\documentclass[pdflatex,sn-mathphys-num]{sn-jnl}

\usepackage{graphicx}%
\usepackage{aliascnt}
\usepackage{amsmath,amssymb,amsfonts}%
\usepackage{amsthm}%
\usepackage[capitalize]{cleveref}
\usepackage[title]{appendix}%
\usepackage{xcolor}%

\usepackage{booktabs}%
\usepackage{orcidlink}
\usepackage[T1]{fontenc}

\usepackage{float}
\usepackage{enumitem}
\usepackage{todonotes}
\usepackage{xspace}
\usepackage[ruled]{algorithm2e}
\usepackage{blkarray}
\usepackage{subcaption}
\usepackage{pgfplots}
\usepackage{bbm}
\usepackage{color}
\usepackage{tikz} 
\usepackage{xparse}

\usepackage{siunitx}
\usepackage{placeins} 
\usepackage{mathtools}
\usepackage{longtable}

\crefname{algocf}{Algorithm}{Algorithms}
\Crefname{algocf}{Algorithm}{Algorithms}

\theoremstyle{thmstyleone}%

\newtheorem{definition}{Definition}

\newaliascnt{proposition}{definition}
\newtheorem{proposition}[proposition]{Proposition}
\aliascntresetthe{proposition}
\crefname{proposition}{Proposition}{Propositions}
\Crefname{proposition}{Proposition}{Propositions}

\newaliascnt{problem}{definition}

\aliascntresetthe{problem}
\crefname{problem}{Problem}{Problems}
\Crefname{problem}{Problem}{Problems}

\newaliascnt{theorem}{definition}
\newtheorem{theorem}[theorem]{Theorem}
\aliascntresetthe{theorem}
\crefname{theorem}{Theorem}{Theorems}
\Crefname{theorem}{Theorem}{Theorems}

\newaliascnt{lemma}{definition}
\newtheorem{lemma}[lemma]{Lemma}
\aliascntresetthe{lemma}
\crefname{lemma}{Lemma}{Lemmata}
\Crefname{lemma}{Lemma}{Lemmata}

\theoremstyle{thmstylethree}%
\newaliascnt{corollary}{definition}
\newtheorem{corollary}[corollary]{Corollary}
\aliascntresetthe{corollary}
\crefname{corollary}{Corollary}{Corollaries}
\Crefname{corollary}{Corollary}{Corollaries}

\theoremstyle{thmstyletwo}%
\newaliascnt{example}{definition}
\newtheorem{example}[example]{Example}
\aliascntresetthe{example}
\crefname{example}{Example}{Examples}
\Crefname{example}{Example}{Examples}

\newcommand{\conv}{\mathrm{conv}}
\newcommand{\proj}{\mathrm{proj}}

\newcommand{\diag}{\mathrm{diag}}

\newcommand{\bracket}[2]{[#1,#2]}

\DeclareDocumentCommand\transpose{m}{#1^{\intercal}}
\DeclareDocumentCommand\Z{}{\mathbb{Z}}

\DeclareDocumentCommand\Q{}{\mathbb{Q}}
\DeclareDocumentCommand\R{}{\mathbb{R}}
\DeclareDocumentCommand\M{}{\mathbb{MI}}

\DeclareDocumentCommand\MP{}{\mathcal{P}}
\DeclareDocumentCommand\MQ{}{\mathcal{Q}}

\DeclareDocumentCommand\zerovec{}{\mathbbm{O}}
\DeclareDocumentCommand\onevec{}{\mathbbm{1}}

\DeclareDocumentCommand\cplxNP{}{\mathsf{NP}}

\DeclareDocumentCommand\orderO{o}{\mathcal{O}\IfValueTF{#1}{\left(#1\right)}{}}

\DeclareDocumentCommand\Rolf{m}{}

\begin{document}
\title[Folding Mixed-Integer Linear Programs and Reflection Symmetries]{Folding Mixed-Integer Linear Programs and
Reflection Symmetries}

\author{\fnm{Rolf} \sur{van der Hulst} \orcidlink{0000-0002-5941-3016}
}

\affil{\orgname{University of Twente}, 
\city{Enschede}, \country{The Netherlands}}

\abstract{For mixed-integer linear programming and linear programming it is well known that symmetries can have a negative impact on the performance of branch-and-bound and linear optimization algorithms.
A common strategy to handle symmetries in linear programs is to reduce the dimension of the linear program by aggregating symmetric variables and solving a linear program of reduced dimension. In their work \emph{Dimension Reduction via Color Refinement} (DRCR), Grohe, Kersting, Mladenov and Selman show that it is sufficient to run a fast color refinement algorithm to detect permutation symmetries and reduce the dimension of the linear program. 
We extend DRCR in two directions. First, we show that DRCR can be extended to reflection symmetries, which generalize permutation symmetries. Second, we show the folklore result that DRCR can be applied to the continuous columns of mixed-integer linear programs. In order to derive additional reductions on the integer variables we use \emph{affine totally unimodular decompositions} to reformulate mixed-integer linear programs into mixed-integer linear programs with fewer integer variables.
Computational experiments on MIPLIB 2017 collection set using SCIP 10 show that DRCR is an effective tool for handling symmetries.
For the linear programming relaxations, DRCR with reflection symmetries yields a modest reduction in running time compared to the original DRCR procedure. For mixed-integer linear programming models, DRCR is very effective at reducing the solution time compared to the default configuration of SCIP. Moreover, the developed DRCR detection algorithms are fast and scale well to large problem instances.
}
\keywords{linear programming, reflection symmetry, mixed-integer linear programming, symmetry, total unimodularity}
\maketitle
\section{Introduction}
In many problems in mathematical optimization,  symmetries arise naturally. For the representative MIPLIB 2017 collection~\cite{Gleixner2021}, at least 47\% of the instances in the collection contains some permutation or reflection symmetry~\cite{Hojny2025-si}.
Symmetry can pose a serious challenge for (spatial) branch-and-cut algorithms in mixed-integer linear programming (MILP) and mixed-integer nonlinear programming (MINLP). If symmetries are not handled adequately, they may cause a branch-and-cut algorithm to repeatedly explore symmetric parts of the search space~\cite{Margot2010}. In the worst case, all symmetric solutions to a problem are enumerated, which can cause a major performance degradation in the branch-and-cut algorithm, and make it difficult to solve the given optimization problem efficiently.\\

Previous works that develop symmetry handling methods for mathematical optimization can loosely be categorized into two categories. 
The first category consists of symmetry handling methods that remove symmetries from the problem formulation by \emph{projection}, which remove symmetric parts of the feasible region by projecting onto the fixed space of the symmetry group. By doing so, one obtains a symmetry-free formulation with fewer variables and constraints, which is attractive from a computational perspective. This method is used primarily for convex optimization problems such as linear programming~\cite{Bodi2013} and semidefinite programs~\cite{Gatermann2004}.\\

In contrast to the symmetry handling methods for convex optimization problems, mixed-integer linear programming solvers typically handle symmetry by \emph{breaking} it, either through specialized branching rules~\cite{Ostrowski2011} or by adding valid inequalities~\cite{Liberti2012}. Due to the non-convexity of the integrality constraints, the projection-based methods used for convex optimization problems no longer apply as they do not necessarily preserve integrality. Although breaking symmetries is still very popular for mixed-integer linear programming, a few recent works, such as the work on Orbital Shrinking~\cite{Fischetti2012,Salvagnin2013, Fischetti2017}, challenge the idea that symmetry handling methods for mixed-integer linear programming must necessarily break symmetry. This work falls into the same category and formulates symmetry-handling methods that bridge the gap between the strong symmetry handling methods for linear programs to the more challenging setting of mixed-integer linear programs. As our main result, we show that the strong dimension reduction method for linear programs from~\cite{Grohe2014} can be extended to mixed-integer linear programs. Secondly, we show that the dimension reduction method from~\cite{Grohe2014} can be extended to include the class of reflection symmetries, which generalizes the permutation symmetries that are handled by their dimension reduction method.
Next, we consider some of the relevant previous works in further detail.\\

\noindent\textbf{Symmetry handling for linear programs}\\
For linear programs, B{\"o}di, Herr and Joswig~\cite{Bodi2013} show that one can exploit the convexity of the linear program by projecting the linear program onto the fixed space of its symmetry group. This method is computationally attractive, as it both removes the symmetries from the linear program and reduces its dimension. Furthermore, B{\"o}di, Herr and Joswig present an algorithm that exploits the projection onto the fixed space to solve highly symmetric integer programs whose symmetry group is (almost) transitive.
Their work was inspired by the symmetry handling for Semidefinite Programs~\cite{Gatermann2004,Bachoc2012,Permenter2020}, where symmetries of Semidefinite Programs are exploited by applying results from invariant theory and representation theory to reduce the dimensions of the Semidefinite Programs. 

One downside of projecting onto the fixed space of the symmetry group is that the computation of the symmetry group of the Linear Program may be too expensive compared to simply solving the symmetric original program. Typically, the symmetry group is computed by detecting automorphisms of an auxilliary graph representing the (Mixed-Integer) Linear Program~\cite{Margot2010}. The exact complexity of the graph automorphism problem is unknown, and no polynomial time algorithms are known~\cite{Read1977}. However, there are several software packages such as \texttt{bliss}~\cite{Junttila}, \texttt{nauty}~\cite{McKay2025} and \texttt{saucy}~\cite{Darga} that can handle large graphs in practice. \\

Grohe, Kersting, Mladenov and Selman~\cite{Grohe2014} observed that instead of computing the symmetry group via graph automorphism on an auxilliary graph, it is sufficient to run a color refinement method to reduce the dimension of the underlying LP, which they call \emph{Dimension Reduction via Color Refinement} (DRCR). Some sources also refer to their technique as \emph{LP folding} or \emph{weak symmetry}. Their approach has two key advantages compared to the projection onto the fixed space of the symmetry group. First of all, it generalizes the handling of permutation symmetries and may provide additional reductions. Secondly, the color refinement approach is fast in theory and fast in practice since it runs in linearithmic time. One downside of DRCR is that the color refinement algorithm presented in~\cite{Grohe2014} only detects permutation symmetries, whereas B{\"o}di, Herr and Joswig~\cite{Bodi2013} show that projecting to the fixed space of the symmetry group works for the more general linear symmetry group. However, this is not a major limitation in practice, as the majority of symmetries in (Mixed-Integer) Linear Programs are permutation symmetries~\cite{Hojny2025-si}. The developers of the FICO Xpress solver report that after implementing DRCR that they  `observed improvements of an average of up to 30\% on standard LP benchmark sets'~\cite{Berthold2018}. In this case, the average is a somewhat skewed measure; for the affected linear programming models that can be reformulated, applying DRCR even leads to solving times that are `often more than 5 times as fast on average'. However, the technique is only applicable to a limited set of models, which make up roughly 10\% of their test set. 
Similar results are reported by Gurobi \cite{GurobiOptimization2020}. 
To the best of our knowledge, DRCR has currently not yet been implemented in any open source solver. This is somewhat surprising, as a speedup of up to 30\% for linear programming solvers from a single technique is very rare. DRCR can be considered one of the most important algorithmic advances for linear programming software in the last 15 years.\\

\noindent\textbf{Symmetry handling for mixed-integer linear programming}\\
Although the symmetry handling methods for Linear Programming are very powerful, they are unfortunately not directly applicable to Mixed-Integer Linear Programming, as the dimension reduction by projecting to the fixed space relies on the convexity of the problem. For Mixed-Integer Linear Programs, the non-convex integrality constraints invalidate the use of the reduction. Instead, there is a vast collection of symmetry handling methods, where most methods handle symmetry by breaking it in a static or dynamic fashion. Some methods attempt to eliminate the symmetry by adding symmetry-breaking constraints~\cite{Liberti2012,Liberti2014}. Other methods, such as Isomorphism Pruning~\cite{Margot2001,Margot2003} or Orbital Branching~\cite{Ostrowski2008,Ostrowski2011} handle symmetry dynamically by pruning symmetric branches from the branch-and-bound tree.
For a more complete overview, we refer the reader to the surveys by Margot~\cite{Margot2010} and Pfetsch and Rehn~\cite{Pfetsch2019}.
One symmetry handling method that does not break symmetry is discussed by Pfetsch and Rehn. They consider the fixing of continuous variables to the fixed space of the symmetry group whose automorphisms only permute the continuous variables. From their experiments, they conclude that ``fixing the continuous variables slightly speeds up LP-solving, but does not result in a significant overall speed-up'' compared to other symmetry handling methods. \\

One method for symmetry handling for MILP which is relevant to the current work is \emph{Orbital Shrinking}, which was introduced by Fischetti and Liberti~\cite{Fischetti2012}, and whose theory was later solidified in~\cite{Fischetti2017}. The key idea of Orbital Shrinking is to aggregate variables in each orbit of the symmetry group to a single variable per orbit to obtain a smaller mixed-integer program. This is similar to the projection to the fixed space of the symmetry group for linear programs, which also aggregates variables in each orbit. However, due to the non-convexity of the integrality constraints, the reduced mixed-integer problem defines a relaxation of the original problem. Thus, solutions to the relaxed problem are checked by solving a subproblem to check feasibility in the original model. Frequently, this feasibility problem is solved using Constraint Programming techniques~\cite{ Salvagnin2013,Salvagnin2012}. Because orbital shrinking defines a relaxation, any dual bound generated for the relaxed model is also valid for the original model. Due to the smaller size of the relaxed model and the fact that the reduced model contains fewer/no symmetries, the computed dual bounds from the reduced model are typically quite strong for highly symmetric instances~\cite{Salvagnin2013,Salvagnin2012}.

Another innovative method for MILP that preserves symmetry is the core point method~\cite{Herr2013}, which extend the more theoretical work by B{\"o}di, Herr and Joswig~\cite{Bodi2013}. Although they do show that their algorithm is effective for integer programs with (nearly) transitive symmetry groups, their algorithm does not perform as well as orbital branching on arbitrary MILP instances. For a careful comparison between Orbital Shrinking and the core point method, we recommend~\cite[section 3]{Fischetti2017}.

Finally, most modern MIP solvers have fast presolving procedures for detecting simple symmetries arising from parallel columns and parallel rows~\cite{Achterberg2016}, which are typically implemented using 2-level hashing schemes \cite{Achterberg2016,Gemander2020}. This can also be considered a form of variable aggregation to remove symmetries, although the treated symmetries are very simple. In~\cite[Section 7.1]{Achterberg2016} a special structure is presented where non-overlapping symmetries  can be aggregated. In particular, if there is an automorphism of the ILP such that variables contained in the same orbit appear in disjoint constraints, then one can aggregate all the variables in the orbit into one variable without breaking linear and integer feasibility.

\paragraph{Contribution}
We extend the DRCR procedure from~\cite{Grohe2014} in several directions. First of all, we show that DRCR for Linear Programs can be extended to reflection symmetries, which arise from the action of signed permutation matrices on linear programs. Our main idea for realizing this uses two transformations. First, we reformulate the original LP by centering the origin at the center of the variable domains using an affine transformation. Then, we split each variable into two variables, where one variable corresponds to the positive domain and one variable corresponds to the negative domain. We show that by applying DRCR to this reformulated linear program, one can recover the variable complementation and row scaling that achieves the best possible reduction among all linear programs obtained by complementing variables and negating rows. The procedure can be implemented efficiently with only minor modifications to the original DRCR procedure.
\\

For our second contribution, we aim to close the gap between the strong symmetry handling methods for linear programs and the symmetry handling methods for mixed-integer linear programs by specializing DRCR to mixed-integer linear programs.  
In particular, we show the `folklore' result that DRCR can also be applied to the continuous columns of mixed-integer linear programs. 
Additionally, we further investigate the application of DRCR in MILP to also aggregate integer variables. We use \emph{implied integrality}~\cite{VanDerHulst2025} and \emph{affine TU decompositions}~\cite{BaderHWZ18} to infer redundant integrality constraints on a subset of variables. Then, the implied convexity of these variables is exploited by applying DRCR to aggregate these variables. Our approach can also be thought of as \emph{Exact Orbital Shrinking}, where one shows that the orbital shrinking relaxation has the same strength as the original problem, and the feasibility subproblem is an integer program that is given by a perfect formulation and can be solved using linear programming. We formulate an algorithm that specializes DRCR for mixed-integer linear programs. 
One of its crucial components are fast detection algorithms for \emph{network matrices}~\cite{BixbyWagner1988,RowNetworkMatrixPaper}, which are a subclass of totally unimodular matrices~\cite{Schrijver86}. The developed algorithm does not handle the complete permutation symmetry group, but only aggregates variables that appear in automorphisms where all the permuted variables have no integrality constraints or have integrality constraints that can be reformulated using affine TU decompositions or implied integrality. \\

We conduct experiments on the MIPLIB 2017 collection set~\cite{Gleixner2021}. For the linear programs, we consider the linear relaxations, and show that taking reflection symmetries into account in DRCR can lead to additional reductions and a reduced model size and soution time. For the DRCR algorithm for MILP, we show that it has two favorable characteristics. First, the aggregation of variables to remove symmetries from the model is effective in reducing the running time of affected mixed-integer linear programs. In a computational comparison using SCIP 10~\cite{hojny2025scipoptimizationsuite100}, the affected mixed-integer linear programming instances in MIPLIB 2017 that can be solved by SCIP or our method are solved, on average, more than twice as fast compared to the default configuration of SCIP 10. 
Second, we observe that our detection algorithm for DRCR in MILP is fast and scales well to large problems, much like in the linear programming case.

\paragraph{Outline}
In \cref{sec_notation} we introduce DRCR in further detail and define relevant notation.
\Cref{sec_reflection_symmetry} extends DRCR to also handle reflection symmetries that arise from signed permutation matrices. In \cref{sec_dimred_milp}, we explore how DRCR can be extended and specialized for mixed-integer linear programs. In \cref{sec_algorithm}, we formulate a DRCR algorithm that detects reflection symmetries in MILP problems. 
\Cref{sec_results} presents computational experiments on MIPLIB 2017.

\section{Dimension Reduction via Color Refinement}
\label{sec_notation}

Let $V$ and $W$ be two disjoint finite sets. Throughout this work, we will index matrices with the sets $V$ and $W$ for the rows and columns, respectively, and denote any matrix $A$ indexed by $V$ and $W$ using $A\in \R^{V\times W}$. Its elements are indexed using $A_{v,w}$ for $v\in V$, $w\in W$. For $P\subseteq V$, $Q\subseteq W$ we use $A_{P,Q}$ to denote the submatrix induced by $P$ and $Q$ that contains all elements $A_{v,w}$ for $(v,w) \in P\times Q$. For vectors, we use similar notation such as $b\in \R^V$ and $b_v$ for $v\in V$ to denote the vector and its elements, respectively. The order of the elements of matrices will not be relevant, unless noted otherwise.
For two vectors $y,z \in \R^W$, we use $\max(y,z)$ to be the element-wise maximum of $y$ and $z$, such that for all $w\in W$ we have $\max(y,z)_w \coloneqq \max(y_w,z_w)$. Note that for any vector $x\in \R^W$, we have the identity $x \coloneqq \max(x,\zerovec) - \max(-x,\zerovec)$.
For a partition $\MP$ of a set $V$ and some set $V'\subseteq V$, we say that \emph{$\MP$ is compatible with $V'$} if for all $P\in \MP$ either $P\subseteq V'$ or $P\cap V' = \emptyset$ holds.

\subsection{Equitable partitions and fractional automorphisms}

First, we introduce equitable partitions of linear programs as presented in~\cite{Grohe2014}. 
Given a matrix $A\in \R^{V\times W}$, DRCR iteratively computes partitions $\MP_i$ and $\MQ_i$ of $V$ and $W$, the rows and columns of $A$. Let $\MP_0 = \{ V\}$ and $\MQ_0 = \{ W\}$. 
A partition $(\MP,\MQ)$ of the rows and columns of $A$ is said to be \emph{equitable} if for all $P\in \MP,Q\in\MQ$ it holds that
\begin{align}
    \label{eq_folding_row_sums}
    \sum_{w\in Q} A_{v,w} &= \sum_{w\in Q} A_{v',w}\text{ for all } v,v'\in P\\
    \label{eq_folding_col_sums}
    \sum_{v\in P} A_{v,w} &= \sum_{v\in P} A_{v,w'}\text{ for all } w,w'\in Q.
\end{align} In words, for every block of $A$ defined by $P,Q$, the row sums and columns sums restricted to elements in the block must be equal. For a vector $b\in \R^V$, we similarly say that $\MP$ is an \emph{equitable partition} of $b$ if $b_v = b_{v'}$ holds for all $v,v'\in P$ for all $P\in \MP$. If for $(P,Q)\in \MP \times \MQ$ both \eqref{eq_folding_row_sums} and \eqref{eq_folding_col_sums} hold, we also say that $(P,Q)$ defines an \emph{equitable block} of $A$. A partition $\MP_1$ is said to \emph{refine} a partition $\MP_2$ if for every $P_1\in \MP_1$ there exists some $P_2\in \MP_2$ such that $P_1\subseteq P_2$. Color refinement works by iteratively refining $\MP_i$ and $\MQ_i$, by splitting any classes $P\in \MP_i$ or $Q\in\MQ_i$ into multiple classes in $\MP_{i+1}$ and $\MQ_{i+1}$ if they do not satisfy the conditions \eqref{eq_folding_row_sums} and \eqref{eq_folding_col_sums} respectively. For some iteration $i$, this iterative process satisfies $\MP_i = \MP_{i+1}$ and $\MQ_i = \MQ_{i+1}$. For this $i$,
the partition $(\MP_\infty,\MQ_\infty) \coloneqq (\MP_i,\MQ_i)$ is known as the \emph{coarsest equitable partition} of $A$. The authors of~\cite{Grohe2014} adapt the well-known fast color refinement algorithm by Paige and Tarjan~\cite{Paige1987}, that is based on ideas by Hopcroft~\cite{Hopcroft1971}, to compute the coarsest equitable partition. For $n = |V| + |W|$ and the total bitlength $m$ of the entries of $A$, they show that one can compute the coarsest equitable partition in $\orderO((n+m)\log n)$ time. \\

Given a partition $\MP$ of a set $V$, its \emph{partition matrix} is denoted by $\Pi_{\MP}\in \{0,1\}^{V\times\MP}$, for which $(\Pi)_{vP} = 1 \iff v\in P$ holds for all $v\in V$ and $P\in \MP$. A \emph{scaled partition matrix} $\widetilde{\Pi_\MP}$ is a normalized partition matrix defined such that $(\widetilde{\Pi_\MP})_{vP} = \begin{cases} \frac{1}{|P|} \text{ if $v\in P$} \\
0\text{ otherwise}\end{cases}$ holds for all $v\in V$ and $P\in \MP$. \Cref{thm_part_id} summarizes a few identities for partition matrices and scaled partition matrices that are established and used in~\cite{Grohe2014}.
\begin{proposition}[\cite{Grohe2014}]
\label{thm_part_id}
Given a partition $\MP$ of a set $V$, the following hold:

\begin{enumerate}[label=(\roman*)]
    \item \label{thm_part_id_id}$\transpose{\widetilde{\Pi_\MP}} \Pi_\MP = \transpose{\Pi}_\MP \widetilde{\Pi_\MP} = I_{\MP\times \MP} $.
    \item \label{thm_part_id_ds}For $v,v'\in V$, $(\widetilde{\Pi_\MP}\transpose{\Pi_\MP})_{v,v'} = (\Pi_\MP \transpose{\widetilde{\Pi_\MP}})_{v',v} = \begin{cases}
        \frac{1}{|P|} \text{ if $v,v'\in P$ for some $P\in \MP$}\\
        0 \text{ otherwise.}
    \end{cases}$
    \item \label{thm_part_id_equitable} If $\MP$ is an equitable partition of $b\in \R^V$, then $b= \widetilde{\Pi_{\MP}} \transpose{\Pi_{\MP}} b = \Pi_\MP \transpose{\widetilde{\Pi_\MP}}b$ holds. 
\end{enumerate}
\end{proposition}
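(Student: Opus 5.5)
The plan is to compute every product entrywise from the definitions of $\Pi_\MP$ and $\widetilde{\Pi_\MP}$. The one fact used throughout is that each $v\in V$ lies in exactly one class of $\MP$, which we denote $[v]$. Consequently, row $v$ of $\Pi_\MP$ is the indicator of $[v]$, and row $v$ of $\widetilde{\Pi_\MP}$ is $\frac{1}{|[v]|}$ times that indicator.

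For \ref{thm_part_id_id}, fix $P,P'\in\MP$. Then
\begin{equation*}
(\transpose{\widetilde{\Pi_\MP}}\Pi_\MP)_{P,P'} = \sum_{v\in V}(\widetilde{\Pi_\MP})_{v,P}(\Pi_\MP)_{v,P'} = \sum_{v\in P\cap P'}\frac{1}{|P|}.
\end{equation*}
Since the classes of a partition are disjoint, $P\cap P'=\emptyset$ when $P\neq P'$, so the entry is $0$. When $P=P'$ the sum is $|P|\cdot\frac{1}{|P|}=1$; classes are nonempty, so this is well defined. The second product $\transpose{\Pi_\MP}\widetilde{\Pi_\MP}$ is the transpose of the first, and the transpose of the identity is the identity, so it equals $I_{\MP\times\MP}$ as well.

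For \ref{thm_part_id_ds}, fix $v,v'\in V$. Then
\begin{equation*}
(\widetilde{\Pi_\MP}\transpose{\Pi_\MP})_{v,v'}=\sum_{P\in\MP}(\widetilde{\Pi_\MP})_{v,P}(\Pi_\MP)_{v',P}.
\end{equation*}
Only the term $P=[v]$ can be nonzero. That term equals $\frac{1}{|[v]|}$ if $v'\in[v]$, and $0$ otherwise, which is exactly the stated case distinction. The equality $(\widetilde{\Pi_\MP}\transpose{\Pi_\MP})_{v,v'} = (\Pi_\MP\transpose{\widetilde{\Pi_\MP}})_{v',v}$ holds because the two matrices are transposes of each other. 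The expression is also symmetric in $v$ and $v'$, since $v'\in[v]$ if and only if $[v]=[v']$.

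For \ref{thm_part_id_equitable}, apply \ref{thm_part_id_ds} to get
\begin{equation*}
(\widetilde{\Pi_\MP}\transpose{\Pi_\MP}b)_v=\frac{1}{|[v]|}\sum_{v'\in[v]}b_{v'}.
\end{equation*}
By equitability of $\MP$ with respect to $b$, every $b_{v'}$ with $v'\in[v]$ equals $b_v$, so the average is $b_v$. The other product gives the same value: by \ref{thm_part_id_ds} the two matrices coincide, because the given expression is symmetric.

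There is no genuine obstacle in this argument. The only points needing care are the nonemptiness of the classes, which keeps $\frac{1}{|P|}$ well defined, and keeping the index order in \ref{thm_part_id_ds} straight.
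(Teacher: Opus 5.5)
Your proposal is correct and takes the same approach as the paper: an entrywise computation from the definitions of $\Pi_\MP$ and $\widetilde{\Pi_\MP}$, with (i) checked on a pair $P,P'$ and the other product obtained by transposition. Your version is slightly more complete. You spell out (ii), which the paper leaves to the definitions, and you justify the second equality in (iii) via $\widetilde{\Pi_\MP}\transpose{\Pi_\MP} = \Pi_\MP\transpose{\widetilde{\Pi_\MP}}$. You also compute with the correctly ordered product where the paper's text has the typo $\transpose{\widetilde{\Pi_\MP}}\Pi_\MP b$.
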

\begin{proof}
For~\ref{thm_part_id_id}, consider any $P,P'\in \MP$ and note that $(\transpose{\widetilde{\Pi_\MP}} \Pi_\MP)_{P,P'} = \begin{cases}
    \sum_{v\in P} \frac{1}{|P|} \text{ if $P=P'$}\\
    0 \text{ otherwise}
\end{cases} = I_{\MP\times\MP}$ holds. The identity in \ref{thm_part_id_id} follows since $I_{\MP\times \MP} = \transpose{(\transpose{\widetilde{\Pi_\MP}} \Pi_\MP)} = \transpose{\Pi}_\MP \widetilde{\Pi_\MP}$ holds.
The proof of \ref{thm_part_id_ds} follows by the definition of matrix transpose and multiplication. For \ref{thm_part_id_equitable} we have for $v\in V$ and $P\in \MP$ such that $v\in P$ that $(\transpose{\widetilde{\Pi_\MP}} \Pi_\MP b)_v = \sum_{v'\in P} \frac{1}{|P|} b_{v'} = b_v$, where the last equality holds since $\MP$ is an equitable partition of $b$, which implies that $b_v = b_{v'}$ holds for all $v'\in P$.
\end{proof}

A matrix $A\in \R^{V\times W}$ is said to be \emph{stochastic} if it is nonnegative and $\sum_{w\in W} A_{v,w} = 1$ for all $v\in V$, and it is \emph{doubly stochastic} if both $A$ and $\transpose{A}$ are stochastic. For a matrix $A\in \R^{V\times W}$, a pair $(X,Y) \in \R^{V\times V} \times \R^{W\times W}$ of doubly stochastic matrices is said to be a \emph{fractional automorphism} of $A$ if $XA = AY$ holds. One of the key insights made by the authors of~\cite{Grohe2014} is given in \cref{thm_frac_aut_equitable_partition}, and relates fractional automorphisms of $A$ to its equitable partitions.
\begin{proposition}(\cite{Grohe2014})
\label{thm_frac_aut_equitable_partition}
    If $(\MP,\MQ)$ is an equitable partition of a matrix $A\in \R^{V\times W}$, then $(\Pi_\MP \transpose{\widetilde{\Pi_\MP}}, \Pi_\MQ \transpose{\widetilde{\Pi_\MQ}})$ is a fractional automorphism of $A$.
\end{proposition}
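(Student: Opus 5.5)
We write $X \coloneqq \Pi_\MP \transpose{\widetilde{\Pi_\MP}}$ and $Y \coloneqq \Pi_\MQ \transpose{\widetilde{\Pi_\MQ}}$. Two things must be shown: that $X$ and $Y$ are doubly stochastic, and that $XA = AY$.

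For double stochasticity, \cref{thm_part_id}\ref{thm_part_id_ds} gives the entries explicitly: $X_{v,v'} = \frac{1}{|P|}$ if $v,v'$ lie in a common class $P\in\MP$, and $X_{v,v'}=0$ otherwise. Hence $X$ is nonnegative and symmetric. Each row $v\in P$ has exactly $|P|$ nonzero entries, each equal to $\frac{1}{|P|}$, so every row sums to $1$. By symmetry every column sums to $1$ as well. The same argument applied to $\MQ$ shows that $Y$ is doubly stochastic.

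For the identity $XA = AY$, we compare the two sides entrywise. Fix $v\in V$ with $v\in P\in\MP$, and fix $w\in W$ with $w\in Q\in\MQ$. Then
\begin{align*}
(XA)_{v,w} &= \frac{1}{|P|}\sum_{v'\in P} A_{v',w}, & (AY)_{v,w} &= \frac{1}{|Q|}\sum_{w'\in Q} A_{v,w'}.
\end{align*}
By the column condition \eqref{eq_folding_col_sums}, the column sum $c_{P,Q} \coloneqq \sum_{v'\in P} A_{v',w}$ does not depend on the choice of $w\in Q$. By the row condition \eqref{eq_folding_row_sums}, the row sum $r_{P,Q} \coloneqq \sum_{w'\in Q}A_{v,w'}$ does not depend on the choice of $v\in P$.

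The only real step is to relate these two constants, which we do by a double-counting argument. Summing all entries of the block $A_{P,Q}$ in two ways gives
\[
|Q|\,c_{P,Q} = \sum_{v'\in P}\sum_{w'\in Q}A_{v',w'} = |P|\,r_{P,Q}.
\]
Therefore $\frac{c_{P,Q}}{|P|} = \frac{r_{P,Q}}{|Q|}$, which says exactly that $(XA)_{v,w} = (AY)_{v,w}$. Since $v$ and $w$ were arbitrary, $XA = AY$, and so $(X,Y)$ is a fractional automorphism of $A$.
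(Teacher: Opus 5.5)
Your proof is correct and takes the same route the paper indicates: double stochasticity follows directly from the entry formula in \cref{thm_part_id}\ref{thm_part_id_ds}, exactly as the paper remarks after the statement. The paper gives no argument of its own for $XA = AY$ and simply cites \cite{Grohe2014}; your entrywise comparison, using the double-counting identity $|Q|\,c_{P,Q} = |P|\,r_{P,Q}$ that relies on both equitability conditions, is the standard argument behind that citation.
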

In \cref{thm_frac_aut_equitable_partition}, it can be easily checked that $\Pi_\MP \transpose{\widetilde{\Pi_\MP}}$ and $\Pi_\MQ \transpose{\widetilde{\Pi_\MQ}}$ are both doubly stochastic due to \cref{thm_part_id}\ref{thm_part_id_ds}. From \cref{thm_part_id}\ref{thm_part_id_ds} and \cref{thm_frac_aut_equitable_partition} it also follows that if $(\MP,\MQ)$ is an equitable partition, that $(\widetilde{\Pi_\MP} \transpose{\Pi_\MP}, \widetilde{\Pi_\MQ} \transpose{\Pi_\MQ})$ is a fractional automorphism of $A$. 

\subsection{Dimension Reduction of a Linear Program}

Throughout this work, we consider the following linear program in standard form.

\begin{equation}
    \label{lp_orig_signed}
    \min \,\transpose{c} x \text{  s.t. } A x = b,~\ell \leq x \leq u  \tag{$F(A,b,\ell,u,c)$}
\end{equation}
with $A\in \R^{V\times W}$, $b\in \R^V$, $\ell_w \in \R \cup \{-\infty\}$ and $u\in \R\cup \{\infty\}$ such that $\ell_w \leq u_w$ for all $w\in W$.
We use $F(A,b,\ell,u,c)$ to denote the linear program, and we define its feasible region as $P(A,b,\ell, u) \coloneqq \{ x\in \R^W \mid A x = b, \ell \leq x \leq u\}$.
For the linear program $F(A,b,\ell, u, c)$ and a partition $\MP$ of $V$ and a partition $\MQ$ of $W$, $(\MP,\MQ)$ is an \emph{equitable partition of $F(A,b,\ell,u,c)$} if $(\MP,\MQ)$ is an equitable partition of $A$, $\MP$ is an equitable partition of $b$, and $\MQ$ is an equitable partition of $\ell$, $u$, and $c$. The key result necessary for DRCR is given in \cite[Lemma 7.1]{Grohe2014} and shows that a linear program that has an equitable partition $(\MP,\MQ)$ can be reduced to a linear program with a constraint matrix with dimension $|\MP|\times |\MQ|$. We show a variation of this result which has essentially the same proof, but which produces a slightly different but equivalent linear program whose variables and constraints are scaled differently. The motivation for deviating from \cite{Grohe2014} in \cref{thm_lp_folding} will become clear in \cref{sec_dimred_milp}, where we consider mixed-integer linear programs.

\begin{proposition}[Grohe, Kersting, Mladenov and Selman~\cite{Grohe2014}
]
    \label{thm_lp_folding}
    For given $A\in \R^{V\times W}$, $b\in \R^{V}$, $c\in \R^{W}$ and vectors $\ell, u$ with $\ell_w \in \R \cup \{-\infty\}$ and $u_w\in \R \cup \{\infty\} $ such that $\ell_w \leq u_w$ holds for all $w\in W$, consider the linear program $F(A,b,\ell, u, c)$. Let $(\MP,\MQ)$ be an equitable partition of $F(A,b,\ell,u,c)$.  Define 
    $A' \coloneqq \transpose{\Pi_{\MP}} A \widetilde{\Pi_{\MQ}}$, $b'\coloneqq \transpose{\Pi_{\MP}} b$, $\ell' \coloneqq \transpose{\Pi_{\MQ}} \ell$, $u' \coloneqq \transpose{\Pi_\MQ} u$ and $c' \coloneqq  \transpose{\widetilde{\Pi_\MQ}} c$, and consider the reduced linear program $F(A',b',\ell',u',c')$. Then, the following hold:
    \begin{enumerate}[label=(\roman*)]
        \item \label{thm_lp_folding_to_red} If $x\in \R^W$ is feasible for $P(A,b,\ell, u)$, then $y \coloneqq \transpose{\Pi_{\MQ}} x$ is feasible for $P(A',b',\ell', u')$ and $\transpose{c'}y = \transpose{c} x$ holds.
        \item \label{thm_lp_folding_from_red} If $y\in \R^\MQ$ is feasible for $P(A',b',\ell', u')$, then $x \coloneqq \widetilde{\Pi_{\MQ}} y$ is feasible for $P(A,b,\ell,u)$ and $\transpose{c} x = \transpose{c'} y$ holds
    \end{enumerate}
\end{proposition}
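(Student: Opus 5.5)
We verify both directions directly from the identities in \cref{thm_part_id} and the fractional automorphism property of \cref{thm_frac_aut_equitable_partition}. The two identities we need from equitability are $A \widetilde{\Pi_\MQ}\transpose{\Pi_\MQ} = \widetilde{\Pi_\MP}\transpose{\Pi_\MP} A$ and $A\Pi_\MQ\transpose{\widetilde{\Pi_\MQ}} = \Pi_\MP\transpose{\widetilde{\Pi_\MP}} A$. Both say that the block sums of $A$ are constant along rows and columns, and they are exactly the fractional automorphisms noted after \cref{thm_frac_aut_equitable_partition}. We also use $b = \widetilde{\Pi_\MP}\transpose{\Pi_\MP} b$ and the analogous identities for $\ell,u,c$ with respect to $\MQ$, from \cref{thm_part_id}\ref{thm_part_id_equitable}.

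For \ref{thm_lp_folding_to_red}, let $x$ be feasible and set $y = \transpose{\Pi_\MQ}x$. Then
$A'y = \transpose{\Pi_\MP} A \widetilde{\Pi_\MQ}\transpose{\Pi_\MQ} x = \transpose{\Pi_\MP}\widetilde{\Pi_\MP}\transpose{\Pi_\MP} A x = \transpose{\Pi_\MP} b = b'$,
using \cref{thm_part_id}\ref{thm_part_id_id}. For the bounds, $y_Q = \sum_{w\in Q} x_w$. Since $\ell$ and $u$ are constant on $Q$, we get $|Q|\ell_w \le y_Q \le |Q| u_w$, and these are exactly $\ell'_Q$ and $u'_Q$. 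Infinite bounds need a separate remark, interpreting $\transpose{\Pi_\MQ}\ell$ entrywise as $|Q|\cdot\ell_w$ with $\pm\infty$ preserved. For the objective, $\transpose{c'}y = \transpose{c}\widetilde{\Pi_\MQ}\transpose{\Pi_\MQ}x$, and $\transpose{c}\widetilde{\Pi_\MQ}\transpose{\Pi_\MQ} = \transpose{(\Pi_\MQ\transpose{\widetilde{\Pi_\MQ}}c)} = \transpose{c}$ by \cref{thm_part_id}\ref{thm_part_id_equitable}. This gives $\transpose{c'}y = \transpose{c}x$.

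For \ref{thm_lp_folding_from_red}, let $y$ be feasible for the reduced program and set $x = \widetilde{\Pi_\MQ}y$, so that $x_w = y_Q/|Q|$ for $w\in Q$. The bounds follow by dividing $\ell'_Q \le y_Q\le u'_Q$ by $|Q|$. The objective identity is immediate: $\transpose{c}\widetilde{\Pi_\MQ} y = \transpose{c'}y$.

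The main step is $Ax=b$. We compute $Ax = A\widetilde{\Pi_\MQ}y$ and want to rewrite this as $\widetilde{\Pi_\MP}\transpose{\Pi_\MP}A\widetilde{\Pi_\MQ}y = \widetilde{\Pi_\MP}A'y = \widetilde{\Pi_\MP}b' = \widetilde{\Pi_\MP}\transpose{\Pi_\MP}b = b$. The first equality requires $A\widetilde{\Pi_\MQ} = \widetilde{\Pi_\MP}\transpose{\Pi_\MP}A\widetilde{\Pi_\MQ}$. This holds because, by the row-sum condition \eqref{eq_folding_row_sums}, the vector $A\widetilde{\Pi_\MQ}e_Q$ has entries $\frac{1}{|Q|}\sum_{w\in Q}A_{v,w}$, which are constant over each $P\in\MP$. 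Equivalently, $\MP$ is an equitable partition of each column of $A\widetilde{\Pi_\MQ}$, so \cref{thm_part_id}\ref{thm_part_id_equitable} applies columnwise. This is the step I expect to need the most care: one must pick the correct orientation of $\widetilde{\Pi}$ versus $\Pi$ among the four products in \cref{thm_part_id}, and identify that it is the row-sum condition, not the column-sum condition, that is used here.
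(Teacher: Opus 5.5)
Your proof is correct and follows the paper's argument essentially step for step. The steps are the same chain $A'y=\transpose{\Pi_\MP}\widetilde{\Pi_\MP}\transpose{\Pi_\MP}Ax=b'$ via the fractional automorphism in (i), the same bound and objective arguments, and the same chain ending in $\widetilde{\Pi_\MP}\transpose{\Pi_\MP}b=b$ in (ii). The only cosmetic difference is in (ii): the paper obtains $A\widetilde{\Pi_\MQ}=\widetilde{\Pi_\MP}\transpose{\Pi_\MP}A\widetilde{\Pi_\MQ}$ by inserting $\transpose{\Pi_\MQ}\widetilde{\Pi_\MQ}=I$ and applying the fractional automorphism, whereas you verify it directly from \eqref{eq_folding_row_sums}, which shows that (ii) needs only the row-sum condition; also, your two stated identities are the same one, since $\widetilde{\Pi_\MP}\transpose{\Pi_\MP}=\Pi_\MP\transpose{\widetilde{\Pi_\MP}}$.
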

\begin{proof}
    Let us prove the first point. Let $x\in P(A,b,\ell,u)$ be given and consider $y\coloneqq \transpose{\Pi_{\MQ}} x$. First of all, we show that for each $Q\in \MQ$, $\ell'_Q \leq y_Q \leq u'_Q$ holds. Note that  $\ell'_Q = (\transpose{\Pi_\MP} \ell)_Q = \sum_{w\in Q} \ell_w$ and $u'_Q = (\transpose{\Pi_\MP} u)_Q = \sum_{w\in Q} u_w$ and $y_Q  = \sum_{w\in Q} x_w$ hold. Then, $\ell'_Q \leq y_Q \leq u'_Q$ is equivalent to $\sum_{w\in Q}\ell_w \leq \sum_{w\in Q} x_w \leq  \sum_{w\in Q} u_w$, which is clearly valid by summing the inequalities $\ell_w \leq x_w \leq u_w$ for all $w\in Q$. Then, we show that $y$ is feasible by showing that the equation $A' y = b'$ holds. The following equations hold.
    \begin{equation*}
        A' y = \transpose{\Pi_{\MP}} A \widetilde{\Pi_{\MQ}} \transpose{\Pi_{\MQ}} x \stackrel{\textup{(a)}}{=} \transpose{\Pi_{\MP}} \widetilde{\Pi_{\MP}} \transpose{\Pi_{\MP}}  A x \stackrel{\textup{(b)}}{=} \transpose{\Pi_{\MP}}  A x = \transpose{\Pi_{\MP}}  b = b' 
    \end{equation*}

    The equality in \textup{(a)} follows from the fact that $(\MP,\MQ)$ is an equitable partition of $A$, which implies that $(\widetilde{\Pi_{\MP}} \transpose{\Pi_{\MP}}, \widetilde{\Pi_{\MQ}} \transpose{\Pi_{\MQ}})$ is a fractional automorphism of $A$ using \cref{thm_frac_aut_equitable_partition}. The equality in \textup{(b)} follows from \cref{thm_part_id}\ref{thm_part_id_id}. Thus, $y\in P(A',b',\ell',u')$ holds, and $y$ is indeed feasible. Finally, since $\MQ$ is an equitable partition of $c$ it follows that $\transpose{c'} y = \transpose{c} \widetilde{\Pi_\MQ} \transpose{\Pi_\MQ} x = \transpose{c} x$, where we use \cref{thm_part_id}\ref{thm_part_id_equitable} in the final step.

    Next, we proceed with the proof of second point. Let $y\in P(A',b',\ell',u')$ be a given feasible point, and consider $x\coloneqq \widetilde{\Pi_\MQ} y$.  
    First, note that since $\MQ$ is an equitable partition of $\ell$ and $u$, that for any $Q\in \MQ$ we have $\ell'_Q = |Q| \ell_w$ for all $w\in Q$ and $u'_Q = |Q| u_w$, since the $l_w$ and $u_w$ entries must be identical for all $w\in Q$ and are summed by definition of $\transpose{\Pi_\MQ}$. Additionally, note that for each $w\in W$, there exists exactly one $Q\in \MQ$ such that $x_w = \frac{1}{|Q|} y_Q$. 
    Then, we have for every $w\in W$ and $Q\in \MQ$ with $w\in Q$ that $\ell'_Q \leq y_Q \leq u'_Q$ holds, which implies that $\frac{1}{|Q|} \ell'_Q \leq \frac{1}{|Q|} y_Q \leq \frac{1}{|Q|} u'_Q$ holds. By substituting using $\ell'_Q = |Q| \ell_w$, $u'_Q = |Q| u_w$ and $x_w = \frac{1}{|Q|} y_Q$, we see that $\ell_w \leq x_w \leq u_w$ holds for all $w\in W$.
    
    Then, similar to the proof of the first point, we consider $Ax$ and show that it equals $b$.
    \begin{equation*}
        A x = A \widetilde{\Pi_\MQ} y \stackrel{\textup{(c)}}{=} A \widetilde{\Pi_\MQ} \transpose{\Pi_\MQ} \widetilde{\Pi_\MQ} y \stackrel{\textup{(d)}}{=} \widetilde{\Pi_\MP} \transpose{\Pi_\MP} A  \widetilde{\Pi_\MQ} y = \widetilde{\Pi_\MP} A' y = \widetilde{\Pi_\MP} b' = \widetilde{\Pi_\MP} \transpose{\Pi_\MP} b \stackrel{\textup{(e)}}{=} b 
    \end{equation*}
    Here, \textup{(c)} holds by \cref{thm_part_id}\ref{thm_part_id_id} and \textup{(d)}holds since $(\MP,\MQ)$ is an equitable partition of $A$, which implies a fractional automorphism using \cref{thm_frac_aut_equitable_partition}. Additionally, \textup{(e)} follows by \cref{thm_part_id}\ref{thm_part_id_equitable} and the fact $\MP$ is an equitable partition of $b$. Thus, $x$ is feasible for $P(A,b,\ell,u)$. 
    Finally, note that $\transpose{c} x = \transpose{c} \widetilde{\Pi_\MQ} \transpose{\Pi_\MQ} x = \transpose{c'} y$ holds, where the first equality follows from \cref{thm_part_id}\ref{thm_part_id_equitable} and the fact that $\MQ$ is an equitable partition of $c$. 
\end{proof}

Occasionally, we will use the notation
$R^{\MP,\MQ}(F(A,b,\ell,u,c)) \coloneqq F(\transpose{\Pi_{\MP}} A \widetilde{\Pi_\MQ}, \transpose{\Pi_{\MP}} b, \transpose{\Pi_{\MQ}} \ell, \transpose{\Pi_{\MQ}} u, 
\transpose{\widetilde{\Pi_{\MQ}}} c)$
to denote the reduced linear program in \cref{thm_lp_folding}.

The key difference in the proof of \cref{thm_lp_folding} compared to the proof in \cite[Lemma 7.1]{Grohe2014} is that we use the fractional automorphism $(\widetilde{\Pi_{\MP}} \transpose{\Pi_{\MP}}, \widetilde{\Pi_{\MQ}} \transpose{\Pi_{\MQ}})$, instead of $({\Pi_{\MP}} \transpose{\widetilde{\Pi_{\MP}}}, {\Pi_{\MQ}} \transpose{\widetilde{\Pi_{\MQ}}})$, which leads to different variable and constraint scaling in the reduced linear programs and in the mappings from and to the original linear program. 

Although we present our results in standard form, equitable partitions can also be applied to linear programs in inequality form by first introducing slack variables and turning them into equality form. In \cref{thm_ineq_equivalent} we show that this procedure does not affect equitability of the constraint matrix. 
\begin{proposition}
    \label{thm_ineq_equivalent}
    Let $(\MP,\MQ)$ be an equitable partition of $A\in \R^{V\times W}$. Then $(\MP, \MQ \cup \MP)$ is an equitable partition of $\begin{bmatrix}
        A & I
    \end{bmatrix}$.
\end{proposition}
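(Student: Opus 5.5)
The plan is a direct verification of conditions \eqref{eq_folding_row_sums} and \eqref{eq_folding_col_sums} for the matrix $B \coloneqq \begin{bmatrix} A & I \end{bmatrix}$. The rows of $B$ are indexed by $V$ and its columns by $W \cup V$, where the slack column associated with row $v$ is identified with $v$ itself. So $B_{v,w} = A_{v,w}$ for $w\in W$, and $B_{v,v'} = 1$ if $v=v'$ and $0$ otherwise for $v'\in V$. Since $W$ and $V$ are disjoint, $\MQ\cup\MP$ is indeed a partition of $W\cup V$. The blocks of $B$ are therefore of two types, $(P,Q)$ with $P\in\MP, Q\in\MQ$, and $(P,P')$ with $P,P'\in\MP$, and I will treat them separately.

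For blocks $(P,Q)$ with $Q\in\MQ$, the restriction of $B$ equals the restriction of $A$. Both the row-sum condition \eqref{eq_folding_row_sums} and the column-sum condition \eqref{eq_folding_col_sums} are therefore inherited verbatim from the assumption that $(\MP,\MQ)$ is equitable for $A$.

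For blocks $(P,P')$ with $P,P'\in\MP$, the restriction of $B$ is the restriction of the identity. Fix $v\in P$. Its row sum over $P'$ is $\sum_{v''\in P'} [v''=v]$, which is $1$ if $v\in P'$ and $0$ otherwise. Because $\MP$ is a partition, this is $1$ exactly when $P=P'$, independently of the choice of $v\in P$, so the row sums agree. Symmetrically, for $v'\in P'$ the column sum over $P$ is $1$ if $P=P'$ and $0$ otherwise, independently of $v'$, so the column sums agree as well. Put differently, the identity block is $I$ on diagonal blocks and $0$ on off-diagonal blocks.

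I do not expect any real obstacle. The only care needed is bookkeeping: being explicit about identifying the slack index set with $V$, and noting that the disjointness of $V$ and $W$ makes $\MQ\cup\MP$ a genuine partition of the columns.
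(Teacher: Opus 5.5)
Your proposal is correct and takes essentially the same approach as the paper. Both split on whether the column part lies in $\MQ$, where equitability is inherited from $A$, or in $\MP$, where the block $I_{P,P'}$ is an identity block if $P=P'$ and a zero block otherwise.
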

\begin{proof}
Consider any block $(P,Q) \in \MP\times (\MQ \cup \MP)$. If $Q \in \MQ$, then $\begin{bmatrix}
    A & I 
\end{bmatrix}_{PQ} = A_{PQ}$ holds, and $A_{PQ}$ satisfies the equitability conditions \eqref{eq_folding_row_sums} and \eqref{eq_folding_col_sums} since $(\MP,\MQ)$ is an equitable partition of $A$. Otherwise, for $Q\in \MP$, we have that $\begin{bmatrix}
    A & I 
\end{bmatrix}_{PQ} = I_{P,P'}$ for some $P'\in \MP$. If $P= P'$, then $I_{P,P}$ is a $P\times P$ identity matrix, which satisfies \eqref{eq_folding_row_sums} and \eqref{eq_folding_col_sums}. Otherwise, if $P\neq P'$, then $I_{P,P'}$ is an all-zero matrix, which also satisfies \eqref{eq_folding_row_sums} and \eqref{eq_folding_col_sums}. Thus,  $\MP\times (\MQ \cup \MP)$ is an equitable partition of $\begin{bmatrix}
    A & I
\end{bmatrix}$. 
\end{proof}
Note that the introduced slack variables will all have identical bounds and do not participate in the objective, so the equitability conditions for the variable bounds and objective are also satisfied under the addition of slack variables. However, the standard form may have additional reductions compared to inequality form if the slack variables can be aggregated with non-slack variables. In~\cite{Geyer2019} the authors observe that additional reductions on the slack variables in equality form lead to large running time improvements for algorithms that enumerate all non-symmetric solutions of certain MILP. Intuitively, this can be explained by the fact that certain relationships between the slack of the constraints and the variables become more explicit upon adding the slack variables.
Thus, we consider the more general standard form in the remainder of this work.\\

The authors of~\cite{Grohe2014} show that DRCR is advantageous primarily because it is fast to compute and because it generalizes permutation symmetries. We have not yet discussed one of its more relevant downsides, which is that a basic optimal solution to the reduced LP is not necessarily mapped to a basic solution of the original LP. This is particularly relevant for branch-and-bound approaches that utilize an LP relaxation, as for these the reduction for the LP may not be valid for the MILP, and thus we need to obtain a solution to the original LP. For MILP, basic linear programming solutions are necessary in an efficient branch-and-cut implementation, which relies heavily on the dual simplex algorithm to branch quickly and to efficiently find cutting planes such as those formulated by Gomory~\cite{Gomory60} from the simplex tableau.
For DRCR, the original solution obtained from the the reduced solution may lie in the interior of the optimal face of the original LP. Then, to obtain a basic solution, one has to run a crossover procedure for the original problem to obtain a feasible vertex from the interior point~\cite{Megiddo1991}. In some cases, the crossover procedure has a runtime that may be orders of magnitude larger than solving the reduced linear program. For example, for problems with a transitive symmetry group, the linear program computed by DRCR consists of a single variable and is easily solved, but the crossover procedure may be as expensive as solving the original linear program. 

\section{Extending DRCR to reflection symmetries}

\label{sec_reflection_symmetry}
As mentioned in the introduction, the DRCR algorithm presented in~\cite{Grohe2014} only detects automorphisms that arise from permutation matrices, whereas the theory they formulate supports more general linear symmetries of the constraint matrix. In~\cite{Bodi2013} it is shown that for linear programs the projection to the fixed space of the symmetry group works for any linear group, rather than just the symmetries arising from permutation matrices. The algorithm presented in \cite{Bodi2013} uses reflection symmetries generated by the signed permutation group to solve highly symmetric integer programs. Reflection symmetries generalize permutation symmetries to signed permutations and additionally take the action of complementing a variable $x$ to $u-x$ into account, where $u$ is its upper bound. \\

Most MILP solvers and LP solvers only perform symmetry handling for permutation symmetries. SCIP is a notable exception, as it handles reflection symmetries since version 9.0~\cite{Bolusani2024}. More information on how reflection symmetries are detected and handled in SCIP can be found in~\cite{Hojny2025-si}. As far as we are aware, the only other work that treats reflection symmetries computationally for MILP solvers is by Christophel, G{\"u}zelsoy and P{\'o}lik~\cite{Christophel2014} and explores how reflection symmetries may be used in primal heuristics. Reflection symmetries that are not permutation symmetries are somewhat uncommon in Mixed Integer Programs; Hojny~\cite{Hojny2025-si} reports that they were detected for only 60 out of 1055 tested MIPLIB 2017 instances and 6 out of 486 tested MINLPLIB instances, and that handling of reflection symmetries for mixed-integer linear programs does not produce large speedups. In fact, the default settings for symmetry handling in SCIP 9.0 disable the detection and handling of reflection symmetries. This can perhaps be explained by the interesting result by Geyer, Bulutoglu and Ryan~\cite{Geyer2019}, who show that for feasible linear programs that are full-dimensional and do not contain redundant inequalities, permutation symmetries can be used to detect the complete linear symmetry group of the linear program. To do so, they first normalize the linear program using a singular-value decomposition and then formulate a complicated algorithm to detect the full symmetry group. They also detect reflection symmetries and exploit them in an enumeration procedure for an integer linear program used for classifying orthogonal arrays.\\

As our first contribution, we will show that DRCR can be extended to detect reductions that take the complements of every variable into account. We will consider the original linear program given by $F(A,b,\ell,u,c)$, and transform it to obtain a representation where DRCR is more effective. 
Our approach somewhat resembles the detection strategies in \cite{Christophel2014,Hojny2025-si}, which both reduce the problem of finding reflection symmetries to finding permutation symmetries by adding both the original and the complemented variables to the auxilliary graph used for symmetry detection. We will use two transformations to achieve something similar using DRCR.
First, the origin is moved to the center of the variable domains using an  affine transformation, so that $\ell \leq x\leq u$ is transformed into $-\nu\leq x' \leq \nu $, where $\nu = \frac{u-\ell}{2}$. Then, each variable $x'_w$ for $w\in W$ is split into two variables $x'^+_w$ and $x'^-_w$ such that $x'_w = x'^+_w - x'^-_w$ holds, where $0\leq x'^+ \leq \nu$ and $0\leq x'^-\leq \nu$ hold. Crucially, we show that by computing an equitable partition on this linear program with $x'^+$ and $x'^-$, one can recover the best equitable partition possible for the original linear program under the operation of complementing the variables.

We illustrate the main ideas of our method with an example linear program given in~\eqref{eq_example_1}.
\begin{subequations}
\label{eq_example_1}
\begin{align}
    \min\,  x_1 - x_2 & & \\
    \begin{bmatrix}
        2 & 1 & 1 \\
        -1 & -2 & -1 \\
        -1 & 1 & 0
    \end{bmatrix}
    \begin{bmatrix}
        x_1\\
        x_2\\
        x_3
    \end{bmatrix}
    &\leq \begin{bmatrix}
    5\\
    -3\\
    1\\
\end{bmatrix}\\
0 \leq x_i &\leq 2 & \forall i\in \{1,2,3\}
\end{align}    
\end{subequations}

The polyhedron that defines the feasible region of~\eqref{eq_example_1} has two reflection symmetries, one reflection in the hyperplane $x_1 + x_2 = 2$ and one reflection in the hyperplane $x_3 = 1$. We will show how both symmetries can be projected out by considering a reformulation of~\eqref{eq_example_1}.
First, we apply an affine transformation to~\eqref{eq_example_1} that yields variables that have identical negative and positive domain sizes by substituting $x = x' + \onevec$. Furthermore, we add the slack variables $s$ to turn the inequalities into equalities.

\begin{subequations}
\label{eq_example_2}
\begin{align}
    \min\,  x'_1 - x'_2 & \\
    \begin{bmatrix}
        2 & 1 & 1 & 1 & 0 & 0 \\
        -1 & -2 & -1 & 0 & 1 & 0\\
        -1 & 1 & 0 & 0 & 0 & 1
    \end{bmatrix}
    \begin{bmatrix}
        x'_1\\
        x'_2\\
        x'_3 \\
        s_1\\
        s_2\\
        s_3
    \end{bmatrix}
    & = \begin{bmatrix}
    1\\
    1\\
    1\\
\end{bmatrix}\\
-1 \leq x'_i &\leq 1 & \forall i\in \{1,2,3\}\\
s_i &\geq 0 &\forall i \in \{1,2,3\}
\end{align}    
\end{subequations}

Next, we introduce redundant variables and constraints in the following manner. First, we split each variable $x'_w$ using $x'_w = x'^+_w - x'^-_w$ for $w\in \{1,2,3\}$. After doing so, we additionally add a negated copy of each equation. Then, the mappings defined by $x'_w \gets x'^+_w - x'^-_w$ and $(x'^+_w,x'^-_w) \gets (\max(x_w,0), \max(-x_w,0))$ can be used to show that feasibility and optimality are preserved. This yields the following equivalent linear program, which we will call the \emph{split reformulation} more generally.
\begin{subequations}
\label{eq_example_3}
\begin{align}
    \min\,  x'^+_1 - x'^-_1 - x'^+_2  + x'^-_2 & \\
    \begin{bmatrix}
        2 & 1 & 1 & -2 & -1 & -1 & 1 & 0 & 0 \\
        -1 & -2 & -1 & 1 & 2 & 1 & 0 & 1 & 0\\
        -1 & 1 & 0 & 1 & -1 & 0 & 0 & 0 & 1 \\
        -2 & -1 & -1 & 2 & 1 & 1 & -1 & 0 & 0 \\
        1 & 2 & 1 & -1 & -2 & -1 & 0 & -1 & 0\\
        1 & -1 & 0 & -1 & 1 & 0 & 0 & 0 & -1 
    \end{bmatrix}
    \begin{bmatrix}
        x'^+_1\\
        x'^+_2\\
        x'^+_3 \\
        x'^-_1\\
        x'^-_2\\
        x'^-_3 \\
        s_1\\
        s_2\\
        s_3
    \end{bmatrix}
    & = \begin{bmatrix}
    1\\
    1\\
    1\\
    -1\\
    -1\\
    -1
\end{bmatrix}\\
0 \leq x'^j_i \leq 1& \quad\forall i\in \{1,2,3\}, \forall j\in \{+,-\}\\
s_i \geq 0 & \quad\forall i \in \{1,2,3\}
\end{align}    
\end{subequations}

Next, we compute the equitable partition of the split reformulation ~\eqref{eq_example_3}. We reorder the variables to clarify the partition.

\begin{equation*}
   \begin{blockarray}{rrr|rr|rr|rr|rrr}
    & x'^+_1 & x'^-_2 & x'^-_1 & x'^+_2 & x'^+_3 & x'^-_3 & s_1 & s_2 & s_3 & & \\
\begin{block}{c[rr|rr|rr|rr|r]r[r]}
  & 2 & -1 & -2 & 1 & 1 & -1 & 1 & 0 & 0 & & 1\\
  & -1 & 2 & 1 & -2 & -1 & 1 & 0 & 1 & 0 & & 1 \\
  \BAhline
  & -1 & -1 & 1 & 1 & 0 & 0 & 0 & 0 & 1 & & 1\\
  \BAhline
  & -2 & 1 & 2 & -1 & -1 & 1 & -1 & 0 & 0 & & -1\\
  & 1 & -2 & -1 & 2 & 1 & -1 & 0 & -1 & 0 & & -1 \\
  \BAhline
  & 1 & 1 &  -1 & -1 & 0 & 0 & 0 & 0 & -1 & & -1\\
\end{block}
\end{blockarray} 
\end{equation*}

Then, we apply~\cref{thm_lp_folding} to the given equitable partition. Doing so aggregates some of the positive and negative index variables with each other.
\begin{subequations}
\label{eq_example_4}
\begin{align}
    \min y_{1^+,2^-} - y_{1^-,2^+} & \\
    \begin{bmatrix}
        1 & -1 & 0 & 1 & 0 \\
        -1 & 1 & 0 & 0 & 1\\
        -1 & 1 & 0 & -1 & 0\\
        1 & -1 & 0 & 0 & -1\\
    \end{bmatrix} \begin{bmatrix}
        y_{1^+,2^-}\\
        y_{1^-,2^+} \\
        y_{3^+,3^-} \\
        s_{1,2}\\
        s_3
    \end{bmatrix} &= \begin{bmatrix}
        2 \\
        1\\
        -2\\
        -1
    \end{bmatrix}\\
    0 \leq y_{1^+,2^-}, y_{1^-,2^+}, y_{3^+,3^-}&\leq 2 \\
    s_{1,2}, s_3 &\geq 0\\
\end{align}    
\end{subequations}

We can notice two things about~\eqref{eq_example_4}. First of all, we note that $y_{3^+,3^-}$ vanishes from the given equations and thus any $0 \leq y_{3^+,3^-} \leq 2$ is always feasible, so we may remove it from the Linear Program. 
This corresponds to removing the reflection symmetry around the hyperplane $x_3 = 1$, which implies that $x'_{3^+}$ and $x'_{3^-}$ are symmetric to each other. Secondly, we can observe that the redundancy that we introduced by adding negated variables and rows is still present in~\eqref{eq_example_4}. By removing the duplicated rows and setting $z_{1^\pm,2^{\mp}} = y_{1^+,2^-} - y_{1^-,2^+}$, we obtain the following one-dimensional linear program. 
\begin{align*}
    \min z_{1^\pm, 2^\mp} & \\
    z_{1^\pm, 2^\mp} + s_{1,2} &= 2\\
    -z_{1^\pm, 2^\mp} + s_3 &= 1\\
    -2 \leq z_{1^\pm, 2^\mp} &\leq 2\\
    s_{1,2},s_3 &\geq 0
\end{align*}
In this final linear program, we have additionally removed the reflection symmetry around the hyperplane $x_1 + x_2 = 2$ in the original problem. Finally, we can conclude that applying the equitable partition to the split reformulation was sufficient to remove the reflection symmetries from our problem. In the next sections, we show more formally that the illustrated procedure can be used to aggregate the reflection symmetries of any linear program. 

\subsection{Centering the linear program}
Initially, we consider the linear program $F(A,b,\ell,u,c)$. In order to detect reflection symmetries, we would like to center each variable at the center of its domain. This can be achieved using an affine transformation that offsets the variables.

\begin{proposition}
\label{thm_affine_equivalent}
    For any scalar $\delta \in \R$, $x \in P(A,b,\ell,u)$ holds if and only if $x'\coloneqq x - \delta \in P(A,b- A \delta,\ell - \delta,u- \delta)$ holds. Moreover, $\transpose{c} x' = \transpose{c} x - \transpose{c} \delta$ holds.
\end{proposition}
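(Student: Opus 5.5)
The plan is a direct verification by substitution. The statement calls $\delta$ a scalar, but the expressions $A\delta$ and $\transpose{c}\delta$, together with the intended use $\delta = \frac{u+\ell}{2}$, show that $\delta$ should be read as a vector in $\R^W$ (possibly a constant vector $\delta\onevec$). I will carry out the argument for an arbitrary $\delta\in\R^W$, which covers the scalar case as well. I will also adopt the convention that $\pm\infty - \delta_w = \pm\infty$, so that infinite bounds stay infinite after the shift.

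The map $x\mapsto x-\delta$ is a bijection of $\R^W$ with inverse $x'\mapsto x'+\delta$. It therefore suffices to show that each defining condition of $P(A,b,\ell,u)$ at $x$ is equivalent to the corresponding condition of $P(A,b-A\delta,\ell-\delta,u-\delta)$ at $x' = x-\delta$. I will check the two kinds of condition separately.

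For the equality constraints, linearity of $A$ gives $Ax' = Ax - A\delta$. Hence $Ax = b$ holds if and only if $Ax' = b - A\delta$ holds, since we only add or subtract the fixed vector $A\delta$ on both sides.

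For the bounds, I will argue componentwise. For each $w\in W$, we have $\ell_w\le x_w$ if and only if $\ell_w-\delta_w\le x_w-\delta_w = x'_w$, and similarly $x_w\le u_w$ if and only if $x'_w\le u_w-\delta_w$. When a bound is infinite, the condition is vacuous on both sides under the stated convention, so the equivalence still holds. Combining the equality constraints and the bounds gives the "if and only if" claim.

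For the objective, linearity gives $\transpose{c}x' = \transpose{c}(x-\delta) = \transpose{c}x - \transpose{c}\delta$.

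No step is a real obstacle. The only care needed is the point noted above: treat $\delta$ as a vector so that the expressions are well-typed, and handle infinite bounds through the convention, so the shifted program is again of the form $F$ with $\ell_w-\delta_w\le u_w-\delta_w$.
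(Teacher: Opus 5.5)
Your proposal is correct and takes the same approach as the paper: direct substitution showing $Ax=b \iff Ax'=b-A\delta$ and $\ell\le x\le u \iff \ell-\delta\le x'\le u-\delta$, with the objective identity read off from linearity. Reading $\delta$ as a vector in $\R^W$ and adopting the convention for infinite bounds are sound clarifications that the paper leaves implicit.
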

\begin{proof}
    It is sufficient to observe that $A x = b \iff A x - A \delta = b - A\delta \iff Ax' = b - A \delta$ and that $\ell \leq x \leq u \iff \ell - \delta \leq x - \delta \leq u -\delta \iff \ell - \delta \leq x' \leq u -\delta$. Moreover, $\transpose{c} x' = \transpose{c} x - \transpose{c} \delta$ follows by definition of $x'$. 
\end{proof}

The main idea that we use \cref{thm_affine_equivalent} for in the context of equitable partitions is that that $F(A,b-A \delta, \ell-\delta,u-\delta,c)$ may admit a coarser equitable partition than $F(A,b,\ell,u,c)$. 
We note that there are infinitely many choices for $\delta$ to do so, so it is not clear how $\delta$ may be chosen. Frequently, we use \cref{thm_affine_equivalent} to center the variables at their domains by using $\delta_w \coloneqq \frac{u_w + \ell_w}{2}$. 
For variables with infinite lower or upper bounds, such a centered $\delta_w$ value is not well-defined. Later on, we will clarify how one should choose $\delta_w$ in such cases. For now, we analyze the equitable partitions of $F(A,b-A \delta, \ell-\delta,u-\delta,c)$ for any finite $\delta \in \R$.\\

Furthermore, we note that $\delta$ corresponds to an offset of the primal variables. It is also possible to expand the results in this work by using an offset for the dual variables, which corresponds to adding weighted multiples of the rows to the objective. One could hope that doing so may help to construct additional variables that have identical objective value, which would lead to potentially coarser equitable partitions. In order to simplify the presentation, we do not consider this extension here.

\subsection{Split reformulations of linear programs}

In both papers~\cite{Hojny2025-si} and \cite{Christophel2014}, the authors reduce the problem of finding reflection symmetries to finding permutation symmetries by adding both the original and the complemented variables to the auxiliary graph used for symmetry detection. For linear programs, a similar effect can be achieved by splitting each variable $x$ as $x=x^+ - x^-$, where $x^+$ represents the positive domain of $x$ and $x^-$ corresponds to the negative domain of $x$. We introduce the notion of a \emph{split reformulation} to achieve such a reformulation.
\begin{definition}
    \label{def_split_reform}
    Let $F(A,b,\ell,u,c)$ be any linear program with $A\in \R^{V\times W}$, $b\in \R^V$, $c\in \R^W$ and vectors $\ell,u$ with $\ell_w\in \R\cup \{-\infty\}$ and $u \in \R\cup \{\infty\}$ for all $w\in W$. 
    If $\ell \leq 0$ and $u \geq 0$ hold, we define the \emph{split reformulation} of $F(A,b,\ell,u,c)$ to be the following linear program with variables $x^+\in \R^W$ and $x^-\in \R^W$:
    \begin{equation}
        \begin{split}
            \min\,  \transpose{c} x^+ - \transpose{c} x^-\\
            A x^+ - A x^- &= b\\
            -A x^+ + A x^- &= -b\\
            0\leq x^+ &\leq u \\
            0 \leq x^- &\leq -\ell
        \end{split}
        \tag{$F_S(A,b,\ell,u,c))$}
    \end{equation}
    We use $F_S(A,b,\ell,u,c)$ to denote the linear program that is the split reformulation of $F(A,b,\ell,u,c)$, and we use $P_S(A,b,\ell,u)$ to denote its feasible region. 
    Furthermore, we use $\widehat{M} = \begin{bmatrix}
        A & -A\\
        -A & A
    \end{bmatrix}\in \R^{(\widehat{V}^+ \cup \widehat{V}^-) \times (\widehat{W}^+ \cup \widehat{W}^-)}$, to denote the constraint matrix and $\widehat{V} \coloneqq \widehat{V}^+ \cup \widehat{V}^-$ and $\widehat{W} \coloneqq \widehat{W}^+ \cup \widehat{W}^-$ to denote the row and column index sets.
\end{definition}

Note that the split reformulation  $F_S(A,b,\ell,u)$ includes the constraints $-Ax^+ + A x^- = -b$, indexed by $\widehat{V}_2$, which are redundant as they are directly implied by negating $Ax^+ - Ax^- = b$. Thus, they may be left out without changing the feasible region. Previous works do not consider the duplication of constraints. However, we will see that these redundant equations may be useful for the derivation of coarse equitable partitions. In particular, they may help to establish equitable partitions that arise if a subset of the rows are multiplied by $-1$.\\

In \cref{thm_splitting_equivalent}, we show that the split reformulation of any linear program is equivalent to its original linear program.
\begin{lemma}
    \label{thm_splitting_equivalent}
    Let $A\in \R^{V\times W}$, $b\in \R^V$, $c\in \R^W$ and vectors $\ell,u,$ with $l_w \in \R_{\leq0} \cup \{-\infty\}$ and $u_w\in \R_{\geq 0} \cup \{\infty\}$ for all $w\in W$. Then, the following hold:
    \begin{enumerate}[label=(\roman*)]
        \item \label{thm_splitting_equivalent_to_red} If $x\in P(A,b,\ell,u)$, then for $x^+\coloneqq \max(x,\zerovec)$ and $x^-\coloneqq \max(-x,\zerovec)$ it holds that $(x^+,x^-) \in P_S(A,b,\ell,u)$ and $\transpose{c} x = \transpose{c}x^+ - \transpose{c} x^-$.
        \item \label{thm_splitting_equivalent_from_red} If $(x^+,x^-)\in P_S(A,b,\ell,u)$ then for $x\coloneqq x^+ - x^-$ it holds that $x\in P(A,b,\ell,u)$ and $\transpose{c} x = \transpose{c}x^+ - \transpose{c} x^-$.  
    \end{enumerate}
\end{lemma}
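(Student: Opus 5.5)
The plan is to verify both directions coordinatewise, using only the identity $x = \max(x,\zerovec) - \max(-x,\zerovec)$ and linearity. The sign conditions $\ell \leq 0 \leq u$ make the split reformulation well defined.

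For \ref{thm_splitting_equivalent_to_red}, take $x \in P(A,b,\ell,u)$ and set $x^+ \coloneqq \max(x,\zerovec)$ and $x^- \coloneqq \max(-x,\zerovec)$. The identity gives $x^+ - x^- = x$. Hence $Ax^+ - Ax^- = Ax = b$, and negating yields $-Ax^+ + Ax^- = -b$. The objective identity $\transpose{c}x^+ - \transpose{c}x^- = \transpose{c}x$ follows the same way.

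The bounds are checked per $w \in W$, splitting on the sign of $x_w$. If $x_w \geq 0$, then $x^+_w = x_w \in [0,u_w]$ and $x^-_w = 0 \in [0,-\ell_w]$, where the latter uses $\ell_w \leq 0$. If $x_w < 0$, then $x^+_w = 0 \leq u_w$ (using $u_w \geq 0$) and $x^-_w = -x_w \leq -\ell_w$, since $x_w \geq \ell_w$. Infinite bounds cause no issue, because each comparison is either trivially true or inherited from $\ell_w \leq x_w \leq u_w$.

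For \ref{thm_splitting_equivalent_from_red}, take $(x^+,x^-) \in P_S(A,b,\ell,u)$ and set $x \coloneqq x^+ - x^-$. The first block of equations gives $Ax = b$ directly, and the objective identity again follows by linearity. For the bounds, $x_w \leq x^+_w \leq u_w$ because $x^-_w \geq 0$, and $x_w \geq -x^-_w \geq \ell_w$ because $x^+_w \geq 0$.

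There is no real obstacle here. The only point needing care is the case distinction on the sign of $x_w$ in \ref{thm_splitting_equivalent_to_red}, together with the handling of infinite bounds.
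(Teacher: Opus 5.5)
Your proposal is correct and follows essentially the same route as the paper: the identity $x = \max(x,\zerovec) - \max(-x,\zerovec)$ plus linearity gives the equations and the objective, and the sign conditions $\ell \leq \zerovec \leq u$ give the bounds. Your explicit case split on the sign of $x_w$ spells out the bound check that the paper compresses into a single step.
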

\begin{proof}
To prove \ref{thm_splitting_equivalent_to_red}, let $x\in P(A,b,\ell,u)$ be given and consider $x^+=\max(x,\zerovec)$ and $x^-=\max(-x,\zerovec)$. Then, we have:
\begin{equation*}
 Ax^+ - Ax^- = A(x^+-x^-) = A(\max(x,\zerovec)- \max(-x,\zerovec)) = A x \stackrel{\textup{(a)}}{=} b,   
\end{equation*} where \textup{(a)} follows since $x\in P(A,b,\ell,u)$ holds. It follows directly that also $-Ax^+ + Ax^- = -(Ax^+ - Ax^-) = -b$ holds.
Furthermore, note that $x^+ \geq 0$ and $x^-\geq 0$ hold by definition of the $\max$ operator. The upper bounds $x^+\leq u$ and $x^- \leq -\ell$ follow from $x\leq u$ and $-x \leq - \ell$ which are satisfied since $x\in P(A,b,\ell,u)$ holds. Then, since $\ell \leq 0 $ and $u\geq 0$, it follows that $(x^+,x^-)$ is feasible for $ P_S(A,b,\ell,u)$. Finally, note that $\transpose{c} x = \transpose{c} (\max(x,\zerovec)- \max(-x,\zerovec)) = \transpose{c} x^+ - \transpose{c} x^-$ shows that the objectives are equal.

To prove \ref{thm_splitting_equivalent_from_red}, let $(x^+,x^-)\in P_S(A,b,\ell,u)$ be given and consider $x\coloneqq x^+-x^-$. Then, $Ax = A x^+ - Ax^- = b$ holds since $(x^+,x^-) \in P_S(A,b,\ell,u)$. Since $x^+\geq 0$ and $x^-\geq 0$ hold in $P_S(A,b,\ell,u)$, we also have that $x = x^+ - x^- \leq u + \zerovec = u$ and $x = x^+ - x^- \geq \zerovec - -\ell = \ell$, which shows that $\ell \leq x \leq u$ holds, and thus it follows that $x\in P(A,b,\ell,u)$. Finally, we have by definition of $x$ that $\transpose{c} x = \transpose{c} (x^+ - x^-) = \transpose{c} x^+ - \transpose{c} x^-$. 
\end{proof}

 First of all, we note that $\ell \leq 0$ and $u \geq 0$ are necessary conditions to guarantee that the split reformulation exists in the proof of \cref{thm_splitting_equivalent}. For any linear programs with $\ell_w \leq u_w$ for variable $x_w$, one can achieve this by transforming $x_w$ into $x'_w$ using $x_w' = x_w + \delta_w$ for any $\delta_w$ that satisfies $\ell_w \leq \delta_w \leq u_w$. If $\ell_w \leq u_w$ does not hold, then the linear program is clearly infeasible.\\
 
For the detection of reflection symmetries, previous approaches detect reflection symmetries by detecting permutation symmetries on a mixed-integer linear program obtained from a split reformulation.
Thus, a logical next step is to consider the application of DRCR to a split reformulation $F_S(A,b,\ell,u,c)$ for any linear program $F(A,b,\ell,u,c)$ with $\ell \leq \zerovec$ and $u\geq \zerovec$. To do so, we first need to understand some of the special properties of equitable partitions of split reformulations.

\subsection{Equitable partitions of split reformulations}
Throughout this section, we investigate the structure of equitable partitions of the split reformulation $F_S(A,b,\ell,u,c)$ for an arbitrary linear program $F(A,b,\ell,u,c)$ with $\ell \leq \zerovec$ and $u\geq \zerovec$.

Recall that for a split reformulation, its variable index sets $\widehat{W} = \widehat{W}^+ \cup \widehat{W}^-$ and $\widehat{V} = \widehat{V}^+ \cup \widehat{V}^-$ correspond to the original and negated variables and constraints, respectively. For $x\in \{-1,1\}$, we will use the synonyms $\widehat{V}^x\coloneqq \begin{cases}
    \widehat{V}^+ \text{ if $x= 1$}\\
    \widehat{V}^- \text{ if $x=-1$}\\
\end{cases}$ and $\widehat{W}^x\coloneqq  \begin{cases}
    \widehat{W}^+ \text{ if $x= 1$}\\
    \widehat{W}^- \text{ if $x=-1$}\\
\end{cases}$ to denote the corresponding signed sets and for $v\in V$ ($w\in W$) we similarly use $v^x$ ($w^x$) to indicate that $v$ lies in $\widehat{V}^x$ ($\widehat{W}^x$).
For any $P\subseteq V$ and  $\gamma \in \{-1,1\}^{P}$, and, we use $P^\gamma \coloneqq \{ v^{\gamma_v} \in \widehat{V}^{\gamma_v} \mid v\in P \}$ to denote the signed set of indices corresponding in $\widehat{V}^+$ (if $\gamma_v = 1)$ and $\widehat{V}^-$ (if $\gamma_v = -1$).
For example, for $P=\{v_1,v_2,v_3\}$ and $\gamma \coloneqq \begin{bmatrix}
    \gamma_{v_1} & \gamma_{v_2} & \gamma_{v_3}
\end{bmatrix} = \begin{bmatrix}
    1 & -1 & -1
\end{bmatrix}$, we would have that $P^\gamma = \{v^+_1, v^-_2,v^-_3\}$ and that $P^{-\gamma} = \{v^-_1,v^+_2,v^+_3\}$, where $v^+_1,v^+_2,v^+_3 \in \widehat{V}^+_2$ and $v^-_1,v^-_2,v^-_3\in \widehat{V}^-_2$. Similarly, for any $Q\subseteq W$ and $\lambda \in \{-1,1\}^{Q}$, we define $Q^\lambda \coloneqq \{w^{\lambda_w} \in \widehat{W}^{\lambda_w} \mid w\in Q\}$.\\

Then, we consider the structure of equitable partitions $(\widehat{\MP},\widehat{\MQ})$ of $F_S(A,b,\ell,u,c)$ in further detail. 
These equitable partitions have many symmetries that are a result of the redundancies introduced by the split reformulation. One property that will be important in this context, is the \emph{polarity} of parts $\widehat{P}\in \widehat{\MP}$. If for some $v\in V$, both $v^+\in \widehat{P}$ and $v^-\in \widehat{P}$ hold then $\widehat{P}$ is said to be \emph{bipolar}. If $\widehat{P}\subseteq \widehat{V}$ is not bipolar, then $|\widehat{P}\cap \{v^+,v^-\}|\leq 1$ holds for all $v\in V$ and we say it is \emph{unipolar}. Note that for each unipolar part $\widehat{P}$ there exists a part $P\subseteq V$ and $\gamma\in\{-1,1\}^V$ such that $\widehat{P} = P^\gamma$ holds. 
Similarly, $\widehat{Q}\subseteq \widehat{W}$ is \emph{bipolar} if there exists some $w\in W$ such that $w^+\in \widehat{Q}$ and $w^-\in Q$ hold, and $\widehat{Q}$ is said to be \emph{unipolar} if $|\widehat{Q}\cap \{w^+,w^-\}|\leq 1$ holds for all $w\in W$. If $\widehat{Q}$ is unipolar, then there exists $Q\subseteq W$ and $\lambda\in \{-1,1\}^W$ such that $\widehat{Q} = Q^\lambda$ holds. In \cref{def_symmetric}, we define the symmetric structure of equitable partitions of the split reformulation $F_S(A,b,\ell,u,c)$ of any linear program $F(A,b,\ell,u,c)$ with row and column indices $V$ and $W$, respectively. The sets $\widehat{V}$ and $\widehat{W}$ are the row and column indices of $F_S(A,b,\ell,u,c)$ as defined in \cref{def_split_reform}.

\begin{definition}
    \label{def_symmetric}
    Let $(\widehat{\MP},\widehat{\MQ})$ be a partition of $\widehat{V}\times \widehat{W}$. We say that $(\widehat{\MP},\widehat{\MQ})$ is \emph{symmetric} if the following hold.

    \begin{enumerate}[label=(\roman*)]
        \item for every unipolar part $P\in \widehat{\MP}$ such that $P =P^\gamma $ holds, $P^{-\gamma} \in \widehat{\MP}$ holds too. \label{def_symmetric_row_unipolar}
        \item for every bipolar part $P\in \widehat{\MP}$ and any $v\in V$, $P$ either contains both $v^+$ and $v^-$ or none. \label{def_symmetric_row_bipolar} 
        \item for every unipolar part $Q\in \widehat{\MQ}$ such that $Q =Q^\lambda$ holds, $Q^{-\lambda} \in \widehat{\MQ}$ holds too. \label{def_symmetric_col_unipolar}
        \item for every bipolar part $Q\in \widehat{\MQ}$ and any $w\in W$, $Q$ either contains both $w^+$ and $w^-$ or none. \label{def_symmetric_col_bipolar}
    \end{enumerate}
\end{definition}
For a symmetric equitable partition $(\widehat{\MP},\widehat{\MQ})$ of $F_S(A,b,\ell,u,c)$, we use $\widehat{\MP}_B \coloneqq \{P \in \widehat{\MP} \mid P \text{ is bipolar}\}$ and $\widehat{\MP}_U \coloneqq \{P\in \widehat{\MP} \mid P \text{ is unipolar}\}$ and note that $\widehat{\MP}_U$ and $\widehat{\MP}_B$ are a disjoint partition of $\widehat{\MP}$. 
Similarly, we define that $\widehat{\MQ}_B \coloneqq \{Q \in \widehat{\MQ} \mid Q \text{ is bipolar}\}$ and $\widehat{\MQ}_U \coloneqq \{Q \in\widehat{\MQ} \mid Q \text{ is unipolar}\}$, which also partitions $\widehat{Q}$ . Furthermore, we use $\widehat{V}_U \coloneqq \bigcup_{P\in \widehat{\MP}_U} P$, $\widehat{V}_B \coloneqq \bigcup_{P\in \widehat{\MP}_B} P$, $\widehat{W}_U \coloneqq \bigcup_{Q\in \widehat{\MQ}_U} Q$ and $\widehat{W}_B \coloneqq \bigcup_{Q\in \widehat{\MQ}_B} Q$ for the corresponding partitioned subsets of $\widehat{V}$ and $\widehat{W}$. To denote the corresponding sets in $V$ and $W$ we use $V_U \coloneqq \{v \in V \mid \{v^+,v^-\}\cap \widehat{V}_U\neq\emptyset\}$, $V_B \coloneqq \{v \in V \mid \{v^+,v^-\}\subseteq \widehat{V}_B\}$, $W_U \coloneqq \{w \in W \mid \{w^+,w^-\}\cap \widehat{W}_U\neq\emptyset\}$ and $W_B \coloneqq \{w \in W \mid \{w^+,w^-\}\subseteq\widehat{W}_B\}$ .
    
The notion of bipolar and unipolar parts in \cref{def_symmetric} is very similar to an observation by B{\"o}di, Herr and Joswig~\cite{Bodi2013}, who show that the orbits of reflection symmetry groups are either unipolar or bipolar, where bipolar orbits must consist of only pairs $w^+,w^-$ for some subset of $W$. Similarly, we will show that the equitable partitions of $F_S(A,b,\ell,u,c)$, that the associated equitable partitions are indeed symmetric and satisfy \cref{def_symmetric}. \\

Next, we show one important property of symmetric partitions: bipolar parts correspond to zero row and column sums.
\begin{lemma}
    \label{thm_bipolar_symmetric_sum}
    Let $(\widehat{\MP},\widehat{\MQ})$ be any partition of $F_S(A,b,\ell,u,c)$ and let $(P',Q') \in \widehat{\MP} \times \widehat{\MQ}$. Then, the following hold:
    \begin{enumerate}[label=(\roman*)]
        \item \label{thm_bipolar_symmetric_sum_column} If $P' = P^\onevec \cup P^{-\onevec}$ holds for some set $P\subseteq V$, then $\sum_{v\in P'} \widehat{M}_{v,w} = 0$ holds for all $w\in W$. 
        \item \label{thm_bipolar_symmetric_sum_row} If $Q' = Q^\onevec \cup Q^{-\onevec}$ holds for some set $Q\subseteq W$, then $\sum_{w\in Q'} \widehat{M}_{v,w} = 0$ holds for all $v\in V$.
    \end{enumerate}
\end{lemma}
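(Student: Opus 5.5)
The plan is a direct computation from the block structure of $\widehat{M} = \begin{bmatrix} A & -A\\ -A & A\end{bmatrix}$. No equitability or symmetry assumption is needed; only the entries of $\widehat{M}$ are used. I read the statement's ``for all $w\in W$'' as ``for every column $\widehat{w}\in\widehat{W}$'', i.e.\ for both $w^+$ and $w^-$ with $w\in W$. Similarly, ``for all $v\in V$'' means every row $v^+,v^-\in\widehat{V}$.

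First, record the entries explicitly. For $v\in V$, $w\in W$ and signs $s,t\in\{-1,1\}$,
\begin{equation*}
\widehat{M}_{v^s,w^t} = s\,t\,A_{v,w}.
\end{equation*}
This follows from the definition: the $(+,+)$ and $(-,-)$ blocks equal $A$, and the $(+,-)$ and $(-,+)$ blocks equal $-A$.

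For \ref{thm_bipolar_symmetric_sum_column}, fix any column $w^t$. Since $P'=P^{\onevec}\cup P^{-\onevec}$ is the disjoint union $\{v^+ : v\in P\}\cup\{v^- : v\in P\}$, split the sum accordingly:
\begin{equation*}
\sum_{\widehat{v}\in P'}\widehat{M}_{\widehat{v},w^t} = \sum_{v\in P}\bigl(\widehat{M}_{v^+,w^t}+\widehat{M}_{v^-,w^t}\bigr) = \sum_{v\in P}\bigl(t A_{v,w} - t A_{v,w}\bigr)=0 .
\end{equation*}
Part \ref{thm_bipolar_symmetric_sum_row} is the transposed argument. For a fixed row $v^s$, pair $w^+$ with $w^-$ for each $w\in Q$, which gives $sA_{v,w}-sA_{v,w}=0$ termwise.

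The only real obstacle is bookkeeping: making sure the indices in the statement are read as signed indices of $\widehat{V},\widehat{W}$, and that the union $P^{\onevec}\cup P^{-\onevec}$ is disjoint so that the sum splits cleanly. Once the entry formula $\widehat{M}_{v^s,w^t}=stA_{v,w}$ is written down, both claims follow by pairwise cancellation.
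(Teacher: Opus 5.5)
Your proposal is correct and follows essentially the same route as the paper's proof: split the sum over $P^{\onevec}\cup P^{-\onevec}$ (resp.\ $Q^{\onevec}\cup Q^{-\onevec}$) into the $+$ and $-$ copies and cancel using the block structure of $\widehat{M}$. Your sign formula $\widehat{M}_{v^s,w^t}=st\,A_{v,w}$ simply packages the paper's case distinction on whether the column lies in $\widehat{W}^+$ or $\widehat{W}^-$.
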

\begin{proof}
    To prove~\ref{thm_bipolar_symmetric_sum_column}, consider any $w\in W$. Then, we have that
    \begin{equation*}
     \sum_{v\in P'} \widehat{M}_{v,w} = \sum_{v^+\in P^{\onevec}} \widehat{M}_{v^+,w} + \sum_{v^-\in P^{-\onevec}} \widehat{M}_{v^-,w} = \sum_{v\in P} A_{v,w} + \sum_{v\in P} -A_{v,w} = 0,    
    \end{equation*}
    where the first equality is given by the condition. 
    The second equality can be observed through case distinction whether $w\in W^+$ or $w\in W^-$ holds. If $w\in W^+$ holds then by definition of $\widehat{M}$ we have $\widehat{M}_{v^+,w} = A_{v,w}$ and $\widehat{M}_{v^-,w} = -A_{v,w}$. For $w\in W^-$ we have  $\widehat{M}_{v^+,w} = -A_{v,w}$ and $\widehat{M}_{v^-,w} = A_{v,w}$. In both cases, the equality follows.
    
    The proof for~\ref{thm_bipolar_symmetric_sum_row} is similar, where we have for all $v\in V$ that:
    \begin{equation*}
     \sum_{w\in Q'} \widehat{M}_{v,w} = \sum_{w^+\in Q^{\onevec}} \widehat{M}_{v,w^+} + \sum_{w^-\in Q^{-\onevec}} \widehat{M}_{v,w^-} = \sum_{w\in Q} A_{v,w} + \sum_{w\in Q} -A_{v,w} = 0.    
    \end{equation*}
\end{proof}
\begin{corollary}
    \label{thm_bipolar_symmetric_equitable_sum}
    Let $(\widehat{\MP},\widehat{\MQ})$ be any symmetric equitable partition of $F_S(A,b,\ell,u,c)$ and let $(P,Q) \in (\widehat{\MP},\widehat{\MQ})$. If at least one of $P$ and $Q$ is bipolar, then $\sum_{v\in P} \widehat{M}_{v,w} = 0$ holds for all $w\in Q$ and $\sum_{w\in Q} \widehat{M}_{v,w} = 0$ holds for all $v\in P$. 
\end{corollary}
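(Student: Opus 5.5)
The plan is a case distinction on which of $P$ and $Q$ is bipolar, followed by an application of \cref{thm_bipolar_symmetric_sum}. Equitability is not needed for the part that is bipolar itself. It is only used to carry the vanishing over from the bipolar side to the other side.

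First we rewrite bipolar parts in the form required by \cref{thm_bipolar_symmetric_sum}. Let $P\in\widehat{\MP}$ be bipolar. By \cref{def_symmetric}\ref{def_symmetric_row_bipolar}, for every $v\in V$ the part $P$ contains either both of $v^+,v^-$ or neither. So with $P_0\coloneqq\{v\in V\mid v^+\in P\}$ we get $P=P_0^{\onevec}\cup P_0^{-\onevec}$. In the same way, \cref{def_symmetric}\ref{def_symmetric_col_bipolar} gives $Q=Q_0^{\onevec}\cup Q_0^{-\onevec}$ whenever $Q\in\widehat{\MQ}$ is bipolar. We then apply \cref{thm_bipolar_symmetric_sum} with $w$ and $v$ ranging over the column and row indices of $\widehat{M}$. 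As its proof shows, the computation there holds for every index of $\widehat{M}$.

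\emph{Case 1: $P$ is bipolar.} By \cref{thm_bipolar_symmetric_sum}\ref{thm_bipolar_symmetric_sum_column}, $\sum_{v\in P}\widehat{M}_{v,w}=0$ for every column $w$, and in particular for all $w\in Q$. It remains to show the row sums vanish. Summing the column sums over the block gives
\[
\sum_{v\in P}\sum_{w\in Q}\widehat{M}_{v,w}=\sum_{w\in Q}\sum_{v\in P}\widehat{M}_{v,w}=0 .
\]
By the equitability condition \eqref{eq_folding_row_sums}, the row sum $r\coloneqq\sum_{w\in Q}\widehat{M}_{v,w}$ does not depend on $v\in P$. Hence the double sum also equals $|P|\,r$. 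Since $P$ is nonempty, this gives $r=0$.

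\emph{Case 2: $Q$ is bipolar.} This is symmetric to Case 1. \Cref{thm_bipolar_symmetric_sum}\ref{thm_bipolar_symmetric_sum_row} gives vanishing row sums over $Q$ for every row, and in particular for all $v\in P$. Double counting together with \eqref{eq_folding_col_sums} then gives $|Q|$ times the common column sum equal to $0$, so the column sums vanish as well.

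If both $P$ and $Q$ are bipolar, either case applies. The argument contains no real difficulty. The only steps needing care are using symmetry to obtain the $P_0^{\onevec}\cup P_0^{-\onevec}$ form, and noting that the lemma's statement ``for all $w\in W$'' should be read over the indices of $\widehat{M}$.
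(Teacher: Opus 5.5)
Your proof is correct and takes essentially the same route as the paper. You apply \cref{thm_bipolar_symmetric_sum} to get the vanishing sums on the bipolar side, using symmetry to write the bipolar part as $P_0^{\onevec}\cup P_0^{-\onevec}$, a step the paper leaves implicit; then equitability of the block plus double counting of the zero block total forces the constant sums on the other side to vanish, where the paper does this uniformly by dividing the total by $|P|$ and $|Q|$ instead of splitting into cases.
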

\begin{proof}
    We have $\sum_{v\in P} \sum_{w\in Q} \widehat{M}_{v,w} = \sum_{w\in Q} \sum_{v\in P} \widehat{M}_{v,w} = 0$,
    where the final equality holds by \cref{thm_bipolar_symmetric_sum}, since bipolarity of either $Q$ or $P$ implies that $\sum_{w\in Q} \widehat{M}_{v,w} = 0$ holds for all $v\in P$ or $\sum_{v\in P} \widehat{M}_{v,w} = 0$ holds for all $w\in Q$. By equitability of the block $(P,Q)$, the row and column sums are identical, and it follows that $\sum_{w\in Q} \widehat{M}_{v,w} = \frac{1}{|P|} \sum_{v'\in P} \sum_{w\in Q} \widehat{M}_{v',w} = 0$ holds for all $v\in P$ and $\sum_{v\in P} \widehat{M}_{v,w} = \frac{1}{|Q|} \sum_{w'\in Q} \sum_{v\in P} \widehat{M}_{v,w'} = 0$ holds for all $w'\in Q$.
\end{proof}
In order to investigate symmetric equitable partitions of split reformulations, we need a key assumption that limits the structure of the equitable partitions for the split reformulation. One disadvantage of the split reformulation is that the $x^+$- and $x^-$-variables may interact in non-identical ways, which makes it tough to interpret the computed partition of $\widehat{V}$ and $\widehat{W}$ in the context of the original linear program $F(A,b,\ell,u,c)$. For example, for variables $w_1,w_2\in W$, it may be the case that there exists a part $\{w_1^+,w_1^-,w_2^+\}$ which acts identically on the whole domain of $w_1$ but only on the positive domain of $w_2$. 
To avoid such issues, we consider only partitions of the split reformulation such that for each part $Q\in \widehat{\MQ}$, the associated variables in $Q$ of the linear program $F(A,b,\ell,u,c)$ have identical variable bounds. 
\begin{definition}
    \label{def_part_domain_respecting}
    A partition $(\widehat{\MP},\widehat{\MQ})$ of $\widehat{V} \times \widehat{W}$ is said to be \emph{bound-respecting} if for all $Q\in \widehat{\MQ}$ and any $w^{s_1}_1,w^{s_2}_2\in Q$ we have:
    \begin{enumerate}[label=(\roman*)]
        \item $\ell_{w_1} = \ell_{w_2}$ and $u_{w_1} = u_{w_2}$ if $s_1 = s_2$ holds
        \item $\ell_{w_1} = -u_{w_2}$ and $u_{w_1} = - \ell_{w_2}$ if $s_1 \neq s_2$ holds
    \end{enumerate}
\end{definition}

Then, we show that the split reformulation $F_S(A,b,\ell,u,c)$ admits a symmetric equitable partition in two steps. First, we show in \cref{thm_initial_partition_symmetric} that the initial coarsest bound-respecting equitable partition is symmetric. Second, we show in \cref{thm_refinement_symmetric} that any refinements of the initial partition from the constraint matrix preserve the symmetry of the partition. 

\begin{lemma}
\label{thm_initial_partition_symmetric}
Let $(\widehat{\MP},\widehat{\MQ})$ be the coarsest bound-respecting partition of $F_S(A,b,\ell,u,c)$ such that $\widehat{\MP}$ is equitable for $\widehat{b} \coloneqq \begin{bmatrix}
    b & - b
\end{bmatrix}$ and $\widehat{\MQ}$ is equitable for the $\widehat{c} \coloneqq \transpose{\begin{bmatrix}
    c & -c
\end{bmatrix}}$ and $\widehat{u} \coloneqq \transpose{\begin{bmatrix}
    u & -\ell
\end{bmatrix}}$.
Then, $(\widehat{\MP},\widehat{\MQ})$ is symmetric. Moreover, $b_v = 0$ holds for all $v\in \widehat{V}_B$ and $c_w =0$ and $u_w = -\ell_w$ hold for all $w\in W_B$.
\end{lemma}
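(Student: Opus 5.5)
The plan is to describe the coarsest bound-respecting partition explicitly. Since there is no matrix refinement here, the parts are exactly the equivalence classes of an equivalence relation. Once that relation is written down, symmetry and the value claims both follow from the sign-flip map $v^s\mapsto v^{-s}$ and $w^s\mapsto w^{-s}$.

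\textbf{Rows.} Two row indices $v_1^{s_1},v_2^{s_2}\in\widehat V$ lie in the same part of $\widehat\MP$ exactly when $\widehat b_{v_1^{s_1}}=\widehat b_{v_2^{s_2}}$. Here $\widehat b_{v^s}=s\,b_v$. This is the coarsest partition for which $\widehat b$ is equitable, and the bound-respecting condition does not constrain rows. Let $\sigma$ be the involution on $\widehat V$ sending $v^s$ to $v^{-s}$. Then $\widehat b_{\sigma(v^s)}=-\widehat b_{v^s}$, so $\sigma$ maps the class of value $\beta$ onto the class of value $-\beta$. Hence $\sigma$ permutes the parts of $\widehat\MP$, and each part is either fixed by $\sigma$ or sent to a distinct part.

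\begin{itemize}
\item If a part is fixed by $\sigma$, it contains both $v^+$ and $v^-$ (or neither) for every $v$. This gives \cref{def_symmetric}\ref{def_symmetric_row_bipolar} and shows the part is bipolar whenever it is nonempty.
\item If a part $P$ is not fixed, it cannot contain both $v^+$ and $v^-$. Otherwise $\sigma(P)\cap P\neq\emptyset$, which forces $\sigma(P)=P$. So $P$ is unipolar, $P=P^\gamma$ for some $\gamma$, and $\sigma(P)=P^{-\gamma}\in\widehat\MP$. This gives \ref{def_symmetric_row_unipolar}.
\end{itemize}

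A bipolar part contains some pair $v^+,v^-$, so $b_v=-b_v$, i.e.\ $b_v=0$. Since all entries of a part share one value, $b$ vanishes on the whole part. This proves the claim for $\widehat V_B$.

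\textbf{Columns.} Define the relation on $\widehat W$ by $w_1^{s_1}\sim w_2^{s_2}$ if and only if three things hold: $\widehat c$ agrees, $\widehat u$ agrees, and the bound-respecting conditions of \cref{def_part_domain_respecting} hold. Concretely, $s_1c_{w_1}=s_2c_{w_2}$, and the pair $(\ell_{w_1},u_{w_1})$ equals $(\ell_{w_2},u_{w_2})$ if $s_1=s_2$, or equals $(-u_{w_2},-\ell_{w_2})$ if $s_1\neq s_2$.

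The first thing to check is that $\sim$ is an equivalence relation. Encode each $w^s$ by the triple $\bigl(s c_w,\ s\text{-oriented }(\ell_w,u_w)\bigr)$, where the oriented pair is $(\ell_w,u_w)$ for $s=+$ and $(-u_w,-\ell_w)$ for $s=-$. Then $\sim$ is just equality of this signature. Note that $\widehat u_{w^s}$ is the second coordinate of the oriented pair, so agreement on $\widehat u$ is already implied by agreement of the oriented pairs. Because $\sim$ is signature equality, its classes form the coarsest partition satisfying all the requirements.

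The flip $\tau:w^s\mapsto w^{-s}$ sends the signature $(\gamma,(a,b))$ to $(-\gamma,(-b,-a))$, so $\tau$ maps classes to classes. The fixed-or-swapped argument used for rows then gives \ref{def_symmetric_col_unipolar} and \ref{def_symmetric_col_bipolar}. For a bipolar part containing $w^+$ and $w^-$, equal signatures give $c_w=-c_w$, so $c_w=0$, and $(\ell_w,u_w)=(-u_w,-\ell_w)$, so $u_w=-\ell_w$. By \ref{def_symmetric_col_bipolar}, every $w\in W_B$ appears as such a pair, so these identities hold on all of $W_B$.

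\textbf{Main obstacle.} The delicate step is the first column step: showing that the coarsest bound-respecting partition exists and equals this signature partition, including transitivity when signs are mixed. The signature encoding is meant to settle this, and infinite bounds are harmless because they are compared symbolically. A secondary point is to state clearly that the partition fixed by $\sigma$ or $\tau$ and the bipolar partition coincide.
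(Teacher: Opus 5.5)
Your proof is correct, but it takes a different route from the paper's.

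\textbf{The paper's route.} It argues part by part from coarseness. For a unipolar part $P^\gamma$ it checks that $P^{-\gamma}$ is again constant on $\widehat b$ (respectively, on $\widehat c$ and $\widehat u$, and bound-respecting). So $P^{-\gamma}$ lies inside a single part, and any extra element of that part would allow a merge that contradicts coarseness. For a bipolar column part it shows that if only one of $w_2^+,w_2^-$ lies in the part, the partner could be added, again contradicting coarseness.

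\textbf{Your route.} You write the coarsest partition explicitly as the level sets of a signature map. You then observe that the flip $\sigma$ (resp.\ $\tau$) acts on signatures by a fixed involution, so it permutes the parts. The dichotomy ``fixed, or disjoint from its image'' then yields all four conditions of \cref{def_symmetric} at once. The identities $b_v=0$, $c_w=0$ and $u_w=-\ell_w$ follow by comparing the signatures of a pair $v^\pm$ or $w^\pm$.

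\textbf{What your route buys.} It proves that the coarsest bound-respecting partition exists and is unique, which the paper takes for granted. Being bound-respecting is a pairwise condition, and its transitivity under mixed signs is exactly what your oriented-pair encoding settles. It also replaces the paper's ``one can show using a similar argument'' steps with a short structural argument.

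Your argument further shows that a partition is symmetric precisely when it is invariant under the flip. Since $(\sigma,\tau)$ is an automorphism of $\widehat M$, this viewpoint combined with uniqueness of the coarsest equitable refinement would also give \cref{thm_refinement_symmetric} without its case analysis.

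The paper's element-wise argument needs no auxiliary encoding and stays close to the definitions, but its merging steps are only sketched.
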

\begin{proof}
First, consider the coarsest equitable partition $\widehat{\MP}$ of the right-hand side $\widehat{b} \in \R^{\widehat{V}^+\cup \widehat{V}^-}$. Consider any bipolar part $P \in \widehat{\MP}$ such that for some $v \in V$, $\{v^+,v^-\} \subseteq P$ holds. Then, since $\widehat{\MP}$ is an equitable partition of $\widehat{b}$, it follows that $b_v = \widehat{b}_{v^+} = \widehat{b}_{v^-} = - b_v$ holds, which implies that $b_v = 0$. As this holds for all $v\in V$ with $b_{v} = 0$, it follows that the block $P$ in the coarsest equitable partition is exactly formed by those rows of $b$ that have right-hand side $0$, i.e. $P = \{ v^+,v^- \mid \text{ such that $b_v = 0$ for $v\in V$} \}$ is the only bipolar part in $\widehat{\MP}$. Clearly, $P$ satisfies \cref{def_symmetric}\ref{def_symmetric_row_bipolar}. 

For any unipolar part $\widehat{P}\in \widehat{\MP}$ holds such that $\widehat{P} = P^\gamma$ for suitable $P \subseteq V$ and $\gamma \in \{-1,1\}^V$, it follows from equitability of $\widehat{\MP}$ for $\widehat{b}$ that for all $v^{\gamma}_1,v^\gamma_2\in \widehat{P}$ that $\widehat{b}_{v^\gamma_1} = \gamma_{v_1} b_{v_1} = \gamma_{v_2} b_{v_2} = \widehat{b}_{v^\gamma_2}$ holds. However, this directly implies for the set $P^{-\gamma} \subseteq \widehat{V}$ that $\widehat{b}_{v^{-\gamma}_1} = -\gamma_{v_1} b_{v_1} = -\gamma_{v_2} b_{v_2} = \widehat{b}_{v^{-\gamma}_1}$ holds, which shows that $P^{-\gamma}$ is also an equitable block. Thus, $\widehat{\MP}$ can only be the coarsest equitable partition of $\widehat{b}$ if there exists some block $\widehat{P}'$ such that $P^{-\gamma} \subseteq \widehat{P}'$. One can show using a similar argument that $P^{-\gamma} = \widehat{P}'$ holds: existence of any $v^{-\gamma}\in \widehat{P}' \setminus P^{-\gamma}$ contradicts that $\widehat{\MP}$ is the coarsest equitable partition of $\widehat{b}$ as then the part containing $v^\gamma$ can be merged with $\widehat{P}$ to form a coarser partition. Since the above reasoning holds for all unipolar blocks, it follows that \cref{def_symmetric}\ref{def_symmetric_row_unipolar} is satisfied.

Second, we consider the coarsest bound-respecting equitable partition $\widehat{\MQ}$ of $\widehat{c}$ and $\widehat{u}$. Then, for any $Q\in \widehat{\MQ}$ we have for $\widehat{w}_1, \widehat{w}_2 \in Q$ that $\widehat{c}_{\widehat{w}_1} = \widehat{c}_{\widehat{w}_2}$ and $\widehat{u}_{\widehat{w}_1} = \widehat{u}_{\widehat{w}_2}$ hold.
First, consider the case where $Q\in \widehat{\MQ}$ is bipolar, and let $w_1\in W$ be some index such that $\{w_1^+,w_1^-\}\subseteq Q$ holds. Then, it follows from equitability for $\widehat{c}$ that $c_{w_1} = \widehat{c}_{w_1^+} = \widehat{c}_{w_1^-} = -c_{w_1}$ holds, which implies that $c_{w_1} = 0$. Furthermore, equitability for $\widehat{u}$ implies that $u_{w_1} = \widehat{u}_{w_1^+} = \widehat{u}_{w_1^-} = -\ell_{w_1}$ holds. Then, assume for the sake of contradiction that there exists any $w_2\in W$ such that $|\{w_2^+,w_2^-\} \cap Q| = 1$ holds. 
If $w_2^+\in Q$ holds, then $\widehat{u}_{w^-_2} = - \ell_{w_2} = - \ell_{w_1} = \widehat{u}_{w^-_1} =  \widehat{u}_{w^+_2}$ shows that $w_2^-$ has the same bound for $\widehat{u}$, where the second equality follows since $\widehat{\MQ}$ is a bound-respecting partition and $w_2^+$ and $w_1^+$ are both included in $Q$. Similarly, if $w_2^- \in Q$ holds, then $\widehat{u}_{w^+_2} = u_{w_2} = u_{w_1} = \widehat{u}_{w^+_1} = \widehat{u}_{w^-_2}$ holds since $\widehat{\MQ}$ is a bound-respecting partition and $Q$ contains both $w^-_1$ and $w^-_2$. In either case, it follows that $\widehat{u}_{w_2^+} = \widehat{u}_{w_2^-}$ holds. As one of $w_2^+$ and $w_2^-$ is in $Q$, it follows that $\widehat{c}_{w^+_2} = \widehat{c}_{w^-_2} = c_{w_2} = 0$ also holds, which contradicts that $\widehat{\MQ}$ is the coarsest bound-respecting partition as then both $w_2^+$ and $w_2^-$ may be included in $Q$. Thus, bipolar parts must always contain pairs of variables and  \cref{def_symmetric}\ref{def_symmetric_col_bipolar} follows.

Finally, consider the case where $\widehat{Q}\in \widehat{\MQ}$ is unipolar and given by $\widehat{Q} = Q^{\lambda}$ with $Q\subseteq W$ and $\lambda \in \{-1,1\}^W$. By equitability of $\widehat{\MQ}$, we must have for all  $w_1^\lambda,w_2^\lambda \in Q^\lambda$ that $\widehat{c}_{w_1^\lambda} = \widehat{c}_{w_2^\lambda}$ and $\widehat{u}_{w_1^\lambda} = \widehat{u}_{w_2^\lambda}$ hold. Then, consider $Q^{-\lambda}\subseteq \widehat{W}$. First of all, note that for any $w_1^{-\lambda},w_2^{-\lambda} \in Q^{-\lambda}$ that we have: $\widehat{c}_{w_1^{-\lambda}} = -\widehat{c}_{w_1^{\lambda}} = -\widehat{c}_{w_2^{\lambda}} =\widehat{c}_{w_2^{-\lambda}}$, where the first and last equalities follow by definition of $\widehat{c}$ and the second equality follows since $w_1^\lambda$ and $w_2^\lambda$ lie in the equitable part $Q^\lambda$. Now, let us show that for the bounds $\widehat{u}_{w_1^{-\lambda}} = \widehat{u}_{w_2^{-\lambda}}$ holds too:
\begin{align*}
    \widehat{u}_{w_1^{-\lambda}} &= \begin{cases}
        -\ell_{w_1} &\text{ if $\lambda_{w_1} =1$}\\
        u_{w_1} &\text{ if $\lambda_{w_1} = -1$}
    \end{cases} = \begin{cases}
        -\ell_{w_1} &\text{ if $\lambda_{w_1} =1$ and $\lambda_{w_2} = 1$}\\
        -\ell_{w_1} &\text{ if $\lambda_{w_1} =1$ and $\lambda_{w_2} = -1$}\\
        u_{w_1} &\text{ if $\lambda_{w_1} = -1$ and $\lambda_{w_2} = 1$} \\
        u_{w_1} &\text{ if $\lambda_{w_1} = -1$ and $\lambda_{w_2} = -1$}
    \end{cases}\\
    &\stackrel{\textup{(a)}}{=} \begin{cases}
        -\ell_{w_2} &\text{ if $\lambda_{w_1} =1$ and $\lambda_{w_2} = 1$}\\
        u_{w_2} &\text{ if $\lambda_{w_1} =1$ and $\lambda_{w_2} = -1$}\\
        -\ell_{w_2} &\text{ if $\lambda_{w_1} = -1$ and $\lambda_{w_2} = 1$} \\
        u_{w_2} &\text{ if $\lambda_{w_1} = -1$ and $\lambda_{w_2} = -1$}
    \end{cases} =
    \begin{cases}
        -\ell_{w_2} &\text{ if $\lambda_{w_2} =1$}\\
        u_{w_2} &\text{ if $\lambda_{w_2} = -1$}
    \end{cases} = \widehat{u}_{w_2^{-\lambda}}.
\end{align*}
Here, \textup{(a)} follows since $\widehat{\MQ}$ is a bound-respecting partition and $w_1^\lambda$ and $w_2^\lambda$ are in $Q\in \widehat{\MQ}$. Thus $Q^{-\lambda}\subseteq Q'$ must hold for some $Q'\in \widehat{\MQ}$. Using a similar argument, one can show that existence of any $w^{-\lambda}_x \in Q' \setminus Q^{-\lambda}$ shows that the part containing $w^{\lambda}_x$ can be merged with $Q^\lambda$, which contradicts that $\widehat{\MQ}$ is the coarsest partition. Thus $Q^{-\lambda}$ belongs to $\widehat{\MQ}$, which shows that \cref{def_symmetric}\ref{def_symmetric_col_unipolar} is satisfied. 
\end{proof}

Note in \cref{thm_initial_partition_symmetric} that the lower bounds of variables $x^+$ and $x^-$ are not considered as they are all zero and thus do not affect the equitable partition of $\widehat{W}$. Next, let us show that refinement of symmetric partitions through the constraint matrix again yields a symmetric partition.

\begin{lemma}
\label{thm_refinement_symmetric}
    Consider the split reformulation $F_S(A,b,\ell,u,c)$ of a linear program, where the constraint matrix of $F_S(A,b,\ell,u,c)$ is given by $\widehat{M} = \begin{bmatrix}
        A & - A\\
        -A & A
    \end{bmatrix}$, and let $(\widehat{\MP}_0,\widehat{\MQ}_0)$ be a symmetric partition of $F(A,b,\ell,u,c)$. Then, the coarsest equitable partition $(\widehat{\MP}_\infty,\widehat{\MQ}_\infty)$ that refines $(\widehat{\MP}_0,\widehat{\MQ}_0)$ with respect to $\widehat{M}$ is symmetric.
\end{lemma}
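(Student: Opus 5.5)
The plan is to exploit the sign-swap involution of the split reformulation. Define the bijections $\sigma_V:\widehat{V}\to\widehat{V}$ and $\sigma_W:\widehat{W}\to\widehat{W}$ by $v^{s}\mapsto v^{-s}$ and $w^{s}\mapsto w^{-s}$. From the block form $\widehat{M}=\begin{bmatrix} A & -A\\ -A & A\end{bmatrix}$ we read off two identities: $\widehat{M}_{\sigma_V(v),\sigma_W(w)}=\widehat{M}_{v,w}$ for all $v\in\widehat{V}$, $w\in\widehat{W}$, so $(\sigma_V,\sigma_W)$ is a (permutation) automorphism of $\widehat{M}$, and $\widehat{M}_{\sigma_V(v),w}=-\widehat{M}_{v,w}$. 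In this language, a partition $(\widehat{\MP},\widehat{\MQ})$ is symmetric in the sense of \cref{def_symmetric} exactly when $\sigma_V$ maps every part of $\widehat{\MP}$ onto a part of $\widehat{\MP}$ and $\sigma_W$ maps every part of $\widehat{\MQ}$ onto a part of $\widehat{\MQ}$. A unipolar $P^\gamma$ is sent to $P^{-\gamma}$, and a bipolar part is fixed iff it contains both or neither of each $v^\pm$. The first step is to verify this equivalence. One direction is immediate. For the other, if $\sigma_V(P)$ is a part and $P$ meets both $v^+$ and $v^-$ for some $v$, then $\sigma_V(P)\cap P\neq\emptyset$ forces $\sigma_V(P)=P$, which gives condition \ref{def_symmetric_row_bipolar}.

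The second step is to argue that the coarsest equitable refinement is invariant under any automorphism preserving the starting partition. Write $\sigma(\widehat{\MP},\widehat{\MQ}) \coloneqq (\{\sigma_V(P)\},\{\sigma_W(Q)\})$. Since $(\sigma_V,\sigma_W)$ preserves $\widehat{M}$, the conditions \eqref{eq_folding_row_sums} and \eqref{eq_folding_col_sums} hold for a block $(P,Q)$ iff they hold for $(\sigma_V(P),\sigma_W(Q))$. Hence $\sigma(\widehat{\MP}_\infty,\widehat{\MQ}_\infty)$ is again equitable. By the symmetry of $(\widehat{\MP}_0,\widehat{\MQ}_0)$ and the first step, it also refines $\sigma(\widehat{\MP}_0,\widehat{\MQ}_0)=(\widehat{\MP}_0,\widehat{\MQ}_0)$. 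Finally it has the same number of parts as $(\widehat{\MP}_\infty,\widehat{\MQ}_\infty)$.

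The third step, and the one I expect to be the main obstacle, is to conclude from this that $\sigma(\widehat{\MP}_\infty,\widehat{\MQ}_\infty)=(\widehat{\MP}_\infty,\widehat{\MQ}_\infty)$. This requires the uniqueness of the coarsest equitable partition refining a given one, i.e.\ that equitable refinements are closed under taking joins, which the paper only implicitly uses. I would avoid proving lattice properties by arguing directly via the color refinement iteration instead. Show by induction on $i$ that each iterate $(\widehat{\MP}_i,\widehat{\MQ}_i)$ of the refinement procedure is $\sigma$-invariant. The splitting rule of an iteration is defined purely in terms of the multiset of block sums $\sum_{w\in Q}\widehat{M}_{v,w}$ (respectively $\sum_{v\in P}\widehat{M}_{v,w}$) over the current parts. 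Applying $\sigma$ to a row $v$ and to all parts $Q$ simultaneously leaves these sums unchanged, and $\sigma$ permutes the current parts by the induction hypothesis. Therefore $v$ and $v'$ end up in the same new part iff $\sigma_V(v)$ and $\sigma_V(v')$ do, and likewise for columns. This makes $(\widehat{\MP}_{i+1},\widehat{\MQ}_{i+1})$ invariant. The fixed point $(\widehat{\MP}_\infty,\widehat{\MQ}_\infty)$ is then invariant, hence symmetric by the first step.

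As a remark, the refinement is compatible with the initial partition because color refinement only ever splits parts, so the iterates keep refining $(\widehat{\MP}_0,\widehat{\MQ}_0)$. No use of \cref{thm_bipolar_symmetric_sum} is needed here; that lemma becomes relevant only afterwards, when interpreting the bipolar blocks.
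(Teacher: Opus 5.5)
Your proof is correct, but it takes a different route from the paper. The paper cites uniqueness of the coarsest equitable partition (Theorem~2 of Paige and Tarjan) explicitly. It then builds one particular refinement sequence that splits one block $(P',Q')$ at a time and stays symmetric, which requires a case analysis:
\begin{itemize}
\item a bipolar splitter $Q'$ never causes a split, because its row sums vanish by \cref{thm_bipolar_symmetric_sum};
\item a bipolar $P'$ split against a unipolar $Q'$ breaks into a sign-closed zero-sum part plus pairs $P^{\gamma},P^{-\gamma}$ with sums $\pm\alpha$;
\item a unipolar pair is split in lockstep, because $\widehat{M}_{P^{-\gamma},Q'}$ is the negation of $\widehat{M}_{P^{\gamma},Q'}$.
\end{itemize}
You instead identify symmetric partitions with those invariant under the sign-swap involution, note that this involution is an automorphism of $\widehat{M}$, and use equivariance of color refinement. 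Your argument is shorter and needs neither the case distinction nor the zero-sum lemma. It also works verbatim for any automorphism of $\widehat{M}$ preserving $(\widehat{\MP}_0,\widehat{\MQ}_0)$, which fits well with \cref{thm_reflection_symmetry_permutation_symmetry}. What the paper's case analysis buys is the explicit dynamics: bipolar parts never need to act as splitters, bipolar parts are split by absolute values of sums, and signs are fixed at the moment a part becomes unipolar. These facts are what justify the signed refinement on the original matrix in \cref{sec_lp_algorithm}.

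To make your third step airtight, add the standard induction showing that every equitable refinement of $(\widehat{\MP}_0,\widehat{\MQ}_0)$ refines every iterate. This shows the fixed point of your simultaneous iteration is indeed the coarsest equitable refinement. With that fact (equivalently, the uniqueness the paper cites), your second step alone already finishes the proof. Indeed, $\sigma(\widehat{\MP}_\infty,\widehat{\MQ}_\infty)$ is then an equitable refinement of the starting partition, hence refines $(\widehat{\MP}_\infty,\widehat{\MQ}_\infty)$, and it has the same number of parts, so the two are equal.
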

\begin{proof}

    By Theorem 2 in~\cite{Paige1987}, the coarsest equitable partition is unique. Thus, it suffices to show that there exists a sequence of refinements such that the coarsest equitable partition is symmetric. 
    We prove the statement by showing that there exists a refinement algorithm that maintains a symmetric partition at every step. In particular, we assume that $(\widehat{\MP}_i,\widehat{\MQ}_i)$ is symmetric, and show that existence of any refinement of $(\widehat{\MP}_i,\widehat{\MQ}_i)$ implies that there exists a refinement into a partition $(\widehat{\MP}_{i+1},\widehat{\MQ}_{i+1})$ that is symmetric. The result then follows inductively, where the base case is satisfied by assumption.

    Then, assume that $(\widehat{\MP}_i,\widehat{\MQ}_i)$ is a symmetric partition of $F_S(A,b,\ell,u,c)$ such that there exists some $(P',Q') \in (\widehat{\MP}_i,\widehat{\MQ}_i)$ such that $\widehat{M}_{P',Q'}$ is not an equitable block. 
    We consider the case where $\widehat{M}_{P',Q'}$ does not satisfy \eqref{eq_folding_row_sums} for some $v,v'\in P'$ such that $\sum_{w\in Q'} \widehat{M}_{v,w} \neq \sum_{w\in Q'} \widehat{M}_{v',w}$, and show that then $\widehat{\MP}_i$ can be refined symmetrically so that $(\widehat{\MP}_{i+1},\widehat{\MQ}_i)$ is a symmetric partition that refines $(\widehat{\MP}_i,\widehat{\MQ}_i)$. The argumentation for the case where $\widehat{M}_{P',Q'}$ does not satisfy \eqref{eq_folding_col_sums} is identical up to transposition. 

First, note that if $Q'$ is bipolar, that then it follows by symmetry of $\widehat{P}_i$ and \cref{thm_bipolar_symmetric_sum} that $\sum_{v,w} \widehat{M}_{v,w} = 0$ holds for all $v\in P'$. Hence~\eqref{eq_folding_row_sums} is satisfied in this case, and it is sufficient to consider the case where $Q'$ is unipolar in the following. We let $Q' = Q^\lambda$ for some $Q\subseteq W$ and $\lambda\in \{-1,1\}^W$ and use $Q^+ \coloneqq \{w\in Q \mid w^+\in Q^\lambda\}$ and $Q^- \coloneqq \{w\in Q \mid w^-\in Q^\lambda\}$ to denote the positive and negative variables in $Q$, respectively.
Next, we show that refinement of the block $\widehat{M}_{P',Q'}$ always produces a symmetric partition. To do so, we distinguish the cases whether $P'$ is bipolar or unipolar.\\
\noindent\textbf{Case 1: $P'$ is bipolar.}\\
    If $P'$ is bipolar, then there exists $P\subseteq V$ such that $P' = P^\onevec \cup P^{-\onevec}$. Then, since $Q'$ is unipolar, we have that:
        \begin{equation*}
            \widehat{M}_{P',Q'} = \widehat{M}_{P',Q^\lambda} = \begin{bmatrix}
            \widehat{M}_{P^\onevec,Q^\lambda \cap W^+_2} & \widehat{M}_{P^\onevec,Q^\lambda \cap W^-_2} \\
            \widehat{M}_{P^{-\onevec},Q^\lambda \cap W^+_2} & \widehat{M}_{P^{-\onevec},Q^\lambda \cap W^-_2} 
        \end{bmatrix} = \begin{bmatrix}
            A_{P,Q^+} & -A_{P,Q^-} \\
            -A_{P,Q^+} & A_{P,Q^-} 
        \end{bmatrix} = \begin{bmatrix}
            G \\
            -G
        \end{bmatrix},
        \end{equation*}
        where $G\coloneqq \begin{bmatrix}
            A_{P,Q^+} & -A_{P,Q^-}
        \end{bmatrix}$.
        Then, consider the refinement of $\widehat{\MP}_i$ into $\widehat{\MP}_{i+1}$ obtained by splitting $P'$ into $k$ different classes based on the different row sums of $\widehat{M}_{P',Q'}$. Let $P'_j$ for $j=1,\cdots,k$ denote these parts, where we have for all $v,v'\in P'_j$ that $\sum_{w\in Q'} \widehat{M}_{v,w} = \sum_{w\in Q'} \widehat{M}_{v',w}$, and define $\widehat{\MP}_{i+1} \coloneqq (\widehat{\MP}_i \setminus \{P'\} )\cup \bigcup_{j=1}^k P'_j$. We show that \ref{def_symmetric_row_unipolar} and \ref{def_symmetric_row_bipolar} in \cref{def_symmetric} are satisfied for all $P'_j$, which is sufficient since they are satisfied by assumption for all $P''\in \widehat{\MP}_i \setminus \{P'\}$ .

        First, consider the case where $P'_j$ is bipolar. Since $P'_j$ contains at least one pair $v^+,v^-\in P'_j$ for some $v\in V_2$, we can deduce that $\sum_{w\in Q'} G_{v,w} = \sum_{w\in Q'} \widehat{M}_{v,w} = \sum_{w\in Q'} \widehat{M}_{v',w} = \sum_{w\in Q'} -G_{v,w} = 0$ holds. Then, note that for any $\gamma\in \{-1,1\}$ such that $v^\gamma \in P'_j$, the rows of $v^\gamma$ and $v^{-\gamma}$ both sum to zero, which implies that also $v^{-\gamma}\in P'_j$ holds by definition of $P'_j$, where we use the assumption that \cref{def_symmetric}\ref{def_symmetric_row_bipolar} holds for $\widehat{\MP}_i$ to infer that $v^{-\gamma} \in P'$ holds. Thus, this shows that \cref{def_symmetric}\ref{def_symmetric_row_bipolar} is satisfied by $\widehat{\MP}_{i+1}$ in the case where $P'$ is bipolar. 

        If $P'_j$ is unipolar, then there exists some $\gamma\in \{-1,1\}^V$ and $P\subseteq V$ such that $P'_j = P^\gamma$, and we have for all $v\in P'_j$ that $\sum_{w\in Q'} \widehat{M}_{v,w} = \gamma_v \sum_{w\in Q'} G_{v,w} = \alpha $ for some $\alpha \in \R\setminus\{0\}$, where we can exclude $\alpha = 0$ because this corresponds to the bipolar parts treated above. Since $\MP_i'$ satisfies \cref{def_symmetric}\ref{def_symmetric_row_bipolar} by assumption, all elements of $P^{-\gamma}$ are also contained in $P'$, and must be partitioned according to their sums. Then we have for all $v\in P^{-\gamma}$ that $\sum_{w\in Q'} \widehat{M}_{v,w} = -\gamma_v \sum_{w\in Q'} G_{v,w} = -\alpha$, which shows that there exists some $j'$ such that $P'_{j'} = P^{-\gamma}$. Thus, we have shown that \cref{def_symmetric}\ref{def_symmetric_row_unipolar} holds for $\widehat{\MP}_{i+1}$ in the case where $P'$ is bipolar. Thus, we have shown that $(\widehat{\MP}_{i+1}, \widehat{\MQ}_i)$ is symmetric and refines $(\widehat{\MP}_i,\widehat{\MQ}_i)$.\\
        
    \noindent\textbf{Case 2: $P'$ is unipolar.}\\
    If $P'$ is unipolar, then there exists $P\subseteq V$ and $\gamma\in \{-1,1\}^V$ such that $P' = P^\gamma$ holds. By the assumption that $\widehat{\MP}_i$ is symmetric, it also holds that $P^{-\gamma} \in \widehat{\MP}_i$. Then, we show that for $P^\gamma$ and $P^{-\gamma}$ the corresponding blocks $\widehat{M}_{P^\gamma,Q'}$ and $\widehat{M}_{P^{-\gamma},Q'}$ can be simultaneously refined to preserve symmetry in $\widehat{\MP}_{i+1}$. For $x\in \{\gamma,-\gamma\}$, let $P^{x}_+ \coloneqq \{v\in P \mid v^+ \in P^x\}$ and $P^{x}_- \coloneqq \{ v\in P \mid v^- \in P^{x}\}$ denote the positive and negative variables associated to $P^x$, respectively. Note that $P^{\gamma}_+ = P^{-\gamma}_-$ and $P^{\gamma}_- = P^{-\gamma}_+$ hold. 
    Then, for the considered blocks $P^\gamma$ and $P^{-\gamma}$, we have that:
    \begin{equation*}
    \begin{bmatrix}
        \widehat{M}_{P^\gamma,Q'} \\
        \widehat{M}_{P^{-\gamma},Q'} 
    \end{bmatrix} = \begin{bmatrix}
        \widehat{M}_{P^\gamma,Q^\lambda} \\
        \widehat{M}_{P^{-\gamma},Q^\lambda} 
    \end{bmatrix} = 
    \begin{bmatrix}
        A_{P^\gamma_+,Q^+} & -A_{P^\gamma_+,Q^-} \\
        -A_{P^\gamma_-,Q^+} & A_{P^\gamma_-,Q^-} \\
        -A_{P^{-\gamma}_-,Q^+} & A_{P^{-\gamma}_-,Q^-} \\
        A_{P^{-\gamma}_+,Q^+} & A_{P^{-\gamma}_+,Q^-}
    \end{bmatrix} = \begin{bmatrix}
        A_{P^\gamma_+,Q^+} & -A_{P^\gamma_+,Q^-} \\
        -A_{P^\gamma_-,Q^+} & A_{P^\gamma_-,Q^-} \\
        -A_{P^{\gamma}_+,Q^+} & A_{P^{\gamma}_+,Q^-} \\
        A_{P^{\gamma}_-,Q^+} & A_{P^{\gamma}_-,Q^-}
    \end{bmatrix} = \begin{bmatrix}
        G\\
        -G
    \end{bmatrix},
    \end{equation*} where $G=\widehat{M}_{P^\gamma,Q'} = \begin{bmatrix}
        A_{P^\gamma_+,Q^+} & -A_{P^\gamma_+,Q^-} \\
        -A_{P^\gamma_-,Q^+} & A_{P^\gamma_-,Q^-}
    \end{bmatrix}$. Thus, the block corresponding to $(P^{-\gamma},Q')$ is the negated block of $(P^\gamma,Q')$.
    
    For $j=1,\dotsc,k$, let $P_j$ denote the partition of $P$ into $P_j$ so that the row sums of all $v\in P^\gamma_j$ are identical, and for $j\neq j'$ and $v\in P_j$ and $v\in P_{j'}$, the row sums of $v$ and $v'$ differ. Then, we similarly define the partition $P^{-\gamma}_j$ for $j=1,\dotsc,k$. For $v,v'\in P^{-\gamma}$, note that
    \begin{equation*}
        \sum_{w\in Q'} \widehat{M}_{v,w} = \sum_{w\in Q'} -G_{v,w} = - \sum_{w\in Q'} G_{v,w} \stackrel{\textup{(a)}}{=} - \sum_{w\in Q'} G_{v',w} =  \sum_{w\in Q'} -G_{v',w} = \sum_{w\in Q'} \widehat{M}_{v',w}
    \end{equation*} where \textup{(a)} holds because $P^\gamma_j$ has the same row sums by definition. Thus, the partitions $P^\gamma_j$ and $P^{-\gamma}_j$ are both valid refinements. Note that for each $j=1,\cdots,k$, the pair $P^\gamma_j$ and $P^{-\gamma}_j$ is a unipolar pair as in \cref{def_symmetric}\ref{def_symmetric_row_unipolar}. Then, the partition defined by $\widehat{\MP}_{i+1} \coloneqq \widehat{\MP}_{i}\setminus \{P^\gamma, P^{-\gamma}\} \cup \bigcup_{j=1}^k (P^\gamma_j\cup P^{-\gamma}_j)$ satisfies \cref{def_symmetric}\ref{def_symmetric_row_unipolar}, and \ref{def_symmetric_row_bipolar} follows since we did not alter any bipolar part in $\widehat{\MP}_i$. Thus, $(\widehat{\MP}_{i+1},\widehat{\MQ}_i)$ is symmetric and refines $(\widehat{\MP}_i,\widehat{\MQ}_i)$.\\

    Thus, we have shown that there exists a symmetric refinement $(\widehat{\MP}_{i+1}, \widehat{\MQ}_{i+1})$ in all cases, where $\widehat{\MQ}_{i+1} = \widehat{\MQ}_i$ holds. Using essentially the same proof, one can show that there exists a symmetric refinement $(\widehat{\MP}_{i+1},\widehat{\MQ}_{i+1})$ with $\widehat{\MP}_i = \widehat{\MP}_{i+1}$ if there exists some $(P',Q')\in \widehat{\MP}_i\times \widehat{\MQ}_i$ such that \eqref{eq_folding_col_sums} is not satisfied for $\widehat{M}$. This completes the proof.
\end{proof}

\begin{theorem}
    \label{thm_lp_symmetric}
    The coarsest bound-respecting and equitable partition $(\widehat{\MP},\widehat{\MQ})$ of $F_S(A,b,\ell,u,c)$ is symmetric.
\end{theorem}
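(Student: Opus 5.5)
The plan is to combine \cref{thm_initial_partition_symmetric} and \cref{thm_refinement_symmetric} in the natural way, mirroring how color refinement actually computes the partition. Let $(\widehat{\MP}_0,\widehat{\MQ}_0)$ be the coarsest bound-respecting partition of $F_S(A,b,\ell,u,c)$ for which $\widehat{\MP}_0$ is equitable for $\widehat{b}$ and $\widehat{\MQ}_0$ is equitable for $\widehat{c}$ and $\widehat{u}$. By \cref{thm_initial_partition_symmetric}, this initial partition is symmetric.

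I would then apply \cref{thm_refinement_symmetric} with this $(\widehat{\MP}_0,\widehat{\MQ}_0)$. It gives that the coarsest partition refining $(\widehat{\MP}_0,\widehat{\MQ}_0)$ that is equitable with respect to $\widehat{M}$ is symmetric.

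The remaining step is to identify this refined partition with the coarsest bound-respecting equitable partition $(\widehat{\MP},\widehat{\MQ})$ of $F_S(A,b,\ell,u,c)$. I would show two inclusions.

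\textbf{First inclusion.} Any bound-respecting partition that is equitable for $F_S(A,b,\ell,u,c)$ is in particular bound-respecting and equitable for $\widehat{b},\widehat{c},\widehat{u}$. Hence it refines $(\widehat{\MP}_0,\widehat{\MQ}_0)$, since the latter is the coarsest such partition. For this I would note that bound-respecting and equitability for vectors are both closed under taking common coarsenings (joins): merging parts along shared elements preserves equal entries and equal bounds. Therefore a unique coarsest partition exists, and it is coarser than every feasible one.

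\textbf{Second inclusion.} Every refinement of $(\widehat{\MP}_0,\widehat{\MQ}_0)$ stays bound-respecting and equitable for the vectors. So the output of \cref{thm_refinement_symmetric}, which is equitable for $\widehat{M}$, is a bound-respecting equitable partition of $F_S$.

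Combining the two inclusions with the uniqueness of the coarsest equitable refinement (Theorem 2 of \cite{Paige1987}, as already used in \cref{thm_refinement_symmetric}), the two partitions coincide, and symmetry transfers.

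The main obstacle I expect is the first inclusion. One must verify that bound-respecting partitions form a lattice closed under joins, so that ``coarsest bound-respecting'' is well-defined. The bound-respecting relation is not just equality of a vector entry, since it involves the sign-dependent conditions in \cref{def_part_domain_respecting}. I would handle this by observing that the condition is equivalent to equality of $\widehat{u}$ together with equality of the lower-bound vector $\transpose{\begin{bmatrix} -\ell & u\end{bmatrix}}$ on each part. This turns it into an ordinary equitability condition for vectors, which is transitive and so closed under joins.
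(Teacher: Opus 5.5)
Your proposal is correct and follows the paper's proof: the initial partition is symmetric by \cref{thm_initial_partition_symmetric}, and symmetry is preserved under refinement by \cref{thm_refinement_symmetric}. You also add an identification step that the paper's two-line proof leaves implicit, namely why the refined partition coincides with the coarsest bound-respecting equitable partition. Your argument for it, via closure under joins and rewriting the bound-respecting condition as equitability for $\widehat{u}$ and $\transpose{\begin{bmatrix} -\ell & u\end{bmatrix}}$, checks out.
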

\begin{proof}
    \Cref{thm_initial_partition_symmetric} shows that the initial partition is symmetric, and it follows from \cref{thm_refinement_symmetric} shows that applying color refinement on the constraint matrix produces a symmetric equitable partition, given the symmetric initial partition.
\end{proof}

\Cref{thm_lp_symmetric} shows that symmetric equitable partitions of split reformulations can be obtained by using the assumption that the partition is bound-respecting.
Although this assumption seems a bit arbitrary for now, we will motivate it in further detail later on.\\

A simple but crucial observation for symmetric partitions is that symmetric partition of the split reformulation induce partitions and signings for the original linear program.
\begin{proposition}
\label{thm_unipolar_original_mapping}
    Consider any symmetric partition $(\widehat{\MP},\widehat{\MQ})$. For its unipolar parts $\widehat{\MP}_U$ and $\widehat{\MQ}_U$, there exist $\gamma\in \{-1,1\}^{V_U}$ and a partition $\MP_U$ of $V_U$ such that $\widehat{\MP}_U = \bigcup_{P\in \MP}\{ P^\gamma, P^{-\gamma}\}$, and there exists $\lambda\in \{-1,1\}^{W_U}$ and a partition $\MQ_U$ of $W_U$ such that $\widehat{\MQ}_U = \bigcup_{Q\in \MQ} \{ Q^\lambda, Q^{-\lambda}\}$. For its bipolar parts $\widehat{\MP}_B$ and $\widehat{\MQ}_B$, there exist partitions $\MP_B$ of $V_B$ and $\MQ_B$ of $W_B$ such that $\widehat{\MP}_B = \{P^\onevec \cup P^{-\onevec} \mid P \in \MP_B\}$ and $\widehat{\MQ}_B = \{Q^\onevec \cup Q^{-\onevec} \mid  Q \in \MQ_B\}$ hold. 
\end{proposition}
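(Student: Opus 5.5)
The argument is essentially bookkeeping: we read off the original partitions and signings directly from the definition of symmetric partitions (\cref{def_symmetric}). We treat rows and columns separately; the column case is identical to the row case with $V,\gamma,\MP$ replaced by $W,\lambda,\MQ$, so we only describe the rows.

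\textbf{Step 1 (unipolar parts).} We first observe that the unipolar parts of $\widehat{\MP}$ come in pairs. By \cref{def_symmetric}\ref{def_symmetric_row_unipolar}, every unipolar $\widehat P\in\widehat{\MP}_U$ equals $P^{\gamma_P}$ for some $P\subseteq V$ and signing $\gamma_P\in\{-1,1\}^P$, and $P^{-\gamma_P}\in\widehat{\MP}$ as well. The part $P^{-\gamma_P}$ is again unipolar and has the same underlying set $P$, so $\widehat{\MP}_U$ splits into pairs $\{P^{\gamma_P},P^{-\gamma_P}\}$.

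We need $P^{\gamma_P}\neq P^{-\gamma_P}$, so that the pairs are genuine. This holds because $P\neq\emptyset$ and the two parts contain opposite copies of each $v\in P$.

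\textbf{Step 2 (partition and signing on $V_U$).} Choose one representative per pair, and let $\MP_U$ be the set of underlying sets $P$.

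$\MP_U$ is a partition of $V_U$:
\begin{itemize}
\item[] Every $v\in V_U$ has some copy $v^{\pm}$ in a unipolar part, so $v$ lies in some $P\in\MP_U$.
\item[] If $v$ lay in two distinct pairs' underlying sets $P_1,P_2$, then the four parts $P_1^{\pm\gamma}$, $P_2^{\pm\gamma}$ would contain at least three copies drawn from the two elements $v^+,v^-$. Hence two distinct parts of the partition $\widehat{\MP}$ would share an element, which is a contradiction.
\end{itemize}

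Because the parts of $\MP_U$ are disjoint, we can define a single global signing $\gamma\in\{-1,1\}^{V_U}$ by $\gamma|_P\coloneqq\gamma_P$ for the chosen representative $P^{\gamma_P}$. With this choice, $\widehat{\MP}_U=\bigcup_{P\in\MP_U}\{P^\gamma,P^{-\gamma}\}$.

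\textbf{Step 3 (bipolar parts).} By \cref{def_symmetric}\ref{def_symmetric_row_bipolar}, each bipolar $\widehat P\in\widehat{\MP}_B$ contains, for each $v\in V$, either both $v^+,v^-$ or neither. Hence $\widehat P=P^\onevec\cup P^{-\onevec}$ with $P\coloneqq\{v\in V: v^+\in\widehat P\}$.

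Let $\MP_B$ be the collection of these sets $P$. They are disjoint, since the bipolar parts are disjoint. Their union is exactly $\{v:\{v^+,v^-\}\subseteq\widehat V_B\}=V_B$, so $\MP_B$ is a partition of $V_B$.

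\textbf{Remarks.} Step 2 is the only point requiring care: $\gamma$ must be well defined, and this relies on the disjointness of the underlying sets across pairs. A small by-product of the same counting argument is that $V_U\cap V_B=\emptyset$: a unipolar copy and a bipolar pair containing the same $v$ would again overlap in $\widehat{\MP}$. This disjointness is not needed for the stated claim.
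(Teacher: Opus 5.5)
Your proof is correct and follows the same route as the paper. You pair each unipolar part $P^{\gamma}$ with $P^{-\gamma}$ via \cref{def_symmetric}\ref{def_symmetric_row_unipolar}, choose one representative per pair to define $\MP_U$ and $\gamma$, and read the bipolar parts off \cref{def_symmetric}\ref{def_symmetric_row_bipolar}; your pigeonhole argument that distinct pairs have disjoint underlying sets (so $\gamma$ is well defined and $\MP_U$ partitions $V_U$) makes explicit a step the paper leaves implicit.
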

\begin{proof}
    Consider any unipolar part $\widehat{P}\in \widehat{\MP}$. By unipolarity, there exists $P\subseteq V$ such that $\widehat{P} = P^\gamma$. Then, since $(\widehat{\MP},\widehat{\MQ})$ is symmetric,   \cref{def_symmetric}\ref{def_symmetric_row_unipolar} holds and shows $P^{-\gamma}\in \widehat{\MP}$ holds too. Since this holds for all elements and $\widehat{\MP}$ is a partition, it follows that $\widehat{\MP}$ consists of pairs $P^\gamma$, $P^{-\gamma}$. Then, $\MP_U$ is generated by all such $P$ and $\gamma$ is defined by picking a representative part for each pair $P^\gamma$, $P^{-\gamma}$. A similar argument using \cref{def_symmetric}\ref{def_symmetric_col_unipolar}shows the result for $\lambda$ and $\MQ_U$.
    For the bipolar parts, the statement follows directly from \cref{def_symmetric}\ref{def_symmetric_row_bipolar}\ref{def_symmetric_col_bipolar}.
\end{proof}

For any $\gamma\in \{-1,1\}^{V_U}$, $\lambda\in \{-1,1\}^{W_U}$, $\MP \coloneqq \MP_U \cup \MP_B$ and $\MQ\coloneqq \MQ_U \cup \MQ_B$ such as those in \cref{thm_unipolar_original_mapping} we say that they are \emph{generated by} $(\widehat{\MP},\widehat{\MQ})$.
We use $\widehat{P}^\gamma_U \coloneqq \{ P^\gamma \mid P \in \MP_U\}$ and $\widehat{Q}^\lambda_U \coloneqq \{ Q^\lambda \mid Q \in \MQ_U\}$, and define $\widehat{P}^{-\gamma}_U$ and $\widehat{Q}^{-\lambda}_U$ similarly.
Note that the choice for $\gamma$ is not unique as for each $P\in \MP_U$ we can exchange $P^{\gamma}$ and $P^{-\gamma}$ by negating $\gamma_v$ for all $v\in P$, i.e. by using $\gamma' \coloneqq (-\gamma_P,\gamma_{V\setminus P})$. Similarly, one can also choose for each $Q\in \MQ$ to simultaneously negate $\lambda_{w}$ for all $w\in Q$. We note that these actions only affect the signs $\gamma$ and $\lambda$ and not the sets $\MP_U$ and $\MQ_U$. 

\subsection{Reducing the split reformulation}

Now that we have a good understanding of the equitable partitions of split reformulations, we can consider how they can be used to reduce their dimension. 
In this section, we consider a linear program $F(A,b,\ell,u,c)$ and its split reformulation $F_S(A,b,\ell,u,c)$. We apply \cref{thm_lp_folding} to the split reformulation using a symmetric equitable partition $(\widehat{\MP},\widehat{\MQ})$ to obtain a smaller linear program.\\

To clarify the effect of the symmetric equitable partition on $F_S(A,b,\ell,u,c)$, we partition the row and variable sets into 3 sets each. Let $\gamma\in \{-1,1\}^{V_U}$, $\lambda \in \{-1,1\}^{W_U}$ and $\MP_U$ and $\MQ_U$ be generated by the symmetric equitable partition $(\widehat{\MP},\widehat{\MQ})$.
We consider the corresponding partition of $\widehat{V}$ and $\widehat{W}$ into $\widehat{V}^\gamma_U \coloneqq \bigcup_{P\in \widehat{\MP}^\gamma_U} P$, $\widehat{V}^{-\gamma}_U \coloneqq \bigcup_{P\in \widehat{\MP}^{-\gamma}_U} P$ and $\widehat{V}_B$. Similarly, the column partition is defined by $\widehat{W}^\lambda_U \coloneqq \bigcup_{Q\in \widehat{\MQ}^\lambda_U} Q$, $\widehat{W}^{-\lambda}_U \coloneqq \bigcup_{Q\in \widehat{\MQ}^{-\lambda}_U} Q$ and $\widehat{W}_B$. Then, we consider the variable partition $(\widehat{x}_{\widehat{W}^\lambda_U}, \widehat{x}_{\widehat{W}^{-\lambda}_U}, \widehat{x}_{\widehat{W}_B})$ for the split reformulation $F_S(A,b,\ell,u,c)$. Due to the symmetric definition of $\widehat{M}$, rearranging of the variables $\widehat{W}$ into $\widehat{W}^{\lambda}_U$ and $\widehat{W}^{-\lambda}_U$ can be achieved by multiplying by $\Lambda \coloneqq \diag(\lambda)$ from the right. Similarly, rearranging $\widehat{V}$ into $\widehat{V}^\gamma_U$ and $\widehat{V}^{-\gamma}_U$ can be achieved by multiplying by $\Gamma \coloneqq \diag(\gamma)$ from the left.
Furthermore, we remark that rows $v\in \widehat{V}_B$ have right-hand side $0$ and that variables $w\in W_B$ have objective $0$ and have identical bounds $u_w = -\ell_w$ for $w^+$ and $w^-$ by \cref{thm_initial_partition_symmetric}. 

To denote the variable bounds for the unipolar variables, we define the \emph{bound selector vector} $\mathrm{bs}(\ell,u,\lambda)\in \R^{W}$ such that that $\mathrm{bs}(\ell,u,\lambda)_w \coloneqq \begin{cases}
    u_w &\text{ if $\lambda_w = 1$ }\\
    -\ell_w &\text{ if $\lambda_w = -1$.}
\end{cases}$
By permuting the variables of $F_S(A,b,\ell,u,c)$ and applying the mentioned substitutions, we obtain the following linear program:
\begin{subequations}
\label{lp_centered_extended_reordered}
\begin{align}
    \min\, \transpose{c_{W_U}} \Lambda \widehat{x}_{\widehat{W}^\lambda_U} - \transpose{c_{W_U}} \Lambda \widehat{x}_{\widehat{W}^{-\lambda}_U} & \\
\begin{bmatrix}
    \Gamma A^1 \Lambda & -\Gamma A^1 \Lambda & \widehat{M}^1
    \\
    -\Gamma A^1 \Lambda & \Gamma A^1 \Lambda & -\widehat{M}^1
    \\
    \widehat{M}^2 & -\widehat{M}^2 & \widehat{M}^3 \\
\end{bmatrix}
\begin{bmatrix}
    \widehat{x}_{\widehat{W}^\lambda_U}\\
    \widehat{x}_{\widehat{W}^{-\lambda}_U}\\
    \widehat{x}_{\widehat{W}_B}
\end{bmatrix} &= \begin{bmatrix}
    \Gamma b_{W_U}\\
    -\Gamma b_{W_U}\\
    \zerovec
\end{bmatrix}\\
\zerovec \leq \widehat{x}_{\widehat{W}^\lambda_U} &\leq \mathrm{bs}(\ell,u,\lambda)_{W_U} \\
\zerovec \leq \widehat{x}_{\widehat{W}^{-\lambda}_U} &\leq \mathrm{bs}(\ell,u,-\lambda)_{W_U} \\
\zerovec \leq \widehat{x}_{\widehat{W}_B} &\leq \begin{bmatrix}
    u_{W_B}\\
    u_{W_B}
\end{bmatrix} 
\end{align}    
\end{subequations}
where we use $A^1 \coloneqq A_{V_U,W_U}$, $\widehat{M}^1 \coloneqq \Gamma \begin{bmatrix}
    A_{V_U,W_B} & -A_{V_U,W_B}
\end{bmatrix}$, $\widehat{M}^2 \coloneqq \begin{bmatrix}
    A_{V_B,W_U}\\
    -A_{V_B,W_U}
\end{bmatrix} \Lambda$ and\\ $\widehat{M}^3 \coloneqq \begin{bmatrix}
    A_{V_B,W_B} & - A_{V_B,W_B}\\
    -A_{V_B,W_B} & A_{V_B,W_B}
\end{bmatrix}$.

We use $\widehat{y}\in \R^{\widehat{\MQ}}$ to denote the transformed variables, where we let $(\widehat{y}_{\widehat{\MQ}^\lambda_U}, \widehat{y}_{\widehat{\MQ}^{-\lambda}_U}, \widehat{y}_{\widehat{\MQ}_B})$ be the reduced variable sets corresponding to the variables $(\widehat{x}_{\widehat{W}^\lambda_U}, \widehat{x}_{\widehat{W}^{-\lambda}_U}, \widehat{x}_{\widehat{\MQ}_B})$ in $F_S(A,b,\ell,u,c)$.

Then, we are almost ready to apply the equitable partition $(\widehat{\MP},\widehat{\MQ})$ to \eqref{lp_centered_extended_reordered}. Since we determined that $\widehat{\MP}$ consisted of the partitions $\widehat{\MP}^\gamma_U, \widehat{\MP}^{-\gamma}_U$ and $\widehat{\MP}_{B}$, $\Pi_{\widehat{\MP}}$ is a block diagonal matrix consisting of the blocks $\Pi_{\widehat{\MP}^\gamma_U}, \Pi_{\widehat{\MP}^{-\gamma}_U}$ and $\Pi_{\widehat{\MP}_{B}}$. Moreover, note that since $\widehat{\MP}^{\gamma}_U$ and $ \widehat{\MP}^{-\gamma}_U$ arise from the same underlying partition $\MP_U$ of the rows $V_U$, we have that $\Pi_{\widehat{\MP}^{\gamma}_U} = \Pi_{\widehat{\MP}^{-\gamma}_U} = \Pi_{\MP_U}$. Similarly, $\Pi_{\widehat{\MQ}}$ is a block diagonal matrix consisting of the blocks $\Pi_{\widehat{\MQ}^\lambda_U}$, $\Pi_{\widehat{\MQ}^{-\lambda}_U}$ and $\Pi_{\widehat{\MQ}_{W_B}}$, where $\Pi_{\widehat{\MQ}^\lambda_U} = \Pi_{\widehat{\MQ}^{-\lambda}_U} = \Pi_{\MQ_U}$ holds. In \cref{thm_bipolar_symmetric_equitable_sum} we observed that equitable bipolar parts have zero row and column sums. In \cref{thm_zero_sums_equitable}, we show that this implies that the matrices $\widehat{M}^i$ for $i=1,2,3$ transform into all-zero matrices once we apply \cref{thm_lp_folding}. Then, the reduction of $F_S(A,b,\ell,u,c)$ in the reordered form~\eqref{lp_centered_extended_reordered} is given by~\eqref{lp_extended_reduced}.

\begin{lemma} \label{thm_zero_sums_equitable}Let $(\widehat{\MP},\widehat{\MQ})$ be a symmetric equitable partition of $F_S(A,b,\ell,u,c)$, where $\widehat{M}$ is the constraint matrix of $F_S(A,b,\ell,u,c)$. Then,
     $\transpose{\Pi_{\widehat{\MP}_B}}\widehat{M}_{\widehat{V}_B,\star} = \zerovec$ and $\widehat{M}_{\star, \widehat{W}_B} \widetilde{\Pi_{\widehat{\MQ}_B}} = \zerovec$ hold.
\end{lemma}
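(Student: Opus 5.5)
The plan is to prove both identities entrywise. For the first, fix a bipolar part $P\in\widehat{\MP}_B$ and an arbitrary column $\widehat{w}\in\widehat{W}$. The entry of $\transpose{\Pi_{\widehat{\MP}_B}}\widehat{M}_{\widehat{V}_B,\star}$ indexed by $(P,\widehat{w})$ is exactly the column sum $\sum_{v\in P}\widehat{M}_{v,\widehat{w}}$. By symmetry, \cref{def_symmetric}\ref{def_symmetric_row_bipolar} holds, so $P=P_0^{\onevec}\cup P_0^{-\onevec}$ for some $P_0\subseteq V$; this is also recorded in \cref{thm_unipolar_original_mapping}. Then \cref{thm_bipolar_symmetric_sum}\ref{thm_bipolar_symmetric_sum_column} applies directly and gives $\sum_{v\in P}\widehat{M}_{v,\widehat{w}}=0$ for every column $\widehat{w}$, whether $\widehat{w}\in\widehat{W}^+$ or $\widehat{W}^-$. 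The proof of that lemma handles both signs of the column, even though it is stated with ``$w\in W$''. Since $P$ and $\widehat{w}$ were arbitrary, the product is the zero matrix.

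The second identity is the transpose argument. Fix $Q\in\widehat{\MQ}_B$ and any row $\widehat{v}\in\widehat{V}$. The entry of $\widehat{M}_{\star,\widehat{W}_B}\widetilde{\Pi_{\widehat{\MQ}_B}}$ at $(\widehat{v},Q)$ equals $\frac{1}{|Q|}\sum_{w\in Q}\widehat{M}_{\widehat{v},w}$. By \cref{def_symmetric}\ref{def_symmetric_col_bipolar} we have $Q=Q_0^{\onevec}\cup Q_0^{-\onevec}$, and \cref{thm_bipolar_symmetric_sum}\ref{thm_bipolar_symmetric_sum_row} gives that this row sum vanishes for every row $\widehat{v}$. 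Hence the whole product is zero.

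The only point needing care is the difference between \cref{thm_bipolar_symmetric_equitable_sum} and \cref{thm_bipolar_symmetric_sum}. The corollary only gives zero sums within blocks $(P,Q)$ where one part is bipolar, whereas the claim here concerns all columns, respectively all rows. This stronger statement does not need equitability. It follows purely from the pairing structure of bipolar parts together with the antisymmetric form $\widehat{M}=\begin{bmatrix}A&-A\\-A&A\end{bmatrix}$: rows $v^+$ and $v^-$ are negatives of each other, and so are columns $w^+$ and $w^-$. I would therefore invoke \cref{thm_bipolar_symmetric_sum} rather than the corollary. I would also briefly verify the sign bookkeeping, namely $\widehat{M}_{v^-,\widehat{w}}=-\widehat{M}_{v^+,\widehat{w}}$ for every $\widehat{w}\in\widehat{W}$ and $\widehat{M}_{\widehat{v},w^-}=-\widehat{M}_{\widehat{v},w^+}$ for every $\widehat{v}\in\widehat{V}$, since this is the only real content of the argument.
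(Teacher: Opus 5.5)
Your proof is correct and is essentially the paper's entrywise argument. The only difference is that you invoke \cref{thm_bipolar_symmetric_sum} directly, whereas the paper cites \cref{thm_bipolar_symmetric_equitable_sum}. Your concern about the corollary is unfounded: for bipolar $P$, letting $Q$ range over all of $\widehat{\MQ}$ already covers every column, and symmetrically for bipolar $Q$ and every row. Your route is still slightly cleaner, since it avoids equitability altogether.
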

\begin{proof}
    For any $P\in \widehat{\MP}_B$ and any $w\in W$ we have that: $(\transpose{\Pi_{\widehat{\MP}_B}}\widehat{M}_{\widehat{V}_B,\star})_{P,w} = \sum_{v\in P} \widehat{M}_{v,w} = 0$, where the final equality follows from \cref{thm_bipolar_symmetric_equitable_sum}. Similarly, for any $Q\in \widehat{\MQ}_B$ and any $v\in V$ we have that $(\widehat{M}_{\star, \widehat{W}_B} \widetilde{\Pi_{\widehat{\MQ}_B}} = \frac{1}{Q}\sum_{w\in Q} \widehat{M}_{v,w} = 0$ holds by \cref{thm_bipolar_symmetric_equitable_sum}.
\end{proof}

\begin{subequations}
\label{lp_extended_reduced}
\begin{align}
    \min\,\transpose{(\transpose{\widetilde{\Pi_{\MQ_u}}} \Lambda c_{W_U})} \widehat{y}_{\widehat{\MQ}^\lambda_U} - \transpose{(\transpose{\widetilde{\Pi_{\MQ_u}}} \Lambda c_{W_U})} \widehat{y}_{\widehat{\MQ}^{-\lambda}_U} & \label{lp_extended_reduced_obj} \\
\begin{bmatrix}
    \transpose{\Pi_{\MP_U}} \Gamma A^1 \Lambda \widetilde{\Pi_{\MQ_U}}& -\transpose{\Pi_{\MP_U}} \Gamma A^1 \Lambda \widetilde{\Pi_{\MQ_U}}&\zerovec
    \\
    -\transpose{\Pi_{\MP_U}}\Gamma A^1 \Lambda \widetilde{\Pi_{\MQ_U}} & \transpose{\Pi_{\MP_U}}\Gamma A^1 \Lambda \widetilde{\Pi_{\MQ_U}}& \zerovec
    \\
    \zerovec & \zerovec & \zerovec 
\end{bmatrix}
\begin{bmatrix}
    \widehat{y}_{\widehat{\MQ}^\lambda_U}\\
    \widehat{y}_{\widehat{\MQ}^{-\lambda}_U}\\
    \widehat{y}_{\widehat{\MQ}_B}
\end{bmatrix} &= \begin{bmatrix}
    \transpose{\Pi_{\MP_U}} \Gamma b_{W_U}\\
    -\transpose{\Pi_{\MP_U}} \Gamma b_{W_U}\\
    \zerovec
\end{bmatrix}\label{lp_extended_reduced_eqs}\\
\zerovec \leq \widehat{y}_{\widehat{\MQ}^\lambda_U} &\leq \transpose{\Pi_{\MQ_U}}\mathrm{bs}(\ell,u,\lambda)_{W_U} \\
\zerovec \leq \widehat{y}_{\widehat{\MQ}^{-\lambda}_U} &\leq \transpose{\Pi_{\MQ_U}}\mathrm{bs}(\ell,u,-\lambda)_{W_U} \label{lp_extended_reduced_unipolar_bounds} \\
0 \leq \widehat{y}_{\widehat{\MQ}_B} &\leq 2 \transpose{\Pi_{\MQ_B}} u_{W_B} \label{lp_extended_reduced_bipolar_bounds}
\end{align}    
\end{subequations}

There are a few things to note about the system~\eqref{lp_extended_reduced}. First of all, the rows corresponding to the bipolar rows clearly become redundant, and the columns $\widehat{y}_{\widehat{\MQ}_B}$ corresponding to the bipolar variables become independent of the other variables. Indeed,~\eqref{lp_extended_reduced_obj}-\eqref{lp_extended_reduced_unipolar_bounds} and~\eqref{lp_extended_reduced_bipolar_bounds} are the Cartesian product of two polyhedra.
Since $\widehat{y}_{\widehat{\MQ}_B}$ does not appear in the objective, we may arbitrarily fix $\widehat{y}_{\widehat{\MQ}_B} = 0$. Furthermore, the system~\eqref{lp_extended_reduced_obj}-\eqref{lp_extended_reduced_unipolar_bounds} still has the redundancies introduced by the split reformulation of $F(A,b,\ell,u,c)$. 
In fact, the linear program on the variables $\widehat{\MQ}^\lambda_U \cup \widehat{\MQ}^{-\lambda}_U$ in~\eqref{lp_extended_reduced_obj}-\eqref{lp_extended_reduced_unipolar_bounds} is exactly the split reformulation of another linear program!
One can verify that~\eqref{lp_extended_reduced_obj}-\eqref{lp_extended_reduced_unipolar_bounds} is equivalent to split reformulation $G_S$ of the following linear program:
\begin{equation*}
    G\coloneqq F(\transpose{\Pi_{\MP_U}} \Gamma A^1 \Lambda \widetilde{\Pi_{\MQ_U}},\transpose{\Pi_{\MP_U}} \Gamma b_{W_U},\transpose{\Pi_{\MQ_U}}\mathrm{bs}(\ell,u,\lambda)_{W_U}, - \transpose{\Pi_{\MQ_U}}\mathrm{bs}(\ell,u,-\lambda)_{W_U}, \transpose{\widetilde{\Pi_{\MQ_U}}} \Lambda c_{W_U}).
\end{equation*} 

\begin{proposition}
    \label{thm_split_reduce_equiv_reduce_split}
    Let $(\widehat{\MP},\widehat{\MQ})$ be a symmetric equitable partition of $F_S(A,b,\ell,u,c)$, and let $\lambda,\gamma,\MP$ and $\MQ$ be generated by $(\widehat{\MP},\widehat{\MQ})$. Then the reduced linear program can be defined as: 
    \begin{equation*}
        R^{\widehat{\MP},\widehat{\MQ}}(F_S(A,b,\ell,u,c)) \coloneqq G_S \times \{ \widehat{y}_{\widehat{\MQ}_B}\in \R^{\widehat{\MQ}_B} \mid \zerovec \leq \widehat{y}_{\widehat{\MQ}_B} \leq 2\transpose{\Pi_{\MQ_B}} u_{W_B}\}.
    \end{equation*}
\end{proposition}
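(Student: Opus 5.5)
The proof amounts to computing $R^{\widehat{\MP},\widehat{\MQ}}(F_S(A,b,\ell,u,c))$ from \cref{thm_lp_folding} block by block in the reordered form \eqref{lp_centered_extended_reordered}. We then check that the result is exactly \eqref{lp_extended_reduced_obj}--\eqref{lp_extended_reduced_bipolar_bounds}, and identify it with $G_S$ times a box.

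\textbf{Step 1: reorder.} Permuting rows and columns and using $\Gamma$, $\Lambda$ does not affect equitability or the reduction. So we may work with \eqref{lp_centered_extended_reordered}. By \cref{thm_unipolar_original_mapping}, the partition matrices are block diagonal:
\begin{itemize}
\item $\Pi_{\widehat{\MP}} = \diag(\Pi_{\MP_U},\Pi_{\MP_U},\Pi_{\widehat{\MP}_B})$;
\item $\widetilde{\Pi_{\widehat{\MQ}}} = \diag(\widetilde{\Pi_{\MQ_U}},\widetilde{\Pi_{\MQ_U}},\widetilde{\Pi_{\widehat{\MQ}_B}})$.
\end{itemize}
The identity $\Pi_{\widehat{\MP}^{\gamma}_U}=\Pi_{\widehat{\MP}^{-\gamma}_U}=\Pi_{\MP_U}$ holds once each signed part $P^{\gamma}$ is identified with the underlying $P\in\MP_U$. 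The same goes for $\MQ_U$.

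\textbf{Step 2: blockwise products.}
\begin{itemize}
\item The unipolar–unipolar blocks give $\pm\transpose{\Pi_{\MP_U}}\Gamma A^1\Lambda\widetilde{\Pi_{\MQ_U}}$ directly.
\item Every block containing $\widehat{M}^1$, $\widehat{M}^2$ or $\widehat{M}^3$ lies either in a bipolar column set multiplied on the right by $\widetilde{\Pi_{\widehat{\MQ}_B}}$, or in a bipolar row set multiplied on the left by $\transpose{\Pi_{\widehat{\MP}_B}}$. By \cref{thm_zero_sums_equitable}, each such block is zero.
\item For the right-hand side, \cref{thm_initial_partition_symmetric} gives $b_v=0$ on bipolar rows. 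Hence $\transpose{\Pi_{\widehat{\MP}}}\widehat{b}$ has the claimed form.
\item For the bounds, $\transpose{\Pi_{\widehat{\MQ}}}\widehat{u}$ gives $\transpose{\Pi_{\MQ_U}}\mathrm{bs}(\ell,u,\pm\lambda)$ on the unipolar parts. On a bipolar part $Q^{\onevec}\cup Q^{-\onevec}$ it gives $\sum_{w\in Q}(u_w-\ell_w)=2\sum_{w\in Q}u_w$, using $u_w=-\ell_w$ from \cref{thm_initial_partition_symmetric}.
\item For the objective, $\transpose{\widetilde{\Pi_{\widehat{\MQ}}}}\widehat{c}$ gives $\pm\transpose{\widetilde{\Pi_{\MQ_U}}}\Lambda c_{W_U}$, and gives $0$ on bipolar parts since $c_w=0$ there.
\end{itemize}
This reproduces \eqref{lp_extended_reduced}.

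\textbf{Step 3: product structure.} The bipolar columns are identically zero in the constraints and in the objective, and the bipolar rows read $0=0$. The feasible region therefore decomposes as a product of the unipolar system and the box on $\widehat{y}_{\widehat{\MQ}_B}$.

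\textbf{Step 4: identify the unipolar system with $G_S$.} Compare it with \cref{def_split_reform} applied to $G$:
\begin{itemize}
\item the constraint matrix is $\begin{bmatrix}A'&-A'\\-A'&A'\end{bmatrix}$ with $A'=\transpose{\Pi_{\MP_U}}\Gamma A^1\Lambda\widetilde{\Pi_{\MQ_U}}$;
\item the right-hand side is $(b',-b')$;
\item the objective is $(c',-c')$;
\item the upper bounds are $u_G$ for $x^+$ and $-\ell_G$ for $x^-$. Here $u_G=\transpose{\Pi_{\MQ_U}}\mathrm{bs}(\ell,u,\lambda)$ and $\ell_G=-\transpose{\Pi_{\MQ_U}}\mathrm{bs}(\ell,u,-\lambda)$, so $-\ell_G$ equals the stated bound.
\end{itemize}
We also need $\ell_G\le 0\le u_G$ so that $G_S$ is defined. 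This follows from $\ell\le0\le u$.

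\textbf{Main obstacle.} The main work is the bookkeeping in Step 1: the identification of signed parts with unsigned ones. We must ensure that choosing representatives $\gamma$ and $\lambda$ consistently makes the blocks $\widehat{M}_{P^{\gamma},Q^{\lambda}}$ equal to $(\Gamma A\Lambda)_{P,Q}$. It also requires consistency between the indexing $b_{W_U}$ (really $b_{V_U}$) and the $\Gamma$ reordering. I would handle this by checking a single entry $\widehat{M}_{v^{\gamma_v},w^{\lambda_w}}=\gamma_v\lambda_wA_{v,w}$ directly from \cref{def_split_reform}.
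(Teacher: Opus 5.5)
Your proposal is correct and follows the paper's proof: reorder into \eqref{lp_centered_extended_reordered}, kill every block involving $\widehat{M}^1,\widehat{M}^2,\widehat{M}^3$ via \cref{thm_zero_sums_equitable}, use the zero right-hand side, zero objective and $u_w=-\ell_w$ on bipolar parts, and split off the box after discarding the all-zero rows. You are only more explicit about the bookkeeping: the entrywise identity $\widehat{M}_{v^{s},w^{t}}=st\,A_{v,w}$, the bipolar bound $\sum_{w\in Q}(u_w-\ell_w)=2\sum_{w\in Q}u_w$, and reading the bounds of $G$ as in \eqref{lp_original_refreduced}, which quietly fixes the swapped $\ell/u$ arguments in the paper's displayed definition of $G$.
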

\begin{proof}
    By reordering the variables and rows of $F_S(A,b,\ell,u,c)$ according to the signs $\lambda$ and $\gamma$ for the unipolar variables we can obtain the form~\eqref{lp_centered_extended_reordered}, where we implicitly use~\cref{thm_initial_partition_symmetric} to show that bipolar rows have zero right-hand side and bipolar variables have zero objective and identical upper and lower variable bounds.
    
    Then, the reduced form $R^{\widehat{\MP},\widehat{\MQ}}(F_S(A,b,\ell,u,c))$ is given by~\eqref{lp_extended_reduced}, where we use \cref{thm_zero_sums_equitable} to show that the bipolar rows and variables have only zero coefficients. Then, note that the feasible region of the linear program~\eqref{lp_extended_reduced} is exactly the Cartesian product of $G_S$ and $\{ \widehat{y}_{\widehat{\MQ}_B}\in \R^{\widehat{\MQ}_B} \mid \zerovec \leq \widehat{y}_{\widehat{\MQ}_B} \leq 2\transpose{\Pi_{\MQ_B}} u_{W_B}\}$ on the unipolar and bipolar parts, respectively, where we use that the all-zero rows with right-hand side zero in~\eqref{lp_extended_reduced_eqs} are redundant.
\end{proof}

Note that the linear program $G$ is obtained from $F(A,b,\ell,u,c)$ by taking the linear program induced by $V_U$ and $W_U$, scaling the rows and columns with $\Gamma$ and $\Lambda$, applying the equitable partition $(\MP_U,\MQ_U)$ and then computing the split reformulation. Thus, application of the symmetric equitable partition to the split reformulation can also be viewed as taking the unipolar rows and variables in the original linear program, scaling the rows and variables by $\pm 1$ factors, computing an equitable partition and then applying the split reformulation. The final step that computes the split reformulation is not helpful computationally, as $G$ has less variables than $G_S$ and is equivalent to $G_S$. Thus, we wish to reduce the linear programs into the form of $G$. We can obtain $G$ from~\eqref{lp_extended_reduced} by reversing the split reformulation as described in~\cref{thm_splitting_equivalent}. We use $y_{\MQ_U} \coloneqq \widehat{y}_{\widehat{\MQ}^\lambda_U} - \widehat{y}_{\widehat{\MQ}^{-\lambda}_U}$ to define the variables of $G$. Then, $G$ is given by the following linear program.
\begin{subequations}
\label{lp_original_refreduced}
\begin{align}
    \min \, \transpose{(\transpose{\widetilde{\Pi_{\MQ_u}}} \Lambda c_{W_U})} y_{\MQ_U} & \\
    \transpose{\Pi_{\MP_U}} \Gamma A_{V_U,W_U} \Lambda \widetilde{\Pi_{\MQ_U}} y_{\MQ_U} &= \transpose{\Pi_{\MP_U}} \Gamma b_{W_U}\\
-\transpose{\Pi_{\MQ_U}}\mathrm{bs}(\ell,u,-\lambda)_{W_U} \leq y_{\MQ_U} &\leq \transpose{\Pi_{\MQ_U}}\mathrm{bs}(\ell,u,\lambda)_{W_U} 
\end{align}    
\end{subequations}

Note that $G$ is obtained only through transformations that preserve the feasibility and optimality of the linear program. In particular, both the split reformulation and the dimension reduction via color refinement preserve feasibility and optimality.
Thus, we can reduce any linear program $F(A,b,\ell,u,c)$ to the form of $G$ in~\eqref{lp_original_refreduced} through a series of reformulations. \\

One small assumption in our analysis was that we required $\ell \leq 0$ and $u_w \geq 0$ to hold so that the split reformulations are well-defined. For any feasible linear program $\ell \leq u$ must hold, and it is then possible to choose $\delta \in \R^W$ such that $\ell \leq \delta \leq u$ holds. Then, one can apply an affine transformation $x = x' + \delta$ to the linear program $F(A,b,\ell,u,c)$ . By doing so, one obtains the equivalent linear program $F(A,b-A\delta, \ell - \delta, u - \delta,c)$, where the objective function has an additional constant offset of $\transpose{c} \delta$. Clearly, $\ell - \delta \leq 0$ and $u-\delta \geq 0$ hold, so the split reformulation is well defined.

In \cref{def_refeqpart}, we formulate the concept of a \emph{reflection reduction}, which represents the pair of linear programs given by $F(A,b,\ell,u,c)$ and its reduced form obtained by computing a symmetric equitable partition of $F_S(A,b-A\delta, \ell - \delta, u - \delta,c)$ for any $\ell \leq \delta \leq u$. In \cref{thm_refl_reduction}, we show that the symmetric equitable partitions of $F_S(A,b-A\delta, \ell - \delta, u - \delta,c)$ imply the existence of a reflection reduction.\\

In order to simplify the notation,  we
define $\mathrm{comp}(\lambda, \delta, \ell,u)\in \R^W$ to be a vector such that $\mathrm{comp}(\lambda, \delta, \ell,u)_w \coloneqq \begin{cases} 
\ell_w - \delta_w \text{ if } \lambda_w = 1\\
\delta_w - u_w \text{ if } \lambda_w = -1
\end{cases}$ holds for all $w\in W$, where we define that $\infty + k = \infty $ for all finite $k\in \R$ and similarly that $-\infty + k = -\infty$. The vector $\mathrm{comp}(\lambda, \delta, \ell,u)$ corresponds to the complemented variable lower bounds. Note that swapping $\ell$ and $u$ immediately also gives the complemented variable upper bounds using $\mathrm{comp}(\lambda, \delta, u, \ell)$.
 For the variable bound pair $(\ell,u)$, we will primarily use the shorthand notation $\ell^{\delta,\lambda} \coloneqq \mathrm{comp}(\lambda, \delta, \ell, u)$ and $u^{\delta,\lambda} \coloneqq \mathrm{comp}(\lambda, \delta, u, \ell)$. One can verify that $\ell^{\delta,\lambda} = -\mathrm{bs}(\ell-\delta,u-\delta,-\lambda)$ and $u^{\delta,\lambda} = \mathrm{bs}(\ell-\delta,u-\delta,\lambda)$ hold.

\begin{definition}[Reflection reduction]
    \label{def_refeqpart}
    Consider the linear program $F(A,b,\ell, u, c)$ with $A\in \R^{V\times W}$, $b\in \R^{V}$, $c\in \R^{W}$ and vectors $\ell, u$ with $\ell_w \in \R \cup \{-\infty\}$ and $u_w\in \R \cup \{\infty\} $ such that $\ell_w \leq u_w$ holds for all $w\in W$.
    Let $V_U\subseteq V$ and $W_U\subseteq W$, and define $W_B\coloneqq W\setminus W_U$.
    Consider a partition $(\MP,\MQ)$ where $\MP$ is a partition of $V_U$ and $\MQ$ is a partition of $W_U$, and let $\gamma\in \{-1,1\}^{V_U}$, $\lambda\in \{-1,1\}^{W_u}$ and let $\delta \in \R^W$ be such that $\ell \leq \delta \leq u$ holds.
    Then, define $A'\coloneqq \transpose{\Pi_{\MP}}\Gamma A_{V_U,W_U}\Lambda \widetilde{\Pi_{\MQ}}$, $b'\coloneqq \transpose{\Pi_{\MP}}\Gamma(b- A\delta)_{V_U}$, $\ell' \coloneqq \transpose{\Pi_{\MQ}} \ell^{\lambda,\delta}_{W_U}$, $u'\coloneqq  \transpose{\Pi_{\MQ}} u^{\lambda,\delta}_{W_U}$ and $c'\coloneqq \transpose{\widetilde{\Pi_\MQ}} \Lambda c_{W_U}$.
    We say that $(\MP,\MQ,\gamma,\lambda, \delta, V_U, W_U)$ is a \emph{reflection reduction} of $F(A,b,\ell,u,c)$ if the following hold.
    \begin{enumerate}[label=(\roman*)]
        \item \label{def_refeqpart_to_red} For every $x\in P(A,b,\ell,u)$, $y\coloneqq \transpose{\Pi_\MQ} \Lambda (x_{W_U}-\delta_{W_U})$ is feasible for $P(A',b',\ell',u')$ and $\transpose{c} x = \transpose{c'} y + \transpose{c} \delta$ holds.
        \item \label{def_refeqpart_from_red} For every $y\in P(A',b',\ell',u')$,  $x \coloneqq ( \Lambda \widetilde{\Pi_\MQ} y + \delta_{W_U},\delta_{W_B})$ is feasible for $P(A,b,\ell,u)$ and $\transpose{c'} y + \transpose{c} \delta = \transpose{c} x$ holds.
    \end{enumerate}
\end{definition}

The main use of reflection reductions is that they show that one can equivalently solve the linear program $F(A',b',\ell',u',c')$ which has reduced dimension compared to the original linear program $F(A,b,\ell,u,c)$. Then, our main result shows that symmetric equitable partitions imply reflection reductions. Its proof relies strongly on the equivalent linear programming relations defined by the split reformulation and the DRCR reduction.

\begin{theorem}
    \label{thm_refl_reduction}
     Consider the linear program $F(A,b,\ell, u, c)$ with $A\in \R^{V\times W}$, $b\in \R^{V}$, $c\in \R^{W}$ and vectors $\ell, u$ with $\ell_w \in \R \cup \{-\infty\}$ and $u_w\in \R \cup \{\infty\} $ such that $\ell_w \leq u_w$ holds for all $w\in W$, and let $\delta \in \R^W$ be given such that $\ell \leq \delta \leq u$ holds.
     Let $(\widehat{\MP},\widehat{\MQ})$ be a symmetric equitable partition of $F_S(A,b-A\delta, \ell - \delta, u - \delta,c)$, and let $\gamma \in \{-1,1\}^{V_U}$, $\lambda \in\{-1,1\}^{W_U}$, $\MP$ and $\MQ$ be generated by $(\widehat{\MP},\widehat{\MQ})$.
    Then, $(\MP,\MQ,\gamma,\lambda,\delta, V_U, W_U)$ is a reflection reduction of $F(A,b,\ell,u,c)$.
\end{theorem}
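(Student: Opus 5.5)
The proof composes equivalences already established in the paper. Each direction of \cref{def_refeqpart} is traced through a chain of four maps: the affine shift (\cref{thm_affine_equivalent}), the split reformulation (\cref{thm_splitting_equivalent}), DRCR on the split reformulation (\cref{thm_lp_folding} with \cref{thm_split_reduce_equiv_reduce_split}), and the reversal of the split reformulation of $G$ (again \cref{thm_splitting_equivalent}). Write $\bar b \coloneqq b - A\delta$, $\bar\ell \coloneqq \ell-\delta$ and $\bar u \coloneqq u - \delta$. Then $\bar\ell\le 0\le \bar u$, so $F_S(A,\bar b,\bar\ell,\bar u,c)$ is well defined. The reduced program $F(A',b',\ell',u',c')$ from \cref{def_refeqpart} coincides with $G$ in~\eqref{lp_original_refreduced} built from the shifted data, because $u^{\delta,\lambda}=\mathrm{bs}(\bar\ell,\bar u,\lambda)$ and $\ell^{\delta,\lambda}=-\mathrm{bs}(\bar\ell,\bar u,-\lambda)$. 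It then suffices to show that the composite maps agree with the explicit formulas $y=\transpose{\Pi_\MQ}\Lambda(x_{W_U}-\delta_{W_U})$ and $x=(\Lambda\widetilde{\Pi_\MQ}y+\delta_{W_U},\delta_{W_B})$, and to track the objective offset $\transpose{c}\delta$.

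For direction~\ref{def_refeqpart_to_red}, take $x\in P(A,b,\ell,u)$ and set $x'=x-\delta$.
\begin{enumerate}[label=(\alph*)]
\item Map $x'$ to $(x'^+,x'^-)=(\max(x',\zerovec),\max(-x',\zerovec))$ in the split reformulation.
\item Apply \cref{thm_lp_folding} to get $\widehat y=\transpose{\Pi_{\widehat\MQ}}(x'^+,x'^-)$. By the block structure in \cref{thm_split_reduce_equiv_reduce_split}, its unipolar components are $\widehat y_{\widehat\MQ^{\pm\lambda}_U}$. These are the sums over $Q^{\pm\lambda}$, where $w^{\lambda_w}$ picks $x'^+_w$ if $\lambda_w=1$ and $x'^-_w$ if $\lambda_w=-1$.
\item Set $y=\widehat y_{\widehat\MQ^\lambda_U}-\widehat y_{\widehat\MQ^{-\lambda}_U}$. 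This gives $y_Q=\sum_{w\in Q}\lambda_w(x'^+_w-x'^-_w)=(\transpose{\Pi_\MQ}\Lambda x'_{W_U})_Q$, which is the required formula.
\item Feasibility of $y$ in $G$ follows from \cref{thm_splitting_equivalent}\ref{thm_splitting_equivalent_from_red}, applied to $G_S$, since the unipolar projection of $\widehat y$ lies in $G_S$. The bipolar coordinates live in a separate Cartesian factor and can be discarded.
\item The objective values agree along each step; the shift contributes $+\transpose{c}\delta$. The bipolar variables contribute nothing, since $c_w=0$ on $W_B$ by \cref{thm_initial_partition_symmetric}.
\end{enumerate}

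For direction~\ref{def_refeqpart_from_red}, take $y$ feasible for $G$ and run the chain backwards.
\begin{enumerate}[label=(\alph*)]
\item Split $y$ into $(\max(y,\zerovec),\max(-y,\zerovec))\in G_S$.
\item Append $\widehat y_{\widehat\MQ_B}=\zerovec$. This gives a point of $R^{\widehat\MP,\widehat\MQ}(F_S)$ by \cref{thm_split_reduce_equiv_reduce_split}, using that the bipolar box contains $\zerovec$.
\item Lift by $\widetilde{\Pi_{\widehat\MQ}}$ using \cref{thm_lp_folding}\ref{thm_lp_folding_from_red}. This gives a feasible split point $(x'^+,x'^-)$ that is zero on $\widehat W_B$.
\item Recombine $x'=x'^+-x'^-$ and shift back by $\delta$.
\end{enumerate}
On $W_B$ this yields $x_{W_B}=\delta_{W_B}$. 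On $w\in Q\in\MQ$, the value $x'_w$ equals $\frac{1}{|Q|}$ times the $\lambda$-side part minus the $-\lambda$-side part, with sign $\lambda_w$. This is $\lambda_w y_Q/|Q|=(\Lambda\widetilde{\Pi_\MQ}y)_w$, as required.

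The main obstacle is the sign bookkeeping in step (b) of the first direction and step (c) of the second direction. Concretely, one must check that the identifications $\Pi_{\widehat\MQ^{\lambda}_U}=\Pi_{\widehat\MQ^{-\lambda}_U}=\Pi_{\MQ_U}$, together with the reordering by $\Lambda$ and $\Gamma$, make the folded and unsplit maps equal to $\transpose{\Pi_\MQ}\Lambda$ and $\Lambda\widetilde{\Pi_\MQ}$. One must also check that the bounds transform correctly through $\mathrm{bs}$, since $\mathrm{bs}(\bar\ell,\bar u,\lambda)$ is the upper bound of the variable selected by $w^{\lambda_w}$. I would verify both at the level of a single coordinate $w\in Q$, splitting into the cases $\lambda_w=\pm1$. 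A secondary point is that \cref{thm_split_reduce_equiv_reduce_split} gives equality of feasible regions only up to the redundant zero rows, and that the bipolar rows carry right-hand side $0$ (\cref{thm_initial_partition_symmetric} and \cref{thm_zero_sums_equitable}). Because of this, dropping $V_B$ from $b'$ loses nothing.
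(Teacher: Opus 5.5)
Your proposal is correct and follows essentially the same route as the paper's proof. Both directions are a chain through \cref{thm_affine_equivalent}, \cref{thm_splitting_equivalent}, \cref{thm_lp_folding} and \cref{thm_split_reduce_equiv_reduce_split}, with the bipolar block treated as a separate Cartesian factor (set to $\zerovec$ when lifting), and the explicit formulas $\transpose{\Pi_\MQ}\Lambda(x_{W_U}-\delta_{W_U})$ and $\Lambda\widetilde{\Pi_\MQ}y$ are recovered by the same coordinatewise case split on $\lambda_w=\pm1$.
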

\begin{proof}
    We use $\Lambda \coloneqq \diag(\lambda)$, $\Gamma \coloneqq \diag(\gamma)$, $A'\coloneqq \transpose{\Pi_{\MP}}\Gamma A_{V_U,W_U}\Lambda \widetilde{\Pi_{\MQ}}$, $b'\coloneqq \transpose{\Pi_{\MP}}\Gamma(b- A\delta)_{V_U}$, $\ell' \coloneqq \transpose{\Pi_{\MQ}} \ell^{\lambda,\delta}_{W_U}$, $u'\coloneqq  \transpose{\Pi_{\MQ}} u^{\lambda,\delta}_{W_U}$ and $c'\coloneqq \transpose{\widetilde{\Pi_\MQ}} \Lambda c_{W_U}$ as in \cref{def_refeqpart}.
    
    First, let us prove \cref{def_refeqpart}\ref{def_refeqpart_to_red}. Let $x$ be a feasible solution for $F(A,b,\ell,u,c)$. Then, we have the following series of linear programs with feasible solutions:
    \begin{align*}
     & x \text{ is feasible for } F(A,b,\ell,u,c)\\ \stackrel{\textup{(a)}}\implies & x' \coloneqq x-\delta  \text{ is feasible for $F(A,b-A\delta, \ell - \delta, u - \delta,c)$} \\\stackrel{\textup{(b)}}\implies & \widehat{x} \coloneqq (\max(x',\zerovec),\max(-x',\zerovec)) \text{ is feasible for }F_S(A,b-A\delta, \ell - \delta, u - \delta,c) \\ \stackrel{\textup{(c)}}\implies & \widehat{y} \coloneqq \transpose{\Pi_{\widehat{\MQ}}} \widehat{x}\text{ is feasible for } R^{\widehat{\MP},\widehat{\MQ}}(F_S(A,b-A\delta, \ell - \delta, u - \delta,c))\\ \stackrel{\textup{(d)}}\implies & (\widehat{y}_{\widehat{\MQ}_U},\widehat{y}_{\widehat{\MQ}_B})\text{ is feasible for } P_S(A',b',\ell',u') \times \{\widehat{y}_{\widehat{\MQ}_B} \in \R^{\widehat{\MQ}_B} \mid \zerovec \leq \widehat{y}_{\widehat{\MQ}_B} \leq 2 \transpose{\Pi_{\MQ_B}} (u - \delta)_{W_B}\}\\
     \stackrel{\textup{(e)}}\implies & y_{\MQ_U} \coloneqq \widehat{y}_{\widehat{\MQ}^{\lambda}_U} - \widehat{y}_{\widehat{\MQ}^{-\lambda}_U} \text{ is feasible for } F(A',b',\ell',u',c')
    \end{align*}
    where \textup{(a)} a holds by \cref{thm_affine_equivalent}, \textup{(b)} holds by \cref{thm_splitting_equivalent}\ref{thm_splitting_equivalent_to_red} and \textup{(c)} holds by \cref{thm_lp_folding}\ref{thm_lp_folding_to_red}. The implication in \textup{(d)} holds by \cref{thm_split_reduce_equiv_reduce_split}, and 
    \textup{(e)} follows from \cref{thm_splitting_equivalent} and since the polyhedron for the bipolar variables is related only by the Cartesian product.
    For $Q\in \MQ_U$, consider that 
    \begin{equation*}
     y_Q = (\widehat{y}_{\MQ^\lambda_U})_Q - (\widehat{y}_{\MQ^{-\lambda}_U})_Q = \sum_{w\in Q} \widehat{x}_{w^\lambda} - \widehat{x}_{w^{-\lambda}} \stackrel{\textup{(f)}}= \sum_{w\in Q} \lambda_w x'_w,  
    \end{equation*}
    where $\textup{(f)}$ follows since 
    \begin{equation*}
       \widehat{x}_{w^\lambda} - \widehat{x}_{w^{-\lambda}} = \begin{cases}
        \max(x'_w,0)-\max(-x'_w,0)\text{ if $\lambda_w = 1$}\\
        \max(-x'_w,0)-\max(x'_w,0)\text{ if $\lambda_w=-1$}
    \end{cases} = \begin{cases}
        x'_w \text{ if $\lambda_w = 1$}\\
        -x'_w \text{ if $\lambda_w = 1$}
    \end{cases} = \lambda_w x'_w. 
    \end{equation*} Thus, $y_{\MQ_U} = \transpose{\Pi_{\MQ_U}} \Lambda x'_{W_U} = \transpose{\Pi_{\MQ_U}} \Lambda (x_{W_U}-\delta_{W_U})$ is feasible for $F(A',b',\ell',u',c')$, which establishes the feasibility of \cref{def_refeqpart}\ref{def_refeqpart_to_red} for $(\MP,\MQ,\gamma, \lambda, \delta,V_U,W_U)$. \\For the objective, we have
    \begin{align*}
        \transpose{c} x - \transpose{c} \delta &\stackrel{\textup{(g)}}= \transpose{c} x' \stackrel{\textup{(h)}}=
        \transpose{c} (\widehat{x}_{W_U}^+ - \widehat{x}_{W_U}^-) \stackrel{\textup{(i)}}= \transpose{c'}(\widehat{y}_{\widehat{\MQ}^{\lambda}_U} - \widehat{y}_{\widehat{\MQ}^{-\lambda}_U}) \stackrel{\textup{(j)}}= \transpose{c'}y
    \end{align*}
    where \textup{(g)} follows from \cref{thm_affine_equivalent}, \textup{(h)} follows from \cref{thm_splitting_equivalent}\ref{thm_splitting_equivalent_to_red} and the bipolar variables having zero objective by \cref{thm_initial_partition_symmetric}, \textup{(i)} follows from \cref{thm_lp_folding} and the reordering of the variables described in \cref{thm_split_reduce_equiv_reduce_split} and \textup{(j)} follows from \cref{thm_splitting_equivalent}\ref{thm_splitting_equivalent_from_red}. This establishes the objective in \cref{def_refeqpart}\ref{def_refeqpart_to_red}.

    To show \cref{def_refeqpart}\ref{def_refeqpart_from_red}, let $y \in P(A',b',\ell',u)$ be a feasible solution. Then, we have the following series of linear programs with feasible solutions.
    \begin{align*}
        &y\text{ is feasible for }F(A',b',\ell',u',c')\\
        \stackrel{\textup{(k)}}\implies & \widehat{y}_{\widehat{\MQ}_U} = (\widehat{y}_{\widehat{\MQ}^\lambda_U},\widehat{y}_{\widehat{\MQ}^{-\lambda}_U}) \coloneqq (\max(y_{\MQ_U},\zerovec),\max(-y_{\MQ_U},\zerovec))\text{ is feasible for } F_S(A',b',\ell',u',c')\\
        \stackrel{\textup{(l)}}{\implies} & \widehat{y} \coloneqq (\widehat{y}_{\widehat{\MQ}_U},\zerovec_{\widehat{\MQ}_B})\text{ is feasible for } P_S(A',b',\ell',u') \times \{\widehat{y}_{\widehat{\MQ}_B} \in \R^{\widehat{\MQ}_B} \mid \zerovec \leq \widehat{y}_{\widehat{\MQ}_B} \leq 2 \transpose{\Pi_{\MQ_B}} (u - \delta)_{W_B}\}\\
        \stackrel{\textup{(m)}}{\implies} & \widehat{y}\text{ is feasible for } R^{\widehat{\MP},\widehat{\MQ}}(F_S(A,b-A\delta, \ell - \delta, u - \delta,c)) \\
        \stackrel{\textup{(n)}}\implies&
        \widehat{x} \coloneqq \widetilde{\Pi_{\widehat{\MQ}}} \widehat{y} \text{ is feasible for $F_S(A,b-A\delta, \ell - \delta, u - \delta,c)$}\\
        \stackrel{\textup{(o)}} \implies & x'\coloneqq \widehat{x}^+ - \widehat{x}^-\text{ is feasible for } F(A,b-A\delta, \ell-\delta, u-\delta,c)\\
        \stackrel{\textup{(p)}} \implies & x \coloneqq x' + \delta \text{ is feasible for $F(A,b,\ell,u,c)$}
    \end{align*}

    Here, \textup{(k)} follows from \cref{thm_splitting_equivalent}\ref{thm_splitting_equivalent_to_red} and \textup{(l)} holds since taking the Cartesian product preserves feasibility and since $\zerovec$ is clearly feasible. Moreover, \textup{(m)} follows from \cref{thm_split_reduce_equiv_reduce_split}, \textup{(n)} follows from \cref{thm_lp_folding}\ref{thm_lp_folding_from_red}, \textup{(o)} follows from \cref{thm_splitting_equivalent}\ref{thm_splitting_equivalent_from_red} and \textup{(p)} follows from \cref{thm_affine_equivalent}.    

    Next, let us express $y$ in terms of $x$. If $w\in W_U$ holds, then we have the following for $Q\in \MQ_U$ such that $w\in Q$
    \begin{equation*}
        x'_w = \widehat{x}_{w^+} - \widehat{x}_{w^-} = (\widetilde{\Pi_{\widehat{\MQ}}} \widehat{y})_{w^+} - (\widetilde{\Pi_{\widehat{\MQ}}} \widehat{y})_{w^+} \stackrel{\textup{(q)}}{=} \frac{\lambda_w}{|Q|} (\widehat{y}_{Q^\lambda} - \widehat{y}_{Q^{-\lambda}}) = \frac{\lambda_w}{|Q|} y_Q = (\Lambda \widetilde{\Pi_{\MQ_U}} y)_w  
    \end{equation*}
    where \textup{(q)} holds because we have by the reordering of the variables that
    \begin{equation*}
        (\widetilde{\Pi_{\widehat{\MQ}}} \widehat{y})_{w^+} - (\widetilde{\Pi_{\widehat{\MQ}}} \widehat{y})_{w^-} = \begin{cases}
            \frac{1}{|Q|} (\widehat{y}_{Q^\lambda} - \widehat{y}_{Q^{-\lambda}}) &\text{ if $\lambda_w = 1$}\\
            \frac{1}{|Q|} (\widehat{y}_{Q^{-\lambda}} - \widehat{y}_{Q^{\lambda}}) &\text{ if $\lambda_w = -1$}
        \end{cases} = \frac{\lambda_w}{|Q|} (\widehat{y}_{Q^\lambda} - \widehat{y}_{Q^{-\lambda}}).
    \end{equation*}
    For $w\in W_B$, then for $Q \in \widehat{\MQ_B}$ such that $w\in Q$ we have the following:
    \begin{equation*}
        x'_w = \widehat{x}_{w^+} - \widehat{x}_{w^-} = (\widetilde{\Pi_{\widehat{\MQ}}} \widehat{y})_{w^+} - (\widetilde{\Pi_{\widehat{\MQ}}} \widehat{y})_{w^-} = \frac{1}{|Q|}(\widehat{y}_{Q} - \widehat{y}_{Q}) = 0.
    \end{equation*}
    Note that this equation does not depend on the value of $\widehat{y}_Q$, so in fact the choice to assign $y_{\MQ_B} = \zerovec$ is arbitrary as its values always cancel when they are transformed into $x'_{W_B}$.
    Then, we have that $x = x' + \delta = (\Lambda \widetilde{\Pi_{\MQ_U}} y,\zerovec) + \delta = (\Lambda \widetilde{\Pi_{\MQ_U}} y + \delta_{W_U},\delta_{W_B})$, which shows that the feasibility in \cref{def_refeqpart}\ref{def_refeqpart_from_red} is satisfied.
    For the objective, we have similar reductions as in the first part of the proof but in reverse order:
     \begin{align*}
        \transpose{c'}y \stackrel{\textup{(s)}}= \transpose{c'}(\widehat{y}_{\widehat{\MQ}^{\lambda}_U} - \widehat{y}_{\widehat{\MQ}^{-\lambda}_U}) \stackrel{\textup{(t)}}= \transpose{c'} x'_{W_U} \stackrel{\textup{(u)}}= \transpose{c} x' \stackrel{\textup{(v)}}=
        \transpose{c} x - \transpose{c} \delta,
    \end{align*}
    where \textup{(s)} follows from \cref{thm_splitting_equivalent}\ref{thm_splitting_equivalent_to_red}, \textup{(t)} holds by the relation between $\widehat{x}$ and $\widehat{y}$ that we explained above, \textup{(u)} holds since bipolar variables have zero objective by \cref{thm_initial_partition_symmetric} and \textup{(v)} holds by \cref{thm_affine_equivalent}. Then, the objective in \cref{def_refeqpart}\ref{def_refeqpart_from_red} is also satisfied.
    
    Thus, we have established that $(\MP, \MQ, \gamma, \lambda, \delta, V_U, W_U)$ satisfies \cref{def_refeqpart}, which shows that it is a reflection reduction of $F(A,b,\ell,u,c)$.
\end{proof}
\subsection{Choosing the affine offset}
\label{sec_choosing_affine_offset}
In order to apply reflection reductions to arbitrary linear programs, we had to introduce the affine offset $\delta$ to ensure that the lower bounds were nonpositive and the upper bounds were nonnegative. Given $\delta$, the linear transformation that is represented by the $\pm1$ scaling vectors $\lambda$ and $\gamma$ follows automatically from the symmetric equitable partition of the split reformulation. In this section, we argue how to choose $\delta$. In previous work, Hojny~\cite{Hojny2025-si} chooses $\delta$ to be the center of the variable domains such that $\delta_w \coloneqq \frac{\ell_w + u_w}{2}$. One issue is that such a center is not well-defined for variables with infinite lower or upper bounds. Similarly to Hojny, we use the offset $\delta^\mathcal{C}$ defined for all $w\in W$ as
\begin{equation*}
    \delta^\mathcal{C}_w \coloneqq \begin{cases}
        \frac{\ell_w + u_w}2 &\text{if $\ell_w$ and $u_w$ are finite.}\\
        0 &\text{if $\ell_w = -\infty$ and $u_w = \infty$}\\
        \ell_w &\text{if $\ell_w$ is finite and $u_w = \infty$.}\\
        u_w &\text{if $\ell_w = -\infty$ and $u_w$ is finite.}
    \end{cases}
\end{equation*}

Next, let us argue why $\delta^\mathcal{C}$ is a good choice for the offset. First of all, we note that for the most common case of variables $w$ with $\ell_w$ and $u_w$ finite, they can only belong to bipolar parts if $-(\ell_w - \delta_w) = u_w - \delta_w$ holds, which holds exactly if $\delta_w = \frac{\ell_w + u_w}2$ holds. Thus, variables with finite bounds can only be detected as bipolar variables if $\delta_w = \delta^\mathcal{C}_w = \frac{\ell_w + u_w}2$ holds. Note that in the case exactly one of $\ell_w$ and $u_w$ is finite, we can never detect bipolarity. If $\ell_w = -\infty$ and $u_w = \infty$ hold, then the choice of $\delta_w$ does not affect the transformed variable bounds and bipolarity may be detected in any case.\\

Second, recall that we assumed that the symmetric equitable partitions must be bound-respecting. For the linear program $F(A,b- A \delta,\ell- \delta,u- \delta,c)$, this condition can be stated as follows.
The partition $(\widehat{\MP},\widehat{\MQ})$ of the split reformulation must satisfy for all $Q\in \widehat{\MQ}$ and any $w_1^s,w_2^s\in Q$ that $\ell_{w_1} - \delta_{w_1} = \ell_{w_2} - \delta_{w_2}$ and $u_{w_1} - \delta_{w_1} = u_{w_2}- \delta_{w_2}$ hold if $s_1 = s_2$ holds, and if $s_1 \neq s_2$ holds that $\ell_{w_1} - \delta_{w_1} = -u_{w_2} + \delta_{w_2}$ and $u_{w_1} - \delta_{w_1} = - \ell_{w_2} + \delta_{w_2}$ hold. 
Next, we show that for $\delta = \delta^\mathcal{C}$ only the \emph{size} $u_w - \ell_w$ of the domain of $w$ matters.  
Assume for all $Q\in \MQ$ and any $w_1^s,w_2^s \in Q$ that $u_{w_1} - \ell_{w_1} = u_{w_2} - \ell_{w_2}$ holds. For the $s_1 = s_2$ case, we can use this assumption to show that: 
\begin{equation*}
    \ell_{w_1} - \delta^\mathcal{C}_{w_1} = \ell_{w_1} - \frac{\ell_{w_1} + u_{w_1}}{2} = 
    -\frac{u_{w_1} - \ell_{w_1}}{2} = \frac{u_{w_2} - \ell_{w_2}}{2} = \ell_{w_2} - \frac{\ell_{w_2} + u_{w_2}}{2} = \ell_{w_2} - \delta^\mathcal{C}_{w_2}
\end{equation*}
and similarly that
\begin{equation*}
    u_{w_1} - \delta^\mathcal{C}_{w_1} = u_{w_1} - \frac{\ell_{w_1} + u_{w_1}}{2} = 
    \frac{u_{w_1} - \ell_{w_1}}{2} = \frac{u_{w_2} - \ell_{w_2}}{2} = u_{w_2} - \frac{\ell_{w_2} + u_{w_2}}{2} = u_{w_2} - \delta^\mathcal{C}_{w_2}.
\end{equation*}
Also for the $s_1 \neq s_2$ case, we can use the assumption to show:
\begin{equation*}
\ell_{w_1} - \delta^\mathcal{C}_{w_1} =
-\frac{u_{w_1} - \ell_{w_1}}{2} = -\frac{u_{w_2} - \ell_{w_2}}{2} = - u_{w_2} + \frac{u_{w_2} + \ell_{w_2}}2 = -u_{w_2} + \delta^\mathcal{C}_{w_2}
\end{equation*}
and similarly:
\begin{equation*}
u_{w_1} - \delta^\mathcal{C}_{w_1} =
\frac{u_{w_1} - \ell_{w_1}}{2} = \frac{u_{w_2} - \ell_{w_2}}{2} = - \ell_{w_2} + \frac{u_{w_2} + \ell_{w_2}}2 = -\ell_{w_2} + \delta^\mathcal{C}_{w_2}.
\end{equation*}
Thus, the simpler condition $u_{w_1} - \ell_{w_1} = u_{w_2} - \ell_{w_2}$ is sufficient in the case where $\delta = \delta^{\mathcal{C}}$ holds to show that a partition is bound-respecting for finite $u_{w_1}$ and $\ell_{w_1}$. For infinite values, the bound-respecting condition is still necessary. In particular, the above bound argumentation does not prevent that free variables may be aggregated with variables with one infinite bound, as in both cases the domain size is infinite. Thus, the domain-respecting assumption is still necessary for linear programs with infinite variable bounds.\\

Although $\delta^{\mathcal{C}}$ is a natural choice, it does have a few problematic aspects. First of all, the product $A \delta^\mathcal{C}$ that appears in the right-hand side $b- A \delta^\mathcal{C}$ of the linear program after applying the transformation $x = x' + \delta$ can be problematic. If $A$ contains large nonzero-entries and/or $\delta^\mathcal{C}$ is large due to variables with large domains, then the product $A\delta^\mathcal{C}$ may result in large numerical values in the right-hand $b- A \delta^\mathcal{C}$ in the linear program obtained from the reflection reduction. This is undesirable, as it can lead to numerical problems that slow down the linear programming solver. Thus, we generally prefer offsets $\delta$ that are sparse and/or have small norms.\\

A second issue appears when we apply reflection reductions in the context of mixed-integer linear programs: the offset $\delta^{\mathcal{C}}$ is not necessarily integer. As a result, an integrality constraint $x_w \in \Z$ may be transformed into the form $x'_{w} + \delta^\mathcal{C}_w \in \Z$, which cannot be handled directly by MILP solvers if $\delta^\mathcal{C}_w$ is not integer.
Next, we formulate a result that helps to derive a family of reflection reductions for different $\delta$-values by detecting one for a single value of $\delta$. This result will be helpful to construct an offset $\delta$ that does not cause numerical issues and that also works well for mixed-integer linear programs.\\

\begin{theorem}
    \label{thm_delta_equivalent}
    Let $(\widehat{\MP},\widehat{\MQ})$ be a symmetric equitable partition of $F_S(A,b - A \delta, \ell - \delta, u - \delta, c)$ and let $\gamma$, $\lambda$, $\MP_U$ and $\MQ_U$ be generated by $(\widehat{\MP},\widehat{\MQ})$. Let $\nu \in \R^{W_U}$ be a vector that is equitably partitioned by $\MQ_U$ and define $\delta' \coloneqq (\delta_{W_U} + \Lambda \nu,\delta_{W_B})$ such that $\ell \leq \delta' \leq u$. Then $(\widehat{\MP},\widehat{\MQ})$ is a symmetric equitable partition of $F_S(A,b- A \delta', \ell - \delta', u - \delta', c)$.
\end{theorem}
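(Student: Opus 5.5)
The plan is to compare the two split reformulations $F_S(A,b-A\delta,\ell-\delta,u-\delta,c)$ and $F_S(A,b-A\delta',\ell-\delta',u-\delta',c)$ data item by data item. The constraint matrix $\widehat{M}$ and the objective $\widehat{c}=(c,-c)$ are identical in both, and the lower bounds of all split variables are zero. Moreover, symmetry in the sense of \cref{def_symmetric} is a purely combinatorial property of $(\widehat{\MP},\widehat{\MQ})$ that does not depend on the data. So it suffices to show three things: (a) $\widehat{\MP}$ is equitable for the new right-hand side $\widehat{b}'=(b-A\delta',-(b-A\delta'))$; (b) $\widehat{\MQ}$ is equitable for the new upper bounds $\widehat{u}'=(u-\delta',-(\ell-\delta'))$; and (c) bound-respectingness is preserved, if it is required.

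For (a), write $b-A\delta'=(b-A\delta)-A_{\star,W_U}\Lambda\nu$. Set $\widehat{\nu}\in\R^{\widehat{W}}$ with $\widehat{\nu}_{w^{\lambda_w}}=\nu_w$ and $\widehat{\nu}_{w^{-\lambda_w}}=0$ for $w\in W_U$, and $\widehat{\nu}=\zerovec$ on $\widehat{W}_B$. By the definition of $\widehat{M}$ we then get $\widehat{M}\widehat{\nu}=(A_{\star,W_U}\Lambda\nu,\,-A_{\star,W_U}\Lambda\nu)$, so $\widehat{b}'=\widehat{b}-\widehat{M}\widehat{\nu}$. The vector $\widehat{\nu}$ is constant on every part of $\widehat{\MQ}$. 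It equals $\nu_Q$ on $Q^\lambda$ because $\nu$ is equitable for $\MQ_U$, it equals $0$ on $Q^{-\lambda}$, and it equals $0$ on bipolar parts. Hence $\widehat{\nu}=\Pi_{\widehat{\MQ}}z$ for some $z$. For $v\in P\in\widehat{\MP}$ we have $(\widehat{M}\Pi_{\widehat{\MQ}}z)_v=\sum_{Q}z_Q\sum_{w\in Q}\widehat{M}_{v,w}$, and each inner row sum is constant on $P$ by \eqref{eq_folding_row_sums}. So $\widehat{M}\widehat{\nu}$ is equitably partitioned by $\widehat{\MP}$. Since $\widehat{b}$ already is, so is $\widehat{b}'$.

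For (b), take $w\in W_U$. The new upper bound of $w^+$ is $u_w-\delta_w-\lambda_w\nu_w$, and that of $w^-$ is $-\ell_w+\delta_w+\lambda_w\nu_w$. If $\lambda_w=1$, the bound of $w^\lambda=w^+$ shifts by $-\nu_w$ and that of $w^{-\lambda}=w^-$ shifts by $+\nu_w$. If $\lambda_w=-1$, a direct check gives the same pattern: the bound of $w^\lambda$ shifts by $-\nu_w$ and that of $w^{-\lambda}$ by $+\nu_w$. Since $\nu$ is constant on each $Q\in\MQ_U$, the shifts are constant on $Q^\lambda$ and on $Q^{-\lambda}$, so equitability is preserved on unipolar parts. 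Bipolar variables are untouched because $\delta'_{W_B}=\delta_{W_B}$. For (c), the same computation shows that two elements of a unipolar part with equal signs receive identical shifts. Elements of opposite signs cannot occur inside a unipolar part $Q^\lambda$ relative to $\lambda$. More precisely, the conditions in \cref{def_part_domain_respecting} compare $\ell_{w_1}-\delta'_{w_1}$ with $\ell_{w_2}-\delta'_{w_2}$ or with $-(u_{w_2}-\delta'_{w_2})$, and the shifts $\mp\lambda_w\nu_w$ match exactly according to the signs.

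I expect the main obstacle to be the sign bookkeeping in (b)/(c), namely checking carefully that $\Lambda\nu$ moves the $\lambda$-side bounds down and the $-\lambda$-side bounds up uniformly within each part, regardless of mixed signs of $\lambda$ inside $Q$. The key identity $\widehat{b}'=\widehat{b}-\widehat{M}\widehat{\nu}$ with $\widehat{\nu}\in\mathrm{im}\,\Pi_{\widehat{\MQ}}$ in (a) is the conceptual core. The hypothesis $\ell\le\delta'\le u$ guarantees that the new split reformulation is well defined.
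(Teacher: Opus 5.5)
Your proposal is correct and is essentially the paper's proof: it uses the same $\mp\nu$ shift of the upper bounds on the $\lambda$- and $-\lambda$-sides, and the same row-sum condition \eqref{eq_folding_row_sums} to show that the right-hand-side change $A_{\star,W_U}\Lambda\nu$ is equitably partitioned. The only difference is that your lift $\widehat{\nu}$ with $\widehat{b}'=\widehat{b}-\widehat{M}\widehat{\nu}$ handles unipolar and bipolar rows in one step, whereas the paper treats bipolar rows separately via \cref{thm_bipolar_symmetric_equitable_sum}; your part (c) is extra, since the statement does not require bound-respectingness.
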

\begin{proof}
    Clearly, $(\widehat{\MP},\widehat{\MQ})$ is also a symmetric partition of $F_S(A,b- A \delta', \ell - \delta', u - \delta', c)$. Since the objective and constraint matrix are unchanged compared to $F_S(A,b- A \delta, \ell - \delta, u - \delta, c)$ and remain so by the split reformulation, it is sufficient to show that $\widehat{\MP}$ is a symmetric equitable partition of the right-hand side and that $\widehat{\MQ}$ is an equitable partition of the variable upper bounds of $F_S(A,b- A \delta', \ell - \delta', u - \delta', c)$. Note that by definition of the split reformulation, all variables have lower bound zero, so we only consider the upper bounds. \\

    Let us state the reordered linear program of $F_S(A,b - A \delta', \ell-\delta',u-\delta',c)$, which is similar to the form~\eqref{lp_centered_extended_reordered}.

\begin{subequations}
\label{lp_centered_extended_reordered_delta}
\begin{align}
    \min\, \transpose{c_{W_U}} \Lambda \widehat{x}_{\widehat{W}^\lambda_U} - \transpose{c_{W_U}} \Lambda \widehat{x}_{\widehat{W}^{-\lambda}_U} & \\
\begin{bmatrix}
    \Gamma A_{V_U,W_U} \Lambda & -\Gamma A_{V_U,W_U} \Lambda & \widehat{M}^1
    \\
    -\Gamma A_{V_U,W_U} \Lambda & \Gamma A_{V_U,W_U} \Lambda & -\widehat{M}^1
    \\
    \widehat{M}^2 & -\widehat{M}^2 & \widehat{M}^3 \\
\end{bmatrix}
\begin{bmatrix}
    \widehat{x}_{\widehat{W}^\lambda_U}\\
    \widehat{x}_{\widehat{W}^{-\lambda}_U}\\
    \widehat{x}_{\widehat{W}_B}
\end{bmatrix} &= \begin{bmatrix}
    \Gamma (b - A \delta')_{V_U}\\
    -\Gamma (b- A \delta')_{V_U}\\
    \widehat{b}_{\widehat{V}_B}
\end{bmatrix}
\label{lp_centered_extended_reordered_delta_eqs}
\\
\zerovec \leq \widehat{x}_{\widehat{W}^\lambda_U} &\leq \mathrm{bs}(\ell-\delta',u-\delta',\lambda)_{W_U} \\
\zerovec \leq \widehat{x}_{\widehat{W}^{-\lambda}_U} &\leq \mathrm{bs}(\ell-\delta',u-\delta',-\lambda)_{W_U} \\
\zerovec \leq \widehat{x}_{\widehat{W}_B} &\leq \begin{bmatrix}
    -(\ell-\delta')_{W_B}\\
    (u-\delta')_{W_B}
\end{bmatrix} 
\end{align}    
\end{subequations}
We use $\widehat{b}_{\widehat{V}_B} \coloneqq \begin{bmatrix}
    (b - A \delta')_{V_B}\\
    -(b - A \delta')_{V_B}\\
\end{bmatrix}$ to denote the variable bounds of the bipolar rows.
For the variables $\widehat{W}^{\lambda}_U$, we can compute their upper bounds using:
    \begin{align*}
        \mathrm{bs}(\ell-\delta',u-\delta',\lambda)_w &= \begin{cases}
            u_w - \delta'_w&\text{if }\lambda_w=1\\
            -\ell_w + \delta'_w&\text{if }\lambda_w=-1
        \end{cases} = \begin{cases}
            u_w - \delta_w - \lambda_w \nu_w &\text{if }\lambda_w=1\\
            -\ell_w + \delta_w + \lambda_w \nu_w &\text{if }\lambda_w=-1
        \end{cases}\\ &= - \nu_w + \begin{cases}
            u_w - \delta_w&\text{if }\lambda_w=1\\
            -\ell_w + \delta_w&\text{if }\lambda_w=-1
        \end{cases}= - \nu_w +  \mathrm{bs}(\ell-\delta,u-\delta,\lambda)_w.
    \end{align*}
    Thus, we have that $\mathrm{bs}(\ell-\delta',u-\delta',\lambda) = -\nu + \mathrm{bs}(\ell-\delta,u-\delta,\lambda)$ holds.  
    Then, since $\widehat{\MQ}^\lambda_U$ is exactly the partition given by $\MQ_U$ and $\nu$ is $\MQ_U$ equitable, it follows that it follows that $\widehat{\MQ}^\lambda_U$ is an equitable partition of $\mathrm{bs}(\ell-\delta',u-\delta',\lambda)$ since it is the sum of the equitably partitioned vectors $-\nu$ and $\mathrm{bs}(\ell-\delta,u-\delta,\lambda)$.
    For $\widehat{W}^{-\lambda}_U$ a similar argument shows that $\mathrm{bs}(\ell-\delta',u-\delta', -\lambda) = \nu + \mathrm{bs}(\ell-\delta,u-\delta,-\lambda)$ holds, which implies that the upper bounds $\mathrm{bs}(\ell-\delta',u-\delta', -\lambda)$ are equitably partitioned by $\widehat{\MQ}^\lambda_U$. For the bipolar variables $\delta'_{W_B} = \delta_{W_B}$ holds so the variable bounds are unchanged, which directly implies that $\widehat{\MQ}_B$ is an equitable partition of the variable bounds of $\widehat{W}_B$. Combining these statements, it follows that $\widehat{\MQ}$ is an equitable partition of the variable upper bounds of $F_S(A,b-A\delta',\ell-\delta',u-\delta',c)$.\\

    Next, let us show that the right-hand sides of~\eqref{lp_centered_extended_reordered_delta_eqs} are equitably partitioned by $\widehat{\MP}$. Note that for any $v\in V$ that 
    \begin{equation*}
        (b- A \delta')_v = b_v - (A (\delta + (\Lambda\nu,\zerovec_{W_B})))_v =(b- A\delta)_v - (A_{\star,W_U} \Lambda \nu)_v
    \end{equation*} holds. Thus, it follows that 
    \begin{equation*}
        \Gamma(b- A \delta')_{V_U} = \Gamma(b- A \delta - A_{\star,W_U} \Lambda \nu )_{V_U} = \Gamma(b-A \delta)_{V_U} - \Gamma A_{V_U,W_U} \Lambda \nu.
    \end{equation*} 
    The term $\Gamma(b-A \delta)_{V_U}$ is equitably partitioned by the subpartition $\widehat{\MP}^\gamma_U$ of $\widehat{\MP}$. Then, we claim that 
    $\Gamma A_{V_U,W_U} \Lambda \nu$ is also a $\widehat{\MP}^\gamma_U$-equitable vector.
    
    First, note that $(\widehat{\MP}^\gamma_U,\MQ_U)$ is an equitable partition of $\Gamma A_{V_U,W_U} \Lambda$, which follows from the fact that $(\widehat{\MP},\widehat{\MQ})$ is an equitable partition of $\widehat{M}$ in the linear program $F_S(A,b- A \delta, \ell- \delta, u - \delta,c)$, where $(\widehat{\MP}^\gamma_U,\widehat{\MQ}^\lambda_U) = (\widehat{\MP}^\gamma_U,\MQ_U)$ induces the subpartition.

    Secondly, since $\nu$ is $\MQ_U$-equitable, we claim that multiplication of any $(\widehat{\MP}^\gamma_U,\MQ_U)$-matrix with a $\MQ_U$-equitable vector produces a $\widehat{\MP}^\gamma_U$-equitable vector. In particular note that for all $P\in \widehat{\MP}^\gamma_U$ and any $v\in P$, we have:
    \begin{align*}
        (\Gamma A_{V_U,W_U} \Lambda \nu)_v &= \sum_{w\in W} (\Gamma A_{V_U,W_U} \Lambda)_{v,w} \nu_{w} = \sum_{Q\in \MQ_U} \sum_{w\in Q} (\Gamma A_{V_U,W_U} \Lambda)_{v,w} \nu_{w} \\ &\stackrel{\textup{(a)}}{=} \sum_{Q\in \MQ} \alpha_{Q} \sum_{w\in Q} (\Gamma A_{V_U,W_U} \Lambda)_{v,w} \stackrel{\textup{(b)}} = \sum_{Q\in \MQ} \alpha_{Q} \beta_{P,Q}.
    \end{align*}
    Here, \textup{(a)} follows since $\MQ_U$ is an equitable partition of $\nu$, which implies that $\nu_w = \alpha_{Q}$ for $Q\in \MQ_U$ such that $w\in Q$. Then, \textup{(b)} follows since $(\widehat{\MP}^\gamma_U,\MQ_U)$ is an equitable partition of $\Gamma A \Lambda$, which implies that condition \eqref{eq_folding_row_sums} holds. Thus, the row sums of $\Gamma A \Lambda$ are identical for all $v\in P$, which we express using $\beta_{P,Q}$. Since the final expression is independent of $v$ for every part $P\in \widehat{\MP}^\gamma_U$, this shows that $\Gamma A \Lambda \nu$ is $\widehat{\MP}^\gamma_U$-equitable. Then, since the sum of two equitable vectors is equitable, it follows that $\widehat{\MP}^\gamma_U$ is an equitable partition of $\Gamma ( b- A \delta')_{V_U} = \Gamma(b- A \delta)_{V_U} - \Gamma A_{V_U,W_U} \Lambda \nu$.
    A very similar argument shows that $\widehat{\MP}^{-\gamma}_U$ is an equitable partition of $-\Gamma ( b- A \delta')_{V_U} = -\Gamma(b- A \delta)_{V_U} + \Gamma A_{V_U,W_U} \Lambda \nu$.

    For the bipolar rows, we consider $(b-A \delta')_v$ for $v\in V_B$ such that $\{v^+,v^-\} \subseteq P$ for $P \in \widehat{\MP}_B$. Then, we have that:
    \begin{align*}
        (b- A \delta')_{v} &= (b - A \delta)_v - (A_{\star,W_U} \Gamma \nu)_v \stackrel{\textup{(c)}}= - (A_{\star,W_U} \Gamma \nu)_v = - \sum_{w\in W_U} A_{v,w}\lambda_w \nu_w \\
        &\stackrel{\textup{(d)}}= - \sum_{Q\in \MQ_U} \alpha_Q \sum_{w\in Q} A_{v,w} \lambda_w \stackrel{\textup{(e)}}= - \sum_{Q\in \MQ_U} \alpha_Q \sum_{w^\lambda \in Q^\lambda} \widehat{M}_{v^+,w^\lambda} \stackrel{\textup{(f)}}= 0 
    \end{align*}
    Here, \textup{(c)} follows since $(\widehat{\MP},\widehat{\MQ})$ being a symmetric equitable partition for $F_S(A,b- A \delta, \ell-\delta, u - \delta,c)$ implies that $(b-A \delta)_v = 0$ for all $v\in V_B$. The relation in \textup{(d)} follows since $\nu$ is an $\MQ_U$-equitable vector. In \textup{(e)}, we use that $(A \Lambda)_{v,Q} = \widehat{M}_{v^+,Q^\lambda}$ holds by construction of $\widehat{M}$. In \textup{(f)}, we use that $\sum_{w^\lambda \in Q^\lambda} \widehat{M}_{v^+,w^\lambda} = 0$ holds by \cref{thm_bipolar_symmetric_equitable_sum}, using that $v$ is bipolar and that the given partition is equitable. Then, it follows that $\widehat{b}_{\widehat{V}_B} \coloneqq \begin{bmatrix}
    (b - A \delta')_{V_B}\\
    -(b - A \delta')_{V_B}\\
\end{bmatrix} = \begin{bmatrix}
    \zerovec_{V_B}\\
    \zerovec_{V_B}
\end{bmatrix}$, which shows that $\widehat{\MP_B}$ is an equitable partition of $\widehat{b}$. Moreover, it implies that the rows $\widehat{V}_B$ remain bipolar, which implies that the symmetry of $(\widehat{\MP},\widehat{\MQ})$ is also preserved for $F_S(A,b- A \delta', \ell - \delta',u- \delta')$. Then, we have that $\widehat{\MP}$ is an equitable partition of the right-hand side of $F_S(A,b- A \delta', \ell- \delta',u-\delta',c)$. Using the earlier derived equitability for the variable bounds, we conclude that $(\widehat{\MP},\widehat{\MQ})$ is a symmetric equitable partition of $F_S(A,b- A \delta', \ell- \delta',u-\delta',c)$.
\end{proof}

Usually, MIP solvers use \emph{variable complementation}, which maps $x_w$ to $x_w - \ell_w$ or $u_w - x_w$, in the context of reflections. Since the bounds of an integer variable are integral, these transformations have the convenient property of preserving integrality.
In \cref{thm_delta_equivalent}, we showed that by computing symmetric equitable partitions for one $\delta$ value, the obtained partition is valid for a larger set of $\delta$-values. Next, we use this result to show that computing the symmetric equitable partition for $\delta^\mathcal{C}$ can be used to derive variable complementations.
\begin{corollary}
    \label{thm_delta_equivalent_complemented}
    Let $(\widehat{\MP},\widehat{\MQ})$ be a symmetric, equitable and bound-respecting partition of $F_S(A,b - A \delta^\mathcal{C}, \ell - \delta^\mathcal{C}, u - \delta^\mathcal{C}, c)$ and let $\gamma$, $\lambda$, $\MP_U$ and $\MQ_U$ be generated by $(\widehat{\MP},\widehat{\MQ})$. Then, for $\delta^\mathcal{S}$ defined via  
    \begin{equation*}
    \delta^\mathcal{S}_w \coloneqq \begin{cases}
        \ell_w &\text{if $\ell_w$ and $u_w$ are finite, $w\in W_U$ and $\lambda_w=1$.}\\
        u_w &\text{if $\ell_w$ and $u_w$ are finite, $w\in W_U$ and $\lambda_w=-1$.}\\
        \frac{\ell_w + u_w}2 &\text{if $\ell_w$ and $u_w$ are finite and $w\in W_B$.}\\
        0 &\text{if $\ell_w = -\infty$ and $u_w = \infty$.}\\
        \ell_w &\text{if $\ell_w$ is finite and $u_w = \infty$.}\\
        u_w &\text{if $\ell_w = -\infty$ and $u_w$ is finite.}
    \end{cases},
\end{equation*} $(\widehat{\MP},\widehat{\MQ})$ is a symmetric equitable partition of $F_S(A,b - A \delta^\mathcal{S}, \ell - \delta^\mathcal{S}, u - \delta^\mathcal{S}, c)$.
\end{corollary}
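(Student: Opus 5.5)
The plan is to apply \cref{thm_delta_equivalent} with $\delta \coloneqq \delta^\mathcal{C}$. This requires a vector $\nu\in\R^{W_U}$ that is equitably partitioned by $\MQ_U$ and satisfies $\delta^\mathcal{S} = (\delta^\mathcal{C}_{W_U} + \Lambda\nu, \delta^\mathcal{C}_{W_B})$. We must also check $\ell\leq\delta^\mathcal{S}\leq u$. This last condition is immediate from the case definition, since every case picks $\ell_w$, $u_w$, their midpoint, or $0$ when both bounds are infinite.

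On $W_B$ we need $\delta^\mathcal{S}_{W_B} = \delta^\mathcal{C}_{W_B}$. The cases of $\delta^\mathcal{S}$ for $w\in W_B$ coincide with those of $\delta^\mathcal{C}$ (midpoint for finite bounds; $0$, $\ell_w$, $u_w$ otherwise), so this holds. For $w\in W_U$ define $\nu_w \coloneqq \lambda_w(\delta^\mathcal{S}_w-\delta^\mathcal{C}_w)$, so that $\delta^\mathcal{S}_w = \delta^\mathcal{C}_w+\lambda_w\nu_w$ holds automatically because $\lambda_w^2=1$.

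If $w$ has an infinite bound, the definitions agree and $\nu_w=0$. If both bounds are finite and $\lambda_w=1$, then $\nu_w = \ell_w-\frac{\ell_w+u_w}{2} = -\frac{u_w-\ell_w}{2}$. If $\lambda_w=-1$, then $\nu_w = -(u_w-\frac{\ell_w+u_w}{2}) = -\frac{u_w-\ell_w}{2}$. So in all cases $\nu_w = -\frac{u_w-\ell_w}{2}$ if both bounds are finite, and $\nu_w=0$ otherwise.

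The main step is showing that $\nu$ is $\MQ_U$-equitable. Take $w_1,w_2\in Q\in\MQ_U$. Then $w_1^{\lambda_{w_1}}, w_2^{\lambda_{w_2}}$ lie in the common part $Q^\lambda\in\widehat{\MQ}$. Bound-respectingness for the centered program (\cref{def_part_domain_respecting} applied to bounds $\ell-\delta^\mathcal{C}, u-\delta^\mathcal{C}$) then gives, depending on whether the signs agree, either equality of the centered bounds or equality with the negated swapped bounds. In both cases the centered domain endpoints coincide as a set up to sign, and the equality of the infinite/finite pattern is preserved.

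In particular, $w_1$ has both bounds finite if and only if $w_2$ does. When they are finite, $u_{w_1}-\ell_{w_1} = (u_{w_1}-\delta^\mathcal{C}_{w_1})-(\ell_{w_1}-\delta^\mathcal{C}_{w_1})$ equals $u_{w_2}-\ell_{w_2}$ in both the same-sign and opposite-sign cases. Hence $\nu_{w_1}=\nu_{w_2}$. The only care needed is this sign bookkeeping and the treatment of infinite bounds, using the paper's convention $\pm\infty+k=\pm\infty$.

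Finally, invoking \cref{thm_delta_equivalent} yields that $(\widehat{\MP},\widehat{\MQ})$ is a symmetric equitable partition of $F_S(A,b-A\delta^\mathcal{S},\ell-\delta^\mathcal{S},u-\delta^\mathcal{S},c)$.
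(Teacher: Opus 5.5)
Your proposal is correct and follows the paper's proof: the same $\nu$ ($-\frac{u_w-\ell_w}{2}$ on unipolar variables with finite bounds, $0$ otherwise), $\MQ_U$-equitability of $\nu$ from bound-respectingness, and an application of \cref{thm_delta_equivalent} with $\delta=\delta^\mathcal{C}$. You are slightly more careful than the paper, since you explicitly check $\ell\leq\delta^\mathcal{S}\leq u$, the sign bookkeeping, and that finiteness of the bounds is constant on each part of $\MQ_U$.
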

\begin{proof}
    Let $\nu \in \R^{W_U}$ be such that $\nu \coloneqq \begin{cases}
        -\frac{u_{w}- \ell_w}2 &\text{if $\ell_w$ and $u_w$ are finite and $w\in W_U$}\\
        0 &\text{otherwise.}
    \end{cases}$
    
    Since the symmetric equitable partition is bound-respecting, the domain size $u_w - \ell_w$ is identical for all variables $w\in Q$ for some part $Q \in \MQ$ in the case where $u_w$ and $\ell_w$ are both finite. Thus, $\nu$ is a $\MQ_U$-equitable vector.
    Using $\frac{\ell_w + u_w}2 -\frac{u_{w}- \ell_w}2 = \ell_w$ and $\frac{\ell_w + u_w}2 + \frac{u_{w}- \ell_w}2 = u_w$ one can verify that $\delta^\mathcal{S} \coloneqq \delta^\mathcal{C} + (\Lambda \nu, \zerovec_{W_B})$ holds, and the result follows by \cref{thm_delta_equivalent}.
\end{proof}
One important detail about \cref{thm_delta_equivalent_complemented} is that the variable bounds of bipolar variables are not adjusted and may still be fractional. Later on, we will see that this is not an issue in the context of Mixed-Integer Linear Programming as the bipolar variables are eliminated from the reduced model.

\subsection{Comparison with reflection symmetries}
Next, we will show that reflection reductions generalize reflection symmetries. First let us define reflection symmetries more formally. A \emph{permutation matrix} $B$ is a square binary matrix with exactly one nonzero element with value $1$ in each row and column. It is well known that the inverse $B^{-1}$ is again a permutation matrix, which corresponds to the inverse permutation of the permutation given by $B$. A \emph{signed permutation matrix} is a square $\{-1,0,1\}$-matrix with exactly one nonzero element in each row and column. Note that each signed permutation matrix $B$ can be decomposed as $\diag(s) C$ or $C \diag(s')$ where $C$ is the nonzero support of $B$, which is a permutation matrix, and $s$ and $s'$ are suitably chosen $\{-1,1\}$ vectors.
In this section, we do not allow for arbitrary reordering of matrix columns and rows. In order to clarify the effect of the permutation symmetries, we denote each permutation explicitly using a permutation matrix.

\begin{definition}
Consider a linear program $F(A,b,\ell,u,c)$, with $A\in \R^{V\times W}$. Let $\gamma \in \{-1,1\}^V$, $\lambda \in \{-1,1\}^W$ and define $\Gamma \coloneqq\diag(\gamma)$ and $\Lambda \coloneqq \diag(\lambda)$, and let $C\in \{0,1\}^{V\times V}$ and $D\in \{0,1\}^{W\times W}$ be permutation matrices. Then, $(\gamma,\lambda,C,D)$ is a \emph{reflection symmetry of $F(A,b,\ell,u,c)$} if the following hold:
\begin{enumerate}[label=(\roman*)]
    \item $C\Gamma A \Lambda D = A$
    \item $C \Gamma b = b$
    \item $D^{-1} \lambda c = c$
    \item $-D^{-1}\mathrm{bs}(\ell,u,-\lambda)  = \ell$ and $D^{-1}\mathrm{bs}(u,\ell,\lambda)= u$
\end{enumerate}
Furthermore, we say that $(C,D)$ is a \emph{permutation symmetry} of $F(A,b,\ell,u,c)$ if $(\onevec,\onevec,C,D)$ is a reflection symmetry of $F(A,b,\ell,u,c)$.
\end{definition}

\begin{theorem}
    \label{thm_reflection_symmetry_permutation_symmetry}
    Every reflection symmetry of $F(A,b,\ell,u,c)$ induces a permutation symmetry of $F_S(A,b,\ell,u,c)$.
\end{theorem}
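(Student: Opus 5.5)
Given a reflection symmetry $(\gamma,\lambda,C,D)$ of $F(A,b,\ell,u,c)$, I will write down explicit permutation matrices $\widehat{C}$ on $\widehat{V}=\widehat{V}^+\cup\widehat{V}^-$ and $\widehat{D}$ on $\widehat{W}=\widehat{W}^+\cup\widehat{W}^-$. Then I will check the four conditions of a permutation symmetry for $F_S(A,b,\ell,u,c)$. The idea is to absorb each sign into a swap of the $+$ and $-$ copies. For a sign vector $s$, let $S_s$ be the $2\times 2$ block permutation matrix
\begin{equation*}
S_s \coloneqq \begin{bmatrix} \diag(\tfrac{\onevec+s}{2}) & \diag(\tfrac{\onevec-s}{2}) \\ \diag(\tfrac{\onevec-s}{2}) & \diag(\tfrac{\onevec+s}{2}) \end{bmatrix},
\end{equation*}
which fixes $v^\pm$ when $s_v=1$ and swaps $v^+$ and $v^-$ when $s_v=-1$. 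I will set $\widehat{C}\coloneqq (I_2\otimes C)\,S_\gamma$ and $\widehat{D}\coloneqq S_\lambda\,(I_2\otimes D)$. Both are products of permutation matrices, hence permutation matrices.

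The key algebraic identity to establish first concerns the matrix $\widehat{M}=\begin{bmatrix}1&-1\\-1&1\end{bmatrix}\otimes A$ and the vector $J\coloneqq\begin{bmatrix}1\\-1\end{bmatrix}$. Write $\widehat{M}=(J\otimes I_V)\,A\,(\transpose{J}\otimes I_W)$. Then $S_\gamma(J\otimes I_V)=(J\otimes I_V)\Gamma$, which one checks by looking at a single row $v$. The transposed version gives $(\transpose{J}\otimes I_W)S_\lambda=\Lambda(\transpose{J}\otimes I_W)$. Combining these,
\begin{equation*}
\widehat{C}\widehat{M}\widehat{D}=(I_2\otimes C)(J\otimes I_V)\,\Gamma A\Lambda\,(\transpose{J}\otimes I_W)(I_2\otimes D)=(J\otimes I_V)\,C\Gamma A\Lambda D\,(\transpose{J}\otimes I_W)=\widehat{M}.
\end{equation*}
The right-hand side $\widehat{b}=(J\otimes I_V)b$ works the same way: $\widehat{C}\widehat{b}=(J\otimes I_V)C\Gamma b=\widehat{b}$. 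For the objective $\widehat{c}=(J\otimes I_W)c$, I will use $\widehat{D}^{-1}=(I_2\otimes D^{-1})S_\lambda$, which gives $\widehat{D}^{-1}\widehat{c}=(J\otimes I_W)D^{-1}\Lambda c=\widehat{c}$. Here I read condition (iii) as $D^{-1}\Lambda c=c$.

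The bound condition is where I expect the main obstacle, because the paper's notation there is delicate. In particular, $\mathrm{bs}(u,\ell,\lambda)$ in (iv) looks like a transcription of $\mathrm{bs}(\ell,u,\lambda)$. In $F_S$ every lower bound is $0$, so only the upper-bound vector $\widehat{u}=(u,-\ell)$ matters. I will compute $S_\lambda\widehat{u}$ coordinatewise: its $+$ block is $\mathrm{bs}(\ell,u,\lambda)$ and its $-$ block is $\mathrm{bs}(\ell,u,-\lambda)$. Applying $I_2\otimes D^{-1}$ then gives $(D^{-1}\mathrm{bs}(\ell,u,\lambda),\,D^{-1}\mathrm{bs}(\ell,u,-\lambda))$. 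By (iv), read as $D^{-1}\mathrm{bs}(\ell,u,\lambda)=u$ and $-D^{-1}\mathrm{bs}(\ell,u,-\lambda)=\ell$, this equals $(u,-\ell)=\widehat{u}$.

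Finally I will note that $(\widehat{C},\widehat{D})$ is therefore a reflection symmetry of $F_S$ with all-ones signs, that is, a permutation symmetry in the sense of the definition. This requires that the paper's convention in (iii)–(iv), applying $D^{-1}$ after the signing, matches $\widehat{D}^{-1}=(I_2\otimes D^{-1})S_\lambda$. If the orders are reversed, I will instead take $\widehat{D}=(I_2\otimes D)S_{\lambda'}$ with $\lambda'\coloneqq D^{-1}\lambda$, and the same computation goes through.
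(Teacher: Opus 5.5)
Your proof is correct and uses the same construction as the paper. Your $S_\gamma$ and $S_\lambda$ are the paper's $\widehat{C}_1,\widehat{D}_1$ (swap the $\pm$ copies wherever the sign is $-1$), and $I_2\otimes C$, $I_2\otimes D$ are its block-diagonal $\widehat{C}_2,\widehat{D}_2$, so $\widehat{C}=\widehat{C}_2\widehat{C}_1$ and $\widehat{D}=\widehat{D}_1\widehat{D}_2$ exactly as in the paper. The only difference is presentation: you verify the conditions through the intertwining identities such as $S_\gamma(J\otimes I_V)=(J\otimes I_V)\Gamma$ rather than by reordering the LP and substituting. Your readings of (iii)--(iv) as $D^{-1}\Lambda c=c$ and $D^{-1}\mathrm{bs}(\ell,u,\lambda)=u$ match how the paper's own proof uses them.
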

\begin{proof}
    Let $(\gamma,\lambda,C,D)$ be a reflection symmetry of $F(A,b,\ell,u,c)$. Then, consider the split reformulation $F_S(A,b,\ell,u,c)$. 
    We argued earlier that for the split permutation there exist permutation matrices $\widehat{C}_1$ and $\widehat{D}_1$ whose application to $F_S(A,b,\ell,u,c)$ yields the following linear program, see e.g.~\eqref{lp_centered_extended_reordered}. Here, $\widehat{C}_1$ corresponds to the permutation that swaps $v^+$ and $v^-$ if $\gamma_v = -1$ holds, and $\widehat{D}_1$ corresponds to the permutation that swaps $w^+$ and $w^-$ if $\lambda_w = -1$ holds.
    \begin{subequations}
    \label{eq_reflection_symmetry_reordered}
    \begin{align}
        \min \transpose{(\Lambda c)} \widehat{x}^\lambda - \transpose{(\Lambda c)} \widehat{x}^{-\lambda}&\\
        \begin{bmatrix}
            \Gamma A \Lambda & - \Gamma A \Lambda\\
            -\Gamma A \Lambda & \Gamma A \Lambda
        \end{bmatrix} \begin{bmatrix}
            \widehat{x}^\lambda\\
            \widehat{x}^{-\lambda}
        \end{bmatrix} &= \begin{bmatrix}
            \Gamma b\\
            - \Gamma b
        \end{bmatrix}\\
        \zerovec \leq \begin{bmatrix}
            \widehat{x}^\lambda\\
            \widehat{x}^{-\lambda}
        \end{bmatrix} & \leq \begin{bmatrix}
            \mathrm{bs(\ell,u,\lambda)}\\
            \mathrm{bs(\ell,u,-\lambda)}
        \end{bmatrix} 
    \end{align}
    \end{subequations}
    Then, we define permutation matrices $\widehat{C}_2 \coloneqq \begin{bmatrix}
        C & \zerovec\\
        \zerovec & C
    \end{bmatrix}$ and $\widehat{D}_2 \coloneqq \begin{bmatrix}
        D & \zerovec\\
        \zerovec & D
    \end{bmatrix}$ and apply them to~\eqref{eq_reflection_symmetry_reordered}. Doing so, we obtain the linear program:
    \begin{subequations}    \label{eq_reflection_symmetry_permutation_symmetry}
    \begin{align}
        \min \transpose{(D^{-1} \Lambda c)} \widehat{x}^\lambda - \transpose{(D^{-1} \Lambda c)} \widehat{x}^{-\lambda}&\\
        \begin{bmatrix}
            C \Gamma A \Lambda D & - C\Gamma A \Lambda D\\
            -C\Gamma A \Lambda D & C\Gamma A \Lambda D
        \end{bmatrix} \begin{bmatrix}
            \widehat{x}^\lambda\\
            \widehat{x}^{-\lambda}
        \end{bmatrix} &= \begin{bmatrix}
            C\Gamma b\\
            - C\Gamma b
        \end{bmatrix}\\
        \zerovec \leq \begin{bmatrix}
            \widehat{x}^\lambda\\
            \widehat{x}^{-\lambda}
        \end{bmatrix} & \leq \begin{bmatrix}
            D^{-1} \mathrm{bs(\ell,u,\lambda)}\\
            D^{-1}\mathrm{bs(\ell,u,-\lambda)}
        \end{bmatrix} 
    \end{align}
    \end{subequations}
    
    Then, since $(\gamma,\lambda,C,D)$ is a reflection symmetry, it follows by substitution that $C \Gamma A \Lambda D = A$, $C \Gamma b = b$,
    $D^{-1} \mathrm{bs}(\ell,u,\lambda) = u$, $D^{-1} \mathrm{bs}(\ell,u,-\lambda) = - \ell$. Using these substitutions,~\eqref{eq_reflection_symmetry_permutation_symmetry} is identical to $F_S(A,b,\ell,u,c,)$, without reordering the indices. Thus, the permutation matrices $\widehat{C} \coloneqq \widehat{C}_2 \widehat{C}_1$ and $\widehat{D} \coloneqq \widehat{D}_1 \widehat{D}_2$ form a permutation symmetry $(\widehat{C},\widehat{D})$ of $F_S(A,b,\ell,u,c)$.
\end{proof}

By~\cref{thm_reflection_symmetry_permutation_symmetry}, the reflection symmetries of a linear program are captured completely by the permutation symmetries of its split reformulation. 
In section 7.2 in~\cite{Grohe2014}, it is argued that for any linear program, performing the reduction from \cref{thm_lp_folding} using equitable partitions generalizes the projection to the fixed space of the symmetry group formed by the permutation symmetries.
Although we do not show it formally, the permutation symmetries that we exhibit in the proof of~\cref{thm_reflection_symmetry_permutation_symmetry} are bound-respecting, as they act similarly on the positive and negative variables of the split reformulation. As a consequence, the symmetry subgroup of the split reformulation corresponding to any permutation symmetries derived from reflection symmetries generates a bound-respecting equitable partition of the split reformulation. 
Thus, by computing bound-respecting equitable partitions of the split reformulation, our approach generalizes the projection to the fixed space of the symmetry group formed by reflections symmetries.\\

As is also remarked in~\cite{Grohe2014}, equitable partitions may also occur even if there is no symmetry. Next, we show an example where this occurs for the split reformulation of a general class of matrices that does not necessarily contain symmetries.
\begin{example}
    Let $A\in \{0,1\}^{m\times n}$ be a matrix with two non-zero entries in each row. The matrix $A$ does not necessarily contain any symmetries, and we can consider the following linear program with no objective.
    \begin{align*}
        A x &= \onevec\\
        \zerovec \leq x &\leq \onevec
    \end{align*}
    Then, centering the linear program using $\delta_w \coloneqq \frac{1}{2}$ for all $w\in W$ yields the following linear program.
    \begin{align*}
        A x &= \zerovec\\
        -\frac{1}{2} \leq x_w &\leq \frac{1}{2} & \forall w\in W
    \end{align*}
    Its split reformulation is given by 
    \begin{align*}
        A x^+ - A x^- &= \zerovec\\
        -A x^+ + A x^- &= \zerovec\\
        0\leq x^+_w &\leq \frac{1}{2} & \forall w\in W\\
        0\leq x^-_w &\leq \frac{1}{2} & \forall w\in W
    \end{align*}
    Then, since the objective and right-hand sides are all zero and the variable bounds are identical, the coarsest equitable partition of this linear program has a single bipolar variable part and a single bipolar row part, even though the original LP does not in general contain any symmetries.
\end{example}

\subsection{Detecting reflection reductions}
\label{sec_lp_algorithm}
In order to detect reflection reductions, it suffices to compute symmetric equitable partitions of the split reformulation form by \cref{thm_refl_reduction}. Thus, we can directly reuse the algorithm formulated in~\cite{Grohe2014}, which relies on the fast color refinement algorithm by Paige and Tarjan~\cite{Paige1987}. Since the split reformulation consists of four copies of the constraint matrix, we can detect symmetric equitable partitions, and thus reflection reductions, in identical asymptotic worst-case time and space complexity as in~\cite{Grohe2014}.

\begin{theorem}
\label{thm_running_time_extended} For a linear program $F(A,b,\ell,u,c)$ with $A\in \R^{V\times W}$, let $m$ be the total bitlength of the entries of $\begin{bmatrix}
    \transpose{\ell} & 0 \\
    \transpose{u} & 0 \\
    \transpose{c} & 0\\
    A & b
\end{bmatrix}$ and let $n = |V| + |W|$. For $\delta$ with total bitlength $\orderO(m)$, a reflection reduction $(\MP,\MQ,\gamma,\lambda,\delta,V_U,W_U)$ of $F(A,b,\ell,u,c)$ can be determined in $\orderO((n+m)\log n)$ time and $\orderO(n+m)$ space.
\end{theorem}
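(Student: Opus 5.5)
The plan is to reduce the statement to the running-time bound for coarsest equitable partitions from~\cite{Grohe2014}, applied to the split reformulation of the shifted program. By \cref{thm_refl_reduction}, it suffices to compute a symmetric equitable partition of $F_S(A,b-A\delta,\ell-\delta,u-\delta,c)$ and then read off $\gamma$, $\lambda$, $\MP$, $\MQ$, $V_U$ and $W_U$ as in \cref{thm_unipolar_original_mapping}. By \cref{thm_lp_symmetric}, the coarsest bound-respecting equitable partition is symmetric. So the algorithm has four stages:
\begin{enumerate}[label=(\arabic*)]
\item build the shifted data $b-A\delta$, $\ell-\delta$, $u-\delta$;
\item build the split reformulation with data $\widehat{M}$, $\widehat{b}$, $\widehat{c}$, $\widehat{u}$;
\item run the Paige--Tarjan based color refinement of~\cite{Grohe2014} from a bound-respecting initial coloring;
\item extract the reflection reduction.
\end{enumerate}

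For stages (1) and (2), computing $A\delta$ takes one pass over the nonzeros of $A$. Since $\delta$ has bitlength $\orderO(m)$, I will argue that the products and sums fit in $\orderO(m)$ total bits under the paper's bitlength accounting. The same holds for the shifted bounds. The split reformulation consists of four signed copies of $A$ together with doubled vectors, so its size is $n' = 2n$ and $m' = \orderO(m)$. I will not store $\widehat{M}$ explicitly; instead I will expose its nonzeros implicitly so that space stays $\orderO(n+m)$.

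For stage (3), the subtle point is the bound-respecting requirement. The refinement procedure of~\cite{Grohe2014} only guarantees equitability with respect to its initial coloring. So I must encode bound-respecting in the initial color classes of $\widehat{W}$. Coloring each $w^s$ by the pair $(\widehat{c}_{w^s}, \widehat{u}_{w^s})$ is not enough: \cref{def_part_domain_respecting} also constrains the bound of the opposite copy. I will therefore color $w^+$ by the triple $(c_w,\, u_w-\delta_w,\, -(\ell_w-\delta_w))$ and $w^-$ by the triple $(-c_w,\, -(\ell_w-\delta_w),\, u_w-\delta_w)$, where $\infty$ is treated as a symbol. Equal triples then exactly encode both conditions of \cref{def_part_domain_respecting}, and this coloring matches the initial partition of \cref{thm_initial_partition_symmetric}. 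Rows are colored by $\widehat{b}$. The initial colors can be produced by sorting in $\orderO((n+m)\log n)$ time, and the refinement then runs in $\orderO((n'+m')\log n') = \orderO((n+m)\log n)$ time and linear space by the result of~\cite{Grohe2014}. Refinement never merges classes, so the output is the coarsest equitable partition refining a bound-respecting partition, which is itself bound-respecting. By \cref{thm_initial_partition_symmetric} and \cref{thm_refinement_symmetric}, this output is symmetric.

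For stage (4), a single linear scan over the classes classifies each one as bipolar or unipolar. For each unipolar pair $P^\gamma, P^{-\gamma}$, I pick one representative to fix the signs $\gamma$ (and likewise $\lambda$). This produces $\MP_U$, $\MQ_U$, $V_U$ and $W_U$ in $\orderO(n)$ time.

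I expect the main obstacle to be the bitlength bookkeeping in stage (1), namely justifying that $b-A\delta$ has total bitlength $\orderO(m)$. A product $A_{v,w}\delta_w$ can have the combined bitlength of its two factors, and summing many such terms in a row could in principle grow the length further. My first approach is to charge each product to the nonzero $A_{v,w}$ and use that $\delta$ has bitlength $\orderO(m)$ overall. If this does not close, I will interpret the bound as taking $m$ to be the bitlength of the shifted data, which is the natural reading.
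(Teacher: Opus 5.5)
Your four stages match the paper's proof: shift by $\delta$, build $F_S$, form the initial bound-respecting partition, run the color refinement of \cite{Grohe2014}, and read off $\gamma,\lambda,\MP,\MQ,V_U,W_U$. Your triple coloring $(c_w,\,u_w-\delta_w,\,-(\ell_w-\delta_w))$ for $w^+$ and $(-c_w,\,-(\ell_w-\delta_w),\,u_w-\delta_w)$ for $w^-$ is a correct concrete version of what the paper leaves to ``standard hashing''. Equal triples encode exactly both conditions of \cref{def_part_domain_respecting} together with equitability for $\widehat{c}$ and $\widehat{u}$. Hence the refined output is the coarsest bound-respecting equitable partition, and it is symmetric by \cref{thm_lp_symmetric}. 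Using sorting instead of hashing stays within the time bound.

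The obstacle you flag in stage (1) is genuine, and your charging argument will not close it. Each $\delta_w$ is multiplied into every nonzero of column $w$, so its bitlength is charged once per nonzero of that column, not once overall. Concretely, take a single column $w$ with $k$ entries equal to $1$, $b=\zerovec$, $\ell_w=2^L$ and $u_w=\infty$, so $\delta^{\mathcal{C}}_w=2^L$ is admissible and has $\orderO(m)$ bits. Then $m=\Theta(k+L)$, but $b-A\delta$ has $k$ entries of about $L$ bits each. For $k\approx L$ that is $\Theta(m^2)$ bits, which cannot be written in $\orderO((n+m)\log n)$ bit operations.

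The paper's proof does not handle this either. It only asserts that $F(A,b-A\delta,\ell-\delta,u-\delta,c)$ is computable in $\orderO(n+m)$ time with bitlength $\orderO(m)$, and that claim fails in exactly this way. So both arguments need an explicit extra assumption: either unit-cost arithmetic on entries (as with floating-point data), or a hypothesis that $b-A\delta$ has bitlength $\orderO(m)$, which is your fallback. State that assumption up front rather than trying to derive the $\orderO(m)$ bound for arbitrary $\delta$.
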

\begin{proof}
    We assume that the constraint matrix is given in a sparse format such as the CSR and/or CSC format.
    First, $F(A,b- A \delta,\ell- \delta,u-\delta,c)$ can be computed in $\orderO(n+m)$ time and space and has bitlength $\orderO(m)$, where we use that $\delta$ has bitlength of order $\orderO(m)$. Then, the split reformulation $F_S(A,b- A \delta,\ell- \delta,u-\delta,c)$ can also be computed in $\orderO(n+m)$ time and space and has bitlength of order $\orderO(m)$. 
    The initial symmetric equitable partition can be determined in and $\orderO(n+m)$ time and $\orderO(n+m)$ space by using standard hashing techniques.
    Then, computing the coarsest symmetric equitable partition of the split reformulation takes $\orderO((n+m)\log n)$ time and $\orderO(n+m)$ space~\cite[Theorem 3.1]{Grohe2014}.
    Using the symmetric equitable partition, we can directly derive a reflection reduction in $\orderO(n+m)$ time and space. Summing up all steps, we obtain a worst-case time complexity of $\orderO((n+m)\log n)$ time and a worst-case space complexity of $\orderO(n+m)$. 
\end{proof}

In \cref{thm_running_time_extended}, we assumed that the offset $\delta$ has bitlength that is of the same order as $F(A,b,\ell,u,c)$. This is a reasonable choice, as both $\delta^{\mathcal{C}}$ and $\delta^\mathcal{S}$ have encoding lengths bounded by the encoding length of $\ell$ and $u$ by their definition.\\

Although the running time is asymptotically equivalent to the `ordinary' equitable partition algorithm presented in~\cite{Grohe2014}, computing equitable partitions of the split reformulation, which has 4 times as many nonzero entries, may still be undesirable in practice. 
 To reduce the running time by a constant factor one can exploit the structure of symmetric equitable partitions. We formulate a refinement algorithm that computes $\gamma$ and $\lambda$ that runs on the original matrix but models the refinement algorithm that computes a symmetric partition on the split reformulation. This closely follows the argumentation presented in \cref{thm_initial_partition_symmetric} and \cref{thm_refinement_symmetric}. 
The core idea to achieve this constant-factor improvement is to refine unipolar part pairs at the same time, as is also done in the proof of \cref{thm_refinement_symmetric}.
The implemented algorithm is very similar to that in~\cite{Grohe2014}, who use the algorithm in~\cite{Berkholz2017} which provides a thorough description of the fast color refinement algorithm. Since the algorithm we present does not have a better asymptotic running time, we outline the main changes and adaptations compared to the description in~\cite{Grohe2014} here without proof.\\

For the data structures, we add arrays to track $\lambda$, $\gamma$ and the type (unipolar or bipolar) of each part, which takes $\orderO(n)$ space. The signs $\lambda_w$ and $\gamma_w$ are used during the computation of the column and row sums for equitability.
Then, $\lambda$ and $\gamma$ are determined whenever a bipolar part $P$ is turned into a unipolar one or at initialization. 

Initially, we compute the affine transformation given by $\delta^\mathcal{C}$. Then, we compute an initial symmetric bound-respecting partition of the split reformulation as in \cref{thm_initial_partition_symmetric}, which can be achieved in linear time by using hashing techniques.
For a row $v\in V$, we initialize it to be bipolar if $b_v = 0$ holds. Otherwise, the row is marked as unipolar and grouped with other rows based on the absolute values $|b_v|$ of the right-hand side.

For a variable $w\in W$, we initialize them it to be bipolar if $c_w = 0$ and $-\ell_w = u_w$ hold. Otherwise, we let them be unipolar, and compute signs $\lambda_w$ that so that variables whose objective and bounds can be mapped to each other by negation are put into the same initial part.

For the subsequent refinement of the constraint matrix we change the behavior slightly based on the type of each part. The changes are similar to those described in the proof of \cref{thm_refinement_symmetric}. When refined, unipolar parts retain their type, but bipolar parts may be refined into smaller parts that may be either unipolar or bipolar. When a bipolar part is refined into a unipolar part, the signs $\lambda$ or $\gamma$ are determined from its elements based on the column or row sums. In our refinement algorithm, we partition the columns (rows) of a bipolar part based on the absolute value of the sums, rather than the sums directly as in the refinement algorithm in~\cite{Grohe2014}. Then, we set the signs $\lambda_w$ so that the column sums are all positive when multiplied by $\lambda_w$. Finally, one improvement that is directly suggested by \cref{thm_bipolar_symmetric_equitable_sum} is that if $P$ is a bipolar type, that then $\widehat{M}_{P,Q}$ is an equitable block for any $Q\in \MQ$. Thus, we do not need to add bipolar parts to the search stack, since checking them for refinements is pointless.\\

There are two cases where one needs to be careful with the adapation of the color refinement algorithm to the reduced matrix size. First, if a bipolar part contains row or columns sums with identical but nonzero values, then it must be converted to a unipolar part and added to the search stack. 
Second, we note that one has to be careful with the `leave largest refined part out'-rule that is used in most color refinement algorithms and is key to the fast running time of color refinement~\cite{Hopcroft1971}. This rule pushes all but the largest part obtained by a refinement of a part to the stack of parts for which refinements need to be checked.
Since we are modeling the refinement of symmetric partitions of the split reformulation, we cannot leave out a unipolar part in the algorithm that runs on the original matrix since it represents two parts in the split reformulation, and leaving out both parts would break correctness. Luckily, this does not lead to a slower running time, since the running time is still guaranteed by \cref{thm_running_time_extended} as we maintain the invariant that our partition models a symmetric partition of the split reformulation.
\paragraph{Choosing \texorpdfstring{$\gamma$}{gamma}, \texorpdfstring{$\lambda$}{lambda} and \texorpdfstring{$\delta$}{delta}}
One detail that is important for a practical implementation is the choice of $\delta$. If $\delta$ has many nonzero entries and the matrix $A$ has large values, then the product $A \delta$ that appears in the right-hand side of the reflection reduced model can become very large and lead to numerical issues.

Recall that the $\gamma$ and $\lambda$ vectors that were generated by the symmetric equitable partition were not unique, and could be optionally negated for each part $P\in \MP_U$ and $Q\in \MQ_U$. In order to minimize the number of complemented rows, we choose $\gamma$ so that for each $P\in \MP_U$ at least $\frac{|P|}2$ rows $v\in P$ have value $\gamma_{v} = 1$. Similarly, we minimize the number of complemented variables by picking $\lambda$ such that for each $Q\in \MQ_U$ at least $\frac{|Q|}{2}$ variables $w\in Q$ have value $\lambda_w = 1$.

As a starting point for our $\delta$ vector we consider $\delta^{\mathcal{S}}$ as defined in \cref{thm_delta_equivalent_complemented}. However, this vector may still not be very sparse due to the $\ell_w$ and $u_w$ values.
By \cref{thm_delta_equivalent}, we may modify $\delta^\mathcal{S}$ to $\delta \coloneqq \delta^\mathcal{S} + (\Lambda \nu,\zerovec)$ for any $\MQ_U$-equitable vector $\nu$. In our implementation, we choose $\nu$ so that $\delta$ becomes sparse. For every part $Q\in \MQ_U$, we compute how often the value $\lambda_w \delta^\mathcal{S}_w$ appears for all $w\in Q$ and set $\nu_Q$ to be the negative of the most frequently appearing value. This way, most variables $w\in Q$ satisfy 
\begin{equation*}
    \delta_w = \delta^\mathcal{S}_w + \lambda_w \nu_w = \delta^\mathcal{S}_w + \lambda_w (- \lambda_w \delta^\mathcal{S}_w) = \delta^\mathcal{S}_w -  \delta^\mathcal{S}_w = 0.
\end{equation*}
Thus, the resulting $\delta$ vector is typically quite sparse. Furthermore, we remark that for a unipolar variable $w\in W$ with integral bounds, $\delta^\mathcal{S}_w$ is also integral. One important consequence is that if all variables in some part $Q\in \MQ_U$ have integral bounds, then $\delta_Q$ is also integral. Although we cannot modify  $\delta^\mathcal{S}_w$ for variables $w$ in bipolar parts, these values do not lead to numerical issues as fixing the corresponding values does not affect the right-hand sides since the corresponding parts have zero sums due to bipolarity.

\section{Dimension Reduction for Mixed-Integer Linear Programs}
\label{sec_dimred_milp}

Throughout this section we consider a mixed-integer linear program given by a linear program $F(A,b,\ell,u,c)$ with $A\in\R^{V\times W}$, $b\in \R^V$, $\ell, u, c \in \R^W$ and integer variables $W_I \subseteq W$. For any partition $\MQ$ of $W$ that is compatible with $W_I$, we use $\MQ_I \coloneqq \{Q \in \MQ \mid Q \subseteq W_I\}$ to denote the corresponding partition over the integer variables.\\

Let us consider the effect of DRCR in context of mixed-integer linear programs.
For the mixed-integer linear program given by $F(A,b,\ell,u,c)$ and integrality constraints $x \in \M^{W_I}$, we consider the \emph{reduced problem} $R^{\MP,\MQ}(F(A,b,\ell,u,c))$ with integrality constraints $y\in \M^{\MQ_I}$ for $y\in R^{\MP,\MQ}(F(A,b,\ell,u,c))$. 
For the linear programming relaxations,~\cref{thm_lp_folding} establishes a correspondence between the original and the reduced problem. Let us consider how these mappings interact with integrality. For a solution $x\in F(A,b,\ell,u,c)\cap \M^{W_I}$, we can observe that $y \coloneqq \transpose{\Pi_{\MQ}} x$ satisfies the integrality constraints for the reduced problem, since then $y_Q \coloneqq \sum_{w\in Q} x_w$ is integral for any $Q\in \MQ_I$ as it is the sum of integral values. Linear feasibility and optimality are guaranteed already by~\cref{thm_lp_folding}.
Thus, every mixed-integer solution for the original problem maps to a mixed-integer solution of the reduced problem. However, for the other direction integrality is not preserved, since for $y\in R^{\MP,\MQ}(F(A,b,\ell,u,c))\cap \M^{\MQ_I}$ the original solution is given for $w\in W$ with $w\in Q$ by $x_w \coloneqq \frac{1}{|Q|} y_Q$, which is not necessarily integral. In particular, this is an issue if the reduced problem contains integer solutions $y\in R^{\MP,\MQ}(F(A,b,\ell,u,c)) \cap \M^{Q_I}$ that do not map to integer solutions of the original problem.
Previous works on Orbital Shrinking~\cite{Fischetti2012,Fischetti2017,Salvagnin2013} deal with this limitation by solving a feasibility subproblem, which checks if the provided mixed-integer solution for the reduced problem can be mapped to a mixed-integer solution of the original problem. For any fixed mixed-integer $y$-solution, these subproblems can be formulated by adding the constraints $ \transpose{\Pi_\MQ} x = y$ to the original MILP. 
This subproblem is a mixed-integer linear program of reduced size with no objective, and the DRCR procedures defines a point that is feasible for its LP relaxation. 
Frequently, this subproblem is solved using techniques such as constraint programming that exploit the specific structure of the subproblem~\cite{Salvagnin2012, Salvagnin2013}.\\

In this work, we take a different approach compared to previous works on Orbital Shrinking by characterizing when each solution of the reduced problem corresponds to a solution of the original model. In terms of orbital shrinking, one can think of our approach as \emph{Exact Orbital Shrinking}, where the orbital shrinking subproblem is given by a perfect formulation of a mixed-integer linear program. In this case, the orbital shrinking subproblem can be solved via linear programming. One of the main advantages of our approach is that it guarantees a tight correspondence between the mixed-integer solutions of the original and the reduced problem, and that the subproblems can be solved easily via linear programming. This correspondence comes at a cost: the main downside of our approach is that it cannot handle all symmetries. Due to the requirement that the subproblem is a perfect formulation, our approach cannot in general handle the complete symmetry group of the MILP, and instead only handles a subgroup of the symmetry group that generates subproblems that are perfect formulations.\\

In \cref{thm_milp_folding}, we characterize when reflection reductions, which generalize the DRCR procedure, can be used to reduce the dimension of a mixed-integer linear program.
\begin{theorem}
    \label{thm_milp_folding}
    Let $(\MP,\MQ,\gamma,\lambda,\delta,V_U,W_U)$ be a reflection reduction of $F(A,b,\ell,u,c)$ such that $\delta\in \M^{W_I\cap W_U}$ holds and $W_I \cap W_U$ is compatible with $\MQ$. We formulate the reduced mixed integer program using $A'\coloneqq \transpose{\Pi_{\MP}}\Gamma A_{V_U,W_U}\Lambda \widetilde{\Pi_{\MQ}}$, $b'\coloneqq \transpose{\Pi_{\MP}}\Gamma(b_{V_U} - A_{V_U,W_U} \delta_{W_U})$, $\ell' \coloneqq \transpose{\Pi_{\MQ}} \ell^{\lambda,\delta}_{W_U}$, $u'\coloneqq  \transpose{\Pi_{\MQ}} u^{\lambda,\delta}_{W_U}$ and $c'\coloneqq \transpose{\widetilde{\Pi_\MQ}} \Lambda c_{W_U}$.
    If the following equation holds
    \begin{equation}
        \label{thm_milp_folding_condition}
        \conv(\{x\in P(A,b,\ell,u) \mid (\transpose{\Pi_{\MQ}} \Lambda x)_{\MQ_I} \in \Z^{\MQ_I}\}) = \conv(P(A,b,\ell,u)\cap \M^{W_I})
    \end{equation}
    then the following hold:
    \begin{enumerate}[label=(\roman*)]
        \item \label{thm_milp_folding_to_red} For any $x\in P(A,b,\ell,u)\cap \M^{W_I}$, then $y\coloneqq \transpose{\Pi_{\MQ}} \Lambda (x_{W_U}-\delta_{W_U}) \in P(A',b',\ell',u')\cap \M^{\MQ_{I}}$ and $\transpose{c} x = \transpose{c'} y + \transpose{c} \delta$ hold.
        \item \label{thm_milp_folding_from_red} For any $y\in P(A',b',\ell',u') \cap \M^{\MQ_{I}}$ there exists a solution $x\in P(A,b,\ell,u) \cap \M^{W_I}$ such that $\transpose{c} x \leq \transpose{c'} y + \transpose{c} \delta$ hold. 
    \end{enumerate}    
\end{theorem}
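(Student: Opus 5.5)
Part \ref{thm_milp_folding_to_red} follows directly from the reflection reduction property. Let $x\in P(A,b,\ell,u)\cap \M^{W_I}$. By \cref{def_refeqpart}\ref{def_refeqpart_to_red}, $y\coloneqq \transpose{\Pi_\MQ}\Lambda(x_{W_U}-\delta_{W_U})$ lies in $P(A',b',\ell',u')$ and satisfies $\transpose{c}x=\transpose{c'}y+\transpose{c}\delta$. It remains to check integrality. For $Q\in\MQ_I$ we have
\[
y_Q=\sum_{w\in Q}\lambda_w(x_w-\delta_w).
\]
Compatibility of $\MQ$ with $W_I\cap W_U$ gives $Q\subseteq W_I\cap W_U$. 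By assumption each $x_w$ and each $\delta_w$ with $w\in Q$ is integral, and $\lambda_w\in\{-1,1\}$. Hence $y_Q\in\Z$, so $y\in \M^{\MQ_I}$.

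For part \ref{thm_milp_folding_from_red}, let $y\in P(A',b',\ell',u')\cap\M^{\MQ_I}$. By \cref{def_refeqpart}\ref{def_refeqpart_from_red}, the point $\bar x\coloneqq(\Lambda\widetilde{\Pi_\MQ}y+\delta_{W_U},\delta_{W_B})$ lies in $P(A,b,\ell,u)$ with $\transpose{c}\bar x=\transpose{c'}y+\transpose{c}\delta$. The plan is to show that $\bar x$ belongs to the left-hand set of \eqref{thm_milp_folding_condition}, and then use the convex-hull equality to produce an integer point that is at least as good.

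The membership check reads the condition with the offset $\delta$, i.e.\ as $(\transpose{\Pi_\MQ}\Lambda(x_{W_U}-\delta_{W_U}))_{\MQ_I}\in\Z^{\MQ_I}$. This is equivalent to the unshifted form, because $\Lambda$ is diagonal with entries $\pm1$ and, for $Q\in\MQ_I$, the shift contributes $\sum_{w\in Q}\lambda_w\delta_w\in\Z$. Using \cref{thm_part_id}\ref{thm_part_id_id} and $\Lambda^2=I$,
\[
\transpose{\Pi_\MQ}\Lambda(\bar x_{W_U}-\delta_{W_U})=\transpose{\Pi_\MQ}\Lambda\Lambda\widetilde{\Pi_\MQ}y=\transpose{\Pi_\MQ}\widetilde{\Pi_\MQ}y=y,
\]
whose $\MQ_I$-entries are integral by assumption. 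So $\bar x$ lies in the left-hand set of \eqref{thm_milp_folding_condition}, hence in its convex hull, and therefore in $\conv(P(A,b,\ell,u)\cap\M^{W_I})$.

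Now write $\bar x=\sum_i\mu_i x^i$ as a finite convex combination of points $x^i\in P(A,b,\ell,u)\cap\M^{W_I}$. Linearity gives $\transpose{c}\bar x=\sum_i\mu_i\transpose{c}x^i$, so some $x^i$ satisfies $\transpose{c}x^i\le\transpose{c}\bar x=\transpose{c'}y+\transpose{c}\delta$. That $x^i$ is the required solution.

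The main obstacle is the interpretation of \eqref{thm_milp_folding_condition}: the offset and the restriction to $W_U$ must be handled carefully so that $\bar x$ genuinely satisfies the integrality side condition. The integrality of $\delta$ on $W_I\cap W_U$ is exactly what makes the shifted and unshifted readings agree. A second point of care is that the convex hull may not be closed if $P$ is unbounded; this is harmless here, because the convex hull itself (not its closure) appears in \eqref{thm_milp_folding_condition}, so finite convex combinations suffice.
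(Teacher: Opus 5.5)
Your proof is correct and follows the paper's argument. Part (i) uses \cref{def_refeqpart}\ref{def_refeqpart_to_red} together with integrality of $\lambda_w(x_w-\delta_w)$ on parts in $\MQ_I$, and part (ii) shows that the lifted point $\bar x$ satisfies the integrality side condition of \eqref{thm_milp_folding_condition}, via $\transpose{\Pi_{\MQ}}\widetilde{\Pi_{\MQ}}=I$ and integrality of $\sum_{w\in Q}\lambda_w\delta_w$, before invoking the convex-hull equality. Your explicit finite-convex-combination step is the same idea as the paper's appeal to $\conv(P(A,b,\ell,u)\cap\M^{W_I})$ being a polyhedron, only stated more carefully.
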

\begin{proof}
    First, consider the proof of the first point. By \cref{def_refeqpart}\ref{def_refeqpart_to_red}, it follows that $y\in P(A',b',\ell',u')$ and $\transpose{c} x = \transpose{c'} y + \transpose{c} \delta$ hold. Thus, it is sufficient to show that $y\in \M^{\MQ_I}$ holds. First, since $x\in \M^{W_I}$ and $\delta\in \M^{W_I\cap W_U}$ hold we have that $x_{W_U} - \delta_{W_U} \in \M^{W_I\cap W_U}$. Then, for any $Q\in \MQ_I$, we have that $y_Q = (\transpose{\Pi_{\MQ}} \Lambda (x_{W_U} - \delta_{W_U}))_Q = \sum_{w\in Q} \lambda_w (x_w - \delta_w)$. Since $Q\in \MQ_I$ holds, it follows that all $w\in Q$ satisfy $w\in W_I$. Thus, this implies that $x_w$ and $\delta_w$ must be integral by the given conditions. Then, since $\lambda_w$ is also integral, it follows that $y_Q$ is integral. Since this holds for any $Q\in \MQ_I$, we have $y\in \M^{\MQ_I}$.

    We proceed with the proof of the second point. Let $y\in P(A',b',\ell',u')\cap \M^{\MQ_I}$ be given. Then, by \cref{def_refeqpart}\ref{def_refeqpart_from_red} we have for $x' \coloneqq (\Lambda \widetilde{\Pi_\MQ} y, \zerovec) + \delta$ that $x'\in P(A,b,\ell,u)$ and $\transpose{c'} y +\transpose{c} \delta = \transpose{c} x'$ hold. 
    Then, we observe that for each $Q\in \MQ$, we have the following for $\transpose{\Pi_{\MQ}} \Lambda x'$: 
    \begin{align*}
        (\transpose{\Pi_{\MQ}} \Lambda x')_Q = \sum_{w\in Q} \lambda_w x'_w &= \sum_{w\in Q} \lambda_w (\frac{\lambda_w y_Q}{|Q|} + \delta_w) = \sum_{w\in Q} \frac{y_Q}{|Q|} + \lambda_w \delta_w = y_Q + \sum_{w\in Q} \lambda_w \delta_w.
    \end{align*}
    Then, for $Q\in \MQ_I$, $y_Q$ is integral by assumption, and all $w\in Q$ lie in $W_I$ by definition, which shows that $\sum_{w\in Q} \lambda_w \delta_w$ is integral since $\delta\in \M^{W_I\cap W_U}$ holds. Thus, we have that $x'\in \conv(\{x\in P(A,b,\ell,u) \mid (\transpose{\Pi_{\MQ}} \Lambda x)_{\MQ_I} \in \Z^{\MQ_I}\})$ holds. By the equality in \eqref{thm_milp_folding_condition} we have that $x'\in \conv(P(A,b,\ell,u)\cap \M^{W_I})$ holds. Since $\conv(P(A,b,\ell,u)\cap \M^{W_I})$ is a convex polyhedron, there exists some $x\in P(A,b,\ell,u)\cap \M^{W_I}$ such that $\transpose{c} x \leq \transpose{c} x'$ holds. Then $\transpose{c} x \leq \transpose{c'} y +\transpose{c} \delta$ follows, which completes the proof.
\end{proof}

\Cref{thm_milp_folding} differs from reflection reductions in a few subtle but important ways. First of all, the new and crucial condition \eqref{thm_milp_folding_condition} in \cref{thm_milp_folding} states for each part $Q\in \MQ_I$ that the integrality of all variables in $Q$ must be implied by the integrality of the sum $\sum_{w\in Q} \lambda_w x_w \in \Z$. We will later elaborate on a few different methods to detect this fact. 
Additionally, note that the partition $\MQ$ may not have parts that contain both integer and continuous variables and that $\delta_{W_I\cap W_U}$ must be integral.

In the second point of \cref{thm_milp_folding} we only show existence of a better solution in the original problem, rather than proving that a direct mapping exists, as was done in \cref{thm_lp_folding}. This is due to the fact that the mapped solution, $x'$ in the proof, does not necessarily satisfy all the integrality constraints for $W_I$, even though it is feasible for the linear relaxation. This raises an important issue; it is not yet clear how, given arbitrary $x'\in P(A',b',\ell',u')$, if one can obtain in polynomial time a solution solution $x$ to the original problem that also satisfies the integrality constraints. Intuitively, one may suspect that it is sufficient to consider the affine subspace generated by $x = \transpose{\Pi_{\MQ}} \Lambda (x_{W_U} - \delta_{W_U})$ in the original problem and recover an original solution. This approach is also taken in Orbital Shrinking~\cite{Fischetti2017}. However, we show in \cref{example_fiber_nonintegral} that it may be the case that there exists no solution $x\in P(A,b,\ell,u) \cap \M^{W_I}$ such that $x' = \transpose{\Pi_{\MQ}} \Lambda (x_{W_U} - \delta_{W_U})$ holds, even though the conditions of \cref{thm_milp_folding} and $x'\in \M^{\MQ_I}$ hold. This fact is an important limitation for Orbital Shrinking~\cite{Fischetti2017} as it may be difficult to recover integral solutions for the original integer program.

\begin{example}
\label{example_fiber_nonintegral}
    Consider the integer program given by polyhedron $P = \{(x,y)\in \R^2 \mid -x + 2 y \leq 2, ~ 2x - y \leq 2, x \geq 0, y \geq 0 \}$ and $x\in \Z, y\in \Z$ and note that $P = \conv(P\cap \Z^2) = \conv(\{(x,y) \in P \mid x+y\in \Z\})$ holds and that \cref{thm_milp_folding} is applicable to the standard representation of the integer program, where $\lambda= \gamma = \onevec$, $V_U = V$ and $W_U = W$ hold. See \Cref{fig_fiber_nonintegral_example} for the geometric representation of $P$. For the reduced integer program $P' = \{z\in \R \mid z\leq 4, z\geq 0\}\cap \Z$, $z=3$ is a feasible integer solution. However, there exists no solution $(x,y)\in P$ such that $x+y=3$, $x\in \Z$ and $y\in \Z$ hold. 
\end{example}
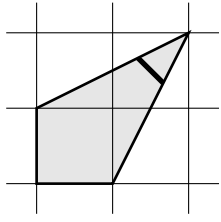
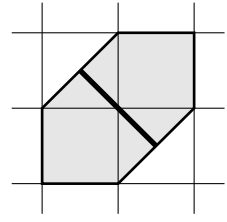
\begin{figure}[h]
    \begin{subfigure}[t]{0.45\textwidth}
    \centering
    \begin{tikzpicture}
      \draw[black, fill=black!10, line width=1pt] (0,0) -- (1,0) -- (2,2) -- (0,1) -- cycle;
        \draw[black, line width=2pt] (4/3, 5/3) -- (5/3,4/3);
      \foreach \y in {0,1,2}
      {
        \draw (-0.4,\y) -- (2.4,\y);
      }
      \foreach \x in {0,1,2}
      {
        \draw (\x,-0.4) -- (\x,2.4);
      }
    \end{tikzpicture}
    \subcaption{%
    $P$ from \cref{example_fiber_nonintegral} (thin) and its subset induced by $x+y=3$ (thick). \label{fig_fiber_nonintegral_example}
    }        
    \end{subfigure}
    \hfill
    \begin{subfigure}[t]{0.45\textwidth}
    \centering
    \begin{tikzpicture}
      \draw[black, fill=black!10, line width=1pt] (0,0) -- (1,0) -- (2,1) -- (2,2) -- (1,2) -- (0,1) -- cycle;
     \draw[black, line width=2pt] (0.5, 1.5) -- (1.5,0.5);
      \foreach \y in {0,1,2}
      {
        \draw (-0.4,\y) -- (2.4,\y);
      }
      \foreach \x in {0,1,2}
      {
        \draw (\x,-0.4) -- (\x,2.4);
      }
    \end{tikzpicture}
    \subcaption{%
    A symmetric integral polyhedron $P$ (thin) with a totally unimodular constraint matrix whose $x$ and $y$ fibers are integral, but for which the fiber defined by $x+y=2$ (thick) is not integral. \label{fig_nonintegral_affine_fiber_integral_fibers}
    }        
    \end{subfigure}
    \caption{Examples of symmetric integral polyhedra with non-integral affine fibers}
\end{figure}

Finally, \cref{thm_milp_folding} clarifies the need to prove the variant of \cref{thm_lp_folding} compared to that in~\cite{Grohe2014}. In~\cite{Grohe2014}, the mapping from the original to the reduced problem is scaled, which makes it so that integrality of the original solution does not imply integrality of the mapped solution in the reduced problem. Instead, we require the scaling as in \cref{thm_lp_folding} in \cref{thm_milp_folding} to ensure that we can map integral solutions between the original and the reduced problem. A similar scaling is also used in Orbital Shrinking~\cite{Fischetti2017}.\\

Next, we will consider several methods to detect that \eqref{thm_milp_folding_condition} is satisfied. For each method, we additionally show that a solution to the original problem can be recovered in polynomial time from a solution to the reduced problem.

First, we show in \cref{thm_integer_unit_partition} that an equitable partition where each integer variable is contained in its own part of size $1$ satisfies \eqref{thm_milp_folding_condition}. This result seems to already be known as a folklore result within the integer programming solver community\footnote{Developers of both Gurobi and FICO Xpress mentioned this idea in personal communication.}, but to the best of our knowledge it was never documented in public. This idea also extends naturally to reflection reductions.

\begin{lemma}    \label{thm_integer_unit_partition}
    If $\delta\in \M^{W_I}$ and if for each $w\in W_I$, $\{w\}\in \MQ$ holds, then \eqref{thm_milp_folding_condition} is satisfied. Additionally, for every $y\in P(A',b',\ell',u')\cap \M^{\MQ_I}$, $x' \coloneqq (\Lambda \widetilde{\Pi_\MQ} y, \zerovec)  + \delta$ lies in $P(A,b,\ell, u)\cap \M^{W_I}$.
\end{lemma}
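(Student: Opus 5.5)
The plan is to work in the setting of \cref{thm_milp_folding}: $(\MP,\MQ,\gamma,\lambda,\delta,V_U,W_U)$ is a reflection reduction with $\delta\in\M^{W_I\cap W_U}$, and here additionally $\delta\in\M^{W_I}$. The key observation is that a singleton part makes the aggregated condition literally equal to the original integrality condition.

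\textbf{Step 1: reduce \eqref{thm_milp_folding_condition} to an equality of sets.} For every $w\in W_I$ we have $\{w\}\in\MQ$, so $\MQ_I=\{\{w\}\mid w\in W_I\}$. Then for $Q=\{w\}$,
\[
(\transpose{\Pi_\MQ}\Lambda x)_Q=\lambda_w x_w ,
\]
and since $\lambda_w\in\{-1,1\}$, this quantity is integral if and only if $x_w$ is. Hence the two sets inside the convex hulls in \eqref{thm_milp_folding_condition} coincide, and so do their convex hulls.

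A subtlety is how to read $\Lambda x$ for integer variables outside $W_U$, i.e.\ in $W_B$. Here $\MQ$ is a partition of $W_U$ only, so $\MQ_I$ only involves $W_I\cap W_U$. I would interpret the hypothesis $\{w\}\in\MQ$ for all $w\in W_I$ as forcing $W_I\subseteq W_U$. If instead bipolar integer variables are allowed, they are fixed to $\delta_w$ by the map in \cref{def_refeqpart}, which is integral by assumption, so the recovery statement below still holds. I would state this case explicitly rather than leave it implicit.

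\textbf{Step 2: the recovery map.} Take $y\in P(A',b',\ell',u')\cap\M^{\MQ_I}$ and set $x'\coloneqq(\Lambda\widetilde{\Pi_\MQ}y,\zerovec)+\delta$.
\begin{itemize}
\item \emph{Linear feasibility:} $x'\in P(A,b,\ell,u)$ holds directly by \cref{def_refeqpart}\ref{def_refeqpart_from_red}.
\item \emph{Integrality on $W_I\cap W_U$:} for $w\in W_I\cap W_U$ the part is $Q=\{w\}$, so $|Q|=1$ and $x'_w=\lambda_w y_{\{w\}}+\delta_w$. Here $y_{\{w\}}\in\Z$ because $\{w\}\in\MQ_I$, $\lambda_w=\pm1$, and $\delta_w\in\Z$, so $x'_w$ is integral.
\item \emph{Integrality on $W_I\cap W_B$:} for such $w$, $x'_w=\delta_w\in\Z$.
\end{itemize}
Therefore $x'\in P(A,b,\ell,u)\cap\M^{W_I}$, which gives the second claim and, as a byproduct, shows the recovery needs no convex-hull argument.

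The only real obstacle is this bookkeeping of which integer variables lie in $W_U$ versus $W_B$. The integrality computation itself is immediate once $|Q|=1$ removes the scaling factor $\frac{1}{|Q|}$.
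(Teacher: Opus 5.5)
Your proof is correct and follows the paper's argument essentially verbatim. Singleton parts make $(\transpose{\Pi_\MQ}\Lambda x)_{\MQ_I}$ integral exactly when $x_{W_I}$ is integral (the paper phrases this via unimodularity of $\Lambda$), and $|Q|=1$ gives $x'_w=\lambda_w y_{\{w\}}+\delta_w\in\Z$, with linear feasibility coming from \cref{def_refeqpart}\ref{def_refeqpart_from_red}. Your explicit observation that the hypothesis forces $W_I\subseteq W_U$ matches the paper's remark right after the lemma that bipolar parts must then consist of continuous variables only.
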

\begin{proof}
    For the first point, note that $(\transpose{\Pi_{\MQ}} \Lambda x)_{\MQ_I} \in \Z^{\MQ_I} \iff (\Lambda x)_{W_I} \in \Z^{W_I} \iff x_{W_I} \in \Z^{W_I}$ holds, where the first equivalence follows since every $w\in W_I$ is a part of size one. The second equivalence holds since $\Lambda$ is a unimodular matrix. Thus, this directly establishes that \eqref{thm_milp_folding_condition} holds.
    
    For the second point, note that for all $\{w\} = Q\in \MQ_{I}$, $x_w=((\Lambda \widetilde{\Pi_\MQ} y, \zerovec)  + \delta)_w = \lambda_w y_Q + \delta_w$ holds since $|Q|=1$ holds. The latter expression is integral since $y_Q$, $\delta_w$ and $\lambda_w$ are all integral, which shows that $x\in \M^{W_I}$. By \cref{def_refeqpart}\ref{def_refeqpart_from_red}, $x\in P(A,b,\ell,u)$ holds, which completes the proof.
\end{proof}

It is implicit in \cref{thm_integer_unit_partition} that all bipolar parts must consist of continuous variables only by the condition that each $w\in W_I$ has $\{w\}\in \MQ$. This condition is not required in \cref{thm_milp_folding}.

The condition in \cref{thm_integer_unit_partition} that each integer variable is in its own partition class can be easily enforced during detection by modifying the initial partition $\MQ_0$ of the refinement algorithm. The condition also implies a strong condition on the rows by the requirement that the partition is equitable for the constraint matrix. In particular, this implies that for any $P\in \MP$ that $A_{v_1,w} = A_{v_2,w}$ holds for any $v_1,v_2\in P$ and $w\in W_I$. In other words, for any two rows $v_1,v_2$ that are in the same row partition, the rows must be identical in the column submatrix $A_{\star, W_I}$. Another perspective is that we are considering DRCR for a parametric set of LPs, whose right-hand sides depend on the fixed integer values for the $x_{W_I}$ variables. Since the right-hand sides for two rows in the same row-block need to be identical under all fixed integer values of $x_{W_I}$, their rows in $A_{\star,W_I}$ must be identical.\\

Although \cref{thm_integer_unit_partition} is very useful, the detection of \eqref{thm_milp_folding_condition} is mostly seen as an afterthought and is done by making the partition much finer. Thus, we investigate more sophisticated methods to detect \eqref{thm_milp_folding_condition} which do not require every integer variable to be in its own part, so that we may also aggregate integer variables. 
In the context of detecting equivalent mixed-integer formulations as we have in \eqref{thm_milp_folding_condition}, there are two works that are relevant to the current work. Recall that a matrix $A$ is said to be \emph{totally unimodular} if $\det(A')= \pm 1$ holds for every nonsingular square submatrix $A'$ of $A$. A famous result by Hoffman and Kruskal~\cite{HoffmanK56} shows that $\{x \in \R^n \mid A x \leq b, x\geq 0\}$ defines an integral polyhedron for all $b\in \Z^m$ if and only if $A$ is totally unimodular.
First of all, the author has previously investigated the phenomenon of \emph{implied integrality}~\cite{VanDerHulst2025}, which is said to occur when $\conv(P\cap \M^W) = \conv(P\cap \M^{W'})$ holds for some $W'\subseteq W$. There, implied integrality is detected by detecting totally unimodular submatrices within the constraint matrix.\\

In their work, Bader, Hildebrand, Weismantel and Zenklusen~\cite{BaderHWZ18} investigate methods to detect matrices $T\in \Z^{k\times n}$ where $k < n$ such that $\conv(\{x \in P \mid T x \in \Z^k\}) = \conv(P\cap \Z^n) $ holds. They investigate the case where $T$ is totally unimodular and formulate the notion of an \emph{affine totally unimodular decomposition} (affine TU decomposition) to detect such cases. 

\begin{definition}[Bader, Hildebrand, Weismantel \& Zenklusen\cite{BaderHWZ18}]
A matrix $A\in \Z^{m\times n}$ admits a \emph{$k$-row affine totally unimodular decomposition} if there exist matrices $S \in \Z^{m\times n}$, $U\in \Z^{m\times k}$ and $T\in \Z^{k\times n}$ such that $\begin{bmatrix}
    S\\
    T
\end{bmatrix}$ is totally unimodular and $A = S + U T$ holds.
\end{definition}

In particular, they show that affine TU decompositions can be used to remove and reformulate integrality constraints. 
\begin{proposition}[\cite{BaderHWZ18}]
    \label{thm_atu_ilp_integral}
    Let $A = S + UT \in \Z^{m\times n}$ with $T\in \{-1,0,1\}^{k\times n}$ be a $k$-row affine TU decomposition and $b\in \Z^m$ and $\ell\in (\Z\cup \{-\infty\})$, $u\in(\Z\cup \{\infty\}$ with $\ell \leq u$. Then for $P = \{x\in \R^n \mid~\ell \leq x \leq u, Ax \leq b\}$ it holds that $\conv(\{x\in P \mid Tx \in \Z^k\}) = \conv(P\cap \Z^n)$.
\end{proposition}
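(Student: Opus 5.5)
The inclusion $\conv(P\cap\Z^n)\subseteq\conv(\{x\in P\mid Tx\in\Z^k\})$ is immediate: every integral $x\in P$ has $Tx\in\Z^k$ because $T$ is integral. For the reverse inclusion it suffices, since the right-hand side is convex, to show that every $\bar x\in P$ with $\bar t\coloneqq T\bar x\in\Z^k$ lies in $\conv(P\cap\Z^n)$. The plan is to fix such a point, pass to the fiber of $P$ over $\bar t$, and argue that this fiber is an integral polyhedron containing $\bar x$.

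Concretely, I will consider
\begin{equation*}
P_{\bar t}\coloneqq\{x\in\R^n \mid \ell\leq x\leq u,\ Sx\leq b-U\bar t,\ Tx=\bar t\}.
\end{equation*}
On the hyperplane $Tx=\bar t$ we have $Ax=Sx+UTx=Sx+U\bar t$. Hence $P_{\bar t}=P\cap\{x\mid Tx=\bar t\}$, and in particular $\bar x\in P_{\bar t}$. The right-hand side data $b-U\bar t$, $\bar t$, $\ell$, $u$ are all integral or infinite, since $b,U,\bar t,\ell,u$ are integral.

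The constraint matrix of $P_{\bar t}$ consists of the rows of $S$, the rows of $T$ and $-T$, and the rows of $I$ and $-I$ for the finite bounds. Stacking $\begin{bmatrix} S\\ T\end{bmatrix}$ with $-T$ and with $\pm I$ preserves total unimodularity (a standard fact). By the Hoffman--Kruskal theorem, $P_{\bar t}$ is therefore an integral polyhedron: every nonempty face contains an integer point.

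The one delicate step is the case where $P_{\bar t}$ is unbounded or has no vertices, since infinite bounds are allowed. Here I will use the Minkowski--Weyl decomposition $P_{\bar t}=\conv(F)+\mathrm{rec}(P_{\bar t})$, where $F$ is a finite set of integral points, taking one integral point from each minimal face. The recession cone is defined by the same TU matrix with zero right-hand side, so it is generated by finitely many integral rays. Then $\bar x=\sum_i\mu_i f_i+\sum_j\alpha_j r_j$ with $f_i\in F$ and each $r_j$ an integral ray of $P_{\bar t}\subseteq P$. Because $P\cap\Z^n$ is nonempty and $P$ is a rational polyhedron, $\conv(P\cap\Z^n)$ is a polyhedron whose recession cone equals $\mathrm{rec}(P)\supseteq\mathrm{rec}(P_{\bar t})$. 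It follows that $\bar x\in\conv(P_{\bar t}\cap\Z^n)+\mathrm{rec}(P_{\bar t})\subseteq\conv(P\cap\Z^n)$.

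If one prefers to read $\conv$ as the closed convex hull, this same argument still suffices. Taking convex hulls of the union over all $\bar x$ then gives the reverse inclusion.
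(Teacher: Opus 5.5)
Your argument is correct and follows essentially the route the paper takes: the fiber argument of \cref{thm_affine_fiber_integral}, combined with the substitution $A=S+UT$ on $Tx=\bar t$ and Hoffman--Kruskal applied to the totally unimodular system with integral right-hand side, as in the proof of \cref{thm_atu_milp_integral}. Your separate Minkowski--Weyl treatment of the unbounded case is valid but unnecessary, because integrality of the rational polyhedron $P_{\bar t}$ already means $P_{\bar t}=\conv(P_{\bar t}\cap\Z^n)$.
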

Note that the matrix $\transpose{\Pi_{\MQ}} \Lambda$ in \eqref{thm_milp_folding_condition} is totally unimodular as it contains at most one $\pm 1$ entry in every column. Thus, the notion of an affine TU decomposition is very suitable for our setting.

One idea which is central to the proof of \Cref{thm_atu_ilp_integral} and the detection of implied integrality, is to show that every \emph{fiber} defined by a linear subspace of $P$ given by $T x = d$ for some $d\in \Z^Q$ is (mixed)-integral. Although this is not a necessary condition for the possibility of a mixed-integer formulation (see \cref{fig_fiber_nonintegral_example}), it is practical to detect and exploit this condition. In our setting, imposing the requirement that the fibers have to be integral has the additional benefit that an integral solution to the original problem may be recovered in polynomial time, avoiding the issue in \Cref{fig_fiber_nonintegral_example}. We adapt  \cref{thm_affine_fiber_integral} from the proof of \cref{thm_atu_ilp_integral} in \cite{BaderHWZ18} to work in a slightly more general mixed-integer setting.

\begin{lemma}[Generalized from \cite{BaderHWZ18}]
\label{thm_affine_fiber_integral}
Let $P \subseteq \R^W$ be a rational polyhedron, let $W_I\subseteq W$ denote the integer variables. Let $T\in \Z^{k\times W_I}$ be any integral matrix and consider the affine fiber $H_d \coloneqq \{x \in P \mid T x_{W_I} = d\}$ for any fixed $d\in \Z^k$. If $H_d$ is $W_I$-integral for every $d\in \Z^k$, then $\conv(\{x \in P \mid T x_{W_I} \in \Z^k\}) = \conv(P \cap \M^{W_I})$ holds.
\end{lemma}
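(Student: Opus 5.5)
Denote $L \coloneqq \{x \in P \mid T x_{W_I} \in \Z^k\}$ and $M \coloneqq P \cap \M^{W_I}$. The inclusion $\conv(M) \subseteq \conv(L)$ is immediate. Since $T$ is integral, every $x \in M$ has $T x_{W_I} \in \Z^k$, so $M \subseteq L$, and taking convex hulls preserves the inclusion.

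For the reverse inclusion, I will show $L \subseteq \conv(M)$; then $\conv(L) \subseteq \conv(M)$ follows because $\conv(M)$ is convex. Fix $x \in L$ and set $d \coloneqq T x_{W_I} \in \Z^k$, so that $x \in H_d$. The fiber $H_d$ is $P$ intersected with finitely many rational hyperplanes, hence a rational polyhedron. By hypothesis it is $W_I$-integral, meaning $H_d = \conv(H_d \cap \M^{W_I})$. This is the standard notion: every minimal face of $H_d$ contains a point with integral $W_I$-coordinates, which for rational polyhedra is equivalent to the polyhedron equalling the convex hull of its mixed-integer points. Therefore $x \in \conv(H_d \cap \M^{W_I})$. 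Since $H_d \subseteq P$, we have $H_d \cap \M^{W_I} \subseteq M$, and so $x \in \conv(M)$.

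The only delicate point is what ``$W_I$-integral'' means when $H_d$ is unbounded. If it is defined through minimal faces, I would invoke Minkowski–Weyl: write $H_d = \conv(\text{vertices/minimal-face points}) + \mathrm{rec}(H_d)$ and note that the recession cone is rational. For each minimal face, pick a $W_I$-integral point in it. Adding integral multiples of the integral generators of the rational recession cone keeps points $W_I$-integral. Then the usual Meyer-type argument shows that every point of $H_d$ is a convex combination of $W_I$-integral points of $H_d$.

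If one prefers to avoid this and works with closures, it suffices to note that $\conv(M)$ is a rational polyhedron by Meyer's theorem, since $P$ is rational. Then $L \subseteq \overline{\conv}(M) = \conv(M)$ yields the claim. I expect this unbounded/closure bookkeeping to be the main obstacle; the core argument is just the fiberwise inclusion above.
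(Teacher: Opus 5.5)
Your proof is correct and follows the paper's argument. Both cover $\{x\in P \mid Tx_{W_I}\in\Z^k\}$ by the fibers $H_d$, use $H_d=\conv(H_d\cap\M^{W_I})$ together with $H_d\cap\M^{W_I}\subseteq P\cap\M^{W_I}$, and use integrality of $T$ for the easy inclusion. The paper additionally invokes Meyer's theorem to show the left-hand hull is a rational polyhedron, and your direct pointwise inclusion makes clear this step is not needed for the equality.
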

\begin{proof}
    Let $G \coloneqq \conv(\{ x\in P \mid Tx_{W_I} \in \Z^{k}\})$. First, we show that $G$ is a rational polyhedron. This is relatively easy to see, since it is the projection of the rational mixed integer program given by $G'\coloneqq \conv(\{(x,d) \in \M^{W_I} \times \Z^k \mid x \in P,~ Tx_{W_I} = d\})$. By Meyer's theorem~\cite{Meyer74}, $G'$ is a rational polyhedron. Then, since the projection of a rational polyhedron is again a polyhedron and $G = \proj_{x}(G')$, $G$ is also a rational polyhedron.

    Then the following holds:
    \begin{align*}
        G &=\conv(\{ x\in P \mid Tx_{W_I} \in \Z^k\})= \conv(\bigcup_{d\in \Z^k} H_d) \stackrel{\textup{(a)}} =  \conv(\bigcup_{d\in \Z^k} \conv(H_d\cap \M^{W_I}))\\
        &= \conv(\bigcup_{d\in \Z^k} H_d \cap \M^{W_I})
        \stackrel{\textup{(b)}}= \conv( P \cap \M^{W_I}).
    \end{align*} The equality in \textup{(a)} follows from the fact that $H_d$ is $W_I$-integral. For \textup{(b)} note that $T$ is an integral matrix, which implies for all $x\in \M^{W_I}$ that $T x_{W_I} = d$ holds for some $d\in \Z^k$. Then, the equality $\bigcup_{d\in \Z^k}H_d \cap \M^{W_I} = P \cap \M^{W_I}$ holds since any $x\in P\cap \M^{W_I}$ must be contained in $H_d\cap \M^{W_I}$ for some $d\in \Z^k$, which implies that $P\cap \M^{W_I} \subseteq \bigcup_{d \in \Z^k} H_d\cap \M^{W_I}$ holds. The reverse inclusion follows easily since $H_d \subseteq P$ holds.
\end{proof}

Then, using \cref{thm_affine_fiber_integral}, we show that one can recover solutions to the original problem from solutions to the reduced problem in polynomial time.
\begin{theorem} \label{thm_fibers_symmetry_integral} Consider the setting of \cref{thm_milp_folding} and let $\MQ'\subseteq \MQ_I$.
    If for each $d \in \Z^{\MQ'}$,  $H_d \coloneqq \{x\in P(A,b,\ell,u) \mid 
    (\transpose{\Pi_\MQ} \Lambda x)_{\MQ'} = d\}$ is an $\M^{W_I}$-integral polyhedron then \eqref{thm_milp_folding_condition} holds and $H'_{d'} \coloneqq \{y\in P(A',b',\ell',u') \mid y_{\MQ'} = d'\}$ is a $\MQ_I$-integral polyhedron for each $d'\in Z^{\MQ'}$. Moreover, for every $y\in P(A',b',\ell',u')\cap \M^{Q'}$ there exists a feasible solution $x \in H_{d^y}\cap \M^{W_I}$ with $d^y \coloneqq (y + \transpose{\Pi_\MQ} \Lambda \delta_{W_U})_{\MQ'}$
    that can be found in polynomial time, where $\transpose{c} x \leq \transpose{c'} y + \transpose{c} \delta$ holds.
\end{theorem}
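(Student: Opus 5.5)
We prove the three claims in turn, reusing \cref{thm_affine_fiber_integral} and the maps of \cref{def_refeqpart}.

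\textbf{Condition \eqref{thm_milp_folding_condition}.} Apply \cref{thm_affine_fiber_integral} with $P = P(A,b,\ell,u)$, with the $\MQ'$-rows of $\transpose{\Pi_\MQ}\Lambda$ as $T$ (integral, entries in $\{-1,0,1\}$), and with fibers $H_d$. This gives $\conv(\{x\in P \mid (\transpose{\Pi_\MQ}\Lambda x)_{\MQ'}\in\Z^{\MQ'}\}) = \conv(P\cap\M^{W_I})$. The set in \eqref{thm_milp_folding_condition} imposes integrality on all of $\MQ_I\supseteq \MQ'$, so it is sandwiched: it lies inside the $\MQ'$-relaxed set and contains $P\cap \M^{W_I}$. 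The latter inclusion holds because $x\in \M^{W_I}$ makes each $(\transpose{\Pi_\MQ}\Lambda x)_Q$ with $Q\subseteq W_I$ integral. Taking convex hulls yields equality.

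\textbf{Integrality of $H'_{d'}$.} Use the affine correspondence between the fibers. The map $\phi(y) = (\Lambda\widetilde{\Pi_\MQ}y+\delta_{W_U},\delta_{W_B})$ sends $H'_{d'}$ into $H_{d}$ with $d = d' + (\transpose{\Pi_\MQ}\Lambda\delta_{W_U})_{\MQ'}$, by the identity $(\transpose{\Pi_\MQ}\Lambda x')_Q = y_Q + \sum_{w\in Q}\lambda_w\delta_w$ from the proof of \cref{thm_milp_folding}. Here $d$ is integral because $\delta$ is integral on $W_I\cap W_U$. Conversely, $\psi(x) = \transpose{\Pi_\MQ}\Lambda(x_{W_U}-\delta_{W_U})$ maps $H_d$ into $H'_{d'}$. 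Since $\psi\circ\phi = \mathrm{id}$, every point of $H'_{d'}$ is $\psi$ of some point of $H_d$. As $H_d$ is $W_I$-integral, we can write $\phi(y)$ as a convex combination of points of $H_d\cap\M^{W_I}$ plus a recession direction. Applying the affine map $\psi$ preserves convex combinations and sends $W_I$-integral points to $\MQ_I$-integral points (integer sums of integers). Recession directions map to recession directions of $H'_{d'}$. This gives $H'_{d'} = \conv(H'_{d'}\cap\M^{\MQ_I})$ plus its recession cone, i.e. $\MQ_I$-integrality. The main obstacle is handling unbounded polyhedra cleanly; I would phrase integrality as ``every nonempty face contains a $W_I$-integral point'' and argue via minimal faces, or via optimizing arbitrary objectives $\transpose{\tilde c}y$ pulled back through $\psi$.

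\textbf{Recovery.} Given $y$ (integral on $\MQ'$), set $d^y$ as stated. Then $x' = \phi(y)\in H_{d^y}$ with $\transpose{c}x' = \transpose{c'}y+\transpose{c}\delta$ by \cref{def_refeqpart}\ref{def_refeqpart_from_red}, so $H_{d^y}$ is nonempty. Solve the LP $\min \transpose{c}x$ over $H_{d^y}$ with a polynomial-time method returning a vertex (ellipsoid or interior point plus crossover). If $c$ is unbounded below on $H_{d^y}$, then $c$ is unbounded on $P$ as well. In that case I would instead start from $x'$ and move to a vertex of the minimal face of $H_{d^y}$ containing it without increasing the objective, which is standard polynomial vertex-purification. $W_I$-integrality of $H_{d^y}$ makes the returned vertex lie in $\M^{W_I}$, and its objective is at most $\transpose{c}x' = \transpose{c'}y+\transpose{c}\delta$. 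If $H_{d^y}$ is pointed this yields the required $x$. Otherwise I would add the bounds $x'$ already satisfies, or restrict to a face containing a vertex, to guarantee that a vertex exists.
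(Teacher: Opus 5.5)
Your derivation of \eqref{thm_milp_folding_condition} is the paper's argument. Your proof that $H'_{d'}$ is $\MQ_I$-integral is correct but takes a different route. The paper pulls faces back: the preimage under $\kappa=\psi$ of any face of $H'_{d'}$ contains a minimal face of $H_d$, which holds a $W_I$-integral point. You push forward instead. From $\psi\circ\phi=\mathrm{id}$ you get $\psi(H_d)=H'_{d'}$. Since $H_d=\conv(H_d\cap\M^{W_I})$ for a rational $W_I$-integral polyhedron, and $\psi(\conv S)=\conv\psi(S)$ for affine $\psi$, integrality transfers directly. This is more elementary. It needs no recession-cone bookkeeping once you use the convex-hull form of integrality. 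It also makes explicit why $\psi(H_d)=H'_{d'}$, which the paper's face-preimage argument uses only implicitly.

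The genuine gap is in the recovery step, in exactly the degenerate cases you flag.

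\textbf{Non-pointed fiber.} $H_{d^y}$ may fail to be pointed, e.g.\ when $P(A,b,\ell,u)$ has a nontrivial lineality space coming from free variables. Then there is no vertex to return, and neither of your fallbacks works. A non-pointed polyhedron has no face containing a vertex. Adding bounds, even integral ones that $x'$ satisfies, can destroy $W_I$-integrality: the line $\{x\in\R^2\mid x_1=2x_2\}$ is $\{1,2\}$-integral, but adding $x_1\geq 1$ creates the vertex $(1,\tfrac12)$. The missing idea is to work with minimal faces rather than vertices, as the paper does.
\begin{enumerate}[label=(\roman*)]
\item Compute a minimal face of $H_{d^y}$ on which $\transpose{c}x$ attains its minimum. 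This is an affine subspace and can be identified in polynomial time via the ellipsoid method.
\item By assumption this face contains a $W_I$-integral point.
\item Find such a point in polynomial time by solving the resulting linear Diophantine system, e.g.\ via Hermite normal form; this is Proposition 6.1 of \cite{VanDerHulst2025}.
\end{enumerate}

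\textbf{Unbounded objective.} Your fallback also fails as stated. Moving $x'$ to a vertex cannot in general keep the objective at most $\transpose{c}x'$: for $H=\{x\in\R\mid x\geq 0\}$, $c=-1$, $x'=5$, the only vertex $0$ is worse. Instead, take a $W_I$-integral point $z$ of a minimal face of $H_{d^y}$ and a recession direction $r$ with $\transpose{c}r<0$, scaled to be integral. Then return $z+tr$ for a sufficiently large integer $t$. The paper's own proof tacitly assumes the LP over $H_{d^y}$ is bounded, so this repaired argument would close a hole there as well.
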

\begin{proof}
    For $G=P(A,b,\ell,u)$, we have that
    \begin{equation*}
        \{x\in G \mid (\transpose{\Pi_\MQ} \Lambda x)_{\MQ'} \in \Z^{\MQ'}\} \subseteq \{x\in G \mid (\transpose{\Pi_\MQ} \Lambda x)_{\MQ_I} \in \Z^{\MQ_I}\} \subseteq G \cap \M^{W_I},
    \end{equation*}
    where the first inclusion follows since $\MQ'\subseteq \MQ_I$ holds and the latter inclusion since $(\transpose{\Pi_\MQ} \Lambda x)_{\MQ_I} \in \Z^{\MQ_I}$ holds for all $x\in \M^{W_I}$. Then, by monotonicity of the convex hull we have: 
    \begin{equation*}
        \conv(\{x\in G \mid (\transpose{\Pi_\MQ} \Lambda x)_{\MQ'} \in \Z^{\MQ'}\}) \subseteq \conv(\{x\in G \mid (\transpose{\Pi_\MQ} \Lambda x)_{\MQ_I} \in \Z^{\MQ_I}\}) \subseteq \conv(G\cap \M^{W_I}).
    \end{equation*}
    
    Since the conditions of \cref{thm_affine_fiber_integral} are satisfied for $\MQ'$, we have equality throughout, which shows that \eqref{thm_milp_folding_condition} holds.
   
   Secondly, let $y\in P(A',b',\ell',u')\cap \M^{\MQ'}$ be a feasible solution to the reduced problem. By \cref{def_refeqpart}\ref{def_refeqpart_from_red} it holds for 
    $x' \coloneqq (\Lambda \widetilde{\Pi_\MQ} y,\zerovec_{W_B}) + \delta$  that $x'\in P(A,b,\ell, u)$ and $\transpose{c} x' = \transpose{c'} y + \transpose{c} \delta $. 
    Then, we have for $Q\in \MQ'$ that:
    \begin{equation*}
        (\transpose{\Pi_\MQ} \Lambda x')_Q = \sum_{w\in Q} \lambda_w x'_w = \sum_{w\in Q} \lambda_w (\lambda_w \frac{y_Q}{|Q|} + \delta_w) = y_Q + \sum_{w\in Q} \lambda_w \delta_w = (y + \transpose{\Pi_\MQ} \Lambda \delta_{W_U})_{Q} = d^y_{Q},
    \end{equation*} where $d^y_Q$ is integral by integrality of $y_Q$, $\Lambda$ and $\delta_{W_I\cap W_U}$.
    Note that we implicitly use that $Q\subseteq W_U$ holds, which is implied by $Q\in \MQ'$.
    Since this holds for any $Q\in \MQ'$, it follows that $x'\in H_{d^y}$ holds. Since $H_{d^y}$ is $W_I$-integral and nonempty since $x'\in H_{d^y}$ holds, the linear program $\min\{\transpose{c} x \mid x \in H_{d^y}\}$ contains a solution $x\in H_{d^y}\cap \M^{W_I}$ in its minimal face such that $\transpose{c} x \leq \transpose{c} x'$. Such a solution can be found in polynomial time using the ellipsoid method \cite{Khachiyan1980}, see \cite{Grotschel1981} and \cite{Bland1981} for an explanation on how the optimal minimal face can be identified.
    Proposition 6.1 in \cite{VanDerHulst2025} provides a proof of how $x$ can be determined from the minimal face in polynomial time. Furthermore, note that $\transpose{c} x \leq \transpose{c} x' = \transpose{c'} y + \transpose{c} \delta$ holds. 

    Finally, let us show that $H'_{d'}$ is an $\MQ_I$-integral polyhedron for any $d'\in \Z^{\MQ'}$.  First, note that we obtained $P(A',b',\ell',u')$ through the affine map $\kappa : \R^{W_U} \to \R^{\MQ}$ defined as $\kappa(x_{W_U}) \coloneqq 
    \transpose{\Pi_\MQ} \Lambda (x_{W_U} - \delta{W_U})$ that was applied to $P(A,b,\ell,u)$. Thus, the preimage of $H'_{d'}$ in $P(A,b,\ell,u)$ consists of those $x\in P(A,b,\ell,u)$ such that $(\transpose{\Pi_\MQ} \Lambda (x_{W_U} - \delta_{W_U}))_{\MQ'} = d'$. Then, for $d \coloneqq d' + (\transpose{\Pi_\MQ} \Lambda \delta_{W_U})_\MQ'$ the preimage of $H'_{d'}$ is exactly $H_d$. 
    As we have $|\MQ| \leq |W_U|$, $\kappa$ is a surjective affine transformation and standard theory shows that the preimage of any face $F'$ of $H'_{d'}$ contains some minimal face $F$ of $H_d$ (see e.g. 
    \cite[Lemma 7.10]{Ziegler01}). Since $H_d$ is $\M^{W_I}$-integral, there exists some $x\in F\cap \M^{W_I}$. Then, note that $y \coloneqq \transpose{\Pi_\MQ} \Lambda (x_{W_U} - \delta_{W_U}) \in \M^{\MQ_I}$ holds, which we show in the proof of  \cref{thm_milp_folding}\ref{thm_milp_folding_to_red}. Then, since $y\in F'$ holds, it follows that any minimal face $F'$ of $H'_{d'}$ contains a $\MQ_I$-integral point, which shows that $H'_{d'}$ is $\MQ_I$-integral. 
\end{proof}

In \cref{thm_fibers_symmetry_integral}, we also show that $H'_{d}$ is integral. By \cref{thm_affine_fiber_integral}, this directly implies that $\conv(P(A',b',\ell',u') \cap \M^{\MQ'}) = \conv(P(A',b',\ell',u') \cap \M^{\MQ_I})$ holds for the reduced problem. Thus, any variables $Q\in \MQ_I \setminus \MQ'$ are implied integer in the reduced problem. 
One may hypothesize that the condition $\MQ'\subseteq \MQ_I$ is unnecessary and that the integrality of the fibers for $\MQ'$ implies the integrality for the fibers of $\MQ_I$. The example in \Cref{fig_nonintegral_affine_fiber_integral_fibers} shows that this is not the case, and that the condition $\MQ'\subseteq \MQ_I$ can indeed be helpful when the the fibers obtained by fixing the matrix with respect to $\MQ_I$ are not integral.\\

Given \cref{thm_fibers_symmetry_integral}, we are motivated to investigate methods to determine when the affine fibers $H_d$ are integral.  
Although \cref{thm_atu_ilp_integral} is useful for integer programs, we cannot apply it directly to mixed-integer programs, as they do not satisfy its conditions. Thus, we generalize affine TU decompositions to mixed-integer linear programs to deal with this limitation.

\subsection{Affine TU decompositions for mixed-integer linear programs}

We consider a mixed-integer linear program with variables $(x,y,z,\omega) \in \Z^{W^1} \times \Z^{W^2} \times \R^{W^3} \times \M^{W^4_I}$, where $W^4_I\subseteq W^4$ are the integer variables in $W^4$. We use $W_I = W^1 \cup W^2 \cup W^4_I$ to denote the integer variables and $W = W^1\cup W^2 \cup W^3 \cup W^4$ to denote all variables. 
The linear relaxation of the integer program is of the form:
\begin{align*}
    P \coloneqq \{A x + B y + C z = b,~ D x + E y + F \omega = g,~ \ell \leq \transpose{\begin{bmatrix}
        x &
        y &
        z &
        \omega
    \end{bmatrix}} \leq u \},
\end{align*} where $A \in \Z^{m_1\times W^1}$, $B\in \Z^{m_1\times W^2}$, $C\in \Z^{m_1\times W^3}$, $b\in \Z^{m_1}$, $D\in \Q^{m_2\times W^1}$, $E\in \Q^{m_2\times W^2}$, $F\in \Q^{m_2\times W^4}$ and $\ell \leq u$ hold, where $\ell$ and $u$ are rational vectors where we consider infinite entries to be integral. Any MILP can be brought into this form if we allow $m_1=0$ and $m_2=0$ to hold. 

\begin{theorem}
\label{thm_atu_milp_integral}
Suppose that for $k \leq |W^2|$, there exist matrices $S \in \Z^{m_1\times W^2}$, $U\in \Z^{m_1 \times k}$, $T\in \Z^{k \times W^2}$  and $R\in \Q^{m_2\times k}$ such that $B= S + U T$, $E=RT$ and  
$\begin{bmatrix}
    S & C\\
    T & \zerovec
\end{bmatrix}$ is totally unimodular and that $\ell,u \in \M^{W_I \cup W^3}$ holds. Then, for any $
    d = (d_{W^1}, d_T, d_{W^4_I}) \in \Z^{W^1} \times \Z^{k} \times \Z^{W^4_I}$, the affine fiber $H_d \coloneqq \{(x,y,z,\omega) \in P \mid x = d_{W^1},~ T y = d_T, ~\omega_{W^4_I} = d_{W^4_I} \}$ satisfies 
    \begin{equation*}
        H_d = \conv(H_d \cap \M^{W_I}) = \conv(H_d \cap \M^{W_I \cup W^3}) \text{ and }
    \end{equation*} 
    \begin{equation*}
        \conv(P\cap \M^{W_I}) = \conv(P\cap\M^{W_I\cup W^3})  = \conv(\{(x,y,z,\omega)\in P \mid x\in \Z^{W^1},~T y \in \Z^k,~ \omega \in \M^{W^4_I} \})
    \end{equation*} holds.
\end{theorem}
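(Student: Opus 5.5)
Fix $d=(d_{W^1},d_T,d_{W^4_I})$. On the fiber $H_d$ the variables $x$ and $\omega_{W^4_I}$ are fixed to integers, and $Ty=d_T$. Hence the second block of constraints becomes
\[
F\omega = g - Dd_{W^1} - R d_T,
\]
which involves only $\omega$. The first block becomes
\[
Sy + Cz = b - Ad_{W^1} - U d_T,
\]
which involves only $(y,z)$ and has an integral right-hand side, since $A,U,b$ are integral. So $H_d$ is, up to the fixed coordinates, a Cartesian product $H^{yz}_d\times H^{\omega}_d$. Here
\[
H^{yz}_d=\{(y,z) : Sy+Cz=b', \; Ty=d_T, \; \ell\le (y,z)\le u\},
\]
and $H^\omega_d$ is the $\omega$-polyhedron with $\omega_{W^4_I}$ fixed and the remaining bounds in place. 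Note that $E=RT$ is exactly what allows $Ey$ to be replaced by $Rd_T$ on the fiber, decoupling $\omega$ from $y$.

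The constraint matrix of $H^{yz}_d$ is $\left[\begin{smallmatrix} S & C\\ T & \zerovec\end{smallmatrix}\right]$, which is TU by assumption. Its right-hand side $(b',d_T)$ is integral, and the bounds on $y$ and $z$ are integral (or infinite) because $\ell,u\in\M^{W_I\cup W^3}$. By Hoffman--Kruskal, applied to the equality system with integral bounds (TU is preserved under appending identity rows and duplicating rows with negation), $H^{yz}_d$ is an integral polyhedron. So every minimal face contains a point with $(y,z)$ integral.

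$H^\omega_d$ has no integrality requirement beyond the already fixed integer coordinates $\omega_{W^4_I}=d_{W^4_I}$. Every point of it therefore satisfies the $W^4_I$-integrality automatically. A minimal face of the product is a product of minimal faces, so it contains a point that is integral on $W^1\cup W^2\cup W^3\cup W^4_I$. This gives $H_d=\conv(H_d\cap\M^{W_I\cup W^3})$. Since $H_d\cap\M^{W_I\cup W^3}\subseteq H_d\cap\M^{W_I}\subseteq H_d$, we also get $H_d=\conv(H_d\cap\M^{W_I})$. The step needing most care is this product/minimal-face argument, in particular when $H_d$ is empty or has no vertices. I would handle it by noting that a minimal face of $H^{yz}_d$ is an affine subspace defined by TU rows with integral right-hand side, which still contains an integer point, as in the standard proof.

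For the global statement, apply \cref{thm_affine_fiber_integral} with the integral matrix $T'$ that selects $x$, maps $y\mapsto Ty$, and selects $\omega_{W^4_I}$, taking the integer variable set to be $W_I$. Every fiber is $W_I$-integral, so
\[
\conv(\{(x,y,z,\omega)\in P : x\in\Z^{W^1},\; Ty\in\Z^k,\; \omega\in\M^{W^4_I}\})=\conv(P\cap\M^{W_I}).
\]
Applying the same lemma with integer set $W_I\cup W^3$ (again via the fiber integrality just shown) gives the equality with $\conv(P\cap\M^{W_I\cup W^3})$. Alternatively, that equality follows from the chain of inclusions
\[
P\cap\M^{W_I\cup W^3}\subseteq P\cap\M^{W_I}\subseteq\{(x,y,z,\omega)\in P: x\in\Z^{W^1},\; Ty\in\Z^k,\; \omega\in\M^{W^4_I}\},
\]
together with the fact that the outer convex hull equals $\conv(\bigcup_d H_d)=\conv(\bigcup_d \conv(H_d\cap\M^{W_I\cup W^3}))$.
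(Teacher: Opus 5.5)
Your proof is correct. The fiber statement and the equality with $\conv(\{(x,y,z,\omega)\in P \mid x\in\Z^{W^1},\,Ty\in\Z^k,\,\omega\in\M^{W^4_I}\})$ follow the paper's argument. You use the same decoupling of $H_d$ into a product via $B=S+UT$ and $E=RT$, Hoffman--Kruskal on the $(y,z)$-factor, and \cref{thm_affine_fiber_integral} with the block-diagonal integral matrix. Your minimal-face argument for the product is valid for rational polyhedra, but you can avoid the case analysis for empty or non-pointed fibers. Once $H^{yz}_d=\conv(H^{yz}_d\cap\Z^{W^2\cup W^3})$ is known, the identity $\conv(X)\times\conv(Y)=\conv(X\times Y)$ gives the fiber claim at once, which is essentially what the paper does.

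The genuine difference is how you obtain $\conv(P\cap\M^{W_I})=\conv(P\cap\M^{W_I\cup W^3})$. The paper imports it from \cite[Theorem~3]{VanDerHulst2025}. That result needs only three things: $C$ is TU (as a submatrix of $\left[\begin{smallmatrix} S & C\\ T & \zerovec\end{smallmatrix}\right]$), $A$, $B$, $b$ are integral, and the bounds on $z$ are integral. You instead read it off the fibers you have already shown to be $(W_I\cup W^3)$-integral. Since $P\cap\M^{W_I}\subseteq\bigcup_d H_d$ and each $H_d=\conv(H_d\cap\M^{W_I\cup W^3})$, your chain of inclusions closes. Equivalently, you can apply \cref{thm_affine_fiber_integral} a second time with integer set $W_I\cup W^3$ and zero columns on $W^3$.

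Your route is self-contained and needs nothing beyond what you proved for $H_d$. The paper's route shows that the implied integrality of $z$ does not depend on the affine TU decomposition of $B$ at all. It depends only on $C$ being TU and on the integrality of the first constraint block.
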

\begin{proof}
    We consider the affine fiber $H_d$ for any fixed $d\in \Z^{W^1} \times \Z^{k} \times \Z^{W^4_I}$ and rewrite it by substituting $B = S + U T$ and $D = R T$.
    
    \begin{align*}
      H_d \coloneqq& 
      \begin{aligned}[t]
          \{(x,y,z,\omega) \in \R^W \mid&~ A x + B y + C z = b,~ D x + E y + F \omega = g,~  x = d_{W^1},~ T y = d_T, \\&
       ~\omega_{W^4_I} = d_{W^4_I},~ \ell \leq \transpose{\begin{bmatrix}
        x &
        y &
        z &
        \omega
    \end{bmatrix}} \leq u \}
      \end{aligned}\\
      =& 
      \begin{aligned}[t]
          \{(x,y,z,\omega) \in \R^W \mid~&  x = d_{W^1},~ Sy + C z = b - A d_{W^1} - U d_{T},~ T y = d_T, \\
      &F \omega = g - D d_{W^1} - R d_T,~\omega_{W^4_I} = d_{W^4_I},~ \ell \leq \transpose{\begin{bmatrix}
        x &
        y &
        z &
        \omega
    \end{bmatrix}} \leq u \} 
      \end{aligned}\\
      = & \begin{aligned}[t]
      & \{x \in \R^{W^1} \mid x = d_{W^1}, ~\ell_{W^1} \leq x \leq u_{W^1} \} \times \\
      & \{\omega \in \R^{W^4} \mid F \omega = g - D d_{W^1} - R d_T, ~\omega_{W^4_I} = d_{W^4_I},~\ell_{W^4} \leq~ \omega \leq u_{W^4}\} \times \\
      &\{(y,z) \in \R^{W^2 \cup W^3} \mid Sy + C z = b - A d_{W^1} - U d_{T},~ T y = d_T, \ell_{W^2\cup W^3} \leq \transpose{\begin{bmatrix}
        y &
        z 
    \end{bmatrix}} \leq u_{W^2\cup W^3}\}
      \end{aligned}\\
    \coloneqq& H^{x}_d \times H^{\omega}_d \times H^{y,z}_d
    \end{align*}

    In the third step, we use that $H_d$ can be rewritten as the Cartesian product of the three polyhedra $H^x_d \subseteq \R^{W^1}$, $H^{y,z}_d \subseteq \R^{W^2\cup W^3}$ and $H^\omega_d \subseteq \R^{W^4}$. 
    Then since $\begin{bmatrix}
        S & C \\
        T & \zerovec
    \end{bmatrix}$ is totally unimodular and $\begin{bmatrix}
        b - A d_{W^1} - U d_{T} \\
        d_T
    \end{bmatrix}$ is integral by integrality of $b$, $U$, $A$ and $d$, and $\ell_{W^2\cup W^3}$ and $u_{W^2\cup W^3}$ are integral, $H^{y,z}_d$ is an integral polyhedron by Hoffman and Kruskal's theorem~\cite{HoffmanK56}. Thus, we have $H^{y,z}_d = \conv(H^{y,z}_d \cap \Z^{W^2 \cup W^3}) = \conv(H^{y,z}_d \cap (\Z^{W^2} \times \R^{W^3}))$, where the last equality follows since 
    $H^{y,z}_d \cap \Z^{W^2 \cup W^3} \subseteq H^{y,z}_d \cap (\Z^{W^2} \times \R^{W^3}) \subseteq H^{y,z}_d$ holds. This implies that $H_d = \conv(H_d \cap \M^{W_I}) = \conv(H_d \cap \M^{W_I \cup W^3})$ holds.

   Then, note that $C$ is totally unimodular since it is a submatrix of $\begin{bmatrix}
        S & C \\
        T & \zerovec
    \end{bmatrix}$. Since $A$, $B$ and $b$ are integral and the variable bounds $\ell_{W^3}$ and $u_{W^3}$ of $z$ are integral we have by that the equation $\conv(P\cap \M^{W_I}) = \conv(P \cap \M^{W_I \cup W^3})$ holds by~\cite[Theorem 3]{VanDerHulst2025}. Finally,  using $\widehat{T} = \begin{bmatrix}
        I_{W^1} & \zerovec & \zerovec\\
        \zerovec & T & \zerovec \\
        \zerovec & \zerovec & I_{W^3_I}
    \end{bmatrix}$ as the matrix in \cref{thm_affine_fiber_integral} and the fact that $H_d = \conv(H_d \cap \M^{W_I})$ holds, we derive that $\conv(P \cap \M^{W_I})) = \conv(\{(x,y,z,\omega)\in P \mid x\in \Z^{W^1},~T y \in \Z^k,~ \omega \in \M^{W^4_I} \})$ holds.\end{proof}

There are a few key differences between \cref{thm_atu_ilp_integral} and \cref{thm_atu_milp_integral} that make the latter more convenient to use. First of all, we do not require integrality of all of the right-hand sides, and we only need to construct an affine TU decomposition of a block in the constraint matrix rather than the complete matrix. Additionally, the $E=RT$ condition in \cref{thm_atu_milp_integral} is new. This condition broadens the scope of the theorem, as otherwise, $D=\zerovec$ would have to hold to guarantee implied integrality in the affine fiber $H_d$ in the proof. Furthermore, during the detection we infer implied integrality for the continuous variables and have $\conv(P\cap \M^{W_I}) = \conv(P\cap \M^{W_I\cup W^3})$.
\\

\Cref{thm_atu_milp_integral} can be used directly to show that the condition \eqref{thm_milp_folding_condition} in \cref{thm_milp_folding} holds. For integer variables $W_I\subseteq W$ and an equitable partition $(\MP,\MQ)$ such that either $Q\cap W_I = \emptyset$ or $Q\subseteq W_I$ holds, let $\MQ^2 = \{Q \in \MQ_I \mid |Q| > 1\}$ and let $W^2 = W_B \cup \bigcup_{Q\in \MQ^2} Q$. Then, if the conditions of  \cref{thm_atu_milp_integral} hold for $T=(\transpose{\Pi_\MQ} \Lambda)_{\MQ',\star}$ for some subset $\MQ'\subseteq \MQ^2$, this directly shows condition \eqref{thm_milp_folding_condition} in \cref{thm_milp_folding} by using \cref{thm_fibers_symmetry_integral}, and \cref{thm_fibers_symmetry_integral} shows that we can recover integral solutions for the original model.

\subsection{Example: Generalized Assignment problem}
\label{sec_example}
Consider the generalized assignment problem, where one computes an assignment of $J$ items with sizes $a_j, ~j\in J$ to $I$ knapsacks with capacity $C_i$ such that we maximize the sum of profits $\sum_{j\in J} c_j$ such that the capacities on the knapsacks are not exceeded. This can be modeled as an integer program using the binary variables $x_{ij} = \begin{cases}
    1 &\text{ if $j$ is packed into knapsack $i$}\\
    0 &\text{ otherwise}
\end{cases}$ for all $i\in I, j\in J$.

\begin{subequations}
\label{eq_mk_orig}
\begin{align}
\max\quad & \sum_{i\in I}\sum_{j\in J} c_j x_{ij} \\
\text{s.t.}\quad 
& \sum_{j\in J} a_j x_{ij} \leq C_i 
&& \forall i\in I \\
& \sum_{i\in I} x_{ij} \leq 1 
&& \forall j\in J \label{eq_mk_orig_item_packing}\\
& x_{ij} \in \{0,1\} 
&& \forall i\in I,\ \forall j\in J \label{eq_mk_orig_bounds}
\end{align}
\end{subequations}

Now, suppose that there exist identical items $j\in J$ that have the same weight and cost. Let $\mathcal{K} = \{K_1,\dotsc, K_n\}$ be a partition of $J$ into $n$ item sets that have identical cost and weight. Clearly, $n \leq |J|$ must hold, and by definition we have that $a_K = a_j$ and $c_K = c_j$ for any $j\in K$, for any $K\in \mathcal{K}$. Let the variable partition $\MQ$ be given by the parts $Q_{iK} = \{x_{ij} \mid \forall j\in K\}$ for $i\in I$, $K\in \mathcal{K}$, where identical items are placed into the same part. Let $\MP$ be a constraint partition  where all constraints \eqref{eq_mk_orig_item_packing} and \eqref{eq_mk_orig_bounds} belonging to identical items are placed into the same partition $P\in \MP$ and all other constraints are given their own partition. Then, it can be verified that $(\MP,\MQ)$ is an equitable partition of the generalized assignment problem in \eqref{eq_mk_orig}. Applying the reformulation suggested by the equitable partition we obtain a smaller formulation:
\begin{subequations}
\label{eq_mk_reduced}
\begin{align}
\max\quad & \sum_{i\in I}\sum_{K\in \mathcal{K}} c_K y_{iK} \\
\text{s.t.}\quad 
& \sum_{K\in \mathcal{K}} a_K y_{iK} \leq C_i 
&& \forall i\in I \label{eq_mk_reduced_knapsack}\\
& \sum_{i\in I} y_{iK} \leq |K| 
&& \forall K\in \mathcal{K} \\
& y_{ik} \in \{0,1,\cdots,|K|\} 
&& \forall i\in I,\ \forall K\in \mathcal{K}
\end{align}
\end{subequations}

Now, suppose that we are given any integral solution $y^\star$ to the reduced problem. For these values, we wish to recover a solution in the $x$-space. Then, we can add the constraints \eqref{eq_mk_affine_fixing}, that link the $x$ variables to $y^\star$, and hope to recover a solution for $x$.

\begin{subequations}
\label{eq_mk_affine}
\begin{align}
\max\quad & \sum_{i\in I}\sum_{j\in J} c_j x_{ij} \label{eq_mk_affine_fixing_obj}\\
\text{s.t.}\quad 
& \sum_{j\in J} a_j x_{ij} \leq C_i 
&& \forall i\in I \label{eq_mk_affine_fixing_knapsack}\\
& \sum_{i\in I} x_{ij} \leq 1 
&& \forall j\in J \\
& \sum_{j\in K} x_{ij} = y^{\star}_{iK} &&\forall i \in I, \forall K \in \mathcal{K} \label{eq_mk_affine_fixing}\\
& x_{ij} \in \{0,1\} 
&& \forall i\in I,\ \forall j\in J
\end{align}
\end{subequations}

We can substitute the equations \eqref{eq_mk_affine_fixing} into the constraints \eqref{eq_mk_affine_fixing_knapsack} and the objective \eqref{eq_mk_affine_fixing_obj}.

\begin{subequations}
\label{eq_mk_postsolve_fixed}
\begin{align}
\max\quad & \sum_{i\in I}\sum_{K\in K} c_K y^\star_{iK} \\
\text{s.t.}\quad 
& \sum_{K\in \mathcal{K}} a_K y^\star_{iK} \leq C_i 
&& \forall i\in I \label{eq_mk_postsolve_knapsack}\\
& \sum_{i\in I} x_{ij} \leq 1 
&& \forall j\in J \label{eq_mk_postsolve_usedonce}\\
& \sum_{j\in K} x_{ij} = y^{\star}_{iK} &&\forall i \in I, \forall K\in \mathcal{K} \label{eq_mk_postsolve_linking}\\
& x_{ij} \in \{0,1\} 
&& \forall i\in I,\ \forall j\in J
\end{align}
\end{subequations}
Note that in \eqref{eq_mk_postsolve_fixed}, both the objective and the knapsack constraints are not dependent on $x$ anymore. In fact, by feasibility of $y^\star$, all constraints in \eqref{eq_mk_postsolve_knapsack} are feasible by feasibility of \eqref{eq_mk_reduced_knapsack}.
Then, we observe that the remaining problem is given by an assignment problem on the $x$ variables. Since the constraints \eqref{eq_mk_postsolve_usedonce}, \eqref{eq_mk_postsolve_linking} form a totally unimodular matrix and the right-hand side given by $1$ and $y^\star$ is integral, the LP relaxation of \eqref{eq_mk_postsolve_fixed} is integral. Moreover, the solution $\bar{x}_{ij} = \frac{y^\star_{iK}}{|K|}$ for all $i\in I, K\in\mathcal{K}$ such that $j\in K$ is feasible for the LP relaxation of \eqref{eq_mk_postsolve_fixed} by \cref{thm_lp_folding}. Thus, an integral solution $x$ can be obtained by finding a vertex solution of the LP relaxation of \eqref{eq_mk_postsolve_fixed}.

The reformulation of \eqref{eq_mk_affine} into \eqref{eq_mk_postsolve_fixed} can be seen as exhibiting an affine TU decomposition. The rows of the constraints \eqref{eq_mk_postsolve_linking} correspond to the $T$-matrix, and their substitution in \eqref{eq_mk_postsolve_knapsack} corresponds to the $U$-matrix. The rows of \eqref{eq_mk_postsolve_usedonce} form the nonzeros of the $S$-matrix.
Additionally, note that the usage of the equations to cancel the nonzeros in the objective is possible due to the fact that the objective vector is equitably partitioned by $\MQ$.\\

There are a few interesting observations that can be made about the generalized assignment problem. First of all, as also implied by Salvagnin~\cite{Salvagnin2013}, the used equitable partition is not necessarily the coarsest equitable partition. Suppose for example, that for all knapsacks $i\in I$ the capacity $C_i$ is equal to some constant $C$. Then, the knapsack constraints \eqref{eq_mk_orig_item_packing} have additional permutation symmetries which permute the items between the bins. These symmetries are also captured by the coarsest equitable partition since equitable partitions generalize permutation symmetries. Consequently, for the coarsest equitable partition $(\MP,\MQ)$, $\MQ$ contains variables $x_{ij}$ belonging to multiple bins in $I$. In the case where one would reformulate \eqref{eq_mk_orig} using orbital shrinking, the orbital shrinking subproblem would then become the $\cplxNP$-hard binpacking problem, in contrast to the polynomial-time solvable assignment problem that we derive here. Salvagnin shows in~\cite{Salvagnin2013} that for the generalized assignment model, employing a constraint programming solver to solve the binpacking subproblem and additionally reformulating the symmetric knapsacks yields better performance than the reformulation with the assignment problem that we derive here. 

Secondly, we note that there is a minor way to improve the postsolve subproblem in \eqref{eq_mk_postsolve_fixed}. Let $\mathcal{K}^1 \coloneqq \{K\in \mathcal{K} \mid a_{K} = 1\}$ and $J^1 \coloneqq \{ j\in J \mid a_j = 1\}$ denote the set of items with weight $1$. For these items, it is not necessary to fix them to their corresponding $y^\star_{iK}$ value in \eqref{eq_mk_postsolve_fixed}. In particular, if we change the equalities in \eqref{eq_mk_postsolve_linking} to only be added for all $\mathcal{K}\setminus \mathcal{K}^1$, and substitute only all items with non-unit weight in \eqref{eq_mk_postsolve_knapsack}, we obtain the following reformulation.

\begin{subequations}
\label{eq_mk_unit}
\begin{align}
\max\quad & \sum_{i\in I}\sum_{J\in \mathcal{J}^1} c_j x_{ij} + \sum_{i\in I}\sum_{K\in \mathcal{K}\setminus \mathcal{K}^1} c_K y^\star_{iK}  \\
\text{s.t.}\quad 
& \sum_{j\in \mathcal{J}^1} x_{ij} \leq C_i - \sum_{K\in \mathcal{K}\setminus \mathcal{K}^1} a_K y^\star_{iK}
&& \forall i\in I \label{eq_mk_unit_knapsack}\\
& \sum_{i\in I} x_{ij} \leq 1 
&& \forall j\in J \label{eq_mk_unit_usedonce}\\
& \sum_{j\in K} x_{ij} = y^{\star}_{iK} &&\forall i \in I, \forall K\in \mathcal{K}\setminus \mathcal{K}^1 \label{eq_mk_unit_linking}\\
& x_{ij} \in \{0,1\} 
&& \forall i\in I,\ \forall j\in J
\end{align}
\end{subequations}

It is not difficult to see that \eqref{eq_mk_unit} is still a perfect formulation. The constraint matrix defined by \eqref{eq_mk_unit_knapsack}--\eqref{eq_mk_unit_linking} is totally unimodular as it is the node-edge incidence matrix of a bipartite graph and the right-hand sides are integral. The two parts of the bipartition are given by constraints \eqref{eq_mk_unit_usedonce} and (\eqref{eq_mk_unit_knapsack} plus \eqref{eq_mk_unit_linking}), respectively. The fixings for $\mathcal{K}^1$ can be left out due to the fact that the variables in $J^1$ are implied integer. Let $P$ be the linear relaxation of \eqref{eq_mk_orig}. Then total unimodularity of \eqref{eq_mk_unit} shows by affine TU decomposition that $\conv(P\cap \Z^n) = \conv(\{x\in P \mid \sum_{j\in K} x_{ij} \in \Z, ~ \forall i\in I, ~\forall K \in \mathcal{K}\setminus \mathcal{K}^1\})$ holds. From this equality it also follows that $y_{iK}$ is implied integer for all $i\in I$ and all $K\in \mathcal{K}^1$ by the other variables in \eqref{eq_mk_reduced}. One way in which this additional power from implied integrality manifests is that in contrast to \eqref{eq_mk_postsolve_fixed}, solving \eqref{eq_mk_unit} may give a stronger solution than the provided $y^\star$ solution, if $y^\star$ is suboptimal and one of the items $j\in J^1$ can still be assigned to some knapsack. This correspond to having the proper subset $\MQ' \subset \MQ_I$ in the statement of \cref{thm_fibers_symmetry_integral}.

\section{Detecting DRCR for mixed-integer linear programs}
\label{sec_algorithm}
Next, let us formulate an algorithm to detect the reductions described in \cref{thm_milp_folding}. To achieve such a reduction, we need to balance two conditions: we would like the partition $\MQ$ to be as coarse as possible, but \eqref{thm_milp_folding_condition} must also hold. These two objectives naturally conflict with each other, as a coarse partition of the integer variables requires us to find an affine TU decomposition with fewer integrality constraints. Although one could consider the coarsest possible partition and simply try to detect an affine TU decomposition and apply \cref{thm_atu_milp_integral} and \cref{thm_fibers_symmetry_integral}, it is unlikely that \eqref{thm_milp_folding_condition} holds, either by the structure of the problem or by additional symmetries. For the example of the generalized assignment problem in \cref{sec_example}, if two bins have identical capacities then the symmetries that permute the two bins imply that \eqref{thm_milp_folding_condition} is not satisfied for the coarsest equitable partition. \\

In order to detect an affine TU decomposition that we can use, we attempt to detect the structure of \cref{thm_atu_milp_integral} where $T$ is given by $(\transpose{\Pi_\MQ} \Lambda x_{W_U})_{\MQ',\star}$, where $\MQ' \subseteq  \{ Q\in \MQ_I \mid |Q| > 1\}$. Note that the columns of $T$ consist of unit vectors, which may have been negated. For $w\in W^2$ we have for its column in $T$ that  $T_{\star,w} = \lambda_w \mathbbm{e}_{Q}$ holds for $Q\in \MQ'$ such that $w\in Q$, where $\mathbbm{e}_{Q}$ is a unit vector.  
Then it follows that $B_{\star,w} = (S + U T)_{\star,w} = S_{\star,w} + U T_{\star,w} = S_{\star,w} + \lambda_w U_{\star,Q}$ holds. Since we want $S$ to be part of the totally unimodular matrix, its entries are $\pm 1$ values. Thus, if a column $B_{\star,w}$ contains large entries these must come from the $U_{\star,Q}$ vector. Furthermore, note that for all columns in the block $Q$, its entries must differ by at most $1$ from $\lambda_w U_{\star,Q}$. Thus, columns in $B_{\star,Q}$ must differ by at most $1$ up to $\pm 1$ scaling. For example, if $Q$ contains some block $(P,Q)\in \MP \times \MQ$ such that 
$B_{P,Q} = \begin{bmatrix}
    3 & 0 \\
    0 & 3
\end{bmatrix}$, then it is impossible to construct $U$ and $S$ such that we can get $B = S + U T$, since $B_{P,w_1} - B_{P,w_2} \notin \{-1,0,1\}^P$ implies that there exists no choice of $U_{\star,Q}$ such that we can construct a totally unimodular matrix $S$. \\

For a matrix $A \in \R^{V\times W}$, we define the \emph{nonternary mask} $\mathrm{ntmask}(A) \in \R^{V\times W}$ such that $\mathrm{ntmask}(A)_{v,w} \coloneqq \begin{cases}
    0 &\text{ if $|A_{v,w}| = 1$}\\
    A_{v,w} &\text{ otherwise }
\end{cases}$ holds for all $v\in V$ and $w\in W$. Then, if two columns of $B$ have the same nonternary mask up to $\pm 1$ scaling, they are candidates to belong to the same part $Q\in \MQ$. 
In order to generate equitable partitions which satisfy this condition, we change the initial partition that we pass to the color refinement algorithm to only place two integer columns $w_1$ and $w_2$ in the same initial part $Q\in \MQ_0$ if $\mathrm{ntmask}(A_{\star,w_1}) = \pm \mathrm{ntmask}(A_{\star,w_2})$ holds. Using standard hashing techniques, this can be done in linear time, see~\cite{Gemander2020} for further details. The condition that the nonternary masks are equivalent requirement is a sufficient condition to ensure that $S$ is a ternary matrix, but not a necessary one. For example, we have $\begin{bmatrix}
    3 & 2\\
    2 & 3 
\end{bmatrix} = \begin{bmatrix}
    1 & 0 \\
    0 & 1 
\end{bmatrix} + \transpose{\begin{bmatrix}
    2 & 2
\end{bmatrix}} \begin{bmatrix}
    1 & 1 
\end{bmatrix}$ where the $\transpose{\begin{bmatrix}
    S & T
\end{bmatrix}}$ matrix is given by the totally unimodular matrix  $\transpose{\begin{bmatrix}
    1 & 0 & 1 \\
    0 & 1 & 1 
\end{bmatrix}}$ but the two nonternary masks of the original columns differ. \\

Then, consider the second condition that $E=R T$ holds. For $v\in V$ and $w\in W$ we have $E_{v,w} = (R \transpose{\Pi_\MQ} \lambda)_{v,w} = R_{v,Q} \lambda_w$ for $Q$ such that $w\in Q$, which only holds if the columns in $E_{\star,Q}$ are identical up to $\pm 1$ scaling. Thus, we require that $E_{\star,w} = \pm E_{\star,w'}$ holds for all $w,w'\in Q$.\\

Although totally unimodular matrices can be detected in polynomial time~\cite{Truemper1990}, current implementations still have quintic time complexity~\cite{Walter2013}, which is too slow for practical purposes. Instead, we focus on \emph{network matrices} and their transposes, which form large subclasses of totally unimodular matrices~\cite{Schrijver86}. We use the fast column augmentation algorithms by Bixby and Wagner for network matrices~\cite{BixbyWagner1988} and the algorithm by van der Hulst and Walter for transposed network matrices~\cite{RowNetworkMatrixPaper}. For a (transposed) network matrix $M \in \R^{V\times W}$ and a column $u\in \R^V$, both algorithms are given by a procedure $\mathrm{AugmentNetwork}(M,u)$ that returns whether the augmented matrix $\begin{bmatrix}
    M & u
\end{bmatrix}$ is a (transposed) network matrix, growing the (transposed) network matrix by one column. The operation of \emph{augmenting} a column to a matrix $M$ turns $M$ into $\begin{bmatrix}
    M & u
\end{bmatrix}$.\\

Then, we are ready to describe an algorithm that detects the conditions of \cref{thm_milp_folding} by simultaneously computing a reflection reduction and an affine TU decomposition of the MILP. The algorithm is described in \cref{algo_eos} and consists of roughly five steps.\\

In the first step, we determine the row partition that is used for \cref{thm_atu_milp_integral}, by partitioning the rows according to whether they contain non-integral entries or right-hand side. Then, we determine for each of the the connected components of the submatrix formed by the continuous columns that is in the integral part whether it is (transposed) network. These components form the $C$-matrix in \cref{thm_atu_milp_integral}. This step is identical to part of the algorithm described in~\cite{VanDerHulst2025} to detect implied integrality using totally unimodular submatrices.\\

In the second step, we use the computed row partition for \cref{thm_atu_milp_integral} to compute an initial partition that we pass to the color refinement algorithm. As discussed above, we put two columns in the same partition only if they are (up to $\pm 1$ scaling) identical in the non-integral rows and if their nonternary masks are identical (up to $\pm 1$ scaling) in the integral rows. In the third step, we compute a reflection reduction given this initial partition. \\

For the fourth step, we consider the computed reflection reduction and attempt to construct a matching affine TU decomposition using \cref{thm_atu_milp_integral} that extends the computed reduction to integer variables. 
For every  $Q \in \MQ'$, where $\MQ' \coloneq \{ Q \in \MQ_I \mid |Q| > 1 \}$, we consider adding a column $u$ to the $U$ matrix of the affine TU decomposition. For our approach in \cref{algo_eos}, we simply check if $A_{v,Q} =  \alpha \lambda_{Q}$ holds for some $\alpha \in \R\setminus \{0\}$. If so, then we set $u_v = \alpha_v$. Then, we use $S_{\star,Q} = A_{\star,Q} - u (\transpose{\Pi_\MQ} \Lambda)_{\star,Q}$ This effectively cancels the $v$'th row of $A_{\star,w}$ by using $u$. If $u = \zerovec$, then we do not need the additional row of $\Pi_\MQ \Lambda$ to cancel any rows, and the additional row in the $T$ part of the affine TU decomposition can only harm detection of total unimodularity, as any submatrix of a TU matrix is TU. In this case, we consider the partition of the subset $\MQ' \setminus \{Q\}$ rather than $\MQ'$, which is sufficient by \cref{thm_fibers_symmetry_integral}, and we do not add the corresponding row to the $T$ matrix.
Finally, we ensure that for the affine TU decomposition $E=R T$ holds by checking if $S_{V_R,Q} = \zerovec$ holds, i.e. that all rows in $V_R$ are properly canceled. \\

{\footnotesize{
\begin{algorithm}[H]
    \caption{Detecting DRCR for MILP using affine TU decompositions.}
    \label{algo_eos}

    \LinesNumbered
    \TitleOfAlgo{DRCR$(A,b,\ell,u,c, W_I)$}

  \KwIn{Constraint matrix $A\in \R^{V\times W}$, right-hand side $b\in \R^V$, lower bounds $\ell\in \R^{W}$, upper bounds $u\in \R^W$, objective $c\in \R^W$ and integer variables $W_I\subseteq W$}
  \KwOut{A reflection reduction $(\MP,\MQ,\lambda,\gamma,\delta,V_U,W_U)$ such that the conditions of \cref{thm_milp_folding} hold.}
    \SetKw{Continue}{continue}
    \SetKw{Break}{break}
    $V_R\gets \{v\in V | \, b_v \notin \mathbb{Z}$ or $a_{v,w}\notin\mathbb{Z}$ for some $w\in W\}$\label{algo_eos_start}\;
    Let $\mathcal{K}$ be the set of connected components of $A_{V, W \setminus W_I}$\;
    Let $M$ denote the (transposed) network matrix that is constructed\;
    \For{$K\in\mathcal{K}$}{
        \If{$V_R\cap V_K\neq \emptyset$}{
            $V_R\gets V_R \cup V_K$\;
            \Continue
        }
       $\mathrm{isNetwork} \gets True$\;
        \For{$w\in W_K$}{
            \uIf{not $\mathrm{AugmentNetwork}(M, A_{\star,w})$}{
                $\mathrm{isNetwork} \gets False$\;
                \Break
            }\lElse{
                Augment $M$ with $A_{\star,w}$
            }       
         }
         \If{not $\mathrm{isNetwork}$}{
            $V_R \gets V_R \cup V_K$\;
            Remove from $M$ any columns of $A_{\star,W_K}$ that were augmented\;
         }
    }
    \label{algo_eos_point1}
    Let $\MQ_0$ be a partition of $W$ such that $Q\cap W_I = \emptyset$ or $Q\subseteq W_I$ holds for all $Q\in \MQ_0$. Additionally, we impose the condition that for all $Q\subseteq W_I$, $w,w'\in Q$ holds if and only if $A_{V_R,w} = \pm A_{V_R,w'}$ and $\mathrm{ntmask}(A_{V\setminus V_R,w}) = \pm \mathrm{ntmask}(A_{V\setminus V_R,w'})$ hold\;\label{algo_eos_point2}
    Let $\MP_0 \gets \{V\}$\;
    $(\MP^1,\MQ^1, \lambda, \gamma, \delta, V^1_O, W^1_O) \gets \textsc{ReflectionRefinement}(\MP_0, \MQ_0, A, b, \ell, u)$\label{algo_eos_first_refine}\;

    Sort $\MQ^1$ by descending size $|Q|$ of its parts $Q \in \MQ^1$\label{algo_eos_start_ATU}\;
    \For{$w \in W \setminus W^1_O$}{
        \lIf{not $\mathrm{AugmentNetwork}(M, A_{\star,w})$}{
            $\MQ^1 \gets \MQ^1 \cup \{w\}$
        }
        \lElse{ Augment $M$ with $A_{\star,w}$}
    }
    \For{$Q \in \MQ^1$ such that $Q\subseteq W_I$ and $|Q| > 2$}{
        Let $V_\mathrm{cancel} \coloneq \{ v\in V \mid A_{v,Q} = \alpha_v \lambda_Q \text{ for some } \alpha_v \in R\setminus \{0\}\}$ and let $u\in \R^V$ be such that $u_v = \alpha_v$ \text{ if } $v\in V_\mathrm{cancel}$ and $u_v = 0$ otherwise.\;
        
        Let $S = A_{\star,Q} - u (\transpose{\Pi}_\MQ \Lambda)_{Q}$ and let $M'\in \R^{(V\cup Q^1) \times Q}$ using $M' \gets \begin{bmatrix}
            S\\
            \zerovec_{|\MQ^1|\times Q}
        \end{bmatrix}$\;
        \If{$S_{V_R,\star} \neq \zerovec$}{
        Modify $\MQ^1$ by splitting $Q$ into $|Q|$ parts that contain its individual elements\;
            \Continue
        }

        \lIf{$V_\mathrm{cancel} \neq \emptyset$}{
            Update $M'_{\MQ^1,Q} \gets (\transpose{\Pi}_\MQ \Lambda)_Q $
        }
        $\mathrm{isNetwork} \gets True$\;
        \For{$w \in Q$}{
            \uIf{not $\mathrm{AugmentNetwork}(M, M'_{\star,w})$}{
                $\mathrm{isNetwork} \gets False$\;
                \Break
            }\lElse{
                Augment $M$ with $M'_{\star,w}$
            }
        }
        \If{not $\mathrm{isNetwork}$}{
            Remove from $M$ any column that was augmented from $M'$\;
            Modify $\MQ^1$ by splitting $Q$ into $|Q|$ parts that contain its individual elements\;
        }
    }
    \label{algo_eos_end_ATU}
    $(\MP^2,\MQ^2, \lambda, \gamma, \delta, V^2_O, W^2_O) \gets \textsc{ReflectionRefinement}(\MP^1, \MQ^1, A, b, \ell, u)$\label{algo_eos_additional_refinements}\;
    \Return $(\MP^2,\MQ^2, \lambda, \gamma, \delta, V^2_O, W^2_O)$\;
\end{algorithm}
}
}

In the fifth and final step, we consider all $Q\in \MQ$ that could not be added to the affine TU decomposition. For each such a part $Q$, we \emph{individualize} it by splitting $Q$ into $|Q|$ parts where each element is in its own part of size $1$, and run the refinement algorithm again. Note that this final refinement may refine `good' parts that we computed the affine TU decomposition for previously into several parts. This is unfortunate, as it may adjust the requirements for the $T$ matrix, and break the affine TU decomposition. An example of this phenomenon is shown in \cref{example_refinement_problematic}.
However, we show in \cref{thm_refinement_integrality_okay} that, because the new partition $\MQ$ is a refinement of the previous partition, \eqref{thm_milp_folding_condition} is still satisfied and we can still recover solutions in polynomial time.

\begin{example}
    \label{example_refinement_problematic}
    We consider a run of \cref{algo_eos} on the  mixed-integer linear program $\conv(\{x \in \R^6 \mid A x = b, x\geq 0 \}\cap \Z^6)$ where 
    \begin{equation*}
        A = \begin{bmatrix}
    2 & 2 & 2 & 2 & 3 & 3 \\
    1 & 0 & 1 & 0 & 1 & 0 \\
    0 & 1 & 0 & 1 & 1 & 0 \\
    0 & 1 & 1 & 0 & 0 & 1 \\
    1 & 0 & 0 & 1 & 0 & 1 \\
    1 & 0 & 0 & 0 & 1 & 0 \\
    0 & 1 & 0 & 0 & 1 & 0 \\
    0 & 0 & 1 & 0 & 0 & 1 \\
    0 & 0 & 0 & 1 & 0 & 1 
    \end{bmatrix} \in \Z^{9\times 6}\text{ and } b= \begin{bmatrix}
        8\\
        2\\
        2\\
        2\\
        2\\
        1\\
        1\\
        1\\
        1
    \end{bmatrix} \in \Z^9.
    \end{equation*} We use $v_i$ to indicate the $i$'th row index and $w_j$ to indicate the $j$'th column index. 
    The partition returned by \textsc{ReflectionReduction} is given by $\MP_1 = \{\{v_1\}, \{v_2,v_3,v_4,v_5\}, \{v_6,v_7,v_8,v_9\}\}$ and $\MQ_1 = \{\{w_1,w_2,w_3,w_4\}, \{w_5,w_6\}\}$. Consider $Q\coloneqq \{w_1,w_2,w_3,w_4\}$, and note that $A_{\star,Q}$ admits an affine  TU decomposition using $A_{\star,Q} = \begin{bmatrix}
        \zerovec_{1\times4}\\
        A_{V\setminus \{v_1\},Q}
    \end{bmatrix} + \begin{bmatrix}
        2 \\
        \zerovec_{8\times 1}
    \end{bmatrix} \begin{bmatrix}
        1 & 1 & 1 & 1
    \end{bmatrix}$, where it can be verified that the matrix $\begin{bmatrix}
        \zerovec_{1\times4}\\
        A_{V\setminus \{v_1\},Q}\\
        \onevec_{1\times,4}
    \end{bmatrix}$ is totally unimodular. Thus, initially, \cref{algo_eos} will reformulate the integrality constraints on $Q$. 
    Then, \cref{algo_eos} attempts to reformulate the integrality constraints for $\{w_5,w_6\}$ and fails to do so. Thus, we refine $\{w_5,w_6\}$ into $\{w_5\},\{w_6\}$, and run \textsc{ReflectionReduction} again, which results in the partition given by $\MP_2 =\{\{v_1\}, \{v_2,v_3\},\{v_4,v_5\}, \{v_6,v_7\},\{v_8,v_9\}\}$ and $\MQ_2 = \{\{w_1,w_2\},\{w_3,w_4\},\{w_5\},\{w_6\}\}$. Note for $\MQ_2$ that $Q$ is now split up into the two parts $\{w_1,w_2\}$ and $\{w_3,w_4\}$. However, this also affects the affine TU decomposition that we previously computed, as the matrix $\transpose{\Pi_\MQ}$ that represents the $T$ matrix in the affine TU decomposition is refined from $\transpose{\Pi_{\MQ_1}}$ to $\transpose{\Pi_{\MQ_2}}$, where the nonzero submatrix on $Q$ changes from $\onevec_{1\times 4}$ to $\begin{bmatrix}
        1 & 1 & 0 & 0\\
        0 & 0 & 1 & 1
    \end{bmatrix}$. Unfortunately, the matrix $\begin{bmatrix}
        0 & 0 & 0 & 0 \\
        \multicolumn{4}{c}{A_{V\setminus \{v_1\},Q}}\\
        1 & 1 & 0 & 0 \\
        0 & 0 & 1 & 1 
    \end{bmatrix}$ is not totally unimodular, which invalidates the previously computed affine TU decomposition. Thus, it may be the case that after a run of~\cref{algo_eos} that the affine TU decomposition that is computed no longer matches the equitable partition, due to the additional refinements that are computed in line \ref{algo_eos_additional_refinements} of~\cref{algo_eos}.
\end{example}

\begin{lemma}
    \label{thm_refinement_integrality_okay}
    Let $(\MP^1,\MQ^1,\gamma^1,\lambda^1,\delta,V^1_O,W^1_O)$ be a reflection reduction of $G\coloneqq P(A,b,\ell,u)$ and let it be given that for some $\widehat{\MQ^1} \subseteq \MQ^1_I$ that 
    $H_d \coloneqq \{ x \in G \mid (\transpose{\Pi_{\MQ^1}} \Lambda^1 x_{W^1_O})_{\widehat{\MQ^1}} = d \}$ is a ${W_I}$-integral polyhedron for all $d\in \Z^{\widehat{\MQ^1}}$.
    Consider any refinement $(\MP^2,\MQ^2, \lambda^2, \gamma^2,\delta,V^2_O,W^2_O)$ of the given reflection reduction where $\widehat{\MQ}^2 \subseteq \MQ^2$ corresponds to the parts that were refined from $\widehat{\MQ}^1$ such that $\bigcup_{Q\in \widehat{\MQ}^2} Q = \bigcup_{Q\in \widehat{\MQ}^1} Q$ holds. Then, \eqref{thm_milp_folding_condition} holds for the refined reduction, i.e. we have:
    \begin{equation*}
        \conv(G\cap \M^{W_I}) = \conv(\{x \in G \mid (\transpose{\Pi_{\MQ^2}} \Lambda^2 x_{W^2_O})_{{\MQ^2_I}} \in \Z^{{\MQ^2_I}}\}) = \conv(\{x \in G \mid (\transpose{\Pi_{\MQ^2}} \Lambda^2 x_{W^2_O})_{\widehat{\MQ^2}} \in \Z^{\widehat{\MQ^2}}\}),
    \end{equation*}
\end{lemma}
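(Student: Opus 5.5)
The proof is a sandwich argument of the same kind as in the first part of the proof of \cref{thm_fibers_symmetry_integral}. Write $G_1 \coloneqq \{x\in G \mid (\transpose{\Pi_{\MQ^1}}\Lambda^1 x_{W^1_O})_{\widehat{\MQ^1}}\in\Z^{\widehat{\MQ^1}}\}$. Define $G_2^I$ and $\widehat{G}_2$ analogously, using $\MQ^2,\Lambda^2$ and the index sets $\MQ^2_I$ and $\widehat{\MQ^2}$ respectively. The plan is to establish the chain
\begin{equation*}
G\cap\M^{W_I}\subseteq G_2^I\subseteq \widehat{G}_2\subseteq G_1 .
\end{equation*}
Taking convex hulls then gives $\conv(G\cap \M^{W_I})\subseteq\conv(G_2^I)\subseteq\conv(\widehat{G}_2)\subseteq\conv(G_1)$.

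The outer equality comes from the hypothesis. Every fiber $H_d$ with $d\in\Z^{\widehat{\MQ^1}}$ is $W_I$-integral, and $(\transpose{\Pi_{\MQ^1}}\Lambda^1)_{\widehat{\MQ^1},\star}$ is an integral matrix supported on $W_I$ because $\widehat{\MQ^1}\subseteq\MQ^1_I$. So \cref{thm_affine_fiber_integral} gives $\conv(G_1)=\conv(G\cap\M^{W_I})$. The chain therefore collapses to equalities, which is exactly the claimed statement.

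The first two inclusions are routine. For the first, take $x\in G\cap\M^{W_I}$. Every $Q\in\MQ^2_I$ satisfies $Q\subseteq W_I$, so $\sum_{w\in Q}\lambda^2_w x_w$ is a $\pm1$-combination of integers, as in the proof of \cref{thm_milp_folding}\ref{thm_milp_folding_to_red}. For the second, note that $\widehat{\MQ^2}\subseteq\MQ^2_I$. This holds because the parts of $\widehat{\MQ^2}$ refine parts of $\widehat{\MQ^1}\subseteq\MQ^1_I$, and so they consist only of integer variables. Hence requiring integrality on $\MQ^2_I$ is stronger than requiring it on $\widehat{\MQ^2}$.

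The main obstacle is the third inclusion $\widehat{G}_2\subseteq G_1$, which requires relating the signs $\lambda^2$ to $\lambda^1$. Since $\bigcup_{Q\in\widehat{\MQ^2}}Q=\bigcup_{Q\in\widehat{\MQ^1}}Q$ and $\MQ^2$ refines $\MQ^1$, each $Q_1\in\widehat{\MQ^1}$ is the disjoint union of parts $Q_2\in\widehat{\MQ^2}$. I would argue that refining a unipolar part keeps it unipolar. This follows from the refinement procedure of \cref{thm_refinement_symmetric}: unipolar pairs $P^\gamma,P^{-\gamma}$ are split simultaneously, and the pieces stay unipolar. Consequently, on each such $Q_2$ the vector $\lambda^2$ agrees with $\lambda^1$ up to one global sign $\sigma_{Q_2}\in\{-1,1\}$, which reflects the free choice of representative discussed after \cref{thm_unipolar_original_mapping}. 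It follows that
\begin{equation*}
\sum_{w\in Q_1}\lambda^1_w x_w=\sum_{Q_2\subseteq Q_1}\sigma_{Q_2}\sum_{w\in Q_2}\lambda^2_w x_w .
\end{equation*}
The right-hand side is integral whenever $x\in\widehat{G}_2$, so $\widehat{G}_2\subseteq G_1$. If the refinement were allowed to choose $\lambda^2$ arbitrarily, I would first normalise it by flipping signs part-wise. Such flips change neither the reduction nor the integrality conditions, because each $\sum_{w\in Q_2}\lambda^2_w x_w$ only changes sign. I also need $\bigcup_{Q\in\widehat{\MQ^1}} Q\subseteq W^2_O$, i.e.\ that these parts stay in the unipolar set. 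This again follows from the fact that unipolar parts are never merged into bipolar ones during refinement.
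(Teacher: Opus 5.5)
Your proof is correct and follows the paper's argument. The chain $G\cap\M^{W_I}\subseteq G_2^I\subseteq\widehat{G}_2\subseteq G_1$, collapsed to equalities by \cref{thm_affine_fiber_integral}, is what the paper does in two separate passes, and your per-part sign $\sigma_{Q_2}$ is slightly more careful than the paper's flat assertion $\lambda^1=\lambda^2_{W^1_O}$, which is harmless since flipping the sign of an entire part does not change the integrality condition.
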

\begin{proof}
    First, let us show a few facts about the refined reduction $(\MP^2,\MQ^2, \lambda, \gamma,\delta,V^2_O,W^2_O)$. First of all, note that $\MP^2$ and $\MQ^2$ must refine $\MP^1$ and $\MQ^1$ respectively, and we must have $W^1_O\subseteq W^2_O$ since additional refinements can only turn bipolar variables into unipolar variables, but not the other way around. 
    Furthermore, note that additional refinements cannot change the value of $\lambda^1_w$ for $w\in W^1_O$, since the relative signs of unipolar parts are fixed. Thus, $\lambda^{1} = \lambda^{2}_{W^1_O}$ holds. By design of our algorithm, $\delta$ is constant in both reductions.
    Then, we claim that the following holds:
    \begin{equation*}
        G\cap \M^{W_I}
        \subseteq \{x \in G \mid (\transpose{\Pi_{\MQ^2}} \Lambda^2 x_{W^2_O})_{\widehat{\MQ^2}} \in \Z^{\widehat{\MQ^2}}\} \subseteq \{x \in G \mid (\transpose{\Pi_{\MQ^1}} \Lambda^1 x_{W^1_O})_{\widehat{\MQ^1}} \in \Z^{\widehat{\MQ^1}}\}.
    \end{equation*}
    Here, the first inclusion follows since $\transpose{\Pi_\MQ} \Lambda^2$ is an integral matrix. For the second inclusion, consider any $x \in \{x \in G \mid (\transpose{\Pi_{\MQ^2}} \Lambda^2 x_{W^2_O})_{\widehat{\MQ^2}} \in \Z^{\widehat{\MQ^2}}\}$.
    Then, consider any $Q \in \widehat{\MQ^1}$. Since $\MQ^2$ refines $\MQ^1$, there exist parts $Q'_i \in \widehat{\MQ^2}$ for $i=1,\cdots,k$ such that $\bigcup_{i=1}^k Q'_i = Q$ holds. Then, we can derive that:

    \begin{equation*}
    (\transpose{\Pi_{\MQ^1}} \Lambda^1 x_{W^1_O})_{Q} = \sum_{w\in Q} \lambda^1_w x_w = \sum_{i=1}^k \sum_{w\in Q'_i} \lambda^1_w x_w \stackrel{\textup{(a)}}{=} \sum_{i=1}^k \sum_{w\in Q'_i} \lambda^2_w x_w = \sum_{i=1}^k (\transpose{\Pi_{\MQ^2}} \Lambda^2 x_{W^2_O})_{Q'_i},
    \end{equation*}
    where \textup{(a)} holds since $\lambda^1 = \lambda^2_{W^1_O}$ holds, and the final expression is integral by integrality of $(\transpose{\Pi_{\MQ^2}} \Lambda^2 x_{W^2_O})_{Q'_i}$, using that $Q'_i \in \widehat{Q^2}$.
    
    Taking convex hulls, we note that by applying \cref{thm_affine_fiber_integral}, using ${W_I}$-integrality of $H_d$, we have equality throughout:
        \begin{equation*}
        \conv(G\cap \M^{W_I}) = \conv(\{x \in G \mid (\transpose{\Pi_{\MQ^2}} \Lambda^2 x_{W^2_O})_{\widehat{\MQ^2}} \in \Z^{\widehat{\MQ^2}}\}) = \conv(\{x \in G \mid (\transpose{\Pi_{\MQ^1}} \Lambda^1 x_{W^1_O})_{\widehat{\MQ^1}} \in \Z^{\widehat{\MQ^1}}\}).
    \end{equation*}
    Finally, we have the following inclusions:
        \begin{equation*}
        G\cap \M^{W_I}
         \subseteq \{x \in G \mid (\transpose{\Pi_{\MQ^2}} \Lambda^2 x_{W^1_O})_{\MQ^2_I} \in \Z^{\MQ^2_I}\} \subseteq \{x \in G \mid (\transpose{\Pi_{\MQ^2}} \Lambda^2 x_{W^2_O})_{\widehat{\MQ^2}} \in \Z^{\widehat{\MQ^2}}\},
    \end{equation*}
    taking convex hulls and noting that the first and last sets are equal, we again have equality throughout, which concludes the proof
            \begin{equation*}
        \conv(G\cap \M^{W_I})
         = \conv(\{x \in G \mid (\transpose{\Pi_{\MQ^2}} \Lambda^2 x_{W^1_O})_{\MQ^2_I} \in \Z^{\MQ^2_I}\}) = \conv(\{x \in G \mid (\transpose{\Pi_{\MQ^2}} \Lambda^2 x_{W^2_O})_{\widehat{\MQ^2}} \in \Z^{\widehat{\MQ^2}}\}),
    \end{equation*}
    
\end{proof}
It is not too difficult to see that finding a solution to the original problem can also still be done in polynomial time by simply considering the fiber $H_d$ defined where the affine subspaces are defined by $\Pi_\MQ^1 \Lambda^1 x_{W^1_O} = d$. In \cref{example_refinement_problematic}, this corresponds to adding the equality $x_1 + x_2 + x_3 + x_4 = y_{1,2} + y_{3,4}$ in the postsolve procedure rather than the two equalities $x_1 + x_2 = y_{1,2}$ and $x_3 + x_4 = y_{3,4}$ which are shown to be problematic in \cref{example_refinement_problematic}. Although the latter two equalities clearly imply the former equality, adding them to the linear program breaks the integrality guarantee.

Next, let us consider the total runtime. As for linear programs, we assume that the reflection reduction uses a $\delta$-vector whose bitlength is of the same order in the worst-case, which is reasonable as the choice for $\delta$ that is described in~\cref{thm_delta_equivalent_complemented} satisfies this assumption.
\begin{theorem}
For a mixed-integer linear program $F(A,b,\ell,u,c)$ with $A\in \R^{V\times W}$ and integer variables $W_I\subseteq W$, let $m$ be the total bitlength of the entries of $\begin{bmatrix}
    \ell & 0 \\
    u & 0 \\
    c & 0\\
    A & b
\end{bmatrix}$ and let $n = |V| + |W|$. Assuming that \textsc{ReflectionRefinement} computes a $\delta$ with total bitlength $\orderO(m)$, \cref{algo_eos} computes a  reflection reduction $(\MP,\MQ,\gamma,\lambda,\delta,V_U,W_U)$ of $F(A,b,\ell,u,c)$ such that~\eqref{thm_milp_folding_condition} holds, i.e.:
\begin{equation*}
    \conv(\{x\in P(A,b,\ell,u) \mid (\transpose{\Pi_{\MQ}} \Lambda x)_{\MQ_I} \in \Z^{\MQ_I}\}) = \conv(P(A,b,\ell,u)\cap \M^{W_I})
\end{equation*}
in $\orderO(n+m)$ space and $\orderO((n+m)\log n)$ time using network matrices or $\orderO(n^2\alpha(n,n) + m \log(n))$ time using transposed network matrices.
\end{theorem}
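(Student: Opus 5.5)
The proof splits into a correctness part and a complexity part. For correctness, I would follow \cref{algo_eos} step by step and maintain one invariant: after the affine TU loop (lines \ref{algo_eos_start_ATU}--\ref{algo_eos_end_ATU}), the current partition $\MQ^1$ has a subset $\widehat{\MQ^1}\subseteq\MQ^1_I$ for which every fiber $H_d$ is $W_I$-integral. Two parts would then be shown.

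\textbf{Correctness.} First I handle the continuous components accepted in the first phase, together with the non-unipolar and non-integer columns augmented in line \ref{algo_eos_start_ATU}. These form the block $C$, plus possibly singleton columns, of a (transposed) network matrix, which is totally unimodular. The rows in $V_R$ form the $D,E,F$ block. Every part $Q$ that survives the loop was augmented successfully, so the stacked matrix $\begin{bmatrix} S & C\\ T & \zerovec\end{bmatrix}$ built in $M$ is TU. The check $S_{V_R,\star}=\zerovec$ gives $E=RT$ with $R_{v,Q}=\alpha_v$. By construction $A_{\star,Q}=S+uT$, so $U$ is assembled from the vectors $u$. Every integer part that fails is individualized, and singletons satisfy the requirement trivially: they go into the $W^1$ or $W^4_I$ block, as in \cref{thm_integer_unit_partition}.

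Applying \cref{thm_atu_milp_integral} with $\widehat{\MQ^1}=\MQ'$ then makes each $H_d$ $W_I$-integral. Here I also use that $\delta$ is integral on integer unipolar variables, which holds by the choice following \cref{thm_delta_equivalent_complemented}. The final \textsc{ReflectionRefinement} produces a refinement of the reduction. \Cref{thm_refinement_integrality_okay} then transfers \eqref{thm_milp_folding_condition} to the returned reduction $(\MP^2,\MQ^2,\dots)$, and \cref{thm_refl_reduction} certifies that this output is a reflection reduction.

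\textbf{Complexity.} Lines \ref{algo_eos_start}--\ref{algo_eos_point1} compute $V_R$ and the connected components in linear time. The initial partition in line \ref{algo_eos_point2}, obtained by hashing the $\pm$-normalized masks, also takes linear time. Each \textsc{ReflectionRefinement} call costs $\orderO((n+m)\log n)$ time and $\orderO(n+m)$ space by \cref{thm_running_time_extended}; there are two calls. Sorting $\MQ^1$ costs $\orderO(n\log n)$. Building each $u$ and $S$ only touches the nonzeros of $A_{\star,Q}$ plus one $T$-entry per column, so all parts together stay within $\orderO(n+m)$.

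The network-augmentation work is the main obstacle. Every column is augmented at most once or twice: once in the component phase or line \ref{algo_eos_start_ATU}, and once per part attempt. Removal after a failed attempt must be charged to the columns that were added, so I would need the incremental algorithms to support cheap rollback. I would achieve this by attempting augmentation on a part only after verifying it, or by charging removal to the prior insertions.

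I would then cite the total augmentation bounds over all columns of a matrix with $\orderO(n)$ rows and $\orderO(m)$ nonzeros. Bixby--Wagner \cite{BixbyWagner1988} gives near-linear time, absorbed into $\orderO((n+m)\log n)$. The transposed network algorithm \cite{RowNetworkMatrixPaper} gives $\orderO(n^2\alpha(n,n)+m)$. Summing these costs yields the stated bounds. The delicate step is checking that the added $T$-rows (at most $|\MQ^1|\le n$) and the rollbacks do not exceed those cited totals.
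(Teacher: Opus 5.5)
Your proposal follows the paper's own proof essentially step for step. Correctness comes from the greedy affine TU construction together with \cref{thm_atu_milp_integral}, transferred through the final refinement by \cref{thm_refinement_integrality_okay}. Complexity comes from summing the two \textsc{ReflectionRefinement} calls, the hashing, the sorting, and the cited network augmentation bounds. Two small remarks follow. First, for the fiber hypothesis of \cref{thm_refinement_integrality_okay} you should take $\widehat{\MQ^1}$ to be $\MQ'$ together with the integer singleton parts, because the fiber in \cref{thm_atu_milp_integral} also fixes $x_{W^1}$ and $\omega_{W^4_I}$. Second, the rollback cost you flag is a legitimate concern that the paper does not address either: it simply asserts that each column is passed to $\mathrm{AugmentNetwork}$ at most once.
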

\begin{proof}
We do not provide a complete proof of every step here, but instead emphasize the most relevant parts of the correctness proof and the running time. First let us prove correctness. 
    In lines \ref{algo_eos_start}-\ref{algo_eos_point1}, we compute the connected components of the continuous variables that are (transposed) network, which is necessary to compute our affine TU decomposition.
    Then, we compute an initial reflection reduction in lines \ref{algo_eos_point2}-\ref{algo_eos_first_refine}, where $\MQ_1$ satisfies the following for all $Q\in \MQ_1$:
    \begin{enumerate}
        \item Either $Q\cap W_I$ or $Q\subseteq W_I$ holds.
        \item If $Q\subseteq W_I$, then $A_{V_R,w} = \pm A_{V_R,w'}$ and $\mathrm{ntmask}(A_{V\setminus V_R,w}) = \pm \mathrm{ntmask}(A_{V\setminus V_R,w'})$ hold for all $w,w'\in Q$.
    \end{enumerate}
    In lines \ref{algo_eos_start_ATU}-\ref{algo_eos_end_ATU}, we greedily construct an affine TU decomposition that satisfies the conditions of \cref{thm_atu_milp_integral} where we keep the invariant that $T$ is equal to $(\transpose{\Pi_{\MQ^1}} \Lambda)_Q$ for some $\MQ'\subseteq \MQ^1$. The construction uses condition 2. by testing if the vector $\begin{bmatrix}
        A_{V_R,w} \\
        \mathrm{ntmask}(A_{V\setminus V_R,w})
    \end{bmatrix}$, which is identical up to $\pm 1 $ scale for all $w\in Q$ by construction, can be used to augment the (transposed) network matrix in the affine TU decomposition. Our construction does not always add such a column to $u$: we determine dynamically if this is necessary by checking if $V_{\mathrm{cancel}} \neq \emptyset$ holds.
    If an additional column in the $U$-matrix is needed, we add $Q$ to $\MQ'$ and add the row $(\transpose{\Pi_{\MQ^1}} \Lambda)_Q$ to $T$. Otherwise, we leave $\MQ'$ and $T$ unchanged. By doing so for all columns, we ensure that $T = \transpose{\Pi_{\MQ'}} \Lambda$ holds.
    For any $Q\in \MQ^1$ that can not added to the affine TU decomposition, we simply split it into single-variable parts in $\MQ^1$ to satisfy the conditions of the affine TU decomposition.
    Then, it follows from \cref{thm_atu_milp_integral} and \cref{thm_fibers_symmetry_integral} that for $\MQ^1_I = \{Q \in \MQ^1 \mid Q\subseteq W_I \}$ we have:
    \begin{equation}
    \label{eq_before_refinement_condition}
    \conv(\{x\in P(A,b,\ell,u) \mid (\transpose{\Pi_{\MQ^1}} \Lambda x)_{\MQ^1_I} \in \Z^{\MQ^1_I}\}) = \conv(P(A,b,\ell,u)\cap \M^{W_I}).
    \end{equation}
    Finally, we refine $\MQ^1$ to $\MQ^2$ in \ref{algo_eos_additional_refinements} to ensure that $\MQ^2$ corresponds to a reflection reduction.
    By \eqref{eq_before_refinement_condition} and the fact that $\MQ^2$ refines $\MQ^1$,  \cref{thm_refinement_integrality_okay} shows for $\MQ^2_I \coloneqq \{Q \in \MQ^2 \mid Q\subseteq W_I \}$ that 
    \begin{equation*}
        \conv(\{x\in P(A,b,\ell,u) \mid (\transpose{\Pi_{\MQ^2}} \Lambda x)_{\MQ^1_I} \in \Z^{\MQ^2_I}\}) = \conv(P(A,b,\ell,u)\cap \M^{W_I}).
    \end{equation*}
    Then, since $(\MP^2,\MQ^2, \lambda, \gamma, \delta, V^2_O, W^2_O)$ is a reflection reduction of $F(A,b,\ell,u,c)$, it follows that the conditions of \cref{thm_milp_folding} hold. Thus, we have shown correctness of \cref{algo_eos}.\\

    Next, let us consider the running time and space-complexity of \cref{algo_eos}. First of all, $\textsc{ReflectionRefinement}$ runs in $\orderO((n+m) \log(n))$ and $\orderO(n+m)$ space by \cref{thm_running_time_extended} and is called twice. Note that we use the fact that $\delta$ has bitlength at most $\orderO(m)$.
    
    At the start of the \cref{algo_eos}, we compute the connected components of the submatrix formed by the continuous columns. This can be done in $\orderO(n)$ time and space by using a depth-first search.
    
    Computing the initial partition $\MQ_0$ can be done in $\orderO(n+m)$ time and space by using standard hashing techniques. Sorting $\MQ^1$ can be done in $\orderO(n \log(n))$ time.
    
    Finally, we note that $\mathrm{AugmentNetwork}$ is called at most once for every column. The matrix $M$ has size $(|V| + |\MQ|)  \times |W|$, which is of the order $\orderO(n) \times \orderO(n)$. Moreover, $M$ has $\orderO(m)$ nonzeros as it has fewer nonzeros than $A$, since we only add the additional row of $(\transpose{\Pi_\MQ} \Lambda)_Q$ for all $Q \in \MQ$ if it can be used to cancel one or more rows from $A_{\star,Q}$. We note that regardless of the size of the nonzero entries, one can check in constant time if a nonzero has value $\pm 1$. 
    For network matrices the column augmentation algorithm by Bixby and Wagner then runs in $\orderO(m \alpha(m,n))$ time, where $\alpha$ is the inverse Ackermann function, which grows extremely slowly and is dominated by the order $\orderO((n+m) \log(n))$ running time for the reflection reduction. The transposed network algorithm by van der Hulst and Walter runs in $\orderO(n^2 \alpha(n,n))$ time, and dominates the total runtime in case it is used. Taking the bitlength $m$ into account, we can simplify to $\orderO(n^2 \alpha(n,n) + m\log(n))$ in this case.  
\end{proof}

\section{Computational Results}
 
\label{sec_results}

In order to validate the developed algorithms, we perform experiments on linear programming and mixed-integer linear programming problems.

\Cref{algo_eos} and the reflection reduction detection algorithm are implemented in C++ and are publicly available~\cite{VanDerHulstFolding}. The implementation of~\Cref{algo_eos} uses the network matrix augmentation algorithms described in~\cite{RowNetworkMatrixPaper} and \cite{BixbyWagner1988} to recognize totally unimodular matrices, which are also publicly available~\cite{VanDerHulstMatrec}.
 All experiments are run on a AMD Ryzen 2600 CPU with 16GB of memory. Prior to their solution, the linear programming and mixed-integer linear programming instances are presolved using PaPILO 3.0.0~\cite{GGHpapilo}. In the context of our study, presolving the instances with PaPILO has two benefits. First of all, well-known presolving reductions such as those described in~\cite{AchterbergBGRW20} may provide additional opportunities for the DRCR algorithm, and provide a realistic setting for the application of DRCR. Secondly, PaPILO removes parallel rows and columns from the problem, which would also be detected by DRCR. This way, we can be sure that the reductions detected by R-DRCR and DRCR are not detected easily by other presolving techniques.
 
 For our experiments on mixed-integer linear programs we use the SCIP 10 solver~\cite{hojny2025scipoptimizationsuite100}. We note that currently parallel column reductions are not available in SCIP 10 due to technical limitations of its API. Thus, presolving once with PaPILO before handing the problem to SCIP is necessary to provide a fair comparison of DRCR for MILP with state-of-the-art presolving techniques.
\subsection{Folding reflection symmetries for Linear Programs}
\label{sec_lp_folding_results}
As a baseline, we compare our dimension reduction algorithm against DRCR as described in~\cite{Grohe2014}. The improved DRCR algorithm as presented \cref{sec_reflection_symmetry} is denoted by R-DRCR.

We use the linear programming relaxations of problems in the MIPLIB 2017 collection dataset~\cite{Gleixner2021}, which contains 1065 instances. We exclude the 10 largest instances due to memory requirements, and any instances with the (nonlinear) indicator constraints. Furthermore, we exclude instances where DRCR and R-DRCR produce an identical reduced instance. This yields a subset of 83 instances that we use for the computational experiments. The resulting linear programs are solved using SoPlex 8.0.0~\cite{hojny2025scipoptimizationsuite100} with a time limit of 1 hour.
An overview of the results can be found in \cref{tab:drcr_lp_summary}. The complete results can be found in Appendix \ref{appendix_drcr_lp}.

\begin{table}[h]
    \centering
  \caption{Performance comparison between DRCR and R-DRCR on the 83 linear programming relaxations of instances in the MIPLIB2017 collection set where R-DRCR finds additional reductions compared to DRCR. The subsets contain the instances whose solution time for the DRCR method (in seconds) lies in the indicated bracket. The times reported are shifted geometric means in seconds, with a shift of 1 second. The iterations column indicates the number of simplex iterations, and is reported as a shifted geometric mean with a shift of 10 iterations. The faster and slower columns indicate the number of instances where R-DRCR is faster/slower than DRCR.}
    \label{tab:drcr_lp_summary}
  
    \begin{tabular}{@{}r@{\;\;\extracolsep{\fill}}rrrc@{\hspace{0.5cm}}rrrr@{}}
    \toprule
    & & \multicolumn{2}{c}{DRCR} & & \multicolumn{2}{c}{R-DRCR} &  \\
    \cmidrule{3-4} \cmidrule{6-7} 
    subset & instances & time & iterations & & time & iterations & faster & slower \\
    \midrule
all & 83 & 1.27 & 1866.0 & & 0.93 & 962.0 & 40 &  43 \\
 <0.1 & 34 & 0.02 & 142.9 & & 0.02 & 60.4 & 13 & 21 \\
\bracket{0.1}{1} & 20 & 0.35 & 2907.9 & & 0.33 & 2441.6 & 9 & 11\\
\bracket{1}{10} & 9 & 3.26 & 11942.0 & & 3.00 & 7783.7 & 5 & 4 \\
>10 & 20 & 174.02 & 37205.0 & & 58.50 & 13094.6 & 13 & 7 \\
\bottomrule
    \end{tabular}
\end{table}

The R-DRCR algorithm was also tested on the well-known Netlib linear programming dataset. However, for all of the instances in this dataset, DRCR and R-DRCR produced identical reduced instances. Thus, we omit the results on Netlib in this report.

From the results in \cref{tab:drcr_lp_summary}, it is clear that R-DRCR is more effective than DRCR on the tested subset. In all categories, the running time is smaller and the number of required simplex iterations is also reduced. The running time improvement is particularly impressive for some of the more difficult linear programs that take longer than 10 seconds to solve, with the shifted geometric mean of the running times dropping from 174.0 to 58.5 seconds. Furthermore, we observe that on many of the easier instances there is a moderate reduction in running time and a clear reduction in the number of simplex iterations.

Let us highlight a few instances where reflection refinement is particularly effective.
For the combinatorial instances \texttt{pythago7824} and \texttt{pythago7825}, the linear program is even reduced to a linear program with a single variable and row. Interestingly, these instances do not contain any (reflection) symmetries.
Another impressive result is on the large instance \texttt{neos-3402454-bohle} where the solution time is reduced from 1353 to 6 seconds, where in the latter case, the majority of the running time is spent computing the reduced problem. The original problem has over 2.8 million rows, whereas the reduced problem has roughly 80 thousand rows.
\subsection{Folding symmetries in Mixed-Integer Linear Programming}
\label{sec_milp_folding_results}
Next, we evaluate the impact of \cref{algo_eos} on the solution of mixed-integer linear programs. As before, we consider the MIPLIB 2017 collection dataset~\cite{Gleixner2021}. We exclude the 10 largest instances and any instances with indicator constraints.

For our experiments, we consider two configurations of \cref{algo_eos}. We use R-DRCR-N to indicate the version that detects network matrices, and R-DRCR-T to indicate the version that detects transposed network matrices. For both configurations, we only test those instances where some reduction is found by the algorithms. R-DRCR-N find reductions on 207 of the 1065 MIPLIB 2017 collection instances, and R-DRCR-T finds reductions on 153 instances. In total, 208 of the instances admit a reduction by at least one of R-DRCR-N and R-DRCR-T.

The solving procedure is structured as follows. First, the problem is presolved using PaPILO. Then, we apply the DRCR algorithm to further reduce the presolved problem. Then, the presolved problem is solved using SCIP 10.0~\cite{hojny2025scipoptimizationsuite100} with a time limit of 1 hour. Finally, if SCIP finds a feasible integer solution to the reduced problem that is not integer when mapped to the original variables, we use SoPlex to postsolve the found integer solution to an integer solution. All running times reported include the runtime of the DRCR algorithm, SCIP's time to solve the problem, and time required to postsolve the final solution to obtain a feasible integral solution to the original problem. We compare the R-DRCR methods against the default SCIP 10 configuration, which is run on the presolved model that is output by PaPILO.
In all cases, we use the default parameters and configuration of SCIP 10.
One important detail is that, although the DRCR algorithm may also detect implied integrality in the reduced problem, we do not communicate this information to SCIP in order to measure the effect of the reduction detected by \cref{algo_eos} in isolation. 

In \cref{tab:drcr_n_mip_summary,tab:drcr_t_mip_summary} we summarize the results of R-DRCR-N and R-DRCR-T compared to the SCIP 10 baseline on instances which were solved by at least one of the methods. In \cref{tab:PDItable} we compare the performance on the instances that were not solved within the time limit by any method. Here, we use the Primal-Dual Integral measure introduced by Berthold~\cite{Berthold2013}, which measures the average gap between the primal and dual solution during the solving process. This gives an indication for the quality of the obtained primal and dual bounds.

\begin{table}[h]

    \centering
  \caption{Performance comparison between SCIP 10 and R-DRCR-N on the solvable instances of the MIPLIB2017 collection set that are affected by R-DRCR-N. The subsets contain the instances whose solution time for SCIP 10 (in seconds) lies in the indicated bracket. The time and nodes column indicate the fraction of the shifted geometric mean of the R-DRCR method compared to the SCIP 10 baseline, where we use shifts of 1 second and 1 node respectively. The faster and slower columns indicate the number of instances where R-DRCR method is faster/slower than SCIP 10.}
    \label{tab:drcr_n_mip_summary}
    \begin{tabular}{@{}r@{\;\;\extracolsep{\fill}}rrrc@{\hspace{0.5cm}}rrrr@{}}
    \toprule
    & & SCIP 10 &  \multicolumn{3}{c}{R-DRCR-N} &  \\
    \cmidrule{4-6}
    subset & instances & solved & solved & time & nodes & faster & slower \\
    \midrule
all & 107 & 100 & 103 & 0.48 & 0.69 & 67  & 40 \\
 <10 & 25 & 25 & 25 & 1.18 & 1.40 & 15 & 10 \\
\bracket{10}{100} & 24 & 24 & 23 & 0.86 & 0.98 & 12 & 12\\
\bracket{100}{1000} & 35 & 35 & 34 & 0.61 & 0.70 & 21 & 14 \\
>1000 & 23 & 16 & 21 & 0.075 & 0.25 & 19 & 4  \\
\bottomrule
    \end{tabular}
\end{table}

\begin{table}[h]
    \centering
  
  \caption{Performance comparison between SCIP 10 and R-DRCR-T on the solvable instances of the MIPLIB2017 collection set that that are affected by R-DRCR-T.}
    \label{tab:drcr_t_mip_summary}
    \begin{tabular}{@{}r@{\;\;\extracolsep{\fill}}rrrc@{\hspace{0.5cm}}rrrr@{}}
    \toprule
    & & SCIP 10 &  \multicolumn{3}{c}{R-DRCR-T} &  \\
    \cmidrule{4-6}
    subset & instances & solved & solved & time & nodes & faster & slower \\
    \midrule
all & 77 & 75 & 76 & 0.75 & 0.86 & 47 & 30 \\
 <10 & 19 & 19 & 19 & 1.08 & 0.95 & 12 & 7  \\
\bracket{10}{100} & 14 & 14 & 14 & 0.94 & 1.09 & 8 & 6 \\
\bracket{100}{1000} & 30 & 30 & 30 & 0.80 & 0.92 & 17 & 13 \\
>1000 & 14 & 12 & 13 & 0.35 & 0.49 & 10 & 4 \\
\bottomrule
    \end{tabular}
\end{table}

In \cref{tab:drcr_n_mip_summary,tab:drcr_t_mip_summary}, we observe that R-DRCR-N dominates R-DRCR-T in both the number of affected instances and the speedups on the affected instances. Of the 208 instances where some reduction was found by either of the two methods, R-DRCR-T found a smaller reduced problem on only 8 instances. One explanation for this observation is that assignment subproblems such as those in the generalized assignment problem discussed \cref{sec_example} consist of matrices that are in general network matrices but not transposed network matrices. Otherwise, both methods have similar characteristics. Although they do not seem to help with the solution of 'easy' problems that can be solved under 10 seconds, we observe large speedups for the more difficult problems.

For R-DRCR-N, seven new instances are newly solved, and four instances are no longer solved compared to SCIP 10's default. For the seven instances that are newly solved, the R-DRCR-N algorithm reduces the size of the original system significantly. For example, for the newly solved instance \texttt{neos-631710} the size of the constraint matrix is reduced from $169576\times167056$ to $9448\times7216$.

For the four instances that are no longer solved after applying R-DRCR-N, there are various potential explanations for the loss of performance. For the two instances \texttt{supportcase34} and \texttt{neos-5129192-manaia} a strong primal solution is no longer found early in the search after the reduction is applied to the problem. For \texttt{cvs08r139-94}, it seems that the default symmetry handling methods in SCIP are more effective than DRCR. One potential explanation for this is that the symmetries and the equitable partition in this instance are relatively simple: all the nontrivial variable parts consist of two binary variables that are aggregated into one integer variable.

\begin{table}[h]
    \centering
        \caption{Comparison of SCIP 10 and R-DRCR-N and R-DRCR-T on unsolved instances of MIPLIB 2017 collection set that are affected by the R-DRCR-N or R-DRCR-T. The two different rows of the table correspond to the two different instance sets for which R-DRCR-N and R-DRCR-T found one or more reductions. The `nodes' column indicates the fractional of the shifted geometric mean of R-DRCR method compared to the SCIP 10 baseline. The SCIP 10 PDI and method PDI indicate the Primal-Dual Integral values of the SCIP 10 baseline and the R-DRCR method, respectively. }
    \label{tab:PDItable}
    \begin{tabular}{lrrrr}
        \toprule
        Method & instances & nodes & SCIP 10 PDI & method PDI \\
        \midrule
        R-DRCR-N & 100 & 1.19 &$1.30\cdot 10^5$ & $1.22\cdot 10^5$  \\
        R-DRCR-T & 76 & 1.25 & $1.23\cdot 10^5$ & $1.14\cdot 10^5$ \\
        \bottomrule
    \end{tabular}
\end{table}

In \cref{tab:PDItable} we evaluate the performance of DRCR on instances that were not solved within the time limit by any method. We observe that both DRCR methods help to reduce the Primal-Dual Integral, indicating that the DRCR helps to find strong primal and dual bounds more quickly than SCIP 10. Furthermore, we note that the DRCR-methods explore more branch-and-bound nodes within the time limit. This observation can be explained by the fact that the linear programming relaxations are easier to solve due to their reduced size.

\begin{table}[h]
    \centering
        \caption{Distribution of the time taken (in seconds) to detect and postsolve for R-DRCR-N and R-DRCR-T for the affected instances. For each bracket, the number of instances that fall into it is indicated.}
    \label{tab:drcr_timings}
    \begin{tabular}{lrrrcrrrr}
        \toprule
        & \multicolumn{3}{c}{DRCR detection} & &\multicolumn{4}{c}{postsolve}\\
        \cmidrule{2-4} \cmidrule{6-9}
        Method & <0.1 & \bracket{0.1}{1} & \bracket{1}{10} & & <0.1 & 
        \bracket{0.1}{1} & \bracket{1}{10} & >10 \\ 
        \midrule
        R-DRCR-N & 146 & 46 & 15 & & 134 & 37 & 28 & 8 \\
        R-DRCR-T & 114 & 28 & 11 & & 108 & 25 & 15 & 5 \\
        \bottomrule
    \end{tabular}

\end{table}

\cref{tab:drcr_timings} provides an overview of the distribution of presolve and postsolve times. We note that for the vast majority of the instances, the presolve and postsolve procedures are cheap. For a small number of instances, the postsolve LP solution using SoPlex took longer than 10 seconds. 

One effect that we observe in a few larger instances is that DRCR helps to reduce the running time of further symmetry detection methods. For example for the instance \texttt{neos-3322547-alsek}, symmetry detection for the original model with 1001000 columns takes 378.2 seconds. R-DRCR-N reduces the model to have only 82000 columns in 6.3 seconds. Symmetry detection on the reduced model completes in only 0.24 seconds.

To summarize our results, we observe that both R-DRCR methods are effective in reducing the solution time of general mixed-integer linear programs. The R-DRCR-N algorithm dominates the R-DRCR-T algorithm. Furthermore, R-DRCR-N solves three more instances, and both methods yield a smaller PDI on the set of instances that are not solved within the time limit. 

\section{Discussion}

We will follow the structure of the paper, and first evaluate the extension to reflection symmetries for linear programming and then the extension to mixed-integer linear programs.\\

In \cref{sec_reflection_symmetry}, we extended DRCR to reflection symmetries of linear programs. There are several ways in which the current work can still be extended.
First of all, we note that it is possible to extend the detection of reflection symmetries to not just use a primal affine offset $\delta$ but to also introduce a \emph{dual offset}. Such a dual offset alters the objective by subtracting a weighted linear combination of the rows from the objective. One hopes that doing so will lead to a coarser equitable partition by having more similar entries in the objective.
Similar to the choice of $\delta$, there are infinitely many possibilities to do so. However, unlike the primal variables, the dual variables of linear programming models are typically not explicitly bounded by the user, so there is no `obvious choice' at the center of the dual variable domains like for the primal offset. Thus, one would need to come up with some method to choose a sensible dual offset. \\

The choice of $\delta$ is somewhat justified by~\cref{thm_delta_equivalent_complemented}, which shows that it corresponds to the complementation of the variables at their bounds. However, it is still unclear whether other choices of $\delta$ may lead to better results. This is true in particular for free variables and variables with half-open domains, where any offset can be used and may change the right-hand sides, and there is no obvious center of the variable domain.

More generally, we believe it would be interesting to understand in further detail how one should apply linear and affine transformations to linear programs in order to detect arbitrary symmetries. One direction which naturally extends this research would be to detect arbitrary row and column scalings that result in coarser equitable partitions, compared to the $\pm 1$ scalings that we considered here.\\

 In \cref{sec_lp_folding_results}, we performed computational experiments on the linear programming relaxations of MIPLIB 2017 instances. As expected, reflection symmetries are somewhat uncommon as was also observed by Hojny~\cite{Hojny2025-si}, as for only 81 out of 1065 MIPLIB instances R-DRCR finds more reductions than DRCR. For these instances, we observed on average a running-time reduction for R-DRCR of roughly 27\% on the affected models compared to DRCR, with most of the benefit coming from a few large models. One limitation of the current study is that we did not include a crossover procedure to obtain a basic solution in our experiments, which is crucial in the context of solving linear programming relaxations in MILP. However, we note that since we compare R-DRCR against DRCR, a fair comparison with crossover would affect the running time of both models, so the performance implications in this case are unclear. As R-DRCR still clearly improves over DRCR without crossover it can be considered a relevant improvement for the dimension reduction of linear programs. In particular, reflection reductions can be computed in the same worst-case time complexity as DRCR by \cref{thm_running_time_extended}. Also in practice, they can be computed with only minor computational overhead using the implementation details that we describe in \cref{sec_lp_algorithm}. \\

 As the computational effort for crossover is the most important limitation for applying DRCR to linear programs in practice, one direction for future research is to investigate if it is possible to exploit symmetry to speed up the crossover procedure. Since (R)-DRCR factors out symmetries, the crossover problem contains these symmetries. It is likely that these symmetries hinder the performance of the crossover procedure. In an unpublished work, Deakins, Knueven and Ostrowski~\cite{Deakins2022} formulate a method that takes advantage of equitable partitions using an iterative lifting scheme and show that their method can reduce the time spent in crossover. It would be promising to further develop symmetry-exploiting crossover methods and incorporate them with our approach.
 More generally, one could investigate if knowledge of symmetries of a linear program can be used directly to improve the performance of the simplex algorithm without modifying the analyzed (symmetric) polytope.\\

In \cref{sec_dimred_milp}, we extended DRCR to mixed-integer linear programs and highlight that affine TU decompositions may be used to aggregate integer variables. In \cref{sec_algorithm}, we formulate an algorithm that detects an equitable partition that satisfies the necessary conditions for dimension reduction of mixed-integer linear programs. The computational results in \cref{sec_milp_folding_results} indicate that the formulated algorithm is effective in reducing the running time of an MILP solver: affected instances are solved more than twice as quickly on average.\\

The algorithm we propose for MILPs may have several additional benefits. First of all, if we are able to perform dimension reduction, we may also eliminate some of the symmetries in the instance. Although we may only eliminate a subgroup of the symmetries for MILP, this reduction may help to speed up the detection of the remaining symmetries.
Second, even if the dimension of the MILP is not reduced, our algorithm still computes an affine TU decomposition and/or implied integrality that can be used by the MIP solver. Thus, the computational effort to do so does not go to waste, regardless of whether a nontrivial equitable partition is detected.

One downside of our algorithm is that it completely rejects any equitable integer block $Q\in \MQ_I$ that can not be taken into the reduced problem, by partitioning it into its individual columns. Although this is a simple approach, it may also reject blocks that have many columns that contain a network substructure and only a few that break it. One may hope that instead of a rejecting the block completely, refining the block to compute an equitable partition and then attempting to add certain subsets of it might be more effective. However, this complicates the algorithm and requires a way to decide how equitable parts must refined in case they can not be augmented to the network matrix. \\

Our detection algorithm may benefit from detecting and exploiting additional substructures. For example, if the equitable partition $\MQ$ contains some $Q\in \MQ$ such that the constraint $\sum_{w\in Q} \lambda_w x_w = b$ appears in the constraint matrix, then the corresponding variable is fixed in the reduced problem and may be ignored for the detection of the affine TU decomposition, as it is an integral constant in the reduced problem.
Additionally, it would be interesting to combine our reduction with the aggregation of non-overlapping symmetric variables from Section 7.1 in~\cite{Achterberg2016}.
Another possibility that may enhance DRCR for mixed-integer programs is to take the viewpoint that it corresponds to simultaneously performing DRCR on all affine fibers $H_d$ that result from fixing integral expressions, where the vector $d\in \Z^k$ indicates the subspace. Our current analysis only performs reduces the dimension if DRCR can be applied for all $d\in \Z^k$. It may be possible to use the structure of the polyhedron to argue for some affine fibers $H_d$ that they are irrelevant for the MIP because they are infeasible or suboptimal, and use this insight to derive a stronger size reduction that exploits these properties. \\

For our computational experiments in \cref{sec_milp_folding_results}, we remark that we did not spend any time or effort for the postsolve procedure by solving it as a linear program over the affine fibers, without utilizing the given non-basic LP-feasible point. Although this is not a bottleneck of our approach, a crossover procedure that uses the solution as an initial feasible point should be more efficient. 
If one uses affine TU decompositions to aggregate symmetric integer variables, then a postsolve step to recover integrality of the original solutions is necessary. We emphasize that in our experiments in \cref{sec_milp_folding_results} we perform crossover only once for the best integer solution that is found. Most MILP solvers do not report just the optimal solution, but also provide a solution pool that contains the best found primal solutions. For each such a solution, one needs to call a crossover procedure. Thus, the computational costs of the crossover step may become burdensome as it is repeated for every integer solution that is reported to the user. Thus, this justifies research into fast crossover procedures that can be used in conjunction with R-DRCR-N and R-DRCR-T.\\

Since we detect if the affine fibers are given by  network matrices, the affine fibers can be transformed into a network flow problem, see~\cite{Bixby1980} for further information.
In crossover, one could also exploit this network structure by using crossover algorithms that are specialized for network flow problems~\cite{Ge2025}. The authors of~\cite{Ge2025} show that exploiting the network structure and using ideas from the network simplex algorithm for crossover may help to reduce crossover time by a factor of 10 or more compared to Gurobi's default crossover procedure on large optimal transport instances. As the crossover problem that we consider is also symmetric, the methods from~\cite{Ge2025} may be further enhanced by formulating symmetry-aware crossover strategies.\\

Considering our results, we remark that R-DRCR-N was more effective on the tested instances than R-DRCR-T. One explanation for this observation is that the structure of the affine TU decomposition that we observe for the generalized assignment problem corresponds to an assignment problem, which is given by a constraint matrix that is a network matrix but not a transposed network matrix. \\

Our approach can also be interpreted as \emph{exact orbital shrinking} where the orbital shrinking subproblem is a perfect formulation. Salvagnin~\cite{Salvagnin2013} observed that orbital shrinking may be significantly more powerful than exact orbital shrinking if one aggregates a larger symmetry group, which leads to a more difficult subproblem that is not necessarily a perfect formulation. Further research could investigate how one can better navigate the trade-off between stronger bounds and the difficulty of the subproblem for orbital shrinking. A crucial component here is efficient solution strategies for the orbital shrinking subproblem, which still contains the symmetries that are factored out of the original problem. As also remarked by Salvagnin~\cite{Salvagnin2013}, one could also view orbital shrinking as a logic-based Benders' decomposition and exploit infeasible subproblems by providing cutting planes that cut off infeasible solutions.

\medskip 
\noindent
\textbf{Acknowledgements.}
I am grateful to Matthias Walter for his feedback that helped improve the quality of this work, and I would like to thank Christopher Hojny for recommending relevant literature.

The author acknowledges funding support from the Netherlands Organisation for Scientific Research (NWO) on grant number OCENW.M20.151: ``Making Mixed-Integer Programming Solvers Smarter and Faster using Network Matrices".

\bibliography{exactorbitalshrinking}

@ARTICLE{AchterbergBGRW20,
  author = {Achterberg, Tobias and Bixby, Robert E. and Gu, Zonghao and Rothberg,
	Edward and Weninger, Dieter},
  title = {Presolve Reductions in Mixed Integer Programming},
  journal = {INFORMS Journal on Computing},
  year = {2020},
  volume = {32},
  pages = {473-506},
  number = {2},
  doi = {10.1287/ijoc.2018.0857}
}

@InCollection{HoffmanK56,
  author    = {Hoffman, Alan J. and Kruskal, Joseph B.},
  booktitle = {Linear Inequalities and Related Systems},
  title     = {Integral Boundary Points of Convex Polyhedra},
  doi       = {10.1515/9781400881987-014},
  pages     = {223--246},
  publisher = {Princeton University Press},
  volume    = {38},
  comment   = {cite for integrality of polyhedra due to total unimodularity},
  year      = {1956},
  address = {Princeton, New Jersey}
}

@BOOK{Schrijver86,
  title = {Theory of Linear and Integer Programming},
  publisher = {John Wiley \& Sons, Inc.},
  year = {1986},
  author = {Schrijver, Alexander},
  address = {New York, NY, USA},
}

@ARTICLE{Meyer74,
  author = {Meyer, Robert R.},
  title = {On the existence of optimal solutions to integer and mixed-integer
	programming problems},
  journal = {Mathematical Programming},
  year = {1974},
  volume = {7},
  pages = {223--235},
  day = {01},
  doi = {10.1007/BF01585518},
  issn = {1436-4646}
}

@article{Gleixner2021,
author = {Gleixner, Ambros and et al.},
doi = {10.1007/s12532-020-00194-3},
issn = {1867-2949},
journal = {Mathematical Programming Computation},
number = {3},
pages = {443--490},
title = {MIPLIB 2017: data-driven compilation of the 6th mixed-integer programming library},
volume = {13},
year = {2021}
}

@article{BixbyWagner1988,
author = {Bixby, Robert E. and Wagner, Donald K.},
journal = {Mathematics of Operations research},
number = {1},
title = {An almost linear-time algorithm for graph realization},
volume = {13},
year = {1988},
doi = {10.1287/moor.13.1.99}
}

@article{Bixby1980,
author = {Bixby, Robert E. and Cunningham, William H.},
doi = {10.1287/moor.5.3.321},
issn = {0364-765X},
journal = {Mathematics of Operations Research},
number = {3},
pages = {321--357},
title = {Converting Linear Programs to Network Problems},
volume = {5},
year = {1980}
}

@article{Walter2013,
author = {Walter, Matthias and Truemper, Klaus},
doi = {10.1007/s12532-012-0048-x},
journal = {Mathematical Programming Computation},
number = {1},
pages = {57--73},
title = {Implementation of a unimodularity test},
volume = {5},
year = {2013}
}

@article{Truemper1990,
author = {Truemper, Klaus},
doi = {10.1016/0095-8956(90)90030-4},
issn = {00958956},
journal = {Journal of Combinatorial Theory, Series B},
number = {2},
pages = {241--281},
title = {A decomposition theory for matroids. V. Testing of matrix total unimodularity},
volume = {49},
year = {1990}
}

@TechReport{Gomory60,
  author      = {Gomory, Ralph E.},
  institution = {RAND Corporation},
  title       = {An algorithm for the mixed-integer problem},
  number      = {RM-2597},
  comment     = {cite for Gomory mixed-integer cuts},
  year        = {1960},
}

@book{Ziegler01,
  title = {Lectures on Polytopes (Graduate Texts in Mathematics)},
  publisher = {Springer},
  year = {2001},
  author = {G{\"u}nter M. Ziegler},
  address = {New York}
}

@article{BaderHWZ18,
author = {Bader, J{\"{o}}rg and Hildebrand, Robert and Weismantel, Robert and Zenklusen, Rico},
doi = {10.1007/s10107-017-1147-2},
eprint = {1508.02940},
isbn = {1010701711},
issn = {14364646},
journal = {Mathematical Programming},
number = {2},
pages = {565--584},
publisher = {Springer Berlin Heidelberg},
title = {Mixed integer reformulations of integer programs and the affine TU-dimension of a matrix},
volume = {169},
year = {2018}
}

@techreport{Achterberg2016,
address = {Berlin},
author = {Achterberg, Tobias and Bixby, Robert E. and Gu, Zonghao and Rothberg, Edward and Weninger, Dieter},
institution = {Zuse Institute Berlin},
title = {Presolve Reductions in Mixed Integer Programming},
year = {2016}
}

@article{Grohe2014,
archivePrefix = {arXiv},
arxivId = {1307.5697},
author = {Grohe, Martin and Kersting, Kristian and Mladenov, Martin and Selman, Erkal},
doi = {10.1007/978-3-662-44777-2\_42},
eprint = {1307.5697},
isbn = {9783662447765},
issn = {16113349},
journal = {Lecture Notes in Computer Science (including subseries Lecture Notes in Artificial Intelligence and Lecture Notes in Bioinformatics)},
pages = {505--516},
title = {Dimension reduction via colour refinement},
volume = {8737 LNCS},
year = {2014}
}

@misc{Berthold2018,
author = {Berthold, Timo},
title = {How to Fold a linear programming Problem},
url = {https://community.fico.com/s/blog-post/a5Q80000000Drp2EAC/fico1299},
urldate = {13 October 2025},
year = {2018}
}

@misc{GurobiOptimization2020,
author = {Gurobi Optimization},
title = {What's new in the Gurobi Optimizer 9.1?},
url = {https://www.youtube.com/watch?v=EwuEPu-nBmg},
urldate = {2025-10-13},
year = {2020}
}

@article{Khachiyan1980,
author = {Khachiyan, L.G.},
doi = {10.1016/0041-5553(80)90061-0},
issn = {00415553},
journal = {USSR Computational Mathematics and Mathematical Physics},
month = {jan},
number = {1},
pages = {53--72},
title = {Polynomial algorithms in linear programming},
volume = {20},
year = {1980}
}

@article{Grotschel1981,
author = {Gr{\"{o}}tschel, M. and Lov{\'{a}}sz, L. and Schrijver, A.},
doi = {10.1007/BF02579273},
issn = {0209-9683},
journal = {Combinatorica},
month = {jun},
number = {2},
pages = {169--197},
title = {The ellipsoid method and its consequences in combinatorial optimization},
volume = {1},
year = {1981}
}

@article{Bland1981,
author = {Bland, Robert G and Goldfarb, Donald and Todd, Michael J},
doi = {10.1287/opre.29.6.1039},
journal = {Operations Research},
number = {6},
pages = {1039--1091},
title = {The Ellipsoid Method: A Survey},
volume = {29},
year = {1981}
}

@ARTICLE{Hojny2025-si,
  title     = "Detecting and handling reflection symmetries in mixed-integer
               (nonlinear) programming and beyond",
  author    = "Hojny, Christopher",
  journal   = "Mathematical Programming Computation",
  publisher = "Springer Science and Business Media LLC",
  month     =  jul,
  year      =  2025,
  copyright = "https://creativecommons.org/licenses/by/4.0",
  language  = "en",
  doi = {10.1007/s12532-025-00289-9}
}

@article{Bolusani2024,
archivePrefix = {arXiv},
arxivId = {2402.17702},
author = {Bolusani, Suresh and Besan{\c{c}}on, Mathieu and Bestuzheva, Ksenia and Chmiela, Antonia and Dion{\'{i}}sio, Jo{\~{a}}o and Donkiewicz, Tim and van Doornmalen, Jasper and Eifler, Leon and Ghannam, Mohammed and Gleixner, Ambros and Graczyk, Christoph and Halbig, Katrin and Hedtke, Ivo and Hoen, Alexander and Hojny, Christopher and van der Hulst, Rolf and Kamp, Dominik and Koch, Thorsten and Kofler, Kevin and Lentz, Jurgen and Manns, Julian and Mexi, Gioni and M{\"{u}}hmer, Erik and Pfetsch, Marc E. and Schl{\"{o}}sser, Franziska and Serrano, Felipe and Shinano, Yuji and Turner, Mark and Vigerske, Stefan and Weninger, Dieter and Xu, Lixing},
eprint = {2402.17702},
number = {05},
pages = {1--36},
title = {The SCIP Optimization Suite 9.0},
year = {2024}
}

@article{Bodi2013,
archivePrefix = {arXiv},
arxivId = {1012.4941},
author = {B{\"{o}}di, Richard and Herr, Katrin and Joswig, Michael},
doi = {10.1007/s10107-011-0487-6},
eprint = {1012.4941},
isbn = {1010701104876},
issn = {0025-5610},
journal = {Mathematical Programming},
month = {feb},
number = {1-2},
pages = {65--90},
title = {Algorithms for highly symmetric linear and integer programs},
url = {http://link.springer.com/10.1007/s10107-011-0487-6},
volume = {137},
year = {2013}
}

@book{Gatermann2004,
archivePrefix = {arXiv},
arxivId = {math/0211450},
author = {Gatermann, Karin and Parrilo, Pablo A.},
booktitle = {Journal of Pure and Applied Algebra},
doi = {10.1016/j.jpaa.2003.12.011},
eprint = {0211450},
isbn = {4930841852},
issn = {00224049},
number = {1-3},
pages = {95--128},
primaryClass = {math},
title = {Symmetry groups, semidefinite programs, and sums of squares},
volume = {192},
year = {2004}
}

@incollection{Fischetti2012,
author = {Fischetti, Matteo and Liberti, Leo},
booktitle = {Lecture Notes in Computer Science (including subseries Lecture Notes in Artificial Intelligence and Lecture Notes in Bioinformatics)},
doi = {10.1007/978-3-642-32147-4\_6},
isbn = {9783642321467},
issn = {03029743},
pages = {48--58},
publisher = {Springer},
title = {Orbital Shrinking},
volume = {7422 LNCS},
year = {2012},
address = {Berlin, Heidelberg}
}

@article{Fischetti2017,
author = {Fischetti, Matteo and Liberti, Leo and Salvagnin, Domenico and Walsh, Toby},
doi = {10.1016/j.dam.2017.01.015},
issn = {0166218X},
journal = {Discrete Applied Mathematics},
month = {may},
pages = {109--123},
publisher = {Elsevier B.V.},
title = {Orbital shrinking: Theory and applications},
volume = {222},
year = {2017}
}

@inproceedings{Salvagnin2012,
author = {Salvagnin, Domenico and Walsh, Toby},
booktitle = {Lecture Notes in Computer Science (including subseries Lecture Notes in Artificial Intelligence and Lecture Notes in Bioinformatics)},
doi = {10.1007/978-3-642-33558-7\_46},
isbn = {9783642335570},
issn = {03029743},
pages = {633--646},
title = {A hybrid MIP/CP approach for multi-activity shift scheduling},
volume = {7514 LNCS},
year = {2012}
}

@incollection{Salvagnin2013,
author = {Salvagnin, Domenico},
booktitle = {Lecture Notes in Computer Science (including subseries Lecture Notes in Artificial Intelligence and Lecture Notes in Bioinformatics)},
doi = {10.1007/978-3-642-38171-3\_14},
isbn = {9783642381706},
issn = {03029743},
pages = {204--215},
title = {Orbital Shrinking: A New Tool for Hybrid MIP/CP Methods},
volume = {7874 LNCS},
year = {2013}
}

@article{Pfetsch2019,
author = {Pfetsch, Marc E. and Rehn, Thomas},
doi = {10.1007/s12532-018-0140-y},
issn = {1867-2949},
journal = {Mathematical Programming Computation},
month = {mar},
number = {1},
pages = {37--93},
title = {A computational comparison of symmetry handling methods for mixed integer programs},
volume = {11},
year = {2019}
}

@unpublished{Christophel2014,
archivePrefix = {arXiv},
arxivId = {1408.4017},
author = {Christophel, Philipp M and G{\"{u}}zelsoy, Menal and P{\'{o}}lik, Imre},
eprint = {1408.4017},
month = {aug},
title = {New symmetries in mixed-integer linear optimization: Symmetry heuristics and complement-based symmetries},
year = {2014}
}

@article{Liberti2014,
author = {Liberti, Leo and Ostrowski, James},
doi = {10.1007/s10898-013-0106-6},
issn = {0925-5001},
journal = {Journal of Global Optimization},
month = {oct},
number = {2},
pages = {183--194},
title = {Stabilizer-based symmetry breaking constraints for mathematical programs},
volume = {60},
year = {2014}
}

@article{Liberti2012,
author = {Liberti, Leo},
doi = {10.1007/s10107-010-0351-0},
issn = {0025-5610},
journal = {Mathematical Programming},
month = {feb},
number = {1-2},
pages = {273--304},
title = {Reformulations in mathematical programming: automatic symmetry detection and exploitation},
volume = {131},
year = {2012}
}

@article{Margot2003,
author = {Margot, Fran{\c{c}}ois},
doi = {10.1007/s10107-003-0394-6},
issn = {0025-5610},
journal = {Mathematical Programming},
month = {sep},
number = {1-3},
pages = {3--21},
title = {Exploiting orbits in symmetric ILP},
volume = {98},
year = {2003}
}

@article{Margot2001, title={Pruning by isomorphism in branch-and-cut}, volume={94}, ISSN={1436-4646}, url={http://dx.doi.org/10.1007/s10107-002-0358-2}, DOI={10.1007/s10107-002-0358-2}, number={1}, journal={Mathematical Programming}, publisher={Springer Science and Business Media LLC}, author={Margot, Fran�ois}, year={2002}, month=dec, pages={71–90} }

@article{Ostrowski2011,
author = {Ostrowski, James and Linderoth, Jeff and Rossi, Fabrizio and Smriglio, Stefano},
doi = {10.1007/s10107-009-0273-x},
issn = {0025-5610},
journal = {Mathematical Programming},
month = {jan},
number = {1},
pages = {147--178},
title = {Orbital branching},
volume = {126},
year = {2011}
}

@incollection{Ostrowski2008,
address = {Berlin, Heidelberg},
author = {Ostrowski, James and Linderoth, Jeff and Rossi, Fabrizio and Smriglio, Stefano},
booktitle = {Integer Programming and Combinatorial Optimization},
doi = {10.1007/978-3-540-68891-4\_16},
isbn = {3540688862},
issn = {03029743},
pages = {225--239},
publisher = {Springer Berlin Heidelberg},
title = {Constraint Orbital Branching},
volume = {5035 LNCS},
year = {2008}
}

@inbook{Margot2010,
address = {Berlin, Heidelberg},
author = {Margot, Fran{\c{c}}ois},
booktitle = {50 Years of Integer Programming 1958-2008: From the Early Years to the State-of-the-Art},
doi = {10.1007/978-3-540-68279-0\_17},
editor = {J{\"{u}}nger, Michael and Liebling, Thomas M and Naddef, Denis and Nemhauser, George L and Pulleyblank, William R and Reinelt, Gerhard and Rinaldi, Giovanni and Wolsey, Laurence A},
isbn = {978-3-540-68279-0},
pages = {647--686},
publisher = {Springer Berlin Heidelberg},
title = {Symmetry in Integer Linear Programming},
url = {https://doi.org/10.1007/978-3-540-68279-0\_17},
year = {2010}
}

@article{Read1977,
author = {Read, Ronald C. and Corneil, Derek G.},
doi = {10.1002/jgt.3190010410},
issn = {0364-9024},
journal = {Journal of Graph Theory},
month = {dec},
number = {4},
pages = {339--363},
title = {The graph isomorphism disease},
volume = {1},
year = {1977}
}

@misc{Darga,
author = {Darga, Paul T. and Katebi, Hadi and Liffiton, Mark and Markov, Igor L. and Sakallah, Karem},
title = {Saucy3: Fast Symmetry Discovery in Graphs},
url = {http://vlsicad.eecs.umich.edu/BK/SAUCY/},
urldate = {6 November 2025}
}

@misc{Junttila,
author = {Junttila, Tommi and Kaski, Petteri},
title = {bliss: A Tool for Computing Automorphism Groups and Canonical Labelings of Graphs},
url = {http://www.tcs.hut.fi/Software/bliss/},
urldate = {6 November 2025}
}

@misc{McKay2025,
author = {McKay, Brendan D and Piperno, Adolfo},
title = {Nauty User's Guide (version 2.7)},
url = {https://users.cecs.anu.edu.au/$\sim$bdm/nauty/},
urldate = {6 November 2025},
year = {2025}
}

@incollection{Hopcroft1971,
author = {Hopcroft, John},
booktitle = {Theory of Machines and Computations},
doi = {10.1016/B978-0-12-417750-5.50022-1},
number = {1},
pages = {189--196},
publisher = {Elsevier},
title = {An n log n algorithm for minimizing states in a finite automaton},
year = {1971},
volume = {7874 LNCS},
address = {Haifa, Israel}
}

@article{Paige1987,
author = {Paige, Robert and Tarjan, Robert E.},
doi = {10.1137/0216062},
issn = {0097-5397},
journal = {SIAM Journal on Computing},
month = {dec},
number = {6},
pages = {973--989},
title = {Three Partition Refinement Algorithms},
volume = {16},
year = {1987}
}

@article{Geyer2019,
author = {Geyer, Andrew J. and Bulutoglu, Dursun A. and Ryan, Kenneth J.},
doi = {10.1016/j.disopt.2019.01.001},
issn = {15725286},
journal = {Discrete Optimization},
pages = {93--119},
publisher = {Elsevier B.V.},
title = {Finding the symmetry group of an LP with equality constraints and its application to classifying orthogonal arrays},
url = {https://doi.org/10.1016/j.disopt.2019.01.001},
volume = {32},
year = {2019}
}

@incollection{Bachoc2012,
archivePrefix = {arXiv},
arxivId = {1007.2905},
author = {Bachoc, Christine and Gijswijt, Dion C. and Schrijver, Alexander and Vallentin, Frank},
booktitle = {Handbook of Semidefinite, Conic and polynomial Optimization},
doi = {10.1007/978-1-4614-0769-0\_9},
eprint = {1007.2905},
issn = {08848289},
pages = {219-270},
title = {Invariant Semidefinite Programs},
publisher= {Springer},
volume = {166},
year = {2012},
address = {New York}
}

@article{Permenter2020,
archivePrefix = {arXiv},
arxivId = {1608.02090},
author = {Permenter, Frank and Parrilo, Pablo A.},
doi = {10.1007/s10107-019-01372-5},
eprint = {1608.02090},
issn = {0025-5610},
journal = {Mathematical Programming},
month = {may},
number = {1},
pages = {51--84},
title = {Dimension reduction for semidefinite programs via Jordan algebras},
volume = {181},
year = {2020}
}

@article{Gemander2020,
author = {Gemander, Patrick and Chen, Wei-Kun and Weninger, Dieter and Gottwald, Leona and Gleixner, Ambros and Martin, Alexander},
doi = {10.1007/s13675-020-00129-6},
isbn = {1367502000129},
issn = {21924406},
journal = {EURO Journal on Computational Optimization},
month = {oct},
number = {3-4},
pages = {205--240},
publisher = {Elsevier},
title = {Two-row and two-column mixed-integer presolve using hashing-based pairing methods},
volume = {8},
year = {2020}
}

@article{Herr2013,
archivePrefix = {arXiv},
arxivId = {1202.0435},
author = {Herr, Katrin and Rehn, Thomas and Sch{\"{u}}rmann, Achill},
doi = {10.1016/j.orl.2013.02.007},
eprint = {1202.0435},
issn = {01676377},
journal = {Operations Research Letters},
number = {3},
pages = {298--304},
title = {Exploiting symmetry in integer convex optimization using core points},
volume = {41},
year = {2013}
}

@inproceedings{VanDerHulst2025,
address = {Cham},
author = {{van der Hulst}, Rolf and Walter, Matthias},
booktitle = {Integer Programming and Combinatorial Optimization},
isbn = {978-3-031-93112-3},
pages = {452--465},
publisher = {Springer},
title = {Implied Integrality in Mixed-Integer Optimization},
year = {2025},
doi ={10.1007/978-3-031-93112-3\_33}
}

@article{Megiddo1991,
author = {Megiddo, Nimrod},
doi = {10.1287/ijoc.3.1.63},
issn = {0899-1499},
journal = {ORSA Journal on Computing},
month = {feb},
number = {1},
pages = {63--65},
title = {On Finding Primal- and Dual-Optimal Bases},
volume = {3},
year = {1991}
}

@article{Ge2025,
archivePrefix = {arXiv},
arxivId = {2102.09420},
author = {Ge, Dongdong and Wang, Chengwenjian and Xiong, Zikai and Ye, Yinyu},
doi = {10.1287/ijoc.2022.0291},
eprint = {2102.09420},
issn = {1091-9856},
journal = {INFORMS Journal on Computing},
month = {mar},
number = {December},
publisher = {http://pubsonline.informs.org/journal/ijoc},
title = {From an Interior Point to a Corner Point: Smart Crossover},
year = {2025}
}

@article{RowNetworkMatrixPaper,
archivePrefix = {arXiv},
arxivId = {2408.12869},
author = {{van der Hulst}, Rolf and Walter, Matthias},
eprint = {2408.12869},
month = {jun},
pages = {1--44},
title = {A Row-wise Algorithm for Graph Realization},
url = {http://arxiv.org/abs/2408.12869},
volume = {1},
year = {2025}
}

@article{Berkholz2017,
archivePrefix = {arXiv},
arxivId = {1509.08251},
author = {Berkholz, Christoph and Bonsma, Paul and Grohe, Martin},
doi = {10.1007/s00224-016-9686-0},
eprint = {1509.08251},
issn = {14330490},
journal = {Theory of Computing Systems},
number = {4},
pages = {581--614},
title = {Tight Lower and Upper Bounds for the Complexity of Canonical Colour Refinement},
volume = {60},
year = {2017}
}

@misc{hojny2025scipoptimizationsuite100,
      title={The SCIP Optimization Suite 10.0}, 
      author={Christopher Hojny and Mathieu Besançon and Ksenia Bestuzheva and Sander Borst and João Dionísio and Johannes Ehls and Leon Eifler and Mohammed Ghannam and Ambros Gleixner and Adrian Göß and Alexander Hoen and Jacob von Holly-Ponientzietz and Rolf {van der Hulst} and Dominik Kamp and Thorsten Koch and Kevin Kofler and Jurgen Lentz and Marco Lübbecke and Stephen J. Maher and Paul Matti Meinhold and Gioni Mexi and Til Mohr and Erik Mühmer and Krunal Kishor Patel and Marc E. Pfetsch and Sebastian Pokutta and Chantal Reinartz Groba and Felipe Serrano and Yuji Shinano and Mark Turner and Stefan Vigerske and Matthias Walter and Dieter Weninger and Liding Xu},
      year={2025},
      eprint={2511.18580},
      archivePrefix={arXiv},
      primaryClass={math.OC},
      url={https://arxiv.org/abs/2511.18580}, 
}

@article{GGHpapilo,
author    = {Ambros Gleixner and Leona Gottwald and Alexander Hoen},
publisher = {INFORMS Journal on Computing},
title     = {{PaPILO}: A Parallel Presolving Library for Integer and Linear Programming with Multiprecision Support},
year      = {2023},
journal   = {INFORMS Journal on Computing},
doi       = {10.1287/ijoc.2022.0171.cd},
url =       {https://github.com/INFORMSJoC/2022.0171},
}

@article{Berthold2013,
title = {Measuring the impact of primal heuristics},
journal = {Operations Research Letters},
volume = {41},
number = {6},
pages = {611-614},
year = {2013},
issn = {0167-6377},
doi = {https://doi.org/10.1016/j.orl.2013.08.007},
author = {Timo Berthold}
}

@unpublished{Deakins2022,
author = {Deakins, Ethan and Knueven, Bernard and Ostrowski, Jim},
title = {Orbital Crossover},
year = {2022},
url = {https://optimization-online.org/2022/12/orbital-crossover/}
}

@misc{VanDerHulstMatrec,
  author = {{van der Hulst}, Rolf},
  title = {MATREC: Matrix recognition algorithms},
  year = {2024},
  publisher = {GitHub},
  journal = {GitHub repository},
  howpublished = {\url{https://github.com/rolfvdhulst/matrec}},
  url = {https://github.com/rolfvdhulst/matrec}
}

@misc{VanDerHulstFolding,
  author = {{van der Hulst}, Rolf},
  title = {Folding Mixed-Integer Linear Programs and Reflection Symmetries},
  year = {2026},
  publisher = {GitHub},
  journal = {GitHub repository},
  howpublished = {\url{https://github.com/rolfvdhulst/folding}},
  url = {https://github.com/rolfvdhulst/folding}
}

\begin{appendices}
\newpage
\section{Experimental results}
\label{appendix_drcr_lp}
\tiny{
\begin{longtable}{l|r|r|r|r|r|r|r|r|r|r}
    \caption{Comparison of DRCR and R-DRCR on the linear relaxation of MIPLIB2017 instances.}
    \label{tab:complete_lp_drcr_results} \\
     & \multicolumn{2}{c|}{Original} & \multicolumn{4}{c|}{DRCR} & \multicolumn{4}{c}{R-DRCR} \\
    \hline
instance & rows & columns & time & iters & rows & columns & time & iters & rows & columns \\
\hline
academictimetablebig           & 146304 & 154686 & 155.04 & 36162 & 130415 & 121047 & 259.45 & 38111 & 121759 & 111881\\
academictimetablesmall         & 17258 & 25550 & 0.59 & 2969 & 13727 & 9096 & 0.88 & 3030 & 13442 & 8845\\
bab3                           & 22500 & 393401 & 1085.09 & 353260 & 22500 & 393401 & 1133.28 & 350161 & 22578 & 393401\\
brazil3                        & 3876 & 13002 & 7.46 & 24655 & 3278 & 5604 & 3.25 & 15098 & 2842 & 3642\\
bts4-cta                       & 34301 & 74259 & 0.62 & 5578 & 34013 & 74115 & 0.63 & 5579 & 34013 & 74115\\
cryptanalysiskb128n5obj14      & 68508 & 33549 & 11.53 & 20225 & 50252 & 24417 & 12.77 & 21498 & 50220 & 24401\\
cryptanalysiskb128n5obj16      & 68508 & 33549 & 12.34 & 21756 & 50252 & 24417 & 10.28 & 19876 & 50220 & 24401\\
eva1aprime5x5opt               & 11737 & 1520 & 12.01 & 28688 & 9623 & 1520 & 4.24 & 17575 & 7270 & 1141\\
eva1aprime6x6opt               & 34464 & 3106 & 78.44 & 63347 & 23945 & 3106 & 34.74 & 50149 & 19227 & 2359\\
fastxgemm-n2r6s0t2             & 4462 & 784 & 0.00 & 7 & 24 & 10 & 0.00 & 3 & 20 & 8\\
fastxgemm-n2r7s4t1             & 5180 & 904 & 0.00 & 7 & 24 & 10 & 0.00 & 3 & 20 & 8\\
fastxgemm-n3r21s3t6            & 158132 & 18684 & 0.04 & 7 & 24 & 10 & 0.04 & 3 & 20 & 8\\
fastxgemm-n3r22s4t6            & 165590 & 19539 & 0.05 & 7 & 24 & 10 & 0.04 & 3 & 20 & 8\\
fastxgemm-n3r23s5t6            & 173048 & 20394 & 0.05 & 7 & 24 & 10 & 0.04 & 3 & 20 & 8\\
fhnw-binpack4-48               & 4480 & 3710 & 0.04 & 3062 & 4480 & 3710 & 0.02 & 0 & 3885 & 3083\\
fhnw-binpack4-4                & 620 & 520 & 0.00 & 549 & 620 & 520 & 0.00 & 0 & 542 & 431\\
fiball                         & 2387 & 32899 & 0.11 & 2570 & 878 & 4948 & 0.06 & 1831 & 878 & 4878\\
graphdraw-grafo2               & 203455 & 9258 & 0.33 & 1087 & 40068 & 2813 & 0.45 & 1515 & 40067 & 2813\\
highschool1-aigio              & 67773 & 299378 & 3602.72 & 203703 & 54501 & 154250 & 3600.71 & 229931 & 54482 & 154227\\
ic97\_tension                   & 202 & 334 & 0.00 & 27 & 201 & 333 & 0.00 & 31 & 379 & 332\\
icir97\_potential               & 1657 & 1945 & 0.01 & 687 & 1657 & 1945 & 0.01 & 586 & 2698 & 1637\\
icir97\_tension                 & 880 & 1119 & 0.01 & 16 & 820 & 1057 & 0.01 & 18 & 1574 & 1039\\
in                             & 1504669 & 1444973 & 3606.62 & 144868 & 1504365 & 1444707 & 3607.59 & 130656 & 1503062 & 1443561\\
irish-electricity              & 70342 & 38611 & 292.93 & 85415 & 66825 & 36122 & 210.96 & 72675 & 60894 & 32221\\
kottenpark09                   & 119513 & 1315854 & 3603.47 & 53892 & 118541 & 1299889 & 3604.04 & 66638 & 118538 & 1299793\\
lectsched-1                    & 9206 & 9350 & 0.04 & 0 & 9108 & 9252 & 0.05 & 0 & 17935 & 9123\\
lectsched-2                    & 4894 & 5013 & 0.02 & 0 & 4852 & 4971 & 0.03 & 0 & 9557 & 4903\\
lectsched-3                    & 8185 & 8321 & 0.03 & 0 & 8104 & 8240 & 0.04 & 0 & 15979 & 8137\\
lectsched-4-obj                & 3102 & 3187 & 0.00 & 115 & 2950 & 3035 & 0.02 & 116 & 5285 & 2989\\
lectsched-5-obj                & 9453 & 9582 & 0.05 & 421 & 8822 & 8951 & 0.06 & 432 & 15848 & 8833\\
liu                            & 2178 & 1154 & 0.02 & 560 & 2178 & 1154 & 0.00 & 1 & 1089 & 563\\
maxgasflow                     & 5786 & 5974 & 0.11 & 4463 & 5473 & 5658 & 0.11 & 4015 & 5219 & 5366\\
neos-1324574                   & 5763 & 5220 & 0.01 & 364 & 1247 & 1093 & 0.01 & 523 & 1036 & 910\\
neos-1330346                   & 4177 & 2628 & 0.18 & 2628 & 4177 & 2628 & 0.06 & 1340 & 2794 & 1747\\
neos-2974461-ibar              & 209853 & 210174 & 3600.76 & 112419 & 209853 & 210174 & 3601.85 & 158103 & 208249 & 209304\\
neos-3009394-lami              & 1677 & 1353 & 0.00 & 169 & 419 & 341 & 0.00 & 93 & 220 & 179\\
neos-3068746-nene              & 4409 & 4520 & 0.24 & 2482 & 4228 & 4415 & 0.21 & 2178 & 4123 & 4356\\
neos-3211096-shag              & 5818 & 4271 & 0.00 & 35 & 250 & 163 & 0.00 & 19 & 193 & 82\\
neos-3355323-arnon             & 11010 & 10128 & 0.07 & 3564 & 8106 & 7224 & 0.01 & 21 & 131 & 62\\
neos-3402294-bobin             & 32932 & 780 & 0.69 & 978 & 32932 & 780 & 0.04 & 96 & 4259 & 132\\
neos-3402454-bohle             & 2881228 & 2496 & 1353.86 & 25314 & 2881228 & 2496 & 5.78 & 796 & 81413 & 990\\
neos-3656078-kumeu             & 9614 & 10255 & 0.14 & 4269 & 8266 & 8831 & 0.24 & 5338 & 10850 & 8477\\
neos-4306827-ravan             & 120429 & 67539 & 0.28 & 2680 & 29549 & 14519 & 0.28 & 1415 & 25145 & 10790\\
neos-4338804-snowy             & 1470 & 1323 & 0.01 & 340 & 1470 & 1323 & 0.00 & 0 & 1048 & 888\\
neos-4754521-awarau            & 128319 & 16140 & 14.31 & 0 & 117152 & 15099 & 10.49 & 0 & 84952 & 15086\\
neos-4797081-pakoka            & 6402 & 11777 & 49.80 & 21872 & 6402 & 11777 & 44.89 & 21164 & 6664 & 11777\\
neos-5075914-elvire            & 2629 & 2602 & 0.05 & 1310 & 2531 & 2504 & 0.05 & 1360 & 2532 & 2388\\
neos-5076235-embley            & 6336 & 23618 & 0.64 & 4391 & 6238 & 23569 & 0.61 & 4005 & 5783 & 22239\\
neos-5078479-escaut            & 2783 & 2602 & 0.05 & 1340 & 2765 & 2584 & 0.05 & 1386 & 2912 & 2559\\
neos-5079731-flyers            & 6335 & 23618 & 0.52 & 3968 & 6237 & 23569 & 0.71 & 5123 & 5691 & 21973\\
neos-5093327-huahum            & 5988 & 19252 & 0.34 & 4276 & 5922 & 19220 & 0.44 & 4446 & 5808 & 18992\\
neos-5100895-inster            & 4802 & 14000 & 0.20 & 2325 & 4746 & 13972 & 0.24 & 2278 & 4460 & 13004\\
neos-5102383-irwell            & 6629 & 24500 & 0.51 & 3877 & 6531 & 24451 & 1.07 & 6802 & 5972 & 22559\\
neos-5107597-kakapo            & 3249 & 3045 & 0.01 & 200 & 3249 & 3045 & 0.00 & 158 & 3513 & 1606\\
neos-5115478-kaveri            & 3249 & 3045 & 0.01 & 198 & 3249 & 3045 & 0.01 & 140 & 3513 & 1606\\
neos-5125849-lopori            & 397 & 8074 & 0.02 & 132 & 327 & 6170 & 0.02 & 77 & 205 & 3470\\
neos-5129192-manaia            & 530672 & 161799 & 2.31 & 1115 & 530672 & 161799 & 2.08 & 40 & 529939 & 159533\\
neos-780889                    & 25866 & 112634 & 1.60 & 8074 & 17032 & 74453 & 6.93 & 13524 & 16873 & 73785\\
neos-826650                    & 2245 & 5024 & 0.00 & 94 & 137 & 157 & 0.00 & 83 & 128 & 144\\
neos-983171                    & 6669 & 7269 & 2.22 & 14068 & 3556 & 3871 & 3.15 & 17258 & 3556 & 3870\\
neos9                          & 31600 & 81408 & 0.03 & 82 & 332 & 672 & 0.04 & 38 & 166 & 336\\
nh97\_potential                 & 958 & 1100 & 0.00 & 227 & 958 & 1100 & 0.00 & 130 & 564 & 354\\
ns1905797                      & 51600 & 18180 & 0.39 & 1097 & 38715 & 13635 & 0.19 & 669 & 25830 & 9090\\
ns4-pr6                        & 1589 & 4991 & 0.00 & 261 & 395 & 1146 & 0.00 & 243 & 384 & 1108\\
nu120-pr12                     & 1515 & 4444 & 0.02 & 1127 & 1363 & 3939 & 0.04 & 1402 & 1360 & 3927\\
nu120-pr9                      & 1823 & 6780 & 0.50 & 4785 & 1803 & 6702 & 0.50 & 4991 & 1803 & 6702\\
nu25-pr12                      & 1516 & 4445 & 0.03 & 1289 & 1347 & 3889 & 0.04 & 1343 & 1344 & 3877\\
nu4-pr9                        & 1823 & 6780 & 0.76 & 7556 & 1803 & 6702 & 0.64 & 6286 & 1803 & 6702\\
p0201                          & 110 & 183 & 0.00 & 44 & 80 & 102 & 0.00 & 26 & 50 & 52\\
physiciansched6-1              & 38870 & 26248 & 5.60 & 15821 & 38104 & 25478 & 7.92 & 16836 & 38139 & 25452\\
physiciansched6-2              & 31064 & 17093 & 10.48 & 25047 & 30830 & 16963 & 9.72 & 22787 & 30850 & 16943\\
piperout-27                    & 17225 & 11403 & 1.37 & 11423 & 17225 & 11403 & 0.81 & 10283 & 17226 & 11403\\
pythago7824                    & 7326 & 3740 & 34.37 & 39996 & 7326 & 3740 & 0.00 & 0 & 1 & 1\\
pythago7825                    & 7336 & 3745 & 33.88 & 39996 & 7336 & 3745 & 0.00 & 0 & 1 & 1\\
supportcase33                  & 9862 & 13641 & 0.21 & 549 & 9862 & 13641 & 0.29 & 555 & 9901 & 13631\\
supportcase37                  & 39192 & 9674 & 3.44 & 15069 & 36075 & 8800 & 3.62 & 15156 & 36084 & 8800\\
tokyometro                     & 6835 & 3606 & 0.06 & 1792 & 6791 & 3575 & 0.07 & 1852 & 6919 & 3567\\
transportmoment                & 7312 & 7268 & 0.60 & 8299 & 6989 & 6946 & 0.55 & 7757 & 6719 & 6646\\
uccase10                       & 140361 & 73892 & 145.33 & 94940 & 79074 & 39006 & 12.57 & 30401 & 26022 & 12518\\
uccase12                       & 88635 & 40242 & 9.46 & 35027 & 60606 & 26822 & 2.53 & 14668 & 36541 & 15758\\
uccase7                        & 31928 & 21289 & 90.57 & 47420 & 31093 & 20786 & 83.19 & 44585 & 31581 & 20786\\
unitcal\_7                      & 41890 & 21979 & 2.56 & 16471 & 32001 & 16779 & 2.15 & 15560 & 33834 & 16779\\
woodlands09                    & 36150 & 227094 & 3600.49 & 165913 & 26988 & 110657 & 3600.55 & 364827 & 26988 & 110657
\end{longtable}
}
\end{appendices}

\end{document}